\def\hB{\hspace*{\fill}$\qed$}
\title{Paschke duality and assembly maps}
\author{
Ulrich Bunke\thanks{Fakult{\"a}t f{\"u}r Mathematik,
Universit{\"a}t Regensburg,
93040 Regensburg,
GERMANY\newline
\href{mailto:ulrich.bunke@mathematik.uni-regensburg.de}{ulrich.bunke@mathematik.uni-regensburg.de}}
\and Alexander Engel\thanks{Institut f{\"u}r Mathematik und Informatik, Universit{\"a}t Greifswald, 17489 Greifswald, GERMANY\newline
\href{mailto:alexander.engel@uni-greifswald.de}{alexander.engel@uni-greifswald.de}}
\and Markus Land\thanks{Mathematisches Institut, Ludwig-Maximilians-Universit\"at M\"unchen, 80333 M\"unchen, GERMANY\newline \href{mailto:markus.land@math.lmu.de}{markus.land@math.lmu.de}}
}
\numberwithin{equation}{section}
\newtheorem{theorem}{Theorem}[section] 
\newtheorem{prop}[theorem]{Proposition}
\newtheorem{lem}[theorem]{Lemma}
\newtheorem{ddd}[theorem]{Definition}
\newtheorem{kor}[theorem]{Corollary}
\newtheorem{ass}[theorem]{Assumption}
\theoremstyle{remark}
\theoremstyle{definition}
\newtheorem{ex}[theorem]{Example}
\newtheorem{rem}[theorem]{Remark}
\newcommand{\EE}{\mathrm{EE}}
\newcommand{\eadd}{\mathrm{eadd}}
\newcommand{\ndeg}{\mathrm{ndeg}}
\newcommand{\fg}{\mathrm{fg}}
\newcommand{\hfin}{\mathrm{hfin}}
\newcommand{\cf}{\mathrm{cf}}
\newcommand{\LF}{\mathrm{LF}}
\newcommand{\pc}{\mathrm{pc}}
\newcommand{\kkA}{\mathrm{kk}_{C^{*}\mathbf{Cat}}}
\newcommand{\kkGA}{\mathrm{kk}_{C^{*}\mathbf{Cat}}^{G}}
\newcommand{\kkHA}{\mathrm{kk}_{C^{*}\mathbf{Cat}}^{H}}
\newcommand{\CAT}{\mathbf{CAT}}
\newcommand{\ctc}{\mathrm{ctc}}
\newcommand{\comm}{\mathrm{comm}}
\renewcommand{\Met}{\mathbf{Met}}
\newcommand{\KKHs}{\mathrm{KK}_{\mathrm{sep}}^{H}}
\newcommand{\kks}{\mathrm{kk}_{\mathrm{sep}}}
\newcommand{\Tw}{\mathbf{Tw}}
\newcommand{\kk}{\mathrm{kk}}
\newcommand{\scale}{\mathrm{prop}}
\renewcommand{\Ind}{\mathrm{Ind}}
\newcommand{\std}{\mathrm{std}}
 \newcommand{\bCtsmc}{ \mathbf{\bar C}_{\mathrm{lf}}^{\mathrm{ctr}}}
  \newcommand{\btCtsmc}{ \mathbf{\widetilde{\bar C}}_{\mathrm{lf}}^{\mathrm{ctr}}}
  \newcommand{\bCgtsmc}{ \mathbf{\bar C}_{\mathrm{lf}}^{ G,\mathrm{ctr}}}
\newcommand{\topp}{\mathrm{top}}
\newcommand{\cw}{\mathrm{CW}}
\newcommand{\cofib}{\mathrm{cofib}}
\newcommand{\Homol}{\mathrm{Hg}}
\newcommand{\UBC}{\mathbf{UBC}}
\newcommand{\cJ}{\mathcal{J}}
\newcommand{\yo}{\mathrm{yo}}
\newcommand{\Res}{\mathrm{Res}}
\newcommand{\Orb}{\mathbf{Orb}}
\newcommand{\Hilb}{\mathbf{Hilb}}
\newcommand{\cR}{\mathcal{R}}
\newcommand{\inter}{\mathrm{int}}
\newcommand{\bQ}{\mathbf{Q}}
\newcommand{\BC}{\mathbf{BC}}
\newcommand{\cQ}{\mathcal{Q}}
\newcommand{\Fin}{\mathbf{Fin}}
\newcommand{\Ob}{\mathrm{Ob}}
\newcommand{\bB}{{\mathbf{B}}}
\newcommand{\incl}{\mathrm{incl}}
\newcommand{\An}{\mathrm{An}}
\newcommand{\bL}{\mathbf{L}}
\newcommand{\bM}{\mathbf{M}}
\newcommand{\cP}{\mathcal{P}}
\newcommand{\bF}{{\mathbf{F}}}
\newcommand{\E}{\mathbb{E}}
\newcommand{\cI}{{\mathcal{I}}}
\newcommand{\cZ}{{\mathcal{Z}}}
\newcommand{\PSh}{{\mathbf{PSh}}}
\newcommand{\bA}{{\mathbf{A}}}
\newcommand{\bK}{{\mathbf{K}}}
\newcommand{\const}{{\mathtt{const}}}
\newcommand{\cO}{{\mathcal{O}}}
\newcommand{\cU}{{\mathcal{U}}}
\newcommand{\cY}{{\mathcal{Y}}}
\newcommand{\Var}{\mathrm{Var}}
\newcommand{\cD}{{\mathcal{D}}}
 \newcommand{\Cone}{{\mathtt{Cone}}}
\newcommand{\cE}{{\mathcal{E}}}
\DeclareMathOperator{\proj}{Proj}
\renewcommand{\proj}{\mathrm{proj}}
\newcommand{\lf}{\mathrm{lf}}
\newcommand{\lto}{\longrightarrow}
\newcommand{\Idem}{\mathrm{Idem}}
\newcommand{\Born}{\mathbf{Born}}
\newcommand{\Spc}{\mathbf{Spc}}
\newcommand{\Ccat}{{\mathbf{C}^{\ast}\mathbf{Cat}}}
\newcommand{\Calg}{{\mathbf{C}^{\ast}\mathbf{Alg}}}
\newcommand{\Kasp}{\mathrm{Kasp}}
\newcommand{\MN}{\mathrm{MN}}
\newcommand{\DL}{\mathrm{DL}}
\newcommand{\bd}{\mathrm{bd}}
\newcommand{\op}{\mathrm{op}}
\newcommand{\add}{\mathrm{add}}
\newcommand{\nCcat}{C^{*}\mathbf{Cat}^{\mathrm{nu}}}
\renewcommand{\Ccat}{C^{*}\mathbf{Cat}}
\newcommand{\alg}{\mathrm{alg}}
\renewcommand{\Calg}{C^{*}\mathbf{Alg}}
\newcommand{\nCalg}{C^{*}\mathbf{Alg}^{\mathrm{nu}}}
\newcommand{\Kcat}{K^{C^{*}\mathrm{Cat}}}
\newcommand{\Ass}{\mathrm{Asmbl}}
\newcommand{\Kast}{K^{C^{*}\mathrm{Alg}}}
\newcommand{\an}{\mathrm{an}}
\newcommand{\tCglf}{{\mathbf{C}}^{(G)}_{\mathrm{lf}}}
\newcommand{\Cglf}{\mathbf{C}^{G}_{\mathrm{lf}}}
\newcommand{\sepa}{\mathrm{sep}}
\newcommand{\kkG}{\mathrm{kk}^{G}}
\newcommand{\kkH}{\mathrm{kk}^{H}}
\newcommand{\KKG}{\mathrm{KK}^{G}}
\newcommand{\KKH}{\mathrm{KK}^{H}}
\newcommand{\KK}{\mathrm{KK}}
\newcommand{\KKs}{\mathrm{KK}_{\sepa}}
\newcommand{\KKGs}{\mathrm{KK}_{\sepa}^{G}}
\newcommand{\kkGs}{\mathrm{kk}_{\sepa}^{G}}
\newcommand{\proper}{\mathrm{prop}}
\newcommand{\KKth}{K\!K}
\newcommand{\la}{\mathrm{}}
\newcommand{\Simpl}{\mathbf{Simpl}}
\newcommand{\cc}{\mathcal{CC}}
\newcommand{\ci}{\mathcal{CI}}
\DeclareMathOperator{\hatotimes}{\hat{\otimes}}
\newcommand{\ppGTop}{G\mathbf{LCH}^{\mathrm{prop}}_{+}}
\newcommand{\ppGTopf}{G\mathbf{LCH}^{\mathrm{prop,hfin}}_{+}}
\newcommand{\ppGTopo}{G\mathbf{LCH}^{\mathrm{prop, \sigma hfin}}_{\mathrm{2nd},+}}
\newcommand{\pGTopc}{G\mathbf{LCH}_{+,\mathrm{pc}}^{\mathrm{prop}}}
\newcommand{\uliemph}{}
\begin{document}
	
\maketitle

\begin{abstract}
We construct a  natural transformation between two versions of   {$G$-equivariant $K$-homology} with coefficients in a $G$-$C^{*}$-category  {for a  {countable} discrete  group $G$}. Its domain is a coarse geometric $K$-homology and its target is the usual analytic $K$-homology. Following classical terminology, we call this transformation the Paschke transformation. We show that under certain finiteness assumptions on a $G$-space $X$, the Paschke transformation is an equivalence on $X$.
As an application, we {provide a direct comparison} of the  homotopy theoretic Davis--L\"uck {assembly map} with {Kasparov's} analytic assembly map appearing in the Baum--Connes conjecture.
\end{abstract}

\tableofcontents
\setcounter{tocdepth}{5}

\paragraph{Acknowledgements.}

Ulrich Bunke was supported by the SFB 1085 (Higher Invariants) funded by the Deutsche Forschungsgemeinschaft (DFG).

Alexander Engel acknowledges financial support by the Deutsche Forschungsgemeinschaft (DFG, German Research Foundation) through the Priority Programme SPP 2026 ``Geometry at Infinity'' (EN 1163/5-1, project number 441426261, Macroscopic invariants of manifolds) and through Germany's Excellence Strategy EXC 2044-390685587, Mathematics Münster: Dynamics -- Geometry -- Structure.

Markus Land was supported by the research fellowship DFG 424239956, and by the Danish National Research Foundation through the Copenhagen Centre for Geometry and Topology (DNRF151).

\section{Introduction and statements}

The main result of the present paper is the construction of a natural transformation 
\begin{equation}\label{wefewrfwewefwerfwef}
K^{G,\cX}_{{\bC}}\to K^{G,\An}_{{\bC}}
\end{equation} 
between two versions of spectrum-valued equivariant {$K$-homology} functors, {where $G$ is a  {countable} discrete  group.  The evaluation of this transformation
 on $G$-finite ${G}$-simplicial complexes with finite stabilzers is an equivalence.}  Following the classical terminology, we call this transformation the \uliemph{Paschke transformation}.
{The} functor  $K^{G,\cX}_{{\bC}}$ in the domain is called the equivariant local $K$-homology and is derived from an equivariant  coarse $K$-homology functor using {coarse geometric constructions}, 
while the target $ K^{G,\An}_{{\bC}}$ is a spectrum-valued version of the classical equivariant analytic $K$-homology. {In both versions {the subscript 
indicates}  a natural  dependence    {on} 
a coefficient $G$-$C^{*}$-category {$\bC$.} 

The Paschke transformation \eqref{wefewrfwewefwerfwef} will be used to compare the domains   of the Davis--L\"uck type assembly map  and  the  Baum--Connes type assembly map. 
Our   second main result  is Theorem \ref{wtoiguwegwergergregwe}
showing that these two assembly maps are equal 
on the level of homotopy groups.

 
In the following we give {an 
 informal description of the construction of the two homology theories   entering \eqref{wefewrfwewefwerfwef}. 
Starting from  classical Paschke duality we further explain 
 the development of ideas leading to  {the}   construction of the map
 in \eqref{wefewrfwewefwerfwef}. We then} state 
  the precise version {of} our  Paschke duality  result as Theorem \ref{qreoigjoergegqrgqerqfewf}, {and finally discuss the comparison of assembly maps.}

{We emphasize that this paper is not the first to treat the topic of equivariant Paschke duality and comparisons of assembly maps, most current are the papers \cite{Benameur:2020aa} and \cite{kranz}. We explain more about this in Remarks~\ref{Remark:previous-work} and \ref{Remark:comparison-assembly}.}

\subsubsection*{Constructions with the coefficients}

For facts about $C^{*}$-categories and their $K$-theory we will generally refer to
\cite{crosscat} and \cite{cank} which were written to provide the necessary background for the present paper,  \cite{coarsek} and \cite{KKG}.
  Both $K$-homology  functors  occuring in \eqref{wefewrfwewefwerfwef} 
depend on the choice of a $G$-$C^{*}$-category 
{$\bC$, i.e., an object of $\Fun(BG,\nCcat)$ (see \cite[Sec. 3]{crosscat} or \cite[Def. 2.6]{cank} for $\nCcat$). We use the symbol $\bM\bC$ in order to denote the multiplier category of $\bC$ \cite[Def. 3.1]{cank}.}
{  {In Definition~\ref{qeroighjoreqgreqfefwfqwef}  we  describe}   an exact sequence
$$0\to \bC_{\std}^{(G)}\to \bM\bC_{\std}^{(G)}\to \bQ_{\std}^{(G)}\to 0$$ of $G$-$C^{*}$-categories (see \cite[Def. 8.5]{crosscat} or \cite[Def. 13.2]{cank} for the notion of an exact sequence)
defining the  
\uliemph{Calkin}} $G$-$C^{*}$-category $\bQ^{(G)}_{\std}$. These constructions depend functorially on $\bC$ for non-degenerate morphisms.} 
\begin{ex} In
the case  of trivial coefficients     $\bC$ is the    $G$-$C^{*}$-category  $\Hilb_{c}(\C)$ of Hilbert spaces 
  and compact 
operators with trivial $G$-action.
The multiplier category  of  $\Hilb_{c}(\C)$ can be identified with  the category  $\Hilb(\C)$ of Hilbert spaces and all bounded operators \cite[Lem. 8.1]{cank}. 
By specializing  Definition~\ref{qeroighjoreqgreqfefwfqwef} the $G$-$C^{*}$-category  $ {\bC^{(G)}_{\std} }$  turns out to be  the category  $\Hilb(\C)_{\std}^{(G)}$ of all pairs $(H,\rho)$ of a Hilbert space $ {H}$ with a unitary $G$-representation $\rho$ that are isomorphic to $(L^{2}(G)\otimes H' ,\lambda\otimes \id_{H'})$, where $\lambda$ is the left-regular representation and $H'$ is some auxiliary Hilbert space. 
 The morphisms $(H_{0},\rho_{0})\to (H_{1},\rho_{1})$  in $\Hilb_{c}(\C)^{(G)}_{\std}$   are all compact operators $H_{0}\to H_{1}$.  
 
The $G$-$C^{*}$-category   $\bM\bC^{(G)}_{\std}$ is the category $\Hilb(\C)_{\std}^{(G)}$ 
{which}  has the same objects, but its morphism spaces are  the bigger spaces of  all bounded linear operators. In both cases the $G$-action fixes objects and acts by conjugation on the morphism spaces.
The $G$-$C^{*}$-category  $\bQ^{(G)}_{\std}$ is the Calkin category $\Hilb(\C)^{(G)}_{\std}/ \Hilb_{c}(\C)^{(G)}_{\std}$. Its objects are the 
objects   $(H,\rho)$ of {$\Hilb(\C)_{\std}^{(G)}$}, and its morphism spaces  are the quotient spaces  of bounded operators by compact operators with the induced $G$-action. In particular, the endomorphism algebra of each object $(H,\rho)$ is the usual Calkin algebra $Q(H)$ of $H$ with the $G$-action by conjugation, hence the name. \hB \end{ex}

\begin{ex}
More generally, for a  $G$-$C^*$-algebra $A$ we consider 
the $G$-$C^{*}$-category {$\bC=\Hilb_{c}(A)$ of Hilbert $A$-modules and compact operators. 
Its multiplier category is the category $\Hilb(A)$ of Hilbert $A$-modules and all adjointable operators \cite[Lem. 8.1]{cank}. The $G$-action on both categories is described explicitly in \cite[Ex. 2.10]{cank}.}

 {{If $A$ is unital, } then the associated $G$-$C^{*}$-category $\Hilb_{c}(A)^{(G)}_{\std}$
  consists of pairs $(H,\rho)$
of a Hilbert $A$-module together with a unitary   $G$-action $\rho$ such that 
$H$  is  isomorphic to an orthogonal sum of a family   of  finitely generated projective $A$-modules indexed by a free $G$-set.
Since $G$ acts non-trivially on $\Hilb_{c}(A)$ the details are slightly more complicated   to describe, see  Definition~\ref{qeroighjoreqgreqfefwfqwef}.}
\hB   \end{ex}

\subsubsection*{Analytic $K$-homology}
The construction of 
the equivariant  {analytic} $K$-homology functor $K^{G,\An}_{{\bC}}$ with coefficients in ${\bC}$
 employs the
  $\infty$-categorical version  $$\kkG \colon \Fun(BG,\nCalg)\to \KKG$$ of the $KK$-functor from \cite[Def.\ 1.8]{KKG} and its extension to $C^{*}$-categories
 \begin{equation}\label{werferfrewgrerefw}\xymatrix{ \Fun(BG,\nCalg)\ar[dr]_-{\incl}\ar[rr]^{\kkG}&&\KKG\\& \Fun(BG,\nCcat)\ar[ur]_-{\kkG_{\Ccat}}&}
\end{equation}
 introduced in \cite[Def. 1.29]{KKG}, where $\incl$ interprets a $G$-$C^{*}$-algebra as a $G$-$C^{*}$-category with a single object. The mapping spectrum functor of the stable $\infty$-category 
 $\KKG$ will be denoted by
 $$\KKG(-,-) \colon {\KKG}^{\op}\times \KKG\to \Sp\, .$$ In order to simplify the notation we drop the symbols $\kkG$ or $\kkG_{\Ccat}$  when we express the value of a functor $F$ defined on $\KKG$ on a $G$-$C^{*}$-algebra $A$ or a $G$-$C^{*}$-category $\bC$.
By \cite[Prop. 3.5]{KKG}, if $A$ is a  separable  $G$-$C^{*}$-algebra and $B$ is $\sigma$-unital, then the  homotopy groups
$ \pi_{*}\KKG( A , B)$  are canonically isomorphic to the classical equivariant $\KK^{G}$-groups  of Kasparov  \cite{kasparovinvent} associated to $A,B$.

The equivariant analytic $K$-homology functor $K^{G,\An}_{{\bC}}$ is  defined by the formula
\begin{equation}\label{vfdsvsvvsvsdvdsadsvdsva}
K^{G,\An}_{{\bC}} \colon \ppGTop \to \Sp^{\la}\, , \quad X\mapsto \KKG(C_{0}(X),\bQ_{\std}^{(G)})\, .
\end{equation} 
The domain of this functor is the category $\ppGTop$  of  locally compact {Hausdorff} $G$-spaces with partially {defined proper} maps. {Equivalently, $\ppGTop$ is the} Gelfand dual of the category $G\nCalg_{\comm}$ of non-unital  commutative $G$-$C^{*}$-algebras. 
 {The connection with the notation from \cite[Def.\ 1.15]{KKG} is given by
\begin{equation}\label{fbsfvssfdvsfdvdscsdf}
K^{G,\An}_{{\bC}}=K^{G,\an}_{ \bQ^{(G)}_{\std}}\, ,
\end{equation} 
  {In particular, $K^{G,\An}_{{\bC}}$ is different from $K^{G,\an}_{{\bC}}$ ---  we apologize for this notational inconvenience.  

{In view of \eqref{fbsfvssfdvsfdvdscsdf} the basic properties of $K^{G,\an}$ listed in   \cite[Thm.\ 1.15]{KKG}  imply corresponding properties of $K^{G,\An}_{\bC}$.} In particular, the functor $K^{G,\An}_{\bC}$ is homotopy invariant, is excisive for closed decompositions of second countable spaces with proper action (this restriction is due to the usage of \cite[Prop. 1.12.1]{KKG}), and it annihilates spaces of the form $[0,\infty)\times X$. 

\begin{ex}\label{wtrhiotrhrthtrhdtrh}
Let us consider the  coefficients $ \bC=\Hilb_{c}(A)$ for a unital $G$-$C^{*}$-algebra $A$.
{For a $G$-space $X$ which is homotopy equivalent to a $G$-finite CW-complex with finite stabilizers,   Proposition~\ref{wegojeogrregregwergwegre}}  {provides} a natural isomorphism
\begin{equation}\label{qefqwefwefewdq}
\pi_{*}K^{G,\An}_{\bC}(X)\cong \KKth^{G}_{*-1}(C_{0}(X),{A}) \, .
\end{equation}
This isomorphism {identifies our definition of equivariant {analytic}  $K$-homology with the classical definition {given} by 
 the right hand side of \eqref{qefqwefwefewdq}, up to a shift of degrees.} \hB
 \end{ex}

In order to deal correctly with non-$G$-compact spaces in $\ppGTop$ we will   consider
the  {locally finite} version $K^{G,\An,\mathrm{lf}}_{\bC}$ of $K^{G,\An}_{\bC}$ which is defined as follows. If $X$ is in $ \ppGTop$ and $U$ is an open $G$-invariant subset  of $X$ with $G$-compact closure, then we have a morphism
$X\to U$ in $ \ppGTop$ given by the partially defined map
$X\supset U\stackrel{\id_{U}}{\to} U$   {which corresponds to} the extension-by-zero homomorphism $C_{0}(U)\to C_{0}(X)$ {on the level} of commutative $G$-$C^{*}$-algebras. We define
\begin{equation}\label{wergeefeferfewrferf}
K^{G,\An,\mathrm{lf}}_{\bC}(X)\coloneqq\lim_{U\subseteq X}   K^{G,\An}_{\bC}(U)\, ,
\end{equation}  where the limit runs over all open subsets $U$ of $X$ with $G$-compact closure.
Using right Kan extensions, 
one can turn this prescription into the definition of a functor
%
\begin{equation}\label{regrwefefefewf}
K^{G,\An,\mathrm{lf}}_{\bC}\colon \ppGTop\to \Sp\, ,
\end{equation} 
see \cite[Sec. 7.1.2]{buen} for a similar construction.
We have a natural transformation \begin{equation}\label{qwefwefwqoiudjoqwdwedqwedweqdewd}
c \colon K^{G,\An}_{\bC}\to K^{G,\An,\mathrm{lf}}_{\bC}
\end{equation}
of functors from $\ppGTop$ to $\Sp$.
 The functor $K^{G,\An,\mathrm{lf}}_{\bC}$ is homotopy invariant. Its restriction 
to second countable spaces with proper $G$-action  is excisive for closed decompositions. Finally, it sends countable disjoint unions to products.
If $X$ is $G$-compact, then the canonical map
$c_{X} \colon K^{G,\An}_{\bC}(X)\to K^{G,\An,\mathrm{lf}}_{\bC}(X)$ is an equivalence. We refer to 
Proposition \ref{werigowergrefwerfwe} for a calculation of the  values of $K^{G,\An,\mathrm{lf}}_{\bC}$ on more general spaces.

{The functors $K^{G,\An}_{\bC}$ and $ K^{G,\An,\mathrm{lf}}_{\bC}$ depend functorially on the coefficient  $G$-$C^{*}$-category $\bC$  for non-degenerate morphisms.}
 
\begin{rem}
Using the equivariant $E$-theory functor \cite[Def. 3.22]{budu} one could define a  version of
analytic $K$-homology 
$$E^{G,\An}_{\bC}\colon \ppGTop\to \Sp\ , \quad X\mapsto \EE^{G}(C_{0}(X),\bQ_{\std}^{(G)})$$
with better formal properties.
Since the $E$-theory functor sends all exact sequences of $C^{*}$-categories to fibre sequences, in the case of $\bC=\Hilb_{c}(A)$ for a unital $G$-$C^{*}$-algebra $A$
we have the analogue of \eqref{qefqwefwefewdq}
$$E^{G,\An}_{\bC}(X)\simeq \Sigma \EE^{G}(C_{0}(X),A)$$   without any restriction on $X$.
Furthermore, the functor $ E^{G,\An}_{\bC}$ is excisive for arbitrary invariant closed decompositions, i.e., we can 
drop the assumptions of properness of the $G$-action and second countability needed for $K^{G,\An}_{\bC}$.
Finally, since the $E$-theory functor preserves filtered colimits of $G$-$C^{*}$-algebras, the functor
$E^{G,\An}_{\bC}$ is already locally finite, i.e., the analogue $E^{G,\An}_{\bC}\to E^{G,\An,\lf}_{\bC}$ of the comparison map
 \eqref{qwefwefwqoiudjoqwdwedqwedweqdewd} is an equivalence (see \cite[Prop. 3.30]{Bunke:2024aa} for an analogous statement).

The comparison functor $\KK^{G}\to \EE^{G}$ induces a transformation
$K^{G,\An}_{\bC}\to E^{G,\An}_{\bC}$ which is an equivalence  
on spaces which are homotopy equivalent to $G$-finite $G$-simplicial complexes with finite stabilizers.
Composing the Paschke morphism \eqref{Paschke-morphisms} below with this comparison map
we get a Paschke morphism with target  $E^{G,\An}_{\bC}$. Furthermore, 
 our main Theorem \ref{qreoigjoergegqrgqerqfewf} on the Paschke equivalence implies a similar  result 
involving $E^{G,\An}_{\bC}$. 

Here are our  three reasons to prefer  $K^{G,\An}_{\bC}$.
First of all this is the  analytic $K$-homology functor considered in the classical literature. 
Secondly,  working with  $K^{G,\An}_{\bC}$ provides a finer result. 
Finally, and this is our main reason, in the application to assembly maps 
we need reduced crossed products with $G$ which descend to 
 equivariant $KK$-theory, but not  to equivariant $E$-theory by the lack of exactness of $-\rtimes_{r}G$.
\hB
 \end{rem}

\subsubsection*{Coarse $K$-homology}
{
We now turn to a brief description of the equivariant   local  $K$-homology functor $K^{G,\cX}_{\bC}$. 
For our purposes, the functor $K^{G,\cX}_{\bC}$ is most naturally defined on {the category $G\UBC$ of} $G$-uniform bornological coarse spaces \cite[Def. 9.9]{equicoarse}.
{This category comes with a} cone-at-$\infty$ functor $\cO^\infty \colon G\UBC \to G\BC$ (see Definition \ref{wepogjpweggreewg}),  {where $G\BC$ denotes the category of $G$-bornological coarse spaces} \cite[Def. 2.1]{equicoarse}.  {We  define our equivariant local $K$-homology as the composition of $\cO^\infty$ with the}
equivariant coarse homology theory  $K\bC\cX^G_{G_{can,max}}\colon G\BC \to \Sp$. This functor is the twist (see Definition \ref{regoiergowregrwergwreg}) of  the equivariant coarse $K$-homology $
K\bC\cX^G\colon G\BC \to \Sp$ constructed in 
  \cite{coarsek} (see also Definition \ref{qrogijeqoifefewfefewqffe}) by the object $G_{can,max}$ in $G\BC$. 
  
  {In order to construct
 $K\cX\bC^{G}$ we must assume that  {the} coefficient  {$G$-$C^{*}$-category  $\bC$  {satisfies further axioms, namely} {that it is}
 effectively additive and admits countable AV-sums (see Definitions \ref{wigjosgbsgfgsdfgs91} and \ref{wigjosgbsgfgsdfgs9}).}
 {The coefficient category $\Hilb_c(A)$ for a $G$-$C^*$-algebra $A$ satisfies these axioms by  {\cite[Lem. 7.9]{cank} since it admits all small AV-sums}.  

We   define  the equivariant   local  $K$-homology functor by
\begin{equation}\label{qewfoijfoiqwefqewfewfqewfd}
K^{G,\cX}_{\bC} \coloneqq K\bC\cX^G_{G_{can,max}}\circ \cO^\infty \colon G\UBC \to \Sp\,.
\end{equation}
This composition is an equivariant local homology theory, i.e.\ it is homotopy invariant, excisive for closed decompositions, {$u$-continuous,} and vanishes on spaces of the form $[0,\infty) \otimes X$, see Proposition \ref{qroigjqowfewfqwefqw}. 

{The functor $K\bC\cX^{G}$ and therefore also $K^{G,\cX}_{\bC}$  depend also functorially
 on the  coefficient category $\bC$ for non-degenerate morphisms.}

\subsubsection*{A common domain for $K^{G,\An}_\bC$ and $K^{G,\cX}_\bC$}
By now, the functors $K^{G,\An}_{\bC}$ and $K^{G,\cX}_\bC$ can not be compared. They are invariants of different objects: locally compact Hausdorff $G$-spaces on the one hand{,} and $G$-uniform bornological coarse spaces on the other hand.  In order to compare their domains we  consider the functor   $$\iota^\topp:G\UBC  \to \ppGTop $$   from  \eqref{q3oigfherqoifqrfvffqwefqeeqwfqeeqwfqefqefeeqfeqfewffefqewqewfqfqewf}. It is uniquely characterized by the equalities
\begin{equation}\label{werfwefwerffffrf}C_{0}(X)=C_{0}(\iota^{\topp}(X))
\end{equation}
for all 
 $X$ in $G\UBC$, where the $C^{*}$-algebra $C_{0}(X)$ on the left-hand side consists of  the bounded uniformly continuous  functions
which become arbitrary small outside of sufficiently large bounded subsets. The symbol $C_{0}(\iota^{\topp}(X))$ has the usual meaning.

We let  $G\Simpl_{\mathrm{fin}}^\proper$   denote the category  of $G$-finite $G$-simplicial complexes with finite stabilizers and equivariant proper simplicial maps. Equipping $G$-simplicial complexes with the spherical path metric provides a functor
$$G\Simpl_{\mathrm{fin}}^\proper\to G\UBC\ .$$

We can summarize our first main result, slightly informally, by the following diagram.
\[\begin{tikzcd}
	& G\Simpl_{\mathrm{fin}}^{\proper}  \ar[dl] \ar[dr] & \\
	G\UBC \ar[ddr,"{K^{G,\cX}_\bC}"', bend right]    \ar[rr,"\iota^\topp"] && \ppGTop \ar[ddl,"K^{G,\An}_{\bC}", bend left] \\
	 & \stackrel{p}{\Longrightarrow} & \\
	 & \Sp &
\end{tikzcd}\]
The Paschke transformation $p$ will be constructed as a natural transformation filling the lower triangle.  Equivalently, naturality  of $p$ can be stated by saying that it makes the lower square \uliemph{lax-commutative}. We then show that the Paschke transformation renders 
 the large square commutative. In other words, the Paschke transformation becomes a natural equivalence when 
 restricted to $G$-finite   $G$-simplicial complexes with finite stabilizers. {In addition, the Paschke transformation is natural in the coefficient category $\bC$ for non-degenerate morphisms.} We will state our main theorem more formally as Theorem~\ref{qreoigjoergegqrgqerqfewf} below.

\subsubsection*{A review of classical Paschke duality}

{In order to motivate the definitions involved in the above diagram, we now review some aspects of classical Paschke duality.}
Based on the seminal work of Paschke  \cite{paschke}, the general theme of Paschke duality is to express the {analytic} 
  $K$-homology   $$K_{*}^{\mathrm{an}}(X)\coloneqq \KK_{*}(C_{0}(X),\C)$$ in terms of the $K$-theory
  of a $C^{*}$-algebra naturally associated to $X$, which is then often referred to as the \uliemph{Paschke dual algebra} of $X$.

Classically, this is implemented as follows.
Let $X$ be a proper metric space and $\phi \colon C_{0}(X)\to B(H)$ be  a homomorphism of $C^{*}$-algebras, where  $H$ is a separable Hilbert space. To this data one associates  an exact sequence of $C^{*}$-algebras 
\begin{equation}\label{wergpowkgpowergregregwegrg}
0\to C(H,\phi)\to D( H,\phi)\to Q (H,\phi)\to 0
\end{equation}
where $D(H,\phi)$ is the $C^*$-subalgebra of $B(H)$ generated by the controlled and pseudolocal operators and $C(H,\phi)$,  called the \uliemph{Roe algebra}, is its ideal generated by the operators which are in addition locally compact.

{If  $(H,\phi)$  is 
 sufficiently large ({very ample 
 in classical terminology or} 
 absorbing in the sense of Definition \ref{werogihetgergfdsf}) {and non-degenerate (meaning that $\overline{\phi(C_{0}(X))H}=H$)},  then the $K$-theory of $Q(H,\phi)$ is a well-behaved invariant of $X$. More precisely, for a proper map $f \colon X \to X'$ and absorbing {non-degenerate}  representations $(H,\phi)$ and $(H',\phi')$ for $X$ and $X'$ respectively, there exists a unitary, controlled and pseudolocal {isometry} $(H',\phi')\cong (H,\phi\circ f^{*})$ {called a \uliemph{covering}, which is} unique up to conjugation by unitaries {in $D(H',\phi')$}. This covering induces  a homomorphism  $D(H,\phi)\to D(H',\phi')$ preserving the respective Roe algebras and therefore a homomorphism
 $Q(H,\phi)\to Q(H',\phi')$ between the quotients. For $f=\id_X$, this shows that the $K$-theory of $Q(H,\phi)$ is independent of the choice of an absorbing representation $(H,\phi)$. {We recall here that Voiculescu's Theorem grants the existence of such absorbing representations.}  {Furthermore,}  setting }
\[K_{*}^{\cX}(X) \coloneqq \Kast_{*}(Q(H,\phi))\]
{for {any choice of an} absorbing {non-degenerate} representation $(H,\phi)$, one obtains a functor}
\[K_{*}^{\cX}(-) \colon \Met^{\proper}\to \Ab^{\Z}\]
defined on the category of proper metric spaces and proper maps and taking values in graded abelian groups.  The superscript $\cX$  indicates the coarse geometric origin of the construction, whose implementation was
 initiated by Roe \cite{roe_exotic_cohomology}.
The functor $K_*^{\cX}(-)$ is homotopy invariant {and} admits Mayer--Vietoris sequences. In addition, there is a natural  Paschke duality isomorphism 
\begin{equation}\label{qwefqoihfoiqwefewfqwefef}
K_*^{\cX}(X) \cong K^\an_{*-1}(X)
\end{equation}
given by a concrete cycle level construction, see \cite{higson_roe} for details. 
So up to suspension $Q(H,\phi)$ is the Paschke dual of $C_{0}(X)$.

\subsubsection*{The Paschke {transformation} following Quiao--Roe}

The paper \cite{quro} discusses a systematic approach to the isomorphism \eqref{qwefqoihfoiqwefewfqwefef}, whose basic idea we now adapt to the equivariant situation. We continue to assume that the $G$-space $X$ is equipped with an absorbing {non-degenerate} representation $\phi\colon C_0(X) \to B(H,\rho)$ where $H$ is a separable Hilbert space equipped with a unitary $G$-action $\rho$.
The idea is to derive the isomorphism in \eqref{qwefqoihfoiqwefewfqwefef} from 
a multiplication map
\begin{equation}\label{efqwefewdq}
\mu_{X} \colon C_{0}(X)\otimes Q^{G}(X)\to Q(H ) \, ,
\end{equation}
 $$Q^{G}(X) \coloneqq Q^{G}(H,\rho,\phi) \coloneqq D^{G}(H,\rho,\phi)/C^{G}(H,\rho,\phi)\, ,$$
 where $D^{G}(H,\rho,\phi)$  and $C^{G}(H,\rho,\phi)$ are defined as in the non-equivariant case by just adding the condition that the controlled generators are {$G$}-invariant. Furthermore 
  $Q(H) = Q(H,\rho)$ is the Calkin algebra of $(H,\rho)$ with the induced $G$-action.
Using the multiplication map \eqref{efqwefewdq}, one may define a Paschke {morphism} as the composition 
\begin{equation}\label{rfqwpofkjpqowefewfeqwfqef}
 p^{(H,\rho,\phi)}_{X} \colon \KK (\C,Q^{G}(X))\xrightarrow{\delta_{X}} \KK^{G}  (C_{0}(X),C_{0}(X)\otimes Q^{G}(X))\xrightarrow{\mu_{X}} \KK^{G} (C_{0}(X),Q(H) )  \, .
\end{equation}
The map  $\delta_{X} \coloneqq C_{0}(X)\otimes -$ is the exterior product in equivariant KK-theory and is called the diagonal morphism. We note that the algebras $Q^{G}(X )$ and $Q (H)$ are not separable, which is the reason  why $E$-theory instead of $\KKth$-theory is used in \cite{quro}. However, the equivariant KK-theory of \cite{KKG} is well-defined for all $G$-$C^{*}$-algebras, so we can safely work with this version rather than with $E$-theory. 

With this more abstract definition, how can one show that the Paschke {morphism} induces an isomorphism on $K$-groups, at least for suitable spaces $X$?  Our strategy to answer this question is as follows. 
Suppose one could show that the maps $p^{(H,\rho,\phi)}_{X}$ in \eqref{rfqwpofkjpqowefewfeqwfqef} were the components of a natural transformation of functors with values in the $\infty$-category of spectra,
and that both the domain and target of the  Paschke {transformation} are homotopy invariant and excisive\footnote{This is the spectrum analogue of the property of admitting Mayer--Vietoris sequences  for group-valued functors} as functors in $X$.
Then for {$G$}-finite $G$-CW-complexes $X$, by induction over the number of $G$-cells, one can reduce the verification  that $p_X^{(H,\rho,\phi)}$ is an equivalence 
to the  cases of $G$-orbits, i.e., of spaces of the form $G/H$, where $H$ runs over the  subgroups of $G$ appearing as   stabilizer of the $G$-action on $X$. {While in the non-equivariant case only the trivial case $X=\ast$ is to be treated, the verification that the Paschke maps are equivalences on general $G$-orbits is a non-trivial matter.}

{The above strategy} will indeed be the essential idea of the proof of our main Theorem \ref{qreoigjoergegqrgqerqfewf} below. The first difficulty to overcome is to show that the Paschke maps $p^{(H,\rho,\phi)}_{X}$ are indeed the components of a natural transformation, in particular, to show that the spectrum $\KK(\C,Q^{G}(X))$ appearing in the domain of the Paschke map, is a homotopy invariant and excisive functor in $X$ (at the moment  is not even a functor in any obvious manner).
The origin of the problem  is that in order to define $Q^{G}(X)=Q^{G}(H,\rho,\phi)$, one has to \emph{choose} an absorbing {non-degenerate} representation $(H,\rho,\phi)$, and for a morphism $X\to X'$ one  has to \emph{choose}  a covering in order to define the map 
$\KK (\C,Q^{G}(X))\to \KK (\C,Q^{G}(X'))$. 
Defined in this way, the resulting  map of  spectra depends on these choices and is, at best, unique up to an unspecified homotopy, which is not sufficient for our purposes.

\subsubsection*{The Paschke {transformation} in our setup}

Our key idea to overcome these functoriality issues is to work with the category of all representations.  In fact, the categories  of such representations themselves depend on the space in a strictly functorial manner. Their use hence circumvents the need to find absorbing representations.
The idea  to work with the whole category of representations is not new; it has first been exploited in \cite{buen} in order to define a spectrum-valued coarse $K$-homology functor $K\!\cX$. 

In the present paper, as indicated earlier, we work with its equivariant generalization,  the equivariant  coarse $K$-homology functor 
\[K\bC\cX^{G} \colon G\BC\to \Sp\] 
introduced in \cite{coarsek}. 
Again, the symbol $\bC$ refers to its dependence on a coefficient  {$G$-$C^*$-category $\bC$}. 
In the case of trivial coefficients it is shown in  \cite[Thm.\ 6.1]{indexclass} that this functor is equivalent to the classical definition of equivariant coarse $K$-homology in terms of Roe algebras. More precisely, if   the    $G$-space $X$ is nice, and $C^G(X) \coloneqq C^G(H,\rho,\phi)$ with $(H,\rho,\phi)$ ample,
we have a natural equivalence
\[ K\bC\cX^{G}(X) \simeq \Kast(C^{G}(X)) \, .\]

By construction, see Definition~\ref{qrogijeqoifefewfefewqffe}, for $X$ in $G\BC$  we have $$K\bC\cX^{G}(X)=\KK(\C,\bCgtsmc(X)) \ ,$$ where  
$\bCgtsmc(X)$ is a $C^{*}$-category  of  equivariant locally finite $X$-controlled objects in {$\bC$}, see Definition~\ref{rfquhwfiuqwhfiufewqefqwefqwefwefqwef} for the details.
The endomorphism algebras of the objects of $ \bCgtsmc(X)$  are natural analogues
of the Roe algebras $C(H,\rho,\phi)$.

We now indicate the relation between the functor $X \mapsto K^{G,\cX}_\bC(X)$ and the association $X \mapsto \KK(\C,Q^G(X))$ appearing in the source of the Paschke {morphism} \eqref{rfqwpofkjpqowefewfeqwfqef}. Recall from \eqref{qewfoijfoiqwefqewfewfqewfd} that $K^{G,\cX}_\bC$ is defined as a composition of $K\bC\cX^{G}$ with the functor $\cO^\infty(-)\otimes G_{can,max}$ on $G$-uniform bornological coarse spaces.

 If $X$ is in $G\UBC$, then the cone-at-$\infty$ $\cO^{\infty}(X)$ is the $G$-set
 $\R\times X$ with a certain $G$-bornological coarse structure described in  Definition \ref{qriugiqregergwerg}. 
 It contains the underlying $G$-bornological coarse space of $X$ as the subspace $\{0\}\times X$. We  further consider the cone
 $\cO(X)$ in $G\BC$ defined as the subset $[0,\infty)\times X$ with the induced structures.
The inclusion $X\to \cO(X)$ induces an inclusion of categories \begin{equation}\label{r3foifjowqefewqfqewfef}
\bCgtsmc(X\otimes G_{can,max})\to \bCgtsmc(\cO(X)\otimes G_{can,max})\, 
\end{equation}
to be thought of as the analog of the inclusion $C^G(X) \to D^G(X)$ in the classical situation, see Section \ref{weogiwjiergreggwrgwerg} for more details.
  The resulting quotient $C^{*}$-category  $\bQ(X)$
 is then our version of the algebra $Q^{G}(X)$, and we have natural equivalence
\begin{equation}\label{gepojgregergegwgwergrweg}
 K^{G,\cX}_{\bC}(X)\simeq \KK(\C, \bQ(X))\, .
\end{equation}   
   We refer to 
   Lemma \ref{wrtoihgjoergergegwgrgwerg} for more details and necessary additions.  We construct
   a
 multiplication map (see \eqref{oreihoijfvoisfdvervfdsvdfvsvv})
 \[ \mu_{X} \colon C_0(X) \otimes \bQ(X) \to \bQ^{(G)}_{\std}.\]
In complete analogy to the earlier described Paschke morphism \eqref{rfqwpofkjpqowefewfeqwfqef}, we define our version of the Paschke morphism as the composition:
\begin{equation}\label{Paschke-morphisms}
	p_{X} \colon \KK(\C,\bQ(X)) \stackrel{\delta_X}{\lto} \KKG(C_0(X),C_0(X)\otimes \bQ(X)) \stackrel{\mu_{X}}{\lto} \KKG(C_0(X),\bQ^{(G)}_{\std})\,.
\end{equation}

 The main result of this paper is then the following theorem. 

 \begin{theorem}\label{qreoigjoergegqrgqerqfewf}
 {We assume that $\bC$  is effectively additive and admits countable AV-sums.} \begin{enumerate}
 \item\label{weiufhiqwefewwfqewfewf}
 
The morphisms in \eqref{Paschke-morphisms} assemble into a natural transformation of  spectrum-valued functors on $G\UBC $
\begin{equation}\label{qwoeifjoifjewfqwefwfqewfq}
p \colon K_{\bC}^{G,\cX} \to K_{\bC}^{G,\An}\circ \iota^{\topp}\,
\end{equation}
that   is natural in the coefficient category $\bC$ for non-degenerate morphisms.
\item\label{qregiojqwfewfqwfqewf} If 
$X$ is in $G\UBC $ and homotopy equivalent to a $G$-finite
  $G$-simplicial complex with finite {stabilizers, 
 then} 
 $$p_{X} \colon K_{\bC}^{G,\cX}(X)\to K_{\bC}^{G,\An}(\iota^{\topp}(X))$$
 is an equivalence.
 \item \label{wrtigjiogowrefwerfewrf}
 {If  $\bC$ admits all very small AV-sums,} $G$ is finite, 
$X$ is in $G\UBC $ and homotopy equivalent to a countable finite-dimensional
  $G$-simplicial {complex, 
 then} 
 $$  p^{\mathrm{lf}}_{X} \colon K_{\bC}^{G,\cX}(X)\to K_{\bC}^{G,\An,{\mathrm{lf}}}(\iota^{\topp}(X))$$
 is an equivalence.
\end{enumerate}
 \end{theorem}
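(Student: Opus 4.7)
For part \ref{weiufhiqwefewwfqewfewf}, the plan is to promote each factor in the composite defining the Paschke morphism \eqref{Paschke-morphisms} to a natural transformation on $G\UBC^\scale$. The identification \eqref{gepojgregergegwgwergrweg} makes $\KK(\C,\bQ^G_\bC(-,-))$ strictly functorial in $(X,\tau)$ via the controlled categories $\bCgtsmc(-)$ and the quotient \eqref{r3foifjowqefewqfqewfef}; this is precisely why the construction uses the full category of controlled objects rather than a choice of absorbing representation. The diagonal morphism $\delta_X = C_0(X)\otimes-$ is strictly natural through the symmetric monoidal structure of $\KKG$ together with Gelfand duality. The essential new content is therefore naturality of the multiplication map $\mu_{(X,\tau)}$ from \eqref{oreihoijfvoisfdvervfdsvdfvsvv}, which reduces to checking that morphisms in $G\UBC^\scale$ preserve both the pointwise $C_0$-action and the propagation control; this is exactly what the axioms of scaled $G$-uniform bornological coarse spaces are designed to guarantee.

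For part \ref{qregiojqwfewfqwfqewf}, I would argue by induction on the cellular filtration of a $G$-finite simplicial complex $X$ with finite stabilizers. Both functors $K_\bC^{G,\cX}\circ\iota^\scale$ and $K_\bC^{G,\An}\circ \iota^\topp$ are homotopy invariant (via Proposition \ref{qroigjqowfewfqwefqw} and \cite[Thm.\ 1.14]{KKG}) and excisive for closed $G$-invariant decompositions of the second countable $G$-spaces occurring here (for the target this is \cite[Prop.\ 1.12.1]{KKG}). A standard excision-and-colimit argument along the cellular filtration then reduces the claim to verifying that $p_{(G/H,\tau)}$ is an equivalence for each finite subgroup $H\leq G$ appearing as a stabilizer. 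I expect this orbit base case to be the principal obstacle. The strategy there is to use induction along $H\leq G$ to identify $K^{G,\An}_\bC(G/H)$ with an $H$-equivariant analytic $K$-homology of a point, while the hypothesis that $\bK$ is compatible with orthogonal sums provides enough absorption in $\bCgtsmc(G/H\otimes G_{can,max})$ to identify $\bQ^G_\bC(G/H,\tau)$ with a suspension of $\bQ^{(H)}_\std$ at the level of $\KKH$. Under these identifications the Paschke morphism becomes a suitably $H$-equivariant form of the classical Paschke duality map for a point, whose invertibility follows from a direct analysis of $H$-equivariant Morita-type invariance for the Calkin fiber sequence.

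For part \ref{wrtigjiogowrefwerfewrf}, I exploit that for finite $G$, any countable finite-dimensional $G$-simplicial complex admits an increasing exhaustion by $G$-finite subcomplexes $X_0\subseteq X_1\subseteq\cdots$. By the defining formula \eqref{wergeefeferfewrferf}, the target $K_\bC^{G,\An,\mathrm{lf}}(X)$ is naturally the inverse limit of $K_\bC^{G,\An}(X_n)$, and the comparison \eqref{qwefwefwqoiudjoqwdwedqwedweqdewd} is an equivalence on each $G$-compact $X_n$. The analogous continuity on the coarse side -- that $K_\bC^{G,\cX}(X)$ is the inverse limit of $K_\bC^{G,\cX}(X_n)$ -- would follow from the construction of $K\bC\cX^G_c$ in \cite{coarsek} combined with the behaviour of the cone at infinity on filtered $G$-compact covers. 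Combining these continuity statements with naturality from part \ref{weiufhiqwefewwfqewfewf}, and with part \ref{qregiojqwfewfqwfqewf} applied to each $X_n$ (which are $G$-finite and have finite stabilizers since $G$ is finite), realizes $p^\mathrm{lf}_{(X,\tau)}$ as a limit of equivalences, hence an equivalence.
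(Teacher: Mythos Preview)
Your plan for part~\ref{weiufhiqwefewwfqewfewf} has a genuine gap. You claim that $\delta_X$ and $\mu_{(X,\tau)}$ can each be promoted to natural transformations separately, with the work concentrated in $\mu$. But the intermediate object $\KKG(C_0(X),C_0(X)\otimes \bQ_\tau(X))$ is \emph{not} a functor of $(X,\tau)$: a morphism $f\colon X\to X'$ acts on $C_0(-)$ contravariantly and on $\bQ_\tau(-)$ covariantly, so there is no single induced map between $\KKG(C_0(X),C_0(X)\otimes \bQ_\tau(X))$ and $\KKG(C_0(X'),C_0(X')\otimes \bQ_{\tau'}(X'))$. Thus $\delta$ cannot be ``strictly natural'' in the sense you intend, and this is exactly where the difficulty lies. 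The paper solves this by working over the twisted arrow category $\Tw(G\UBC^\scale)^{\op}$: both $\delta$ and $\mu$ become natural transformations of functors on $\Tw(G\UBC^\scale)^{\op}$ (the domain and target of $\delta$ then naturally separate the co- and contravariant dependence), and the end/limit formula for natural transformations over twisted arrow categories is used to assemble the composite into an honest natural transformation $p$.

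For part~\ref{qregiojqwfewfqwfqewf}, your induction-on-cells reduction to orbits is correct and matches the paper. Your sketch of the orbit case is too vague to assess, but note that the paper's verification is substantially different from what you outline: it passes through a Green--Julg type identification to replace $\KKG(C_0(G/H),\bK^{(G)}_\std)$ by $\KK(\C,\Res^G_H(\bK^{(G)}_\std)\rtimes H)$, constructs an explicit functor $\Theta$ out of $\bCgtsmc(G(H,e))$, and shows $\Kcat(\Theta)$ is an equivalence via a chain of (relative, weak) Morita equivalences. The compatibility of $\bK$ with orthogonal sums enters only at the last of these, not as an absorption argument in $\bQ^G_\bC(G/H,\tau)$.

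For part~\ref{wrtigjiogowrefwerfewrf}, your inverse-limit strategy has a second gap: you would need $K^{G,\cX}_\bC(X)\simeq\lim_n K^{G,\cX}_\bC(X_n)$ along an exhaustion by $G$-finite subcomplexes, but this ``continuity of the cone at infinity'' is neither established nor expected. The paper proceeds entirely differently. The crucial lemma (which genuinely uses finiteness of $G$) is that for a countable $G$-set $S$, both sides decompose as \emph{products} over the orbits: $K^{G,\An,\mathrm{lf}}_\bC$ sends countable disjoint unions to products by construction, and on the coarse side one uses $G_{can,max}=G_{max,max}$ for finite $G$ together with the \emph{strong additivity} of $K\bC\cX^G_c$ to identify $K^{G,\cX}_\bC(S_{min,min,disc})$ with a product over orbits. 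Once the Paschke map is known on orbits, it is therefore an equivalence on all countable $G$-sets, and the induction on dimension then runs exactly as in part~\ref{qregiojqwfewfqwfqewf} without any limit argument.
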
 
{We  refer  {again} to Definitions~\ref{wigjosgbsgfgsdfgs9} and \ref{wigjosgbsgfgsdfgs91} for the conditions  on $\bC$ appearing in the statement above,}  {and recall   that the coefficient category $\Hilb_c(A)$, for $A$ a $G$-$C^*$-algebra, satisfies these conditions.}
 {In Assertion \ref{qreoigjoergegqrgqerqfewf}.\ref{wrtigjiogowrefwerfewrf}  we use the transformation  $c \colon K^{G,\An}_{\bC}\to K^{G,\An,\mathrm{lf}}_{\bC}$ from \eqref{qwefwefwqoiudjoqwdwedqwedweqdewd}
 and set $p^{\mathrm{lf}} \coloneqq c\circ  p$.}
 \begin{ddd}\label{troigjwrtoijgergrweg}
The transformation $p$ in \eqref{qwoeifjoifjewfqwefwfqewfq} is called the Paschke {transformation}.
 \end{ddd}
 
The proof of Assertion  \ref{qreoigjoergegqrgqerqfewf}.\ref{weiufhiqwefewwfqewfewf} will be finished in Section \ref{eroigheifogegegergewgerg}, and  the proof of Assertions   \ref{qreoigjoergegqrgqerqfewf}.\ref{qregiojqwfewfqwfqewf}  and   \ref{qreoigjoergegqrgqerqfewf}.\ref{wrtigjiogowrefwerfewrf}   will be completed in Section \ref{wefgiojwoegergregegw}.
Once $p$ is constructed, which is not at all trivial, the verification that it is an equivalence under additional conditions follows the route described above, i.e.\ by reducing it to the case of orbits.  The verification that $p$ is indeed an equivalence on $G$-orbits with finite stabilizers also turns out to be quite involved and uses a lot of the properties of the $K$-theory functor for $C^{*}$-categories obtained in \cite{cank}.

In the case of  trivial coefficients and under the assumption of the existence of an absorbing representation $(H,\rho,\phi)$ we can compare the version of the Paschke morphism $p^{(H,\rho,\phi)}_{X}$ from \eqref{rfqwpofkjpqowefewfeqwfqef} with the newly defined Paschke morphism $p_{X}$ from \eqref{Paschke-morphisms} (in particular their domains): Indeed, in Proposition \ref{iuhugiergwergergwg} we show that there is a commutative diagram
\[ \begin{tikzcd}
	K_{\bC}^{G,\cX}(X) \ar[r,"\gamma"] \ar[d,"{p_{X}}"] & \KK(\C,Q^{G}(X))\ar[d,"{p_{X}^{(H,\rho,\phi)}}"]  \\
	K_{\bC}^{G,\An} (X) & \ar[l,"\simeq"]  \KK^{G}(C_{0}(X) ,Q(H))  
\end{tikzcd}\]
so that, under the assumption that $p_{X}$ is an equivalence, $\gamma$ is an equivalence if and only if $p_X^{(H,\rho,\phi)}$ is.

\subsection*{Assembly maps}

Our original motivation to show the Paschke duality theorem above
was the wish to write out a complete proof for the fact the  homotopy theoretic assembly map  of Davis--L\"uck \cite{davis_lueck}
and the analytic assembly map appearing in the Baum--Connes conjecture 
  are equivalent. 
 Such an equivalence was asserted in \cite{hamped}, but the details of the proof given in this reference
 remained sparse. {While we were preparing this paper, a comparison of the two assembly maps was recently also carried out by Kranz \cite{kranz} with methods different from ours, see Remark~\ref{Remark:comparison-assembly}.}
 
 Homotopy theoretic assembly maps are generally defined for any equivariant homology theory $G\Orb\to \bM$ with 
 cocomplete target $\bM$ and a family $\cF$ of subgroups, see  Definition~\ref{woiejtgoiwegegrgergwer}. Our comparison concerns
 the functor  
 \begin{equation}\label{asdvijqoirgfvfsdvsva}
K\bC^{G} \colon G\Orb\to \Sp^{\la} \, , \quad S\mapsto  K\bC\cX^{G}_{G_{can,min}}(S_{min,max})\, ,
\end{equation}
see Definition \ref{qroeigjoqergerqfewewfqewfeqwf}. 
 Note that the twist is different from the one used in the Definition~\eqref{qewfoijfoiqwefqewfewfqewfd} of $K^{G,\cX}_{\bC}$, namely {it is  $G_{can,min}$ rather than  $G_{can,max}$}.
 {For appropriate choice of coefficients {$\bC$}, the  functor $K\bC^{G}$ is  equivalent to the functor introduced by Davis--L\"uck, see
 Remark~\ref{oiejgoigsvfdvfvs}.}

The equivariant homology theory $K\bC^{G}$  canonically extends to a functor
$$K\bC^{G} \colon G\Top\to \Sp^{\la}$$ denoted by the same symbol, see Definition \ref{weiogwegerffs}. 
For any family of subgroups $\cF$ of $G$  the homotopy theoretic  assembly map {can be described as the map}
\[\Ass^{h}_{{\bC,\cF}} \colon K\bC^{G}(E_{\cF}G^\cw)\to K\bC^{G}(*)\] 
induced by the projection $E_{\cF}G^\cw\to *$, where $E_{\cF}G^\cw$ is a $G$-CW-complex representing the homotopy type of the classifying space of $G$ with respect to the family $\cF$.   

{For the following we assume that  $\cF\subseteq \Fin$.
We define}  
\[ RK^{G,\An}_{\bC} (E_{\cF}G^\cw) \coloneqq \colim_{W\subseteq E_{\cF}G^\cw } K^{G,\An}_{\bC}(W),\]
where the colimit runs over the $G$-finite subcomplexes of $E_{\cF}G^\cw$. {In Definition \ref{wergoijowergerreggwgw} we} construct an 
 analytic assembly map \begin{equation}\label{eqwfqwefewfwdqwedqwq}
 \Ass^{\an}_{{\bC},\cF} \colon RK^{G,\An}_{\bC} (E_{\cF}G^\cw) \to  \Sigma \KK(\C,  {\bC}^{(G)}_{\std}\rtimes_{r} G)\, ,
\end{equation}
 where the $C^{*}$-category  ${\bC}^{(G)}_{\std}$ is defined in Definition \ref{qeroighjoreqgreqfefwfqwef}  and the reduced crossed product for $C^{*}$-categories is as introduced in \cite[Thm. 12.1]{cank}. 

 The assembly maps  $\Ass^{h}_{{\bC,\cF}}$ and   $\Ass^{\an}_{{\bC},\cF}$ depend naturally
 on  the coefficient category $\bC$  for non-degenerate morphisms.

{In  Definition \ref{wergoijowergerrrfrfrfrfeggwgw} we construct
a spectrum-valued version of  the classical Kasparov assembly map \begin{equation}\label{rerwwerfefefwef}
\mu^{\Kasp}_{A,\cF} \colon RK^{G,\an}_{A}(E_{\cF}G^\cw)\to \KK(\C,A\rtimes_{r}G)
\end{equation}
which functorially depends on $A$ in $\KKG$.}
{We consider the spectrum-valued refinement {\eqref{rerwwerfefefwef}} of Kasparov's assembly map  as an interesting result in its own right. {In view of the definition of the domain, one has to construct  a family of such  assembly maps  indexed by the $G$-finite subcomplexes $W$
 of $E_{\cF}G^\cw$ which is {compatible}   with inclusions}. While it is easy to lift Kasparov's construction to a map of spectra for each such $W$ individually, and it is {also} easy to obtain the {required} compatibility on the level of homotopy groups, it is a non-trivial matter {to enhance the   compatibility to}  the spectrum level. {We obtain this enhancement in the form of the natural transformation \eqref{rgfewrfwweferf}.}
 }



{For a $G$-$C^{*}$-category $\bC$ let $\bC^{u}$ denote the full unital $G$-$C^{*}$-subcategory of unital objects. In} Proposition \ref{weiojgwoerferww} we show the following comparison 
result.}
 \begin{prop} \label{weokjgpwefrewfwerf}We have an equivalence between the assembly maps 
$ \Ass^{\an}_{{\bC},\cF} $ from \eqref{eqwfqwefewfwdqwedqwq}
and ${\Sigma}\mu^{\Kasp}_{ (\bC^{u})^{(G)},\cF}$  from \eqref{rerwwerfefefwef}. 
\end{prop}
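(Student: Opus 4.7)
The plan is to construct a commutative square
\[
\begin{tikzcd}
RK^{G,\An}_\bC(E_\cF G^\cw) \ar[r,"\Ass^{\an}_\cF"] \ar[d,"\simeq"'] & \Sigma\KK(\C,\bK^{(G)}_\std\rtimes_r G) \ar[d,"\simeq"] \\
\Sigma RK^{G,\an}_A(E_\cF G^\cw) \ar[r,"\Sigma\mu^{Kasp}_{\cF,A}"'] & \Sigma\KK(\C,A\rtimes_r G)
\end{tikzcd}
\]
whose vertical arrows are natural equivalences arising from the structure of the coefficient pair $(\bC,\bK)=(\Hilb^G(A),\Hilb^G_c(A))$. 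Since both sources are filtered colimits over $G$-finite $G$-CW-subcomplexes $W\subseteq E_\cF G^\cw$, I will construct a compatible family of squares at each $W$ and pass to the colimit.

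For the left vertical equivalence I use the canonical comparison map $\kkG(\bQ^{(G)}_\std)\to\Sigma\kkG(A)$ introduced in the description of the coefficient pair, together with the fact that $C_0(W)$ is sufficiently finite in $\KKG$ for $G$-finite $W$ with finite stabilizers (Proposition~\ref{wegojeogrregregwergwegre}). This yields $\KKG(C_0(W),\bQ^{(G)}_\std)\simeq\Sigma\KKG(C_0(W),A)$, i.e.\ $K^{G,\An}_\bC(W)\simeq\Sigma K^{G,\an}_A(W)$, and the desired equivalence on the left after passing to the colimit. For the right vertical arrow, the analogous Morita-type identification produces $\kkG(\bK^{(G)}_\std)\simeq\kkG(A)$, and the functoriality of the reduced crossed product of $G$-$C^*$-categories from \cite{cank} upgrades this to $\kkG(\bK^{(G)}_\std\rtimes_r G)\simeq\kkG(A\rtimes_r G)$. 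Applying $\Sigma\KK(\C,-)$ then yields the right vertical equivalence.

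The main obstacle is verifying commutativity of the square. Both horizontal maps are built by pairing a $K$-homology class on $E_\cF G^\cw$ with an index/descent element associated to the $G$-action. The suspensions on the top and bottom rows both originate from the boundary of the Calkin extension $\bK^{(G)}_\std\to\bC^{(G)}_\std\to\bQ^{(G)}_\std$, which is precisely the boundary implementing the $\Sigma$ in the left vertical identification; so the suspensions match structurally. The plan for the commutativity check is to unwind the construction of $\mu^{Kasp}_{\cF,A}$ from Definition~\ref{wergoijowergerrrfrfrfrfeggwgw} and of $\Ass^{\an}_\cF$ via descent of the $C^*$-categorical Calkin extension, and then to show that an absorbing covariant representation of $C_0(W)$ on the standard Hilbert $A$-module $L^2(G)\otimes A$ simultaneously represents both classes, with the Morita equivalence carrying one cycle to the other. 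The anticipated core difficulty is the bookkeeping at the level of representing cycles: matching the two descent constructions and checking that the boundary operators are applied on the correct side of the respective Kasparov products. Once this is verified, naturality of descent for $C^*$-categorical extensions under reduced crossed products closes the square.
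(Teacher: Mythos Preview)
Your overall square and the identification of the vertical equivalences are correct and match the paper's. The point you are missing, however, is that the commutativity you flag as the ``main obstacle'' is essentially a tautology once you recall how $\Ass^{\an}$ was \emph{defined}. By Definition~\ref{qeriughioergewgergwer9}, the transformation $\Ass^{\an}_{-}$ is literally the Kasparov assembly map $\mu^{Kasp}_{-,\bQ^{(G)}_{\std},\max}$ with coefficients $\bQ^{(G)}_{\std}$, followed by the fixed post-composition \eqref{wefqwfefqfqewfqfe}. Since $\mu^{Kasp}_{\cF,-,\max}$ is natural in the coefficient object of $\KKG$ (it is built from the functor $-\rtimes G$ and composition with the Kasparov projection, both manifestly natural), the morphisms $A\to\bK^{(G)}_{\std}$ (weak Morita) and the boundary $\bQ^{(G)}_{\std}\to\Sigma\bK^{(G)}_{\std}$ in $\KKG$ give commuting squares for $\mu^{Kasp}_{\cF,-,\max}$ automatically. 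Unwinding \eqref{wefqwfefqfqewfqfe} shows it is exactly (crossed product of) this same boundary followed by the map to the reduced crossed product, so the whole diagram \eqref{qewfqwefqwdqd} commutes by naturality alone.

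Your plan to verify commutativity via an absorbing representation on $L^{2}(G)\otimes A$ and cycle-level bookkeeping is therefore unnecessary, and in the present setup somewhat hazardous: the $\KKG$ used here is the $\infty$-categorical one of \cite{KKG}, and the categories $\bQ^{(G)}_{\std}$, $\bK^{(G)}_{\std}$ are nonseparable $C^{*}$-categories, so invoking classical absorption theorems and Kasparov products on explicit cycles would require additional justification that the paper deliberately avoids. The cleaner route is simply to observe that both horizontal maps are instances of the \emph{same} natural transformation $\mu^{Kasp}_{\cF,-,\max}$ evaluated at different coefficient objects, connected by morphisms in $\KKG$.
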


\begin{ex}
In the case of a unital $G$-$C^{*}$-algebra $A$ and for $\bC:=\Hilb_{c}(A)$ it follows from \eqref{fdvsfdvvsfdvav} and Proposition \ref{weokjgpwefrewfwerf} that 
 the assembly map  $\Ass^{\an}_{{\bC},\cF}$ is equivalent to 
${\Sigma}\mu^{\Kasp}_{A,\cF}$.
\hB
\end{ex}

 The following theorem (whose proof will be completed at the end of Section \ref{wtogpwgregwegreg})
now provides a comparison of the Davis--L\"uck and  Baum--Connes assembly maps on the level of homotopy groups.  As indicated earlier, a version of this result has recently been shown also by \cite{kranz} with completely different methods.

 \begin{theorem}\label{wtoiguwegwergergregwe}{We assume that $\bC$ is effectively additive and admits countable AV-sums.} 
{We} have  a commutative square
\begin{equation}\label{avvoijfoivjoafvasddsvdvasdv}
\xymatrix{
K\bC_{*}^{G}(E_{\cF}G^\cw)\ar[rr]^-{\pi_{*}\Ass_{{\bC},\cF}^{h}} && K\bC_{*}^{G}(*)\ar[d]^{\cong}\\
RK^{G,\An}_{\bC,*+1} (E_{\cF}G^\cw)\ar@{-}[u]^{\cong}\ar[rr]^{\pi_{*+1}\Ass^{\an}_{{\bC},\cF}} && \KK_{*}(\C,   {\bC}^{(G)}_{\std}\rtimes_{r} G)
}
\end{equation}
in which all terms are natural in $\bC$  for non-degenerate morphisms.
   \end{theorem}      The left vertical equivalence  in \eqref{avvoijfoivjoafvasddsvdvasdv} is, in a non-obvious manner, a consequence of our Paschke Duality Theorem~\ref{qreoigjoergegqrgqerqfewf}. 
{{If $A$ is a  $G$-$C^{*}$-algebra, then   $\bC:=\Hilb_{c}(A)$ 
admits all small AV-sums (this follows from \cite[Thm. 8.4]{cank}) 
 and} hence satisfies the assumption of Theorem \ref{wtoiguwegwergergregwe}.}

{We believe that our method can be upgraded to provide a commutative diagram on the spectrum level, but carrying this out would involve to control further large coherence diagrams. We refrain from doing this additional step at this point, but emphasize that the passage to a statement about homotopy groups is really only in the very final step where one filters $E_{\cF}G^\cw$ through $G$-finite subcomplexes. For any $G$-finite $X$ in place of $E_{\cF}G^\cw$, the diagram in Theorem~\ref{wtoiguwegwergergregwe} commutes already before applying homotopy groups. {In particular,  the square in \eqref{avvoijfoivjoafvasddsvdvasdv}} 
commutes before applying homotopy groups when there is a $G$-finite model of $E_\cF G^\cw$. It is just that we have not worked out that the homotopies for varying $X$ can be obtained in a compatible way. This problem is not visible to homotopy groups, and hence one obtains Theorem~\ref{wtoiguwegwergergregwe} irrespective of this issue.}

We note that it is important to consider the reduced crossed product in the target for the approach presented here. 
While the construction of the analytic assembly map easily lifts to the maximal crossed product our method unfortunately does not generalize to produce the corresponding comparison of assembly maps also for the maximal crossed product.

 \subsection*{Further remarks}
   
Finally, we explain some relations to previous works on (equivariant) Paschke duality and the analytical assembly map.
We begin with  Paschke duality.

  \begin{rem} 
 {Valette established} a non-commutative generalization of the classical Paschke duality \cite{Val83}  {whose statement we briefly recall here}. 
 We  consider  a $C^{*}$-algebra $B$ with a strictly positive element. Then we have an exact sequence
 $$0\to B\otimes K(\ell^{2})\to \cM^{s}(B)\stackrel{\pi}{\to}  \cQ^{s}(B)\to 0\, ,$$
 where $ \cM^{s}(B)$ is the stable multiplier algebra and the stable Calkin algebra   $\cQ^{s}(B)$ is defined as the quotient.
  In place of $\phi\colon C_{0}(X)\to B(H)$ above we now consider a
 unital separable nuclear $C^{*}$-algebra $A$ with a representation
  $\tau\colon A \to  B(\ell^{2})$ such that $\tau(A)\cap K(\ell^{2})=\{0\}$ and set
  $\phi\colon A\xrightarrow{1\otimes \tau}  \cM^{s}(B)\stackrel{\pi}{\to} \cQ^{s}(B)$.
  We further  replace $Q(H,\phi)$ from above by  the commutant 
  $Q(A,\phi,B)\coloneqq\phi(A)'$ of the image of $\phi$.  The proof of the following result employs Kasparov's generalization of Voiculescu's Theorem.
  \begin{prop}[{\cite[Prop. 3]{Val83}}]
  We have an isomorphism
  $$\KK_{*}(\C,Q(A,\phi,B))\cong \KK_{*-1}(A,B)$$
  which is natural in $A$ and $B$.
  \end{prop}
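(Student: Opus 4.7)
My plan is to adapt the classical Paschke duality argument to this noncommutative setting. The key underlying tool is Kasparov's generalization of Voiculescu's theorem, which is available precisely because $A$ is assumed unital, separable, and nuclear, and $\tau$ is chosen so that $\tau(A)\cap K(\ell^{2})=\{0\}$. These hypotheses guarantee that the representation $1\otimes\tau\colon A\to \cM^{s}(B)$ is absorbing in the appropriate sense with respect to all completely positive maps $A\to \cM^{s}(B)/B\otimes K(\ell^{2})$ modulo the ideal. This absorption is what will make the constructed comparison map an isomorphism.

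First I would construct the comparison map from left to right. Given a class in $KK_{0}(\C,Q(A,\phi,B))=K_{0}(Q(A,\phi,B))$, represent it by a projection $p\in Q(A,\phi,B)=\phi(A)'\subseteq \cQ^{s}(B)$. Since $\pi\colon \cM^{s}(B)\to \cQ^{s}(B)$ is surjective, lift $p$ to a self-adjoint $P\in \cM^{s}(B)$. Because $p$ commutes with $\phi(a)$ for every $a\in A$, the commutator $[P,1\otimes\tau(a)]$ lies in the ideal $B\otimes K(\ell^{2})$, and $P^{2}-P\in B\otimes K(\ell^{2})$ as well. Setting $F\coloneqq 2P-1$ yields a Kasparov module $(B\otimes\ell^{2},1\otimes\tau,F)$ for $(A,B)$ with $F$ self-adjoint, $F^{2}-1$ and $[F,\phi(a)]$ compact, hence an element of $KK_{1}(A,B)$; the construction is independent of the chosen lift up to operator homotopy. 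The odd case $K_{1}(Q)\to KK_{0}(A,B)$ is analogous: lift a unitary $u\in Q(A,\phi,B)$ to $U\in \cM^{s}(B)$ and form the graded Kasparov module on $(B\otimes\ell^{2})\oplus(B\otimes\ell^{2})$ with odd operator $\bigl(\begin{smallmatrix}0 & U^{*}\\ U & 0\end{smallmatrix}\bigr)$. Naturality in $A$ and $B$ is built into the formulas: a morphism $A\to A'$ (with suitable choice of $\tau'$) produces a commuting commutant embedding on the left, and a morphism $B\to B'$ induces the expected map via functoriality of stable multipliers.

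For the inverse direction, represent an arbitrary class in $KK_{0}(A,B)$ by a Kasparov module. Using Kasparov's Voiculescu theorem, this module may be absorbed into the standard form $(B\otimes \ell^{2}, 1\otimes \tau, F)$ up to unitary equivalence and compact perturbation. The self-adjoint operator $F$ with $F^{2}\equiv 1$ and $[F,1\otimes\tau(a)]\equiv 0$ modulo $B\otimes K(\ell^{2})$ yields a projection $(1+F)/2$ whose image in $\cQ^{s}(B)$ lies in $\phi(A)'=Q(A,\phi,B)$, defining a class in $K_{0}(Q(A,\phi,B))$; the odd case is similar. Checking that the two assignments are mutually inverse is a direct computation at the level of Kasparov cycles: one composite is the identity essentially by construction, while the other requires using absorption once more to reduce to the canonical representative.

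The main obstacle is showing that the comparison map is well-defined on $KK$-classes rather than on cycles, and that it is bijective. Both points are controlled by Kasparov's Voiculescu theorem: well-definedness reduces to showing that two lifts of the same projection $p\in Q(A,\phi,B)$ produce operator homotopic Kasparov modules, which follows from the fact that any two absorbing $*$-homomorphisms $A\to \cM^{s}(B)/B\otimes K(\ell^{2})$ lifting $\phi$ are unitarily equivalent modulo the ideal; bijectivity then follows because every Kasparov module is, up to unitary equivalence and compact perturbation, of the standard form whose Paschke dual lives in $K_{*}(Q(A,\phi,B))$. The nuclearity of $A$ is used crucially to deduce absorption from Voiculescu-Kasparov, and the existence of a strictly positive element in $B$ ensures that the stable multiplier algebra $\cM^{s}(B)$ behaves well enough for the lifting and absorption arguments to go through.
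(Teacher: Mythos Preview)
The paper does not give its own proof of this proposition: it is quoted verbatim from \cite{Val83} as a remark, and the only comment offered is that ``the proof of the following result employs Kasparov's generalization of Voiculescu's Theorem'' and that Valette's original statement was phrased in terms of $\mathrm{Ext}$-groups rather than $KK$-groups. Your sketch is precisely the classical Paschke-type argument that Valette uses, and your identification of Kasparov's Voiculescu theorem as the crucial input matches the paper's remark. So there is nothing to compare here: your proposal is correct and is the intended proof.
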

 In this statement  $\KK_{*}$ denote Kasparov's $\KK$-groups. Note  that  
  the right-hand side in the  original statement of Valette is   expressed  in terms of $Ext$-groups
  which are isomorphic to the $\KK_{*}$-groups under the given assumptions on $A$ and $B$.
 If $B$ is in addition  $\sigma$-unital, then by \cite[Prop.\ 1.20]{KKG} the $\KK$-group on the right-hand side   coincides with the 
 $KK$-group  obtained from the spectrum-valued $\KK$-theory constructed in \cite{KKG}.
 
See also \cite[Thm. 3.2]{thabs} for a related result.
\hB
\end{rem}

\begin{rem}\label{Remark:previous-work}
Our Theorem \ref{qreoigjoergegqrgqerqfewf}  is similar in spirit to  \cite[Thm.\ 1.5]{Benameur:2020aa}.   
But while Theorem \ref{qreoigjoergegqrgqerqfewf}   produces a natural transformation between spectrum-valued functors which becomes an equivalence when evaluated on spaces satisfying suitable finiteness conditions,
 \cite[Thm.\ 1.5]{Benameur:2020aa} states an isomorphism between $K$-theory groups for a fixed space. While the class of spaces to which 
  \cite[Thm.\ 1.5]{Benameur:2020aa} applies is larger than  the class of spaces for which 
   Theorem \ref{qreoigjoergegqrgqerqfewf} provides an equivalence, our theorem allows to treat more general coefficients.

But even in the case where 
both theorems are applicable the   technical details of their statements are quite different so that at the moment it is difficult to compare them   in a precise way.  
In the following we explain this problem in greater detail.

The space $X$ in \cite[Thm.\ 1.5]{Benameur:2020aa} (denoted by $Z$ in the reference) is a  metric space with an isometric proper  cocompact action of $G$.
In order {to} fit into our theorem we must require   that it is homotopy equivalent to a $G$-finite $G$-simplicial complex.
  The domain of the Paschke map  in    \cite[Thm.\ 1.5]{Benameur:2020aa} is the $K$-theory of a certain $C^{*}$-algebra $Q^{G}(H,\rho,\phi)$, where $H$ is a sufficiently large  Hilbert $C^{*}$-module over a commutative unital $C^{*}$-algebra $A$.  In order to compare with our theorem
  we would restrict the coefficients to the special case {$\bC= \Hilb_{c}(A)$.}
  We then could ask whether {we have}
  $$\Kcat_{*}(\bQ(X))\cong \Kast_{*}(Q(H,\rho,\phi))\, ,$$
  see \eqref{gepojgregergegwgwergrweg}. 
  The construction of a comparison map could proceed similarly as the construction
  of the map $\gamma$ in Proposition \ref{iuhugiergwergergwg} once we know that 
  $(H,\rho,\phi)$ is absorbing in the sense of   the natural generalization of Definition \ref{werogihetgergfdsf} to controlled Hilbert $A$-modules.
 
 On the positive side, in the case {$\bC= \Hilb_{c}(A)$},
 the targets  of the two Paschke duality   
   maps in    \cite[Thm.\ 1.5]{Benameur:2020aa} and {Theorem} \ref{qreoigjoergegqrgqerqfewf}
   are equivalent in view of 
   $$K^{G,\An}_{\bC}(X)\simeq \KKG(C_{0}(X),\bQ^{(G)}_{\std})
 \stackrel{\text{Prop.\ }\ref{wegojeogrregregwergwegre}}{\simeq} \Sigma \KK^{G}(C_{0}(X),A)
$$
provided $X$ is homotopy equivalent to a $G$-finite $G$-CW-complex.
   \hB
\end{rem}

\begin{rem}\label{Remark:comparison-assembly}
As mentioned earlier, in \cite{kranz} Kranz also provides an identification of the Davis--L\"uck assembly map and the Kasparov assembly map. In fact, the contribution of Kranz is a comparison of    the Davis--L\"uck assembly map  with the version of the assembly map  introduced by    Meyer--Nest \cite{MR2193334}.  The latter is compared in  \cite{MR2193334} with Kasparov's assembly map employing work of 
Chabert--Echterhoff \cite{MR1836047}.  In Section \ref{thelast} 
 we will give a detailed account of   the argument of Kranz  using the  $\infty$-categorical language of equivariant $\mathrm{KK}$-theory developed in \cite{KKG}. 
 As an application, {in   Theorem \ref{wtkogwegerfwerf}  we give an argument}   {(which is independent of Chabert--Echterhoff \cite{MR1836047})} that the  Kasparov assembly map   is an equivalence for compactly induced coefficient categories or algebras.
\hB
\end{rem}
 
\section{Constructions with \texorpdfstring{$\boldsymbol{C^{*}}$}{Cstar}-categories}\label{section:constructions}

In order to fix size issues we choose a sequence of four Grothendieck universes whose sets will be called very small, small, large,  and very large, respectively.  The group $G$, bornological coarse spaces or $G$-topological spaces  belong to  
the very small universe.  The categories of these objects,   the coefficient  $C^{*}$-categories,  the categories of controlled objects, and the 
values {of}  the $K$-theory functor $\Kcat$ will
belong to the small universe. The categories of spectra $\Sp$ and $\KKG$
are large, but locally small. They are objects of a category of stable $\infty$-categories $\CAT^{ex}_{\infty}$ which is itself very large.

{We let $\Fun(BG,\nCcat)$ denote the category of small  {not necessarily unital} $C^{*}$-categories with $G$-action  and equivariant functors, and $\Fun(BG,\Ccat)$ be the subcategory of unital $C^{*}$-categories and  functors  preserving  units.}  Both versions of  {$K$-homology} considered in the present paper depend on the choice of  {a coefficient $C^{*}$-category
$\bC$  in $\Fun(BG,\nCcat)$.} 


\begin{ex}\label{weriogwergw} 
{We let $\Fun(BG,\nCalg)$ be the full subcategory of $\Fun(BG,\nCcat)$ of
$C^{*}$-algebras with $G$-action considered as single object categories.  We furthermore set $$\Fun(BG,\Calg):=\Fun(BG,\nCalg)\cap \Fun(BG,\Ccat)\ .$$}Our basic example  {of a coefficient category}  is the {category $\bC=\Hilb_{c}(A)$ of Hilbert $A$-modules  {and compact operators}
for $A$ in $\Fun(BG,\nCalg)$, see Example   \ref{wtrhiotrhrthtrhdtrh}.} 
\hB
\end{ex}

 
{Below we will consider  conditions on $\bC$ in $ \nCcat$ which  involve 
 orthogonal sums of possibly  infinite families 
  $(C_{i})_{i\in I}$   of objects of $\bC$. Let  $(C,(e_{i})_{i\in I})$ be a pair of an object of $\bC$ and a family of  {mutually orthogonal } isometries
$e_{i} \colon C_{i}\to C$ in {the multiplier category} $\bM\bC$ {of $\bC$.} 
\begin{ddd}[{\cite[Def.~3.1]{cank}}]\label{wigjosgbsgfgsdfgs9}
We say that $(C,(e_{i})_{i\in I})$ represents 
an   AV-sum  of the family $(C_{i})_{i\in I}$ if
$\sum_{i\in I} e_{i}e_{i}^{*}$ converges strictly to $\id_{C}$ in $\bM\bC$.
\end{ddd}}

\phantomsection\label{qroigjqoifjmefmewf89uqf8wefqwefqf}
Let $p$ be an orthogonal projection on an object $C$ in a $C^{*}$-category, i.e., an endomorphism of $C$ satisfying $p^{*}=p$ and $p^{2}=p$.
A morphism $u \colon C'\to C$ represents the image of $p$ if $u$ is an isometry, i.e.,    $u^{*}u=\id_{C'}$,  and $uu^{*}=p$.
We say that $p$ is effective if it admits an image.\label{effective_projection} In the present paper we will {only consider}  orthogonal projections, and therefore we will omit the word orthogonal from now on. {We refer to \cite[2.16-2.19]{cank} for more details.}

{\begin{ddd} [{\cite[Def. 3.12]{coarsek}}] \label{wigjosgbsgfgsdfgs91}
We   say that $\bC$ is effectively additive  
if for every object $C$ of $\bC$ and mutually orthogonal family of effective projections $(p_{i})_{i\in I}$ on $C$ in $\bM\bC$ such that $\sum_{i\in I} p_{i}$ converges strictly  to a projection $p$ in $\bM\bC$, the latter  is also effective in $\bM\bC$. \end{ddd}
 If $\bC$ admits all small AV-sums or is  idempotent complete, then it is effectively additive.}  
 {If $\bC$ is in $\Fun(BG,\nCcat)$, then we will apply the notions introduced above to the underlying $C^{*}$-category   obtained by forgetting the $G$-action.}
 
In general the category  $  \bC$ in $\Fun(BG,\nCcat)$   may contain objects which admit an identity morphism. These objects are called unital.  {We note that automorphisms of $\bC$ preserve unital objects.}
\begin{ddd}\label{wefbsfopkfvdfsv} {For $\bC$ in $\Fun(BG,\nCcat)$, }{we let $  {\bC}^{u}$ in $\Fun(BG,\Ccat)$} 
denote  the full subcategory of unital objects in $  {\bC}$.
\end{ddd}
  
  \begin{ex}
  Let $A$ {be in $\Fun(BG,\Calg)$} and  $\bC = \Hilb_{c}(A)$ as in Example \ref{weriogwergw}. Then
 ${\bC^{u}}=\Hilb(A)^{\proj, \fg}$ is the full subcategory of $\Hilb(A)$ of finitely generated projective Hilbert $A$-modules.
  \hB
  \end{ex}

{For the moment, let $\bD$ be in $\Fun(BG,\Ccat)$. Our main example will  be   the multiplier category  $\bM\bC$ of $\bC$ in $\Fun(BG,\nCcat)$.}
We fix the  {following} notation convention concerning the $G$-action on $  \bD$.
If $D$ is an object of $  \bD$ and $g$ is in $G$, then we let ${gD}$ denote the object 
obtained by applying $g$ to {$D$}. 
Similarly, if $A$ is a morphism in $  \bD$, then we write
${gA}$ for the morphism obtained by applying $g$ to~$A$.

\begin{ddd}\label{tgijrogwegweffsfv}
A $G$-object in $  \bD$ is a pair $(D,\rho)$ of an object in $\bD$ and a family $\rho=(\rho_{g})_{g\in G}$ of unitaries $\rho_{g} \colon D\to {gD}$ such that $g\rho_{h}\ \rho_{g}=\rho_{gh}$ for all $h,g$ in $G$.
\end{ddd} 

\begin{ex}
If $G$ acts trivially on $\bD$, then the datum of a $G$-object $(D,\rho)$ in $\bD$ is the same as an object  $D$ of $\bD$ together with a homomorphism $\rho \colon G\to \Aut_{\bD}(D)$, $g\mapsto \rho_{g}^{{-1}}$, such that
$\rho_{g^{-1}}=\rho_{g}^{*}$.
\hB
\end{ex}

\begin{ddd}\label{qegijqgiofjfqewfqwef}
The category of $G$-objects in $  \bD$ is 
the $C^{*}$-category with $G$-action   $  \bD^{(G)}$ in $\Fun(BG,\Ccat)$ defined as follows:
  \begin{enumerate}
  \item objects: The objects of   $  \bD^{(G)}$ are    the $G$-objects in $  \bD$.
  \item morphisms: The morphisms in $  \bD^{(G)}$ are given by 
  \begin{equation}\label{vewvejvovwrwervwevwe}
  \Hom_{\bD^{(G)}}((D,\rho),(D',\rho')) \coloneqq \Hom_{\bD}(D,D')\, . 
\end{equation}
  \item composition and involution: The composition and involution are inherited from $\bD$.
  \item $G$-action:
  \begin{enumerate}
  \item objects: $G$ fixes the objects of $  \bD^{(G)}$.
  \item morphisms: $g$ in $G$ acts on a morphism   $A \colon (D,\rho)\to (D',\rho')$ by
 \begin{equation}\label{eqwfijoifqjowiefjwoefewqfewqf}
g\cdot A \coloneqq \rho_{g}^{\prime, -1} \, gA \, \rho_{g}\, .
\end{equation}
  \end{enumerate}
 \end{enumerate}
 \end{ddd}
 
 Note that we use the notation $g\cdot-$ in order to denote the $G$-action on morphisms between $G$-objects which should not be confused with the original action denoted by $g-$.

Associated to   $\bC$ in $\Fun(BG,\nCcat)$ we have   two derived objects $\bC^{u}$ and $(\bC^{u})^{(G)}$ in 
$\Fun(BG,\Ccat)$. In the following we will show that they are related by a canonical zig-zag of  
fully faithful functors.
To this end we construct a third object $\hat \bC^{u,(G)}$ in $\Fun(BG,\Ccat)$.
\begin{enumerate}
\item objects: The $G$-set of objects of $\hat \bC^{u,(G)}$ is the union of the $G$-sets of objects
of $\bC^{u}$ and $(\bC^{u})^{(G)}$.
\item morphisms: The morphism spaces of $\hat \bC^{u,(G)}$ are defined such that
$\bC^{u}$ and $(\bC^{u})^{(G)}$ are fully faithfully embedded. If $C$ is in $\bC^{u}$ and $(C',\rho')$ is in
$(\bC^{u})^{(G)}$, then we define
$\Hom_{\hat \bC^{u,(G)}}(C,(C',\rho')):=\Hom_{\bC}(C,C')$ and
$\Hom_{\hat \bC^{u,(G)}}((C',\rho'),C):=\Hom_{\bC}(C',C)$.
\item The composition and the involution are inherited from $\bC$.
\item $G$-action: The $G$-action  is defined such that both the inclusions $\bC^{u}\to \hat \bC^{u,(G)}$ and
$(\bC^{u})^{(G)}\to \hat \bC^{u,(G)}$ are $G$-equivariant. 

If $\hat f \colon C\to (C',\rho')$ is a morphism
in $\Hom_{\hat \bC^{u,(G)}}(C,(C',\rho'))$ given by $f \colon C\to C'$ in $\bC$, then  $g \hat f \colon gC\to (C',\rho')$ is given by 
$\rho^{\prime,-1}_{g}\circ gf \colon gC\to C'$. Similarly, if $\hat h \colon (C',\rho')\to C$ is a morphism in $\Hom_{\hat \bC^{u,(G)}}((C',\rho'),C)$ given  by $h \colon C'\to C$, then  $g\hat h \colon C'\to gC$ is given by 
$gh\circ \rho'_{g} \colon C'\to gC$.
\end{enumerate}

\begin{ddd}\label{werkgerwgreferfwref}
We say that $G$ weakly fixes the objects of $\bC^{u}$ if for every object $C$ of $\bC^{u}$ there exists a refinement $(C,\rho)$ to an object of $(\bC^{u})^{(G)}$.
\end{ddd}
 
In other words, $G$ weakly fixes the objects of $\bC^{u}$ if and only if the canonical functor
$${\lim}^{\Ccat_{2,1}}_{BG}\bC^{u}\to \Res^{G}(\bC^{u})$$
from the $2$-categorical $G$-fixed points of $\bC^{u}$ to $\bC^{u}$  with $G$-action forgotten is essentially surjective. 

\begin{lem}\label{wekgowerfrefwerf}\mbox{}
\begin{enumerate}
\item
The inclusion
$\bC^{u}\to \hat \bC^{u,(G)}$
is a unitary equivalence. \item  If $G$ weakly fixes the objects of $\bC^{u}$, then the inclusion  
$(\bC^{u})^{(G)}\to  \hat \bC^{u,(G)}$
is a unitary equivalence.
\end{enumerate}
\end{lem}
\begin{proof}
By construction both inclusion functors are fully faithful. We now argue that they are essentially surjective. 
We start with the inclusion of $\bC^{u}$. We consider an object $(C,\rho)$ in $(\bC^{u})^{(G)}$. Then $C$ is in $\bC^{u}$ and  $\id_{C}$  gives a unitary isomorphism 
$C\to (C,\rho)$ in $\hat \bC^{u,(G)}$.

We now consider the inclusion of $(\bC^{u})^{(G)}$.
Let $C$ be an object of $\bC^{u}$. By assumption there exists an object $(C,\rho)$ in $(\bC^{u})^{(G)}$
and again  $\id_{C}$  gives a unitary isomorphism 
$C\to (C,\rho)$ in $\hat \bC^{u,(G)}$.
\end{proof}

%
%
  {For a $G$-$C^*$-category $\bC$ and a $G$-bornological space  $X$  we will  introduce  the notion of $X$-controlled $G$-objects in $\bC$}. To this end, we recall that
a $G$-bornology  on a $G$-set $X$ 
is a $G$-invariant  subset of the power set $\cP_{X}$ of $X$ which is closed 
under forming finite unions, subsets{, and which} contains all one-point subsets. 
 A $G$-bornological space
 is a pair $(X,\cB)$  of a $G$-set  $X$ with   a $G$-bornology $\cB$ whose elements will be called the bounded subsets of $X$. If $ (X,\cB)$ and $ (X',\cB')$ are $G$-bornological spaces
  and  $f \colon X\to X'$ is an equivariant map of underlying $G$-sets, then $f$ is called  proper
 if  $f^{-1}(\cB')\subseteq \cB$.
  By  $G\Born$ denote the category of very small $G$-bornological spaces and proper maps.
We refer to \cite{equicoarse} for more details. We will usually use the shorter  notation $X$ for $G$-bornological spaces. To any $G$-set $S$ we can associate the  following objects in $G\Born$.
\begin{enumerate}
\item $S_{min}$ is $S$ equipped with the minimal bornology consisting of the finite subsets.
The map $S\mapsto S_{min}$ is functorial for morphisms of $G$-sets with finite fibres.
\item $S_{max}$ is $S$ equipped with the maximal bornology consisting of all subsets of $S$. We   have a functor $G\Set\to G\Born$ given on objects by $S\mapsto S_{max}$. 
\end{enumerate}

Let $X$ be in $G\Born$. 
 
 \begin{ddd}\label{oiguoeigqregregwergregwgrg}
 A subset $L$ of $X$ is called locally finite if $B\cap L$ is finite for every bounded subset in $X$.
 \end{ddd}

     

 The following definition is {an expanded version of} \cite[Def.\ 4.6]{coarsek}. {Let $X$ be in $G\Born$.}

 \begin{ddd}\label{qeroigergeggweerg} A locally finite $X$-controlled $G$-object in $  \bC$ is a triple
$(C,\rho,  \mu)$, where:
\begin{enumerate}
\item  $(C,\rho)$ is an object in $  {\bM}\bC^{(G)}$.
\item\label{eoigeogerwgrwerg}  $ \mu$ is an invariant, finitely additive measure on $X$ with values {in projections in} $\End_{{\bM}\bC}(C)$ such that the following properties hold:
\begin{enumerate}
\item  $\mu(X)=\id_{C}$.
\item\label{ogureoiguogr9g02034} {$\mu(\{x\})$ is effective and belongs to  $ \bC$} for all $x$ in $X$.
\item\label{qeroighjoergfqewfqf} $C$ {is the} {orthogonal} {AV}-sum of the images of the family of projections $(\mu(\{x\}))_{x\in X}$. 
\item\label{toiwjjkfmfdfds89w} {The subset $\supp(\mu)$ of $X$} is locally finite.
\end{enumerate}
\end{enumerate}
\end{ddd} 

\begin{rem} \label{rqfqiofioewjeowfewfqewfqwefe} {In this remark we explain   Condition  \ref{eoigeogerwgrwerg} in more detail.}  It first of all says that 
 $\mu$ is a function from the power set $\cP_{X}$ of $X$ to the set of  projections in $\End_{{\bM}\bC}(C)$ such that for all   $Y,Z$  in $\cP_{X}$ with $Y\subseteq Z$  we have $\mu( Z)=\mu(Y)+\mu(Z\setminus Y) $. 
     {The} invariance condition of $\mu$ means that
\begin{equation}\label{wfoizeqwfu98u9e8wfqewf}
g\cdot \mu(Y)=\mu(gY)
\end{equation}  for all $g$ in $G$ and subsets $Y$ of $X$.  

{Condition \ref{qeroighjoergfqewfqf} says {that 
$\sum_{x\in X}\mu(\{x\})$} converges strictly to $\id_{C}$.}

The support of $\mu$ is the subset
$$\supp(\mu) \coloneqq \{x\in X\:|\: \mu(\{x\})\not=0\}$$ of $X$.  

The Conditions  \ref{ogureoiguogr9g02034} 
{and}  \ref{toiwjjkfmfdfds89w}    together imply that $\mu(B)$ belongs to the ideal ${\bC}$ of $\bM\bC$ for every bounded subset $B$ of $X$.
 \hB
%
\end{rem}

{Let $\bC$ be in $\Fun(BG,\nCcat)$ and  $X$ be in $G\Born$.}

   \begin{ddd}\label{qrohwoifewfqwefqwfewqf}
   We define  {$\tCglf(X)$  in $\Fun(BG,\Ccat)$}
   as follows:
   \begin{enumerate}
   \item objects: The objects of  $\tCglf(X)$  are the  locally finite $X$-controlled $G$-objects in $  \bC$.
   \item morphisms: The morphisms in  $\tCglf(X)$ are given by $$\Hom_{\tCglf(X)}((C,\rho,  \mu),(C',\rho',  \mu')) \coloneqq \Hom_{ {\bM} \bC^{(G)}}((C,\rho),(C',\rho'))\, .$$
   \item composition, involution and $G$-action: The composition, involution and  the $G$-action are induced from $  {\bM}\bC^{(G)}$. 
   \end{enumerate}
   \end{ddd}

 We have a fully faithful forgetful functor
 \begin{equation}\label{qewfqwefefqewfqfef}
\cF \colon \tCglf(X)\to   {\bM}\bC^{(G)}\, , \quad (C,\rho,\mu) \mapsto (C,\rho)\, .
\end{equation}
 
\begin{ddd}\label{qeroighjoreqgreqfefwfqwef}\mbox{}
\begin{enumerate}
\item We define 
{$ {\bM} \bC_{\std}^{(G)}$  in $\Fun(BG,\Ccat)$}
as the full subcategory of $ {\bM} \bC^{(G)}$
of objects which  {are} isomorphic to objects of the form $\cF( (C,\rho,\mu))$ for some object
$(C,\rho,\mu)$ in $\tCglf(Y_{min})$ for some free $G$-set $Y$.
\item \label{wgiowjegoreregwg}We   let $  {\bC}_{\std}^{(G)} $ {in $\Fun(BG,\nCcat)$}  denote the $G$-invariant  ideal   of ${\bM}  \bC_{\std}^{(G)}$ of morphisms belonging to ${  \bC}$. 
\item \label{wrtogwprtgwergwergfreferf}We define {the quotient} 
\begin{equation}\label{wefwfwfwefflkjlqwepofp}
  \bQ^{(G)}_{\std} \coloneqq \frac{ {\bM} \bC_{\std}^{(G)}}{  {\bC}_{\std}^{(G)}}
\end{equation}
{in $\Fun(BG,\Ccat)$.} 
\end{enumerate}
\end{ddd}

\begin{rem}
Let us assume for simplicity that $\bC$ is effectively additive.
Applying Definition \ref{wigjosgbsgfgsdfgs91} to the empty family of projections  on an object $C$ shows that
$\bC$ admits zero objects since the zero projection on $C$ must be  effective.
It can happen that $\bC^{u}$  only consists of zero objects.
In this case $ \tCglf(X)$ consists of zero objects for any $X$ in $G\Born$.
Furthermore, the categories $  \bC^{(G)}_{\std}$,   $ \bM\bC^{(G)}_{\std}$, and $ \bQ^{(G)}_{\std} $
consist of zero objects.  \hB
\end{rem}

{
 \begin{lem}\label{witjodgdgndhghdghh}
The inclusion $ \bC^{(G)}_{\std}\to \bM\bC^{(G)}_{\std}$ presents  $\bM\bC^{(G)}_{\std}$
 as the multiplier category of $ \bC^{(G)}_{\std}$.
\end{lem}
\begin{proof}
We have a fully faithful forgetful functor $ \bC^{(G)}_{\std}\to \bC$ which sends $(C,\rho)$ to $C$.
It induces a fully faithful functor $\bM( \bC^{(G)}_{\std})\to \bM\bC$. This functor has an obvious factorization  $\bM( \bC^{(G)}_{\std})\to  \bM\bC^{(G)}_{\std}\to \bM\bC $, where the first functor is the identity on objects. Since the composition and the second functor are fully faithful, so is the first which is therefore an isomorphism.
\end{proof}
}

%
%

For $A$ in   $\Fun(BG,{\nCalg})$   we consider
$\bC:=\Hilb_{c}(A)$ in $\Fun(BG,\nCcat)$.
The following constructions will be used later to compare $K$-theoretic constructions involving, e.g., $\bQ^{(G)}_{\std}$
with constructions involving $A$ directly.

For  $\bC$  in $\Fun(BG,\nCcat)$
we let $\bM\bC^{(G)}_{\std,+}$ denote the full subcategory of $\bM\bC^{(G)}$ of objects
$(C,\rho)$ which  belong to  $\bM\bC^{(G)}_{\std}$ or  $(\bC^{u})^{(G)}$. We furthermore let $\bC^{(G)}_{\std,+}:=\bM\bC^{(G)}_{\std,+}\cap \bC^{(G)}$.

\begin{ex}\label{wteklgwrtegwergfwrefrfwferfer}
For  $A$ in $\Fun(BG,{\nCalg})$  and  
$\bC:=\Hilb_{c}(A)$ in $\Fun(BG,\nCcat)$
 we let  $\hat A$ be the object of $\bC$ given by 
  $A$ with the right-multiplication and the scalar product $\langle a,b\rangle_{\hat A}=a^{*}b$.   
  Left multiplication {identifies $A$ with $\End_{\bC}(\hat A)$}.
 For $g$ in $G$ we have a $\C$-linear map
$\kappa_{g}\colon \hat A\to \hat A$ given by the  action of $g^{-1}$ on $A$, i.e., $\kappa_{g}(a):={}^{g^{-1}}a$. This map is a unitary {multiplier}  isomorphism  $\hat A \to g \hat A$  in $\bC$. 
The family $\kappa:=(\kappa_{g})_{g\in G}$  refines $\hat A$ to an object $(\hat A,\kappa)$ of $\bC^{(G)}$. {Moreover}, the identification
$A\cong \End_{\bC^{(G)}}((\hat A,\kappa))$ is equivariant.

{If  $A$ is unital,   then the object  $(\hat A,\kappa)$ belongs to
$ (\bC^{u})^{(G)}$ and hence to $\bM\bC^{(G)}_{\std,+}$. In this case we}
have a zig-zag of equivariant inclusions
$$A\to \bM\bC^{(G)}_{\std,+} \leftarrow  \bM\bC^{(G)}_{\std}\, ,\quad  A\to \bC^{(G)}_{\std,+} \leftarrow  \bC^{(G)}_{\std}\, .$$
 The left functors sends
$A$  to the object $(\hat A,\kappa)$ and identify $A$ with $\End_{\bM\bC^{(G)}}((\hat A,\kappa))$  
 or  $\End_{\bC^{(G)}}((\hat A,\kappa))$, respectively.
 \hB
\end{ex}
 
Recall the definitions of a Morita equivalence \cite[16.7]{cank}, of a relative Morita equivalence \cite[Def. 17.1]{cank}, and   of a weak Morita equivalence  \cite[Def. 18.3]{cank} between $C^{*}$-categories.  {In the equivariant case, an equivariant  functor is a Morita equivalence or   weak Morita equivalence if it has the respective property after forgetting the $G$-action. In addition we will need in the following  a stronger version of the notion of a relative Morita equivalence which we call a split relative Morita equivalence. Let $\phi \colon \bD\to \bE$ in $\Fun(BG,\nCcat)$. 
\begin{ddd}
 We say that $\phi$ is a split relative Morita equivalence if there exists a diagram
\begin{equation}\label{csdioucsadoicuoasdcdsacasdcasc} \xymatrix{0\ar[r]& \bD \ar[d]^{\phi}\ar[r] & \bD'\ar[d]\ar[r]^-{p}&\bD'/\bD\ar[d]\ar[r]&0\\
0\ar[r]& \bE\ar[r]&\bE' \ar[r]^-{q}&  \bE'/\bE\ar[r]&0}\end{equation} 
 in $\Fun(BG,\nCcat)$ with horizontal exact sequences such that the two
right vertical functors are Morita equivalences between unital $C^{*}$-categories and
the functors $p$ and $q$ admit right-inverses. 
\end{ddd}
 }

 
 {Let  $\bC$ be in $\Fun(BG,\nCcat)$.}
\begin{lem}\label{eriughwierewfwerfwf}\mbox{}
\begin{enumerate}
\item \label{wergjiergjosfdgdfg}     $\bM\bC^{(G)}_{\std}\to  \bM\bC^{(G)}_{\std,+}$   is a Morita equivalence.
\item \label{wergjiergjosfdgdfg1}     $\bC^{(G)}_{\std}\to    \bC^{(G)}_{\std,+}$ is a {split} relative Morita equivalence.
\item\label{wergjiergjosfdgdfg2}   If $A$ is in $\Fun(BG,\Calg)$ and $\bC=\Hilb_{c}(A)$, then $A\to \bC^{(G)}_{\std,+}$ 
  has a factorization 
into the Morita equivalence $A\to  (\bC^{u})^{(G)} $ followed by the weak Morita equivalence $ (\bC^{u})^{(G)} \to  \bC^{(G)}_{\std,+}$.
\end{enumerate}
\end{lem}
\begin{proof}
We start with the Assertion \ref{wergjiergjosfdgdfg}.
The inclusion  $\bM\bC^{(G)}_{\std}\to  \bM\bC^{(G)}_{\std,+}$   is fully faithful.
We will show that every object of $ \bM\bC^{(G)}_{\std,+}$ is a summand of an object of $\bM\bC^{(G)}_{\std}$.  It suffices to show this for objects of $  (\bC^{u})^{(G)}$. Thus 
let $(C',\rho')$ be an object of
$  (\bC^{u})^{(G)}$. Then  using the fact that $\bC$ admits countable AV-sums one can construct an object $(C,\rho,\mu)$ in $ \tCglf(G_{min})$
such that there exists an isometry $u:C'\to C$ in $\bM\bC$  representing an image of
$\mu(\{e\})$. For $C$ we must take an   AV-sum of the family $(g C')_{g\in G}$. 
We consider  $u$ as an isometry $u:(C',\rho')\to (C,\rho)$ in $\bM\bC^{(G)}_{\std,+}$ with $(C,\rho)\in \Ob(\bM\bC^{(G)}_{\std})$. It realizes  $(C',\rho')$ as a summand of the 
 object    $ (C,\rho)$  of $\bM\bC^{(G)}_{\std}$.
This finishes the proof of Assertion \ref{wergjiergjosfdgdfg}.


Let $ \bC^{(G),\sharp}_{\std}$ and $ \bC^{(G),\sharp}_{\std,+}$ be the $C^{*}$-categories obtained from
$ \bC^{(G)}_{\std}$ and $ \bC^{(G)}_{\std,+}$ by adjoining units to all non-unital objects.
We then have a diagram of exact sequences
$$\xymatrix{0\ar[r]& \bC^{(G)}_{\std}\ar[d]\ar[r] &\bC^{(G),\sharp}_{\std}\ar[d]\ar[r]^-{p}&\bC^{(G),\sharp}_{\std}/\bC^{(G)}_{\std}\ar[d]\ar[r]&0\\
0\ar[r]& \bC^{(G)}_{\std,+}\ar[r]&\bC^{(G),\sharp}_{\std,+}\ar[r]^-{p_{+}}& \bC^{(G),\sharp}_{\std,+}/\bC^{(G)}_{\std,+}\ar[r]&0}$$
Since the objects of $(\bC^{u})^{(G)}$ are unital 
they represent zero objects in  $\bC^{(G),\sharp}_{\std,+}/\bC^{(G)}_{\std,+}$.
We conclude that the right vertical morphism is a Morita equivalence.
Since the morphisms $u:(C',\rho')\to (C,\rho)$ from the argument  for  Assertion \ref{wergjiergjosfdgdfg} actually
belong  to $\bC^{(G),\sharp}_{\std,+}$  we  conclude that the middle arrow is a Morita equivalence, too.
The projections $p$ and $p_{+}$ have obvious splits. 

 In order to show Assertion   \ref{wergjiergjosfdgdfg2} 
 first note that  if $(C,\rho)$ is an object of $ (\bC^{u})^{(G)}$, then
 $C$ is a finitely generated projective $A$-module and hence a summand 
 of a finite sum of copies of $A$. This implies that $A\to  (\bC^{u})^{(G)}$ is a Morita equivalence. In order to show that the second morphism  
 $ (\bC^{u})^{(G)} \to  \bC^{(G)}_{\std,+}$
 is a weak Morita equivalence we first observe that it is fully faithful. We then use that the morphisms in   $\bC^{(G)}_{\std,+}$ are compact operators between Hilbert $C^{*}$-modules. A compact operator can be approximated arbitrary well by an operator which factorizes over a finitely generated projective $A$-module, i.e., an object of $\bC^{u}$. 
 This implies that the set of objects of $(\bC^{u})^{(G)} $ is weakly generating in $ \bC^{(G)}_{\std,+}$. 
\end{proof}

Recall the definition of flasque $G$-$C^{*}$-categories \cite[Def. 11.3]{cank}.
  
 \begin{lem}\label{wrtijhoerhgrtgertg}
If $\bC$ admits    {countable AV-sums, then
$ \bM \bC^{(G)}_{\std}$ is flasque.}\end{lem}
\begin{proof}{We  claim that  
 $  \bC^{(G)}_{\std}$ also  admits countable AV-sums. Then $\bM(\bC^{(G)}_{\std})$  is flasque 
 by \cite[Ex.\ {11.5}]{cank}. We finally use Lemma 
  \ref{witjodgdgndhghdghh} in order to conclude that  $ \bM \bC^{(G)}_{\std}$ is flasque.}
  
 We show the claim.
 We consider a countable family $(C_{i},\rho_{i})_{i\in I}$ of objects in $ \bC^{(G)}_{\std}$. For every $g$ in $G$ we can choose an AV-sum $(C_{g},(e^{gC_{i}}_{i})_{i\in I})$ of the family $( gC_{i})_{i\in I}$ in $\bC$. We set $C \coloneqq C_{e}$ and let $u_{g} \colon C_{g}\to g C $ be the canonical multiplier unitary such that $g(e^{C_{i},*}_{i}) u_{g} e^{gC_{i}}_{i}=\id_{gC_{i}}$ {for all $i$ in $I$}.
 Then $\rho \coloneqq (u_{g}\circ \oplus_{i\in I} \rho_{i})_{g\in G}$ defines a multiplier cocycle on $C$ such that we have $(C,\rho)\in \bC^{(G)}$. We now show that $ (C,\rho)\in\bC^{(G)}_{\std}$. 
By assumption, for every $i$  in $I$  we can refine the pair $(C_{i},\rho_{i})$ to an object
$(C_{i},\rho_{i},\mu_{i})$ in $  \tCglf(X_{i})$ for some free $G$-set $X_{i}$.  
Then $(C,\rho,\mu)$ belongs to $  \tCglf(X)$, where $X=\bigsqcup_{i\in I}X_{i}$
and the measure $\mu$ is given by
$\mu(Y) \coloneqq \oplus_{i\in I} \mu_{i}(Y\cap X_{i})$ for all subsets $Y$ of $X$. Since $X$ is again a free $G$-set we conclude that $(C,\rho)$ belongs to  $ \bC^{(G)}_{\std}$. 

{By construction, the sum  $\sum_{i\in I} e^{C_{i}}_{i}e^{C_{i},*}_{i}$ strictly  converges  to 
$\id_{(C,\rho)}$ in $\bM\bC^{(G)}_{\std}$. 
By 
  Lemma  \ref{witjodgdgndhghdghh} it also strictly converges in $\bM(\bC^{(G)}_{\std})$. Therefore 
the pair   $(C,\rho)$ represents the AV-sum of the family 
$(C_{i},\rho_{i})_{i\in I}$  in $\bC^{(G)}_{\std}$.}
 \end{proof}

If $\bK$ is in $\Fun(BG,\nCcat)$, then we can form the reduced crossed product
$\bK\rtimes_{r}G$ introduced in \cite[Thm. 12.1]{cank}. We use  the explicit description of   the  algebraic crossed product $\bK\rtimes^{\alg}G$ and the notation   introduced in  \cite[Def. 5.1]{crosscat}.  Recall that the maximal crossed product is defined in
\cite[Def. 5.9]{crosscat} as the completion of the pre-$C^{*}$-category $\bK\rtimes^{\alg}G$. In contrast,
the reduced crossed product $\bK\rtimes_{r}G$ is defined in \cite[Def. 12.9]{cank} as the completion of  $\bK\rtimes^{\alg}G$ in the norm induced by a specific representation on a $W^{*}$-category $\bL^{2}(G,\bW\bM\bK)$ \cite[Def. 12.2]{cank}, where $\bW\bM\bK$ is the universal 
$W^{*}$-envelope of the multiplier category $\bM\bK$ defined in \cite[Def. 2.33]{cank}. In order to define $\bL^{2}(G,\bW\bM\bK)$  we must assume that $\bK$ admits countable $AV$-sums. The $W^{*}$-category $\bL^{2}(G,\bW\bM\bK)$ has the same objects as $\bK$, and the morphisms are given by
\begin{equation}\label{gerferfewfreferwfw}\Hom_{\bL^{2}(G,\bW\bM\bK)}(K,K')\cong \Hom_{\bW\bM\bK}(\bigoplus_{g\in G} gK, \bigoplus_{g\in G} gK')\ .
\end{equation} 
Let $(e^{K}_{h})_{h\in G}$ be the family of isometries $e^{K}_{h}:hK\to \bigoplus_{g\in G} gK$ witnessing the sum $\bigoplus_{g\in G}gK$.  On generators  
the representation $\bK\rtimes^{\alg}G\to \bL^{2}(G,\bW\bM\bK)$ is then defined according to \cite[(12.8)]{cank}
by \begin{equation}\label{rwegwerfrefwerf}(f,g)\mapsto \sum_{h\in G} e_{hg^{-1}}^{K'} \: hf \: e^{K,*}_{h}
\end{equation}
(note that $f:K\to g^{-1}K'$), where the sum converges strictly.

 In the present paper we  in particular  need the 
  reduced crossed product $\bC^{(G)}_{\std}\rtimes_{r}G$ 
  for $\bC$ in $\Fun(BG,\nCcat)$. In the following, by specializing the general description above,  we describe
  this crossed product and a 
  part of its multipliers explicitly, thereby introducing notation which will be employed later in the paper.
We assume that $\bC$  is effectively additive and admits countable AV-sums. 
 In the proof of Lemma \ref{wrtijhoerhgrtgertg} we saw  that   $\bC^{(G)}_{\std}$   also admits countable AV-sums.  
The objects of $ \bC_{\std}^{(G)}\rtimes_{r}G$ 
 are the objects of ${\bC}_{\std}^{(G)}$. 
 The $C^{*}$-category
 $ \bC_{\std}^{(G)}\rtimes_{r}G$ is  the completion of the image the  functor 
  $\sigma \colon \bC^{(G)}_{\std}\rtimes^{\alg}G\to \bL^{2}(G,\bW\bM\bC^{(G)}_{\std})$.
  The $W^{*}$-category $\bL^{2}(G,\bW\bM\bC^{(G)}_{\std})$ has  the same objects as $\bC^{(G)}_{\std}$. Since the functor $\bW \bM\bC^{(G)}_{\std}\to \bW\bM\bC$ induced by $\bC^{(G)}_{\std}\to \bC$ is fully faithful and that $G$ fixes the objects of      $\bC^{(G)}_{\std}$, by 
   specializing \eqref{gerferfewfreferwfw}  can identify the morphism spaces of $\bL^{2}(G,\bW\bM\bC^{(G)}_{\std})$ with $$\Hom_{\bL^{2}(G,\bW\bM\bC^{(G)}_{\std})}((C,\rho),(C',\rho'))\cong \Hom_{\bW\bM\bC}(\bigoplus_{g\in G} C,\bigoplus_{g\in G} C')\, ,$$
   where $( \bigoplus_{g\in G} C, (e_{l})_{l\in G})$   and 
$( \bigoplus_{g\in G} C', (e'_{l})_{l\in G})$
represent    AV-sums of the constant families  $(C)_{g\in G}$ and $(C')_{g\in G}$, respectively.  

 We can now 
 describe the functor $\sigma$ explicitly specializing \eqref{rwegwerfrefwerf} where we use 
  that the $G$-action in morphisms in $\bC^{(G)}_{\std}$ is given by $(h,f)\mapsto h\cdot f$.
On objects $\sigma$ acts as the identity.  Furthermore,   $\sigma$ sends   
  the morphism $(f,g) \colon (C,\rho)\to (C',\rho')$ in $\bC^{(G)}_{\std}\rtimes^{\alg}G$  to 
  \begin{equation}\label{ewfefaewfeda}
\sigma(f,h) \coloneqq \sum_{l\in G} e'_{lh^{-1}} \:l\cdot f \:e_{l}^{*}\colon\bigoplus_{g\in G}C\to \bigoplus_{g\in G} C'\, .
\end{equation}

{If $\bL$ is a closed wide subcategory of a $C^{*}$-category  $\mathbf{H}$, then  the idealizer of $\bL$  in $\mathbf{H}$ is the maximal wide subcategory of $\mathbf{H}$ containing $\bL$ as an ideal. It consists of all morphisms of $\mathbf{H}$ which preserve $\bL$ by left- and right composition.}

\begin{ddd}\label{witgjierojgoewrfre}
We define $\bU$ to be the idealizer  of $ \bC_{\std}^{(G)}\rtimes_{r}G$ in $\bL^{2}(G,\bW\bM\bC^{(G)}_{\std})$.  \end{ddd}

We will understand $ \Idem({\bC}_{\std}^{(G)}\rtimes_{r}G)$ as the 
  idempotent completion relative to $\bU$, see \cite[Def. 17.5]{cank}.
  Therefore objects in $ \Idem({\bC}_{\std}^{(G)}\rtimes_{r}G)$ are triples $(C,\rho,p)$, where $p$ is a projection on $(C,\rho)$ in $\bU$.   

Using formula \eqref{ewfefaewfeda}, we see that $\sigma$ extends canonically to a functor
$\sigma\colon\bM\bC^{(G)}_{\std}\rtimes^{\alg} G\to \bU$
given by the same formula. By the universal property of the maximal crossed product it further extends to 
a morphism 
\begin{equation}\label{asvasvadvadsvadvdvoihjio}
\sigma\colon \bM\bC^{(G)}_{\std}\rtimes  G\to \bU \, .
\end{equation}

%

Let $\phi:\bC\to \bC'$ be a morphism in $\Fun(BG,\nCcat)$.   \begin{ddd}[{\cite[Def. 3.11]{cank}
}]
 The morphism $\phi$ is called non-degenerate if for every two objects $C_{0},C_{1}$ of $\bC$ the  linear subspaces 
 $\phi(\End_{\bC}(C_{1}))\Hom_{\bC'}(\phi(C_{0}),\phi(C_{1}))$ and
 $\Hom_{\bC'}(\phi(C_{0}),\phi(C_{1}))\phi(\End_{\bC}(C_{0}))$ are dense in $\Hom_{\bC'}(\phi(C_{0}),\phi(C_{1}))$.
 \end{ddd}
 
We will consider the chain of subcategories
\begin{equation}\label{eq_defn_functoriality_category}
\nCcat_{\ndeg,\add} \subseteq \nCcat_{\ndeg,\omega\add,\eadd} \subseteq
 \nCcat_{\ndeg}\subseteq  \nCcat\ ,
\end{equation}
 where
 \begin{enumerate}
 \item $\nCcat_{\ndeg}$ is the wide subcategory of $ \nCcat$
 of non-degenerate morphisms,
 \item $\nCcat_{\ndeg,\omega\add,\eadd}$ is full  subcategory of $ \nCcat_{\ndeg}$
 of  effectively additive objects which admit countable AV-sums,
 \item $\nCcat_{\ndeg, \add}$ is full  subcategory of $ \nCcat_{\ndeg}$ of objects which admit all small AV-sums.
 \end{enumerate}

By \cite[Prop. 3.16]{cank} a non-degenerate morphism $\phi \colon \bC\to \bC'$ naturally induces a morphism
$\bM\phi \colon \bM\bC\to \bM\bC'$ of the associated multiplier categories and, {again by non-degeneracy,} it restricts to a unital morphism
$\phi^{u} \colon \bC^{u}\to \bC^{\prime,u}$ of full subcategories of unital objects. This implies that the constructions of  $\bC^{(G)}_{\std}$, 
$\bM\bC^{(G)}_{\std}$, $\bQ^{(G)}_{\std}$, {$\bC^{u}$,}  $(\bC^{u})^{(G)}$ and $\bC^{(G)}_{\lf}$ extend to functors on $\Fun(BG,\nCcat_{\ndeg,\eadd,\omega\add})$. {Further, $\phi$ induces a morphism $\bL^{2}(G,\bW\bM\bC_{\std}^{(G)})\to \bL^{2}(G,\bW\bM\bC_{\std}^{\prime, (G)})$ (see the proof of \cite[Lem. 12.10]{cank}) and hence $\bU$ and $\bC^{(G)}_{\std}\rtimes_{r}G$ also extend to such functors.}

\section{\texorpdfstring{$\boldsymbol{G}$}{G}-bornological coarse spaces and  \texorpdfstring{$\boldsymbol{K\bC\cX^{G}}$}{KCXGc}}

  We fix    {$   \bC$ in $\Fun(BG,\nCcat)$.}
 In {the present} section we recall the construction 
 of the equivariant coarse homology theory 
$$K\bC\cX^{G} \colon G\BC\to \Sp^{\la}$$ introduced in \cite{coarsek} (see Definition \ref{qrogijeqoifefewfefewqffe}) which will give rise to the equivariant local   $K$-homology  {$K_\bC^{G,\cX}$ {described in} Definition \ref{qriofjqofewfefqeqff}.

In order to define the functor $K\bC\cX^{G}$ the {coefficient category  $   \bC $ must   be effectively additive (Definition \ref{wigjosgbsgfgsdfgs91}).  
If $\bC$ also admits countable AV-sums (Definition \ref{wigjosgbsgfgsdfgs9}), then  $K\bC\cX^{G}$ is an equivariant coarse homology theory. Finally, in order to ensure strong additivity of $K\bC\cX^{G}$ {by \cite[Thm.11.1]{coarsek}} we must assume the existence of all very small AV-sums.}

{
\begin{ex}
For  $    A$  in $\Fun(BG,\nCalg)$ the category $\Hilb_{c}(A)$
in $\Fun(BG,\nCcat)$ admits all small AV-sums and is idempotent complete, hence is in particular effectively additive. It therefore satisfies all {the} conditions listed above. \hB\end{ex}}
 
Let $X$ be a set.
Subsets of $X\times X$ will be called entourages on $X$.
The set $\cP_{X\times X}$  of all entourages is a monoid with involution, where
the composition of {the} entourages $U$ and $V$ is  the entourage $$U\circ V \coloneqq \pr_{14}[(U\times V)\cap (X\times \diag(X)\times X)]\, ,$$
the unit is the entourage $\diag(X)$, and the involution is given by the formula $$U^{*} \coloneqq \{(y,x)\mid (x,y)\in U\}\, .$$
The monoid $\cP_{X\times X} $ acts on $\cP_{X}$ by
\begin{equation}\label{wfwfwefsdfsdfd}
(U,Y)\mapsto U[Y] \coloneqq \pr_{1}[U\cap (X\times Y)]\, .
\end{equation} 
A $G$-coarse structure $\cC$ on a $G$-set $X$ is by definition  a $G$-invariant  submonoid of $\cP_{X\times X}$ which is  
closed under taking subsets, {applying the involution,} and forming finite  unions, and in which the subset of $G$-invariant entourages $\cC^{G}$ is cofinal with respect to the inclusion relation.
A $G$-coarse space is a pair $(X,\cC)$ of a $G$-set and a $G$-coarse structure. If $ (X,\cC)$  and $ (X',\cC')$ are two $G$-coarse spaces and $f \colon X\to X'$ is an equivariant map of the  underlying $G$-sets, then $f$ is controlled if $(f\times f)(\cC)\subseteq \cC'$. Finally, 
a coarse structure $\cC$ is compatible with a bornology $\cB$ if
$\cC[\cB]\subseteq \cB$.

The category $G\BC$ of $G$-bornological coarse spaces was introduced in \cite[Def. 2.1]{equicoarse}. Its objects are triples $(X,\cC,\cB)$ of a very small 
$G$-set $X$ with a $G$-coarse structure $\cC$     and a $G$-bornology $\cB$ which is compatible with $\cC$. Morphisms are maps of $G$-sets which are controlled   and proper.
 We usually use the shorter  notation $X$ for $G$-bornological coarse spaces.

Let $X$ be in $G\BC$. Then we can consider the category \begin{equation}
\Cglf(X) \coloneqq \lim_{BG} \tCglf(X)
\end{equation}
in $\Ccat$, {where $ \tCglf(X)$  in $\Fun(BG,\Ccat)$  {is as}  introduced  in Definition~\ref{qrohwoifewfqwefqwfewqf}. Explicity, $ \Cglf(X) $}
 is the wide subcategory of $ \tCglf(X)$ consisting of the $G$-invariant morphisms, i.e., morphisms $A$ satisfying $g\cdot A=A$ for all $g$ in $G$, {where the $G$-action is given by  {formula}} 
  \eqref{eqwfijoifqjowiefjwoefewqfewqf}.
 Note that this construction does not  use the coarse structure yet, but this will be the case in the following. 
  
If $Y,Y'$ are two subsets of $X$ and $U$ is an entourage of $X$, then we say that $Y'$ is $U$-separated from $Y$ if $Y'\cap U[Y]=\emptyset$, see \eqref{wfwfwefsdfsdfd} for the definition of the $U$-thickening $U[Y]$ of $Y$. 
We say that  a morphism $A \colon (C,\rho,\mu)\to (C',\rho',\mu')$ in $ \Cglf(X)$ is $U$-controlled if
$\mu'(Y')A\mu(Y)=0$ for all pairs of subsets $Y',Y$ of $X$ such that $Y'$ is $U$-separated from $Y$.

\begin{ddd}\label{rfquhwfiuqwhfiufewqefqwefqwefwefqwef}
We define  $\bCgtsmc(X)$ in $\Ccat$ as follows:
\begin{enumerate}
\item objects: The objects of $ \bCgtsmc(X)$ are the objects of $\Cglf(X)$.
\item  \label{weoigjowegergegwrggwergw222} morphisms: The space of morphisms $\Hom_{ \bCgtsmc(X)}((C,\rho,\mu),(C',\rho',\mu'))$
is the closed subspace of $\Hom_{\Cglf(X)}((C,\rho,\mu),(C',\rho',\mu'))$ generated by {those} 
morphisms which are $U$-controlled for some coarse entourage $U$ of $X$.
\item\label{weoigjowegergegwrggwergw} composition and involution: The composition and the involution of $ \bCgtsmc(X)$ {are} inherited from $\Cglf(X)$. 
\end{enumerate}
\end{ddd}
{One must check that the composition defined in 
  Point \ref{weoigjowegergegwrggwergw}  preserves the morphism spaces 
  defined in Point \ref{weoigjowegergegwrggwergw222}. We refer to 
  \cite[Sec.~4]{coarsek} for the argument.}
 
{Let $\bC$ in $\Fun(BG,\nCcat)$ be effectively additive.}
\begin{ddd}\label{wtogjoergrwegreggwege}
We define a functor
 $$\bCgtsmc \colon G\BC\to \Ccat$$  as follows:
 \begin{enumerate}
 \item objects: The functor $\bCgtsmc$ sends $X$ in $G\BC$ to $\bCgtsmc(X)$ in $\Ccat$.
 \item morphisms: The functor $\bCgtsmc$ sends a morphism $f \colon X\to X'$ {in $G\BC$} to the functor
 $f_{*} \colon \bCgtsmc(X)\to \bCgtsmc(X')$ defined as follows:
 \begin{enumerate}
 \item objects: $f_{*}(C,\rho,\mu) \coloneqq (C,\rho,f_{*}\mu)$.
 \item \label{oiwejgiowegregwr}morphisms: $f_{*}(A) \coloneqq A$.
  \end{enumerate}
\end{enumerate}
\end{ddd}
For the verification that $f_{*}$ is well-defined  
we again refer to  \cite[Sec.~4]{coarsek}. It is at this point where we need the assumption  {that $\bC$ is effectively additive.} 

Using the functors from  \eqref{werferfrewgrerefw} for the trivial group we define 
 the topological $K$-theory functor for $C^{*}$-categories as the composition
\begin{equation}\label{rewgwergwergegwergrewgregwregegwre}
\Kcat \colon  \nCcat \stackrel{\kk_{\Ccat}}{\to} \KK \stackrel{\KK(\C,-)}{\to}  \Sp^{\la}\, .
\end{equation} 
The functor  \eqref{rewgwergwergegwergrewgregwregegwre} is equivalent to the functors  considered in  \cite{joachimcat}, \cite[Sec.\ 8.5]{buen}, \cite[Sec.\ {14}]{cank}. 
Note that here we consider $C^{*}$-algebras like $\C$ as $C^{*}$-categories with a single object.

{Let $\bC$ be in $\Fun(BG,\nCcat)$ be effectively additive.}

\begin{ddd}\label{qrogijeqoifefewfefewqffe}
We define the functor $K\bC\cX^{G}$ as the composition
$$K\bC\cX^{G} \colon G\BC\xrightarrow{\bCgtsmc} \Ccat\xrightarrow{\Kcat} \Sp^{\la}\, .$$
\end{ddd}

For the definition of the notion of 
an  equivariant coarse homology theory we refer to   \cite[Def. 3.10]{equicoarse}. References for additional properties 
are:
\begin{enumerate}
\item strongly additive:  \cite[Def. 3.12]{equicoarse}
\item strongness:   \cite[Def. 4.19]{equicoarse}
\item continuity:   \cite[Def. 5.15]{equicoarse}\ .
\end{enumerate}
 The following  theorem is shown in \cite[Sec.~6]{coarsek} (and  \cite[Sec.~11]{coarsek} for strong additivity).
 
\begin{theorem}\label{qeroigqwefqewfefewfq} {If  $\bC$    in $\Fun(BG,\nCcat)$ is effectively additive and admits   countable AV-sums, then} 
 $K\bC\cX^{G}$ is an equivariant coarse homology theory which is in addition strong and continuous. {If $\bC$ admits all very small AV-sums, then $K\bC\cX^{G}$ is}  strongly additive.
 \end{theorem}
 
{By construction the functors $\bCgtsmc$ and $K\bC\cX^{G}$  depend functorially on the coefficient category $\bC$  in $\Fun(BG,\nCcat_{\ndeg,\eadd,\omega\add})$.}

\section{\texorpdfstring{$\boldsymbol{G}$}{G}-uniform bornological coarse spaces, cones and \texorpdfstring{$\boldsymbol{K_{\bC}^{G,\cX}}$}{KGXC}}\label{woigjowteggwergewrgerg}

A $G$-uniform structure on $X$ is a $G$-invariant subset $\cU$ of $\cP_{X\times X}$ consisting of entourages  containing the diagonal,  which
 is closed under taking supersets, finite intersections, compositions, and the involution, and which has the property that every $U$ in $\cU$ contains a $G$-invariant element of $\cU$ and  admits $V$ in $\cU$ with $V\circ V\subseteq U$.  A $G$-uniform space is a pair  $(X,\cU)$ of a $G$-set and a $G$-uniform structure.
 Let $(X,\cU)$ and  $(X',\cU')$ be $G$-uniform spaces and $f \colon X\to X'$ be a $G$-invariant map of the underlying sets. Then $f$ is uniform  if $(f\times f)^{-1}(\cU')\subseteq \cU$.
  A uniform structure $\cU$ is compatible with a coarse structure if $\cU\cap \cC\not=\emptyset$.

Let $G\UBC$ denote  the category of $G$-uniform bornological coarse spaces introduced in \cite[Def. 9.9]{equicoarse}.  Objects are tuples $(X,\cC,\cB,\cU)$ such that $(X,\cC,\cB)$ is a $G$-bornological coarse space and $\cU$ is a $G$-uniform structure  compatible with $\cC$. Morphisms are morphisms of $G$-bornological coarse spaces which are in addition uniform.  We will usually use the shorter  notation $X$ for $G$-uniform bornological coarse spaces.
We have  canonical forgetful functors 
\begin{equation}\label{qweflkjoqwfeqewfwefeefeqwf}
G\UBC\to G\BC\ , \quad G\UBC\to G\Top
\end{equation} 
 which forget the uniform structure or take the underlying $G$-topological space, respectively.

If not said differently we will consider all subsets of $\R^{n}$ 
{as} objects of $G\UBC$ with the 
  trivial $G$-action and the structures induced by the standard metric.

   The categories $G\BC$ and $G\UBC$ have monoidal structures $\otimes$ which  {are} the cartesian structure on the underlying $G$-uniform and $G$-coarse spaces (see   \cite[Ex.\ 2.17]{equicoarse} for the case of $G\BC$) such that the forgetful functor 
   $G\UBC\to G\BC$ is symmetric monoidal in the canonical way. 
   The bornology on {$X \otimes X'$} is generated by the subsets $B\times B'$ for all bounded subsets $B$ of $X$ and $B'$ of $X'$, respectively.

Let $X$ be in $G\UBC$.    
\begin{ddd}
$X$ is flasque if  it is a retract of $[0,\infty)\otimes X$. 
\end{ddd}

Note that this definition is a little more restrictive than the definition given in \cite[Text before Def.\ {3.10}]{ass}. 
The same argument as for   \cite[Lem.\ 3.28]{buen}  in {the} non-equivariant case shows {that} 
  the  underlying $G$-bornological coarse space of $X$ is flasque in the generalized sense.
   
The notion of homotopy in the category $G\UBC$ is defined in the usual   manner using the interval 
functor $X\mapsto [0,1]\otimes X$. 
   
Recall the definitions of uniformly or coarsely excisive pairs from   \cite[Def.\ 3.3]{ass}  and  \cite[Def.\ 3.5]{ass}.

Let $E \colon G\UBC \to \bM$ be a functor whose target is a stable $\infty$-category.
\begin{ddd}\label{qeroigjoerwgwergwrgwe9111}
\mbox{}
\begin{enumerate}
\item $E$ is homotopy invariant if it sends the projection $[0,1]\otimes X\to X$ to an equivalence for every $X$ in $G\UBC$.
\item $E$ satisfies closed excision if  $E(\emptyset)\simeq 0$ and for every uniformly   and coarsely excisive pair $(Y,Z)$ of invariant closed subsets of some $X$ in $G\UBC$
such that $X=Y\cup Z$ the square
$$\xymatrix{E(Y\cap Z)\ar[r]\ar[d]&E(Y)\ar[d]\\E(Z)\ar[r]&E(X)}$$
is a push-out square.
\item $E$ vanishes on flasques if $E(X)\simeq 0$ for any flasque $X$ in $G\UBC$.
\item $E$ is $u$-continuous if for every $X$ in $G\UBC$ we have $ \colim_{V} E(X_{V})\simeq  E(X) $, where $V$ runs over $\cC^{G}\cap \cU$,  
and $X_{V}$ is obtained from $X$  by  replacing its coarse structure $\cC$   on $X$   by the coarse structure   generated by $V$.
\end{enumerate}
\end{ddd}


Let $X$ be in $G\UBC$ with uniform structure $\cU$.  
Note that $\cU$ and ${\cP_{X\times X}}$ are posets with respect to the inclusion relation.

\begin{ddd} \label{wfiuhiufqwefeewqfqwfewfqewf}
A    scale for $X$ is a non-increasing function $\psi \colon \R\to \cP(X\times X)^{G}$  with the following properties:
\begin{enumerate}
\item If $t$ is in $(-\infty,0]$, then $\psi(t)=X\times X$.
\item \label{gpowgergerlgewrgwregerg} For every   $V$  in $\cU$ there exists $t_{0}$ in $\R$ such that $\psi(t)
 \subseteq V$ for all $t$ in $[t_{0},\infty)$.
\end{enumerate}
\end{ddd}

\begin{ddd}\label{qriugiqregergwerg}
We define the geometric cone-at-$\infty$ of $X$   to be the object $\cO^{\infty}(X)$ in  $G\BC$ given as follows:
\begin{enumerate}
\item The underlying $G$-set of $\cO^{\infty}(X)$ is $\R\times X$.
\item The bornology of $\cO^{\infty}(X)$ is generated by the subsets $[-r,r]\times  B$ for all $r$ in $(0,\infty)$ and bounded subsets $B$ of $X$.
\item\label{qoirgjoqierfewfqfewfw} The coarse structure is  generated by the  entourages $U\cap U_{\psi}$ for all scales  $\psi$, where   $U$ is a coarse entourage of $\R\otimes X$ and
\begin{align}
\mathclap{
U_{\psi} \coloneqq \{((s,x),(t,y))\in (\R\times X)\times ( \R\times X)\:|\:   (x,y)\in \psi(\max\{s,t\})\}\,.
}\notag\\
\label{fqewfpokopqkfqef}
\end{align}
 \end{enumerate}
We furthermore define the cone $\cO(X)$  of $X$ to be the subset $[0,\infty)\times X$  {of $\cO^{\infty}(X)$} with the  induced  structures.
\end{ddd}


\begin{ddd}\label{wepogjpweggreewg}
We define functors $$\cO^{\infty},\cO \colon G\UBC\to G\BC$$ as follows:
\begin{enumerate}
\item objects: The functors send $X$ in $G\UBC$ to $\cO^{\infty}(X)$ or $\cO(X)$, respectively.
\item \label{qwoifhoirfqfefqwef} morphisms: The functors send a morphism $f \colon X\to X'$ in $G\UBC$ to the morphism
$\cO^{\infty}(X)\to \cO^{\infty}(X')$ or $\cO (X)\to \cO (X')$ given by
$\id_{\R}\times f$ or $\id_{[0,\infty)}\times f$, respectively.
\end{enumerate}
\end{ddd}

The definition of the functors for morphisms in Point \ref{qwoifhoirfqfefqwef} needs a justification which is given e.g.\ by {a specialization of the argument for} \cite[Lem.\ 5.15]{buen}.

    For $X$ in $G\UBC$ we have a natural sequence of maps in $G\UBC$ \begin{equation}\label{ecelkmlwqcqwecwecqcqwecq}
X\to \cO(X)\to \cO^{\infty}(X)\to \R\otimes X
\end{equation}
 called the cone sequence. Here the first map is given by $x\mapsto (0,x)$, the second map is the inclusion, and the third map is the identity on the underlying sets.

 Let $E \colon G\BC\to \bM$ be a functor with target a stable $\infty$-category.
Then we consider the functors
\begin{align}\label{wqefqwefewfqewf}
E\cO^{\infty} &\coloneqq E\circ \cO^{\infty} \colon G\UBC\to \bM \\
 E\cO& \coloneqq E\circ \cO \colon G\UBC\to \bM \, . \nonumber
\end{align}
\begin{prop}\label{qroigjqowfewfqwefqw}
We assume that $E$ is a coarse homology {theory} which is in addition strong.
Then the functors $E\cO^{\infty}$ and $E\cO$ have the following properties:
\begin{enumerate}
\item homotopy invariance,
\item closed excision,
\item vanishing on flasques,
\item $u$-continuous.
\end{enumerate}
Moreover, the cone sequence  \eqref{ecelkmlwqcqwecwecqcqwecq} induces a  fibre sequence of functors
\begin{equation}\label{wefqwefqewewfwdqewdewdqede}
E\to E\cO\to E\cO^{\infty}\xrightarrow{\partial^{\Cone}} \Sigma E\, .
\end{equation}
\end{prop}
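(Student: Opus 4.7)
My plan is to verify the four structural properties \textbf{1}--\textbf{4} by reducing each to the corresponding property of the equivariant coarse homology theory $E$, exploiting the way the cone functors $\cO$ and $\cO^\infty$ of Definition \ref{wepogjpweggreewg} interact with the operations on $G\UBC$. The key observation is that the coarse structures on $\cO^\infty(X)$ and $\cO(X)$ are built from the uniform structure of $X$ via the scales of Definition \ref{wfiuhiufqwefeewqfqwfewfqewf}, so homotopies, excisive decompositions, flasqueness, and the $u$-continuity filtration on $G\UBC$ translate to analogous data on $G\BC$.

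Concretely, for homotopy invariance I would construct a natural comparison morphism $\cO^\infty([0,1] \otimes X) \to [0,1] \otimes \cO^\infty(X)$ (the identity on underlying sets) in $G\BC$ and argue that it becomes an equivalence after applying the coarsely homotopy invariant functor $E$; the same trick handles $\cO$. For closed excision, given a uniformly and coarsely excisive decomposition $X = Y \cup Z$ in $G\UBC$, I would verify that $(\cO^\infty(Y), \cO^\infty(Z))$ is coarsely excisive in $G\BC$ with intersection $\cO^\infty(Y \cap Z)$, by inspecting the generators \eqref{fqewfpokopqkfqef} and using the compatibility of the uniform and coarse structures across the common boundary. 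Vanishing on flasques is obtained by functorially lifting a retraction data from $[0,\infty) \otimes X$ through $\cO$ and $\cO^\infty$, and $u$-continuity follows from the description in Definition \ref{qriugiqregergwerg}.\ref{qoirgjoqierfewfqfewfw} of the coarse structure as a filtered union of scale entourages, combined with $u$-continuity of $E$.

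For the fibre sequence \eqref{wefqwefqewewfwdqewdewdqede}, the central step is the decomposition of $\cO^\infty(X)$ as the union of the positive cone $\cO(X) = [0,\infty) \times X$ and the negative piece $N(X) \coloneqq (-\infty, 0] \times X$, meeting precisely at $\{0\} \times X$, which is canonically identified with $X$. First I claim that $N(X)$, with the coarse structure induced from $\cO^\infty(X)$, is flasque in $G\BC$: since every scale function $\phi$ is constant equal to $1$ on $(-\infty, 0]$ and every scale component $\psi$ equals $X \times X$ there, the shift $(t,x) \mapsto (t-1,x)$ is controlled and its iterates provide a flasqueness structure in the sense of \cite{buen}. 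Hence $E(N(X)) \simeq 0$. Second, I claim $(\cO(X), N(X))$ is coarsely excisive in $G\BC$, which together with closed excision of $E$ yields a pushout square that collapses, via $E(N(X)) \simeq 0$, to a cofibre sequence $E(X) \to E(\cO(X)) \to E(\cO^\infty(X))$. Extending this cofibre sequence in the stable target $\bM$ produces the desired boundary map $\partial^{\Cone} \colon E\cO^\infty \to \Sigma E$.

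The main obstacle I anticipate is the precise verification that the decomposition $\cO^\infty(X) = \cO(X) \cup N(X)$ is coarsely excisive in the sense of \cite[Def.\ 3.5]{ass}, which requires showing that every coarse entourage of $\cO^\infty(X)$ meeting both pieces admits a thickening compatible with the decomposition at the boundary $\{0\} \times X$. This amounts to a bookkeeping exercise with the generators \eqref{fqewfpokopqkfqef}, using that on a neighbourhood of $0$ the scale entourages $U_\tau$ restrict to well-controlled product entourages. An analogous, but more routine, argument is also needed to establish the coarse excisiveness used in property~\textbf{2}.
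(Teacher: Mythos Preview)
The paper itself gives no argument: it cites \cite[Sec.~9]{ass} (Lemma~9.6 for properties 1--4 and (9.1) for the fibre sequence), remarking only that the non-equivariant proofs there apply verbatim in the equivariant case. Your proposal is thus an attempt to reconstruct what that reference does. Your treatment of the fibre sequence via the decomposition $\cO^\infty(X)=\cO(X)\cup N(X)$ with $N(X)=(-\infty,0]\times X$ flasque is correct and is the standard route; closed excision and $u$-continuity are also essentially as you sketch.

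There is, however, a genuine gap in your handling of homotopy invariance (and, for the same reason, vanishing on flasques): you never use the hypothesis that $E$ is \emph{strong}, and this hypothesis is not decorative. The comparison map $\cO^\infty([0,1]\otimes X)\to [0,1]\otimes\cO^\infty(X)$ you propose is controlled only in the stated direction; the identity in the other direction is not a morphism of $G\BC$. Concretely, for any scale $\tau=(\phi,\psi)$ on $[0,1]\otimes X$ and any $n\ge 1$, the composite entourage $U_\tau^{\,n}$ forces the $[0,1]$-coordinates of two points at height $t$ to differ by at most $n$ times the $[0,1]$-diameter of $\psi(t)$, which tends to $0$ as $t\to\infty$ by Definition~\ref{wfiuhiufqwefeewqfqwfewfqewf}.\ref{gpowgergerlgewrgwregerg}. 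Hence the pair $((t,0,x),(t,1,x))$ is never controlled in $\cO^\infty([0,1]\otimes X)$, while it always is in $[0,1]\otimes\cO^\infty(X)$. So the two spaces are not coarsely equivalent, and ordinary coarse invariance of $E$ cannot close the argument. The proof in \cite{ass} instead shows that the relevant collapse map is inverted precisely by \emph{strong} coarse homology theories (via weak flasqueness of a suitable cylinder object), which is exactly why the proposition carries that hypothesis. The same mechanism underlies vanishing on flasques: if $X$ is flasque in $G\UBC$, the cone $\cO^\infty(X)$ is only weakly flasque in $G\BC$, not flasque in the ordinary sense, and it is strongness of $E$ that yields $E\cO^\infty(X)\simeq 0$.
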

This proposition follows from the results stated in \cite[Sec.~9]{ass} (which are stated there in the non-equivariant case, but the same proof applies here). In particular, the list of properties of the functors is given by  \cite[Lem.~9.6]{ass} and 
the cone sequence follows from  \cite[(9.1)]{ass}. 
Note that we consider $E$ {in \eqref{wefqwefqewewfwdqewdewdqede}} as a functor on $G\UBC$ by using the {first} forgetful functor {in} \eqref{qweflkjoqwfeqewfwefeefeqwf}.  

  Let $Y$ be in $G\BC$ and $E \colon G\BC\to \bM$  be some functor.  \begin{ddd}[{{\cite[(10.17)]{equicoarse}}}]\label{regoiergowregrwergwreg} We define the twist $E_{Y}$ of $E$ by $Y$ as the functor
 $$E_{Y} \colon G\BC\to \bM\, , \quad E_{Y}(X) \coloneqq E(X\otimes Y)\, .$$
 \end{ddd} 
 
 The following has been shown in \cite[{Lem.\ 4.17 \& 11.25}]{equicoarse}:
 \begin{lem}
 If $E$ is a coarse homology theory, then so is its twist $E_{Y}$.
 If $E$ is strong, then so is $E_{Y}$.
 \end{lem}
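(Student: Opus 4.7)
The plan is to verify each axiom defining a coarse homology theory (and its strong refinement) directly for $E_Y$, reducing each property to the corresponding statement for $E$ applied to the twisted space $X\otimes Y$. The unifying observation is that the endofunctor $-\otimes Y\colon G\BC\to G\BC$ interacts well with the coarse-geometric structures involved, so it essentially intertwines the defining axioms.

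First I would collect the following compatibility facts about $-\otimes Y$, most of which are immediate from the definition of the tensor product on $G\BC$ (the cartesian product of underlying sets with the product coarse structure and the bornology generated by products of bounded subsets). A coarse equivalence $f\colon X\to X'$ tensored with $\id_Y$ remains a coarse equivalence; a coarsely excisive decomposition $X=A\cup B$ tensors to a coarsely excisive decomposition $X\otimes Y=(A\otimes Y)\cup(B\otimes Y)$ with intersection $(A\cap B)\otimes Y$; a flasque space $X$ yields a flasque $X\otimes Y$ since the retraction data from $[0,\infty)\otimes X$ transports to $[0,\infty)\otimes(X\otimes Y)$; finally, the coarse structure on $X\otimes Y$ is generated by product entourages $U\times V$ with $U\in\cC_X$, $V\in\cC_Y$, so restricting $\cC_X$ to subentourages of a fixed invariant entourage $U$ corresponds to restricting $\cC_{X\otimes Y}$ to subentourages of $U\times V$ for $V$ ranging over $\cC_Y$. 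From these facts, coarse invariance, excision, vanishing on flasques, and $u$-continuity of $E_Y$ follow directly from the corresponding properties of $E$. For $u$-continuity one needs the extra cofinality observation that taking colimits over $V\in\cC_X^G$ and then over the product entourages on $Y$ recovers the colimit over $\cC_{X\otimes Y}^G$ appearing when computing $E$ on $X\otimes Y$.

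Next I would verify the additional property appearing in the notion of a strong coarse homology theory, namely vanishing on spaces that are weakly flasque (i.e.\ admit an endomorphism witnessing flasqueness in the generalized sense of \cite{equicoarse}). The argument is parallel: given such a structure $(f,\ldots)$ on $X$ one obtains the product structure $(f\otimes\id_Y,\ldots)$ on $X\otimes Y$, exhibiting it as weakly flasque, whence $E_Y(X)=E(X\otimes Y)\simeq 0$ by strongness of $E$.

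The only point requiring real care, and hence the main obstacle, is the $u$-continuity step, because the generating entourages of $X\otimes Y$ are not strictly of the form $U_X\times\cC_Y$ but are contained in their finite unions and subsets. To handle this cleanly I would argue that the poset of $G$-invariant entourages of $X\otimes Y$ admits a cofinal subposet consisting of products $U\times V$; then the double colimit collapses and one identifies $\colim_U E_Y(X_U)=\colim_U\colim_V E((X\otimes Y)_{U\times V})\simeq E(X\otimes Y)=E_Y(X)$ using $u$-continuity of $E$. The remaining assertions are purely formal and amount to invoking the compatibility facts above in the relevant axiom of Definition~\ref{qeroigjoerwgwergwrgwe9111} transferred to $G\BC$ via the forgetful functor.
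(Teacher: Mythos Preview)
Your proposal is correct and follows the standard direct-verification route. The paper itself does not give a proof of this lemma but merely cites \cite[Lem.~4.17 \& 11.25]{equicoarse}; what you outline is essentially the content of those references. The key structural observation is exactly the one you isolate: the functor $-\otimes Y$ preserves coarse equivalences, complementary (coarsely excisive) pairs, flasqueness witnesses, and weak flasqueness witnesses, and the product entourages $U\times V$ form a cofinal family in $\cC_{X\otimes Y}^{G}$. With these in hand each axiom for $E_Y$ reduces to the corresponding axiom for $E$.

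Your treatment of $u$-continuity is the right one; the double-colimit identification
\[
\colim_{U\in\cC_X^G} E(X_U\otimes Y)\;\simeq\;\colim_{U\in\cC_X^G}\colim_{V\in\cC_Y^G} E\bigl((X\otimes Y)_{U\times V}\bigr)\;\simeq\;E(X\otimes Y)
\]
goes through because $(X_U\otimes Y)$ has coarse structure with cofinal generators $U^{n}\times V$, and the inner colimit over $n$ is absorbed by the outer one over $U$. One minor terminological point: in the $G\BC$-setting flasqueness is usually phrased via a self-map $f\colon X\to X$ satisfying the flasqueness axioms rather than as a retract of $[0,\infty)\otimes X$; your argument still applies verbatim with $f\otimes\id_Y$ in place of the retraction data.
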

 
 We apply this construction to the equivariant coarse homology theory $K\bC\cX^{G}$ from  Definition \ref{qrogijeqoifefewfefewqffe}. 
  The group $G$ gives rise to the $G$-bornological coarse spaces
$G_{can,min}$    \cite[Ex.\ 2.4]{equicoarse} and also  $G_{can,max}$.
Here $min$ and $max$ refer to the minimal (finite subsets) and maximal (all subsets) bornologies, and the canonical coarse structure {$can$}
is the minimal $G$-coarse structure such that $G_{can}$ is a connected $G$-coarse space. It is generated by the entourages
$\{(g,h)\}$ for all {$(g,h)$ in $G \times G$.} 
Later we will in particular consider the coarse homology theories
$K\bC\cX^{G}_{G_{can,max}}$ and $K\bC\cX^{G}_{G_{can,min}}$ obtained from 
$K\bC\cX^{G}$ by {twisting 
with} $G_{can,max}$ and $G_{can,min}$, {respectively}.

 Let $\bC$ be in $\Fun(BG,\nCcat)$ be effectively additive.   \begin{ddd}\label{qriofjqofewfefqeqff} 
We define the equivariant local  $K$-homology functor
$$K_{\bC}^{G,\cX} \colon G\UBC\to \Sp^{\la}$$
as the composition
$$ K_{\bC}^{G,\cX} \colon G\UBC\xrightarrow{\cO^{\infty}} G\BC \xrightarrow{K\bC\cX^{G}_{G_{can,max}}} \Sp^{\la}\,.$$
\end{ddd}

The following proposition   lists the properties of the functor $K_{\bC}^{G,\cX}$.
 It is a consequence of Theorem \ref{qeroigqwefqewfefewfq} and Proposition \ref{qroigjqowfewfqwefqw}.
 \begin{prop}\label{oirgjoerwgwegregw9}
{If $\bC$ is effectively additive and admits all countable AV-sums, then}
the functor $K_{\bC}^{G,\cX}$ has the following properties:
\begin{enumerate}
\item closed excision,
\item homotopy invariant,
\item $u$-continuous,
\item vanishing on flasques.
\end{enumerate}
\end{prop}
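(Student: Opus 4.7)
The plan is to derive Proposition \ref{oirgjoerwgwegregw9} as a direct formal consequence of the two principal inputs already established in the excerpt: Theorem \ref{qeroigqwefqewfefewfq}, which asserts that $K\bC\cX^{G}_{c}$ is a strong equivariant coarse homology theory, and Proposition \ref{qroigjqowfewfqwefqw}, which lists precisely the four desired properties for any composite of the form $E\cO^{\infty}$ whenever $E$ is a strong coarse homology theory on $G\BC$.

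First, I would unwind Definition \ref{qriofjqofewfefqeqff} to identify $K_{\bC}^{G,\cX}$ with $E\cO^{\infty}$ in the notation of \eqref{wqefqwefewfqewf}, where $E \coloneqq K\bC\cX^{G}_{c,G_{can,max}}$ is the twist of $K\bC\cX^{G}_{c}$ by the $G$-bornological coarse space $G_{can,max}$ in the sense of Definition \ref{regoiergowregrwergwreg}. Next I would verify that this $E$ meets the hypotheses of Proposition \ref{qroigjqowfewfqwefqw}, that is, that it is a strong equivariant coarse homology theory. By Theorem \ref{qeroigqwefqewfefewfq} the untwisted functor $K\bC\cX^{G}_{c}$ is itself a strong (in fact continuous and strongly additive) equivariant coarse homology theory, and by the Lemma stated immediately after Definition \ref{regoiergowregrwergwreg} the twist operation $(-)_{Y}$ preserves both the coarse homology theory axioms and the strength property. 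Hence $E$ is strong, and the four assertions of Proposition \ref{oirgjoerwgwegregw9} drop out of Proposition \ref{qroigjqowfewfqwefqw} applied to $E$.

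There is essentially no substantive obstacle to this argument; all of the real work is packaged in the two invoked statements, the first coming from \cite{coarsek} and the second from \cite{ass}. The only point that warrants a brief sanity check is that the notion of flasqueness used in Definition \ref{qeroigjoerwgwergwrgwe9111} for $G\UBC$-objects, namely retracts of $[0,\infty)\otimes X$, is compatible with the generalized flasqueness on the underlying $G$-bornological coarse space that is implicit in the vanishing statement of Proposition \ref{qroigjqowfewfqwefqw}; this compatibility is however recorded explicitly in the excerpt in the paragraph following the definition of flasqueness, where it is noted that the underlying $G$-bornological coarse space of a $G\UBC$-flasque space is flasque in the generalized sense. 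This step therefore presents no difficulty, and the proposition follows.
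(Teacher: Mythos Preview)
Your proposal is correct and matches the paper's own argument exactly: the paper simply states that the proposition is a consequence of Theorem \ref{qeroigqwefqewfefewfq} and Proposition \ref{qroigjqowfewfqwefqw}. Your more explicit mention of the lemma that twisting preserves the strong coarse homology theory property is the only (implicit) intermediate step, and you have identified it correctly.
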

  
 {The functor  $K_{\bC}^{G,\cX}$   depends functorially on coefficient category $\bC$ in $\Fun(BG,\nCcat_{\ndeg,\eadd,\omega\add})$.}

\section{Locality and pseudolocality}\label{qriugfhiergfedfqewfqewf9}

 For a set $X$ we let $\ell^{\infty}(X)$ denote the $C^{*}$-algebra of all bounded functions $X\to \C$
with the supremum norm $\|f\|:=\sup_{x\in X} |f(x)|$.

 For an entourage $U$  on $X$  and a subset $W$   we define the $U$-variation on $W$ of  a function $f:X\to \C$  by
\[
\Var_{U}(f,W):=\sup_{(x,y)\in U\cap (W\times W)} |f(x)-f(y)|\ .
\]

Let $\cY$ be a filtered family of subsets in $X$, ordered by inclusion.
 
 \begin{ddd}
 	
\label{gjsoepgergseffs} \mbox{}
\begin{enumerate}
\item	
The $C^{*}$-algebra $\ell^{\infty}(\cY)$ of functions  {vanishing} away from $\cY$ is defined as the sub-$C^*$-algebra of $\ell^{\infty}(X)$ of functions $f$ satisfying 
 $$\lim_{Y\in \cY} \|f_{X\setminus Y}\|=0\ .$$ 
 \item
 \label{wkotpgegfergweg} 
For a coarse space $X$ with coarse structure $\cC$ we define the algebra of bounded functions with vanishing variation away from $\cY$  as 
\[
\ell^{\infty}_{\cY}(X):=\{f\in \ell^{\infty}(X)\mid  { \forall U\in \cC : {\lim_{Y\in \cY}}\Var_{U}(f,X\setminus Y ) =0 }\}\ .
\]
\end{enumerate}
 \end{ddd}
 If $X$ is a coarse space, then $\cY$ is a big family if for every $Y$ in $\cY$ and coarse entourage $U$ of $X$
 the thickening $U[Y]$ is again contained in a member  of $\cY$ \cite[Def. 3.2]{buen}.
 If $\cY$ is a big family, then 
 we have  $\ell^{\infty}(\cY)\subseteq \ell^{\infty}_{\cY}(X)$.

 {For  $\bC$   in $\Fun(BG,\nCcat)$ and}
  $X$  in {$G\Born$} we consider  the $G$-$C^{*}$-category 
$\tCglf(X)$ introduced  in Definition \ref{qrohwoifewfqwefqwfewqf}.  
 Let $(C,\rho,\mu)$ be an object in 
 $\tCglf(X)$.
We  then extend the projection-valued measure $\mu$ to a homomorphism of $C^{*}$-algebras $$\mu \colon  \ell^{\infty}(X)\to \End_{{\bM}\bC}(C)$$
  which sends $f$ in $\ell^{\infty}( X)$ to
\begin{equation}\label{ewqfpojlqkrmeflkwefqef1}
\mu(f) \coloneqq \int_{ X}  f  d\mu\, .
\end{equation}
 \begin{rem}
This integral   can be interpreted as follows. 
For every $x$ in $X$ we can choose a representative
$u_{x} \colon C_{x}\to C$ of the image  in $\bM\bC$ of the projection $\mu(\{x\})$ on $C$. By Definition  \ref{qeroigergeggweerg}.\ref{qeroighjoergfqewfqf}
\begin{equation}\label{feqwfpokjfpowefkofqwpeofkoqkkopk}
(C,(u_{x})_{x  \in X})
\end{equation}
represents the AV-sum of the family $(C_{x})_{x\in X }$. 
Using that $f$ is bounded and that the family $(u_{x})_{x\in  X }$ is mutually orthogonal we conclude using  \cite[{Lem.\ 7.8}]{cank} that the sum
$$\mu(f):=\sum_{x\in   X } u_{x}f(x) u_{x}^{*}$$   {strictly converges   in $\bM\bC$.} 
\hB
\end{rem}
 One checks  that $\mu$ is a homomorphism of $C^{*}$-algebras and that
 $\mu(\chi_{Y})=\mu(Y)$ for the characteristic function $\chi_{Y}$ of a subset $Y$ of $X$. Furthermore,  using the equivariance \eqref{wfoizeqwfu98u9e8wfqewf} of $\mu$,  one checks that $\phi$ is equivariant in the sense that 
\begin{equation}\label{gwwgegdfsfvfdvsdvsfv}
g^{-1}\cdot \mu(f)=\mu(g^{*}f)
\end{equation}
for all $g$ in $G$, see \eqref{eqwfijoifqjowiefjwoefewqfewqf} for notation.

  Let $X$ be in $G\BC$  and {$\cY$} be a big family on $X$. 
Let $(C,\rho,\mu)$, $(C',\rho',\mu')$ be objects of    $\tCglf(X)$  and
  $A:(C,\rho,\mu)\to(C',\rho',\mu') $ be a  morphism in this $C^{*}$-category.   
    The argument  for the following commutator estimate is taken from \cite{quro}, see also \cite[Lemma 3.9]{Bunke:2024aa}.

  \begin{lem}
  \label{kohpkertphokgpertge} 
  If $f$  is in $\ell^{\infty}_{\cY}(X)$ and $A$ is $U$-controlled for some coarse entourage $U$, then 
 \[
 {\lim_{Y \in \cY} \|\mu'(X\setminus Y)\:(\mu'(f)A-A\mu(f))\:\mu(X\setminus Y)\|=0\ .}
 \]
  \end{lem}
  \begin{proof} 
Let $\epsilon$ in $(0,\infty)$ be given and set $\eta := \epsilon /4 \|A\|$. 
We then choose $Y$ in $\cY$ such that $\Var_{U}(f, X\setminus Y')\leq \eta$ for each $Y'$ in $\cY$ with $Y \subseteq Y'$. 
We define the partition $(S_{k})_{k\in \Z}$  of $X \setminus Y$ by\ \begin{equation*}
  S_k := \{ x \in X \setminus Y \mid (k-1) \eta \leq f(x) < k\eta\}\ .
\end{equation*}
Since $f$ is bounded, only finitely many of these sets are non-empty.
If $k,l$ are in $\Z$, then  $x\in S_k$ and $y \in S_l$ implies $|f(x) - f(y)| \geq (|k-l|-1)\eta$.
Since  the $U$-variation of $f$ on   ${X \setminus} Y =\bigcup_{k\in \Z}S_{k}$ is bounded by $\eta$, the condition $|k-l|\geq 2$ implies that $S_k \cap U[S_l] = U[S_k] \cap S_l = \emptyset$.
Since $A$ is $U$-controlled we can conclude that $\mu'(S_k) A \mu(S_l) = 0$.

We set
\begin{equation*}
 \tilde{f} := \chi_{Y}\cdot f + \eta \sum_{k \in \Z} k \cdot \chi_{S_k}\ .	
\end{equation*}
Then by construction $\|\tilde{f} - f\ \| \leq \eta$ and hence
\begin{equation}
\label{FirstComparison}
  \|(\mu^\prime(f) A - A \mu(f)) - (\mu^\prime(\tilde{f}) A - A \mu(\tilde{f})) \| \leq 2\eta\|A\|	= \frac{\epsilon}{2}\ .
\end{equation}
Since $A$ is $U$-controlled, we have
\begin{equation}
\label{okwgpergrefwf}	
\begin{aligned}
&\mu'(X\setminus U[Y])(\mu'(\tilde{f})A-A\mu(\tilde{f}))\mu(X\setminus U[Y]) \\
&\qquad\qquad\qquad  = \eta\sum_{k \in\Z} k \cdot \mu'(X\setminus U[Y])(\mu^\prime(S_k) A - A \mu(S_k)) \mu(X \setminus U[Y]) \ .
\end{aligned} 
\end{equation}
Inserting the identities $\mu(X \setminus Y) = \sum_{k \in \Z} \mu(S_k)$ and $\mu'(X \setminus Y) = \sum_{k \in \Z} \mu'(S_k)$ and   using that $\mu'(S_k) A \mu(S_l) = 0$ whenever $|k-l| \geq 2$, we get
\begin{equation*}
  \sum_{k \in \Z} k (\mu^\prime(S_k) A - A \mu(S_k))  = \mu'(X \setminus Y)  \sum_{k \in \Z} (\mu^\prime(S_k) A \mu(S_{k-1}) - \mu^\prime(S_k) A \mu(S_{k+1}))\mu(X \setminus Y)\ .
\end{equation*}
The right-hand side is an operator  with norm bounded by $2\|A\|$.
Using $\mu(X\setminus U[Y])=\mu(X\setminus U[Y])\mu(X\setminus Y)$ and plugging the above equality into  \eqref{okwgpergrefwf}, we get
\begin{equation*}
\|\mu'(X\setminus U[Y])(\mu'(\tilde{f})A-A\mu(\tilde{f}))\mu(X\setminus U[Y])\| \leq 2\eta\|A\|= \frac{\epsilon}{2}.
\end{equation*}
Combining this with \eqref{FirstComparison}, we see that  
\begin{equation*}
\|\mu'(X\setminus Y')(\mu'(f)A-A\mu(f))\mu(X\setminus Y')\| \leq \varepsilon
\end{equation*} 
for all $Y'$ in $\cY$ with $U[Y] \subseteq Y'$.
   \end{proof}

 Recall Definition \ref{rfquhwfiuqwhfiufewqefqwefqwefwefqwef} of the $C^{*}$-category 
 $ \bCgtsmc(X)$.
   \begin{kor}\label{vfjfvjpfvosdfvs} For a morphism $A:(C,\rho,\mu)\to (C',\rho',\mu')$  in $ \bCgtsmc(X)$ and $f$ in $\ell^{\infty}_{\cY}(X)$ we have
    $$\lim_{{Y \in \cY}}\| \mu'(X\setminus Y)(\mu'(f)A-A\mu(f))\mu (X\setminus Y)\|=0\ .$$
    \end{kor}
    \begin{proof}
    We use that $A$ can be approximated in norm by $U$-controlled equivariant morphisms $A'$
    and apply Lemma \ref{kohpkertphokgpertge} to the approximants $A'$.
    \end{proof}

{If} $Y$ is an invariant subset  of  $X$, then we  define the wide subcategory
$\bCgtsmc (Y\subseteq X)$ of $\bCgtsmc(X)$ (see \cite[Def.\ 5.5]{coarsek}) such that 
 for objects $(C,\rho,\mu)$ and $(C',\rho',\mu')$ in  
$\bCgtsmc
(Y\subseteq X)$  
$$\Hom_{\bCgtsmc
(Y\subseteq X)}((C,\rho,\mu),(C',\rho',\mu')) \coloneqq \mu'(Y) \Hom_{\bCgtsmc
(  X)}((C,\rho,\mu),(C',\rho',\mu'))\mu(Y)\, .$$
Similarly, for an invariant big family $\cY=(Y_{i})_{I\in I}$ on $X$ (see \cite[Def. 3.5]{equicoarse}) we have  the wide subcategory  \begin{equation}\label{f23r8z89fz2893zf892334234f2}
\bCgtsmc
(\cY\subseteq X) \coloneqq \overline{\bigcup_{i\in I}\bCgtsmc
(Y_{i}\subseteq X)}
\end{equation}
  of $\bCgtsmc(X)$ (the union and closure are both taken on the level of morphisms). By \cite[Lem.~5.9]{coarsek}  we know that $\bCgtsmc
(\cY\subseteq X)$
 is an ideal  in $\bCgtsmc
(  X)$.

    \begin{kor}
   \label{gokpergergsgre}
    For a morphism $A:(C,\rho,\mu)\to (C',\rho',\mu')$  in $ \bCgtsmc(X)$ and $f$ in $\ell^{\infty}_{\cY}(X)$ for an invariant big family $\cY$  on $X$ we have
 we have
$\mu'(f)A-A\mu(f)\in \bCgtsmc
(\cY\subseteq X)$.  
\end{kor}

Let $X$ be in $G\UBC$ and   $\cB$ denote  the bornology of $X$.
\begin{ddd} \label{kohperthrtgertg}\mbox{} \begin{enumerate}
\item  We let $  C_{u}(X)\subseteq  \ell^{\infty}(X)$ be the sub-algebra of uniformly continuous functions on $X$.
\item \label{kohethertgtergelabel} We   set
 $  C_{0}(X):= C_{u}(X)\cap  \ell^{\infty}(\cB)$.
\end{enumerate}
 \end{ddd}
 
Note the discussion \cite[3.13]{Bunke:2024aa} about the difference of  $C_{0}(X)$
and the possibly smaller $C^{*}$-algebra $C_{u}(\cB)$ generated by uniformly continuous  functions supported on bounded 
 subsets.
 
Recall the cone construction $\cO:G\UBC\to G\BC$ introduced in Definition \ref{wepogjpweggreewg}.
For $X$ in $G\UBC$ we consider  $\cO(X)\otimes G_{can,max}$ in $G\BC$. The underlying $G$-set of  this $G$-bornological coarse space is $[0,\infty)\times X\times G$. We let $\pi:
 [0,\infty)\times X\times G\to X$ be the projection.
 It induces a homomorphism
 $\pi^{*}:\ell^{\infty}(X)\to \ell^{\infty}(\cO(X)\otimes G_{can,max})$.
 
 In the following $\cB$ denotes the bornology of $\cO(X)\otimes G_{can,max}$.
\begin{lem}\label{kohprethegrtgertg}
The homomorphism $\pi^{*}$ restricts to a homomorphism
$$\pi^{*}:C_{0}(X)\to \ell^{\infty}_{\cB}(\cO(X)\otimes G_{can,max}) \ .$$
  \end{lem}
\begin{proof}
Let $f$ be in $C_{0}(X)$ and $V$ be a coarse entourage of $\cO(X)\otimes G_{can,max}$.
For every $\epsilon$ in $(0,\infty)$ we must find a bounded subset $A$ of
$\cO(X)\otimes G_{can,max}$ such that $\Var_{V}(\pi^{*}f,X\setminus A)\le \epsilon$.

We can find a bounded subset
$B$  of $X$ such that $\|\chi_{X\setminus B} f\|\le \frac{\epsilon}{2}$.
By uniform continuity we can further find a uniform entourage $U$ of $X$ such that
$\Var_{U}(f,X)\le \epsilon$.
   There exists $t$ in  $(0,\infty)$  such that
 $((s,x,g),(s',x',g'))\in V$ and $s\ge t$ or $s'\ge t$ implies $(x,x')\in U$.
 It follows that $\Var_{V}(\pi^{*}f,Y_{t})\le   \epsilon $, where  $Y_{t}:=[t,\infty)\times  X\times G$.

 We also have $\|    \chi_{ \pi^{-1}(X\setminus B)}\pi^{*}f \|\le  \frac{\epsilon}{2}$ so that actually
 $\Var_{V}(\pi^{*}f,  Y_{t}\cup  \pi^{-1}(X\setminus B) \|\le   \epsilon$.   
   Finally note that $A:= (\cO(X)\otimes G_{can,max})\setminus (Y_{t}\cup  \pi^{-1}(X\setminus B))$
  is a bounded subset of $ \cO(X)\otimes G_{can,max}$.
   \end{proof}

Let $(C,\rho,\mu)$ be an object of  $\bCgtsmc(\cO(X)\otimes  G_{can,max})$. Using \eqref{ewqfpojlqkrmeflkwefqef1} we define
the homomorphism 
  \begin{equation}\label{ewqfpojlqkrmeflkwefqef}\phi \colon \ell^{\infty}(X)\to \End_{{\bM}\bC}(C)\ , \quad f\mapsto   \phi(f) \coloneqq  \mu(\pi^{*}f)\ .
 \end{equation}
  
Let $A \colon (C,\rho,\mu)\to (C',\rho',\mu')$ be a morphism in  $\bCgtsmc(\cO(X)\otimes  G_{can,max})$.  
{Recall that $A$ is in particular   a  multiplier morphism from $C$ to $C'$.}
 Our next result states that $A$ is pseudolocal {(in the sense of \cite[Def.~12.3.1]{higson_roe} {{if} one replaces the ideal of compact operators  in all bounded operators by the ideal $\bC$ in the multiplier category $\bM\bC$}} and  we consider the objects of $\bCgtsmc(\cO(X)\otimes  G_{can,max}) $ as $X$-controlled via \eqref{ewqfpojlqkrmeflkwefqef}). Let  $\phi'$ be defined as in \eqref{ewqfpojlqkrmeflkwefqef}, but  for the object $(C',\rho',\mu')$.

\begin{lem}\label{wtioghjwergfgrefwref} For    $f$  in $C_{0}(X)$  
the difference $A\phi(f)-\phi'(f)A$ belongs to   {$\bC$}. 
 \end{lem}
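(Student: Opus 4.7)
The plan is to prove pseudolocality by approximation, exploiting that $A$'s spatial propagation shrinks as the cone-time grows (by virtue of the uniform scale $\tau$) and that $f$ is uniformly continuous for the uniform structure $\cU$.

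First, since $\bK$ is closed in the operator norm on $\Hom_{\bC^{(G)}}$ and the map $f \mapsto A\phi(f)-\phi'(f)A$ is norm-continuous in $f$ (using $\|\phi(f)\|\le\|f\|_\infty$), I may reduce to $f \in C_c(X)$ with compact support $K$. Such an $f$ is uniformly continuous with respect to $\cU$ (a standard fact for continuous compactly supported functions on a Hausdorff uniform space). Given $\epsilon>0$, I pick $V \in \cU^G$ with $|f(x)-f(x')|<\epsilon$ whenever $(x,x')\in V$, and then use Definition~\ref{wfiuhiufqwefeewqfqwfewfqewf}.\ref{gpowgergerlgewrgwregerg} to select $t_0$ with $\psi(t)\subseteq V$ for all $t\ge t_0$. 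Since $A$ is $U_0$-controlled for some $U_0\subseteq(U\cap U_\tau)\times W$, it has a finite time-propagation $M_0\le\sup_t\phi(t)$, a spatial propagation contained in $\psi_\tau(t)$ at cone-time $t$, and a finite $G$-propagation coming from $W$.

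Next I expand the commutator on a vector $c$ as
\begin{equation*}
(A\phi(f)-\phi'(f)A)\,c \;=\; \sum_{(y,y')\in U_0\cap(\supp\mu\times\supp\mu')} \bigl(f(\pi y)-f(\pi y')\bigr)\,\mu'(\{y'\})\,A\,\mu(\{y\})\,c,
\end{equation*}
and split $T:=A\phi(f)-\phi'(f)A$ at cone-time $N:=t_0+2M_0$ via the $\mu$-projection $P_N=\mu([0,N]\times X\times G)$ (and its analogue $P'_N$), which commute with $\phi(f)$ and $\phi'(f)$ respectively. On the large-time piece (cone-time $>N-M_0$ on both sides), every contributing pair $(y,y')$ satisfies $(\pi y,\pi y')\in\psi_\tau(\max)\subseteq V$, whence $|f(\pi y)-f(\pi y')|<\epsilon$; a Cauchy--Schwarz estimate, using the local uniform finiteness of $\supp\mu,\supp\mu'$ along $U_0$-translates ultimately supplied by condition~\ref{qreoighwoefewqffqewffewfq}.\ref{fhewquifhqwieufhiqwefqwef}, bounds the norm of this piece by $C\epsilon\|A\|$. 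For the bounded-time piece, the $X$-compactness of $\supp f=K$ together with the finite $X$- and $G$-propagation of $A$ restricts the relevant supports to a bounded subset $B\subseteq\cO_\tau(X)\otimes G_{can,max}$; replacing this piece by $\mu'(B')T\mu(B)$ with $B,B'$ bounded yields an element of $\bK$ via the ideal property (since $\mu(B),\mu'(B')\in\bK$ by local finiteness of the supports), and the truncation error is controlled by the same matrix-element estimate and is again $O(\epsilon\|A\|)$.

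Combining the two bounds, for each $\epsilon>0$ I obtain a decomposition $T=T_\epsilon^{(\bK)}+E_\epsilon$ with $T_\epsilon^{(\bK)}\in\bK$ and $\|E_\epsilon\|\le C\epsilon\|A\|$; sending $\epsilon\to0$ and using closedness of $\bK$ gives $T\in\bK$. The principal technical hurdle is the control of the $G$-direction in the bounded-time piece: because $\phi(f)$ acts trivially along $G$ whereas $A$ moves by its finite $G$-propagation, one must choose the bounded set $B$ so as to absorb the $G$-propagated image of the $X$-compact set $K$, and then show that the remaining $G$-tail of $T$ is norm-small via the same propagation and uniform-continuity estimates used for the large-time piece.
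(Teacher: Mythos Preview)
Your overall strategy matches the paper's: reduce to $f\in C_c(X)$ and controlled $A$, split at a large cone-time, show the bounded-time piece lies in $\bK$, and show the large-time piece has norm at most $\epsilon$. The gap is in the large-time estimate. You assert that Condition~\ref{qreoighwoefewqffqewffewfq}.\ref{fhewquifhqwieufhiqwefqwef} supplies ``local uniform finiteness of $\supp\mu,\supp\mu'$ along $U_0$-translates'', but this is false: objects of $\bCgtsmc$ have supports that are merely locally \emph{finite} (Definition~\ref{qeroigergeggweerg}.\ref{toiwjjkfmfdfds89w}), not locally \emph{uniformly} finite in the sense of Definition~\ref{ewriogwergwregwref}. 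The number of points of $\supp\mu'$ in $U_0[\{y\}]$ can be unbounded as $y$ varies, so your pointwise matrix expansion together with a Cauchy--Schwarz/Schur estimate does not yield a uniform norm bound. What Condition~\ref{qreoighwoefewqffqewffewfq}.\ref{fhewquifhqwieufhiqwefqwef} actually provides is a \emph{single} invariant locally uniformly finite subset $L$ coarsely equivalent to $\cO_\tau(X)\otimes G_{can,max}$. The paper uses this $L$ to partition the space into cells $(Y_\ell)_{\ell\in L}$ and works with the operator matrix indexed by $L$ (via isometries $v_\ell$ onto the images of $\mu(Y_\ell)$). Each matrix entry $\|M_{\ell'\ell}\|$ is then bounded by $\|A\|$ times the oscillation of $f$, independently of how many support points lie in each cell, and the number of nonzero entries per column is bounded by $R=\sup_{y\in F}|L\cap U'U[\{y\}]|<\infty$ via local uniform finiteness of $L$. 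This is precisely the reason for working with scales, cf.\ Remark~\ref{rem_scale_needed}.

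A secondary point: your ``principal technical hurdle'' concerning the $G$-direction in the bounded-time piece is a non-issue. Since $G_{can,max}$ carries the maximal bornology, any subset of the form $[0,r)\times B\times G$ with $B$ bounded in $X$ is already bounded in $\cO_\tau(X)\otimes G_{can,max}$; hence $\mu(F\cap V)\in\bK$ directly by local finiteness of the object, with no $G$-tail to control and no further approximation needed for this piece.
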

\begin{proof}Recall that  $\cB$ denotes the bornology of $\cO(X)\otimes G_{can,max}$.
By Lemma \ref{kohprethegrtgertg} we have
$$ \pi^{*}f\in \ell_{\cB}(\cO(X)\otimes  G_{can,max})\ .$$
By Corollary \ref{gokpergergsgre} we have 
$$A\phi(f)-\phi'(f)A\in \bCgtsmc(\cB\subseteq \cO(X)\otimes  G_{can,max})\ .$$
 By local finiteness of the objects of  $ \bCgtsmc(  \cO(X)\otimes  G_{can,max})$  we conclude that
 $$A\phi(f)-\phi'(f)A:C\to C'$$ is a morphism in $\bC$.
 \end{proof}

We consider the big family   
\begin{equation}\label{fqewqewfkqwpefowqefewfef}
\cZ \coloneqq (Z_{n})_{n\in \nat}\, , \quad 
Z_{n} \coloneqq [0,n]\times X\times G\, .
\end{equation}
on $\cO(X)\otimes G_{can,max}$.

Let $A \colon (C,\rho,\mu)\to (C',\rho',\mu')$ be a morphism in  $\bCgtsmc(\cZ\subseteq  \cO_{\tau}(X)\otimes G_{can,max})$, see \eqref{f23r8z89fz2893zf892334234f2}.  Our next result shows that it locally belongs to ${\bC}$.   
 Let $\cB_{X}$ denote the bornology of $X$.
  
\begin{lem}\label{ewiogjoerwgfdsfdgsfg}
For $f$ in $\ell^{\infty}(\cB_{X})$
we have $\phi'(f)A\in {\bC}$ and $A\phi(f)\in {\bC}$.
\end{lem}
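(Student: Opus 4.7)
The plan is to reduce to the defining generators of $\bCgtsmc(\cZ\subseteq \cO_{\tau}(X)\otimes G_{can,max})$. By \eqref{f23r8z89fz2893zf892334234f2}, this category is a norm closure, and $\bK$ is closed in $\bC$, so it suffices to verify both claims for morphisms $A$ in $\bCgtsmc(Z_{n}\subseteq \cO_{\tau}(X)\otimes G_{can,max})$ for some fixed $n$. By construction such an $A$ may be written as $A=\mu'(Z_{n})B\mu(Z_{n})$ for some morphism $B$ in $\bCgtsmc(\cO_{\tau}(X)\otimes G_{can,max})$. A second reduction, again using closedness of $\bK$ together with norm-density of $C_{c}(X)$ in $C_{0}(X)$ (and the fact that $\phi,\phi'$ are $C^{*}$-algebra homomorphisms, hence contractive), allows us to further assume that $f$ is compactly supported.

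The key geometric observation is that the bornology on $\cO_{\tau}(X)\otimes G_{can,max}$ renders
\[
K \coloneqq Z_{n}\cap \pi^{-1}(\supp f) = [0,n]\times \supp(f)\times G
\]
bounded: $\supp(f)$ is a relatively compact, hence bounded, subset of $X$; $[0,n]$ is bounded in $[0,\infty)$; and all subsets of $G$ are bounded in $G_{can,max}$ by virtue of the maximal bornology. Applying Remark \ref{rqfqiofioewjeowfewfqewfqwefe} to $(C,\rho,\mu)$ therefore shows that $\mu(K)\in \bK$, and likewise $\mu'(K)\in \bK$ applied to $(C',\rho',\mu')$.

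It remains to produce the factorisation. Writing $\phi(f)=\sum_{y}u_{y}f(\pi(y))u_{y}^{*}$ as in \eqref{feqwfpokjfpowefkofqwpeofkoqkkopk}, one reads off that $\phi(f)$ commutes with every projection of the form $\mu(S)$ and that $\phi(f)=\phi(f)\mu(\pi^{-1}(\supp f))=\mu(\pi^{-1}(\supp f))\phi(f)$. Consequently
\[
\mu(Z_{n})\phi(f) \;=\; \phi(f)\mu(Z_{n})\mu(\pi^{-1}(\supp f)) \;=\; \phi(f)\mu(K)\,.
\]
Since $\mu(K)\in \bK$ and $\bK$ is an ideal in $\bC$, this forces $\mu(Z_{n})\phi(f)\in \bK$, and composing on the left with $\mu'(Z_{n})B$ yields $A\phi(f)\in \bK$. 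The argument for $\phi'(f)A\in \bK$ is symmetric, using the identity $\phi'(f)\mu'(Z_{n})=\mu'(K)\phi'(f)$ in place of the one above.

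I do not anticipate a genuine obstacle. The only subtle point to flag is the role of the maximal bornology on $G_{can,max}$: without it, the slice $[0,n]\times \supp(f)\times G$ would not be bounded, and the whole argument would collapse. Everything else is a routine ideal-and-closure computation, where the two-step approximation (first in $A$, then in $f$) is invoked to move from the dense generators to general $A\in\bCgtsmc(\cZ\subseteq \cO_{\tau}(X)\otimes G_{can,max})$ and general $f\in C_{0}(X)$.
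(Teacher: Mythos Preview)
Your proof is correct and follows essentially the same approach as the paper: reduce to $A$ supported on some $Z_{n}$ and to compactly supported $f$, observe that $Z_{n}\cap\pi^{-1}(\supp f)$ is bounded so that the corresponding $\mu$- and $\mu'$-projections lie in $\bK$, and conclude via the ideal property. The only cosmetic difference is that the paper treats $\phi'(f)A\in\bK$ first and deduces $A\phi(f)\in\bK$ by applying the involution, whereas you handle the two cases by a symmetric argument.
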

\begin{proof}
It suffices to show that  $\phi'(f)A\in {\bC}$. 
In order to deduce $A\phi(f)\in {\bC}$ we {then} use  the involution.
 
 We fix $\epsilon$ in $(0,\infty)$. Then we can find
 $A'$ in $\bCgtsmc(\cZ\subseteq  \cO(X)\otimes G_{can,max})$  and $n$ in $\nat$ such that
 $\|A-A'\|\le \frac{\epsilon}{2\|f\|}$ and
 $\mu(Z_{n})A' \mu(Z_{n})=A'$.
 We can furthermore find a bounded subset $B$ of $X$ such that $\| \chi_{X\setminus B}f\|\le  \frac{\epsilon}{2\|A\|}$.
 We set  $f':=
 \chi_{ B} f$.
 Then $\|\phi'(f)A-\phi'(f')A'\|\le \epsilon$.
 Since $\epsilon$ can be taken arbitrary small and $\bC$ is closed in $\bM\bC$
 it suffices to show that $\phi'(f')A'\in \bC$.
 But $\phi'(f')A'$ is supported on the bounded set
 $[0,n]\times B\times G$ of $ \cO(X)\otimes G_{can,max}$.
 Hence  $\phi'(f')A'\in \bC$ by local finiteness of $(C',\rho',\mu')$.
%
\end{proof}

\section{Construction of the Paschke morphism}
 

To $X$ in $G\UBC$ we can associate the commutative $G$-$C^{*}$-algebra 
$C_{0}(X)$ introduced in Definition \ref{kohperthrtgertg}.
Since a morphism $f:X\to X'$   in $G\UBC$ is uniform and proper  it  induces a homomorphism
$f^{*}:C_{0}(X')\to C_{0}(X)$ given by pre-composition. We therefore get a functor
$$C_{0}:G\UBC\to (G\nCalg_{\comm})^{\op}\ , \qquad X\mapsto C_{0}(X)\ .$$
Using Gelfand duality $(G\nCalg_{\comm})^{\op}\simeq \ppGTop$
we thus get a functor
\begin{equation}\label{q3oigfherqoifqrfvffqwefqeeqwfqeeqwfqefqefeeqfeqfewffefqewqewfqfqewf}
\iota^{\topp}:G\UBC\to \ppGTop
\end{equation}
uniquely characterized by the equality \eqref{werfwefwerffffrf}.

The main result of the present section is the description of the Paschke morphism for a given space $X $ in $G\UBC$.  
The general idea for its construction via a multiplication map like $\mu_{X}$ as below, {but} with completely different technical details otherwise, has been used at various places, see e.g.\ \cite[Sec.~6.5]{willett_yu_book} or \cite[Sec.~6.4]{wulff_axioms}.
In the Section \ref{wtgiojioergerfwerfewfwerf} we will provide a refinement of this construction to a natural transformation of functors defined on $G\UBC^{\scale}$.

 We start with a description of the following intermediate constructions which go into the  construction of the Paschke morphism:
 \begin{enumerate}
 \item The functor $X\mapsto \bQ(X)$ from $G\UBC $ to $\nCcat$,
 \item the tensor product $C_{0}(X)\otimes \bQ(X)$,
 \item the multiplication morphism $\mu_{X} \colon C_{0}(X)\otimes \bQ(X)\to  {\bQ^{(G)}_{\std}}$,
 \item the diagonal morphism $\delta_{X} \colon \KK(\C,\bQ(X))\to \KKG(C_{0}(X),C_{0}(X)\otimes \bQ(X))$.
 \end{enumerate}

Using  the cone functor $\cO$    introduced in Definition \ref{wepogjpweggreewg} we define the functor \begin{equation}\label{adsfasffafd}
G\UBC \to G\BC\, , \quad X \mapsto \cO(X)\otimes G_{can,max}\,  .
\end{equation}  
 {For {an effectively additive} $\bC$ in $\Fun(BG,\nCcat)$,  composing  \eqref{adsfasffafd}} with $\bCgtsmc$ from Definition \ref{wtogjoergrwegreggwege} we get a functor 
 \begin{equation}\label{rgfqfewfqewf1}
G\UBC \to \Ccat\, ,\quad   X \mapsto \bD (X) \coloneqq  \bCgtsmc(  \cO (X)\otimes G_{can,max})\, .
\end{equation} 
 We furthermore have the subfunctor  
 \begin{equation}\label{rgfqfewfqewf}
G\UBC \to \nCcat\, , \quad   X \mapsto  \bC (X) \coloneqq \bCgtsmc(\cZ\subseteq \cO (X)\otimes G_{can,max})
\end{equation}
(see \eqref{f23r8z89fz2893zf892334234f2}  {and \eqref{fqewqewfkqwpefowqefewfef}} for notation)
  such that $\bC (X)$ is a closed ideal in $\bD (X)$.
 Note that $\bC (X)$ is our replacement for
 $\bCgtsmc( X\otimes G_{can,max})$ which can be considered as a subcategory of
 $\bCgtsmc(  \cO (X)\otimes G_{can,max})$ of objects {which are} supported on $\{0\}\times X\times G$, but which is not an ideal {(these two $C^*$-categories actually have the same $K$-theory as will be used and also explained further below in Diagram \eqref{qfwefeeqfeqfqwfqewwfeqwf}).}
 Our choice of  notation  $ \bC (X)$ and $\bD (X)$ should indicate that
 these $C^{*}$-categories are our versions of the Roe algebra and the algebra of pseudolocal operators. We refer to Section \ref{weogiwjiergreggwrgwerg} for more details.  
 By forming quotients of $C^{*}$-categories we finally define the functor 
  \begin{equation}\label{qwefoujujfqew09ufewewfewfqfqef}
G\UBC \to \nCcat\,, \quad X \mapsto\bQ (X) \coloneqq  \frac{\bD (X) }{  \bC (X)}\, .
\end{equation}
{The functors $\bC $, $\bD $ and $\bQ $ depend functorially on $\bC$ in $\Fun(BG,\nCcat_{\ndeg,\eadd,\omega\add})$   since $ \bCgtsmc$ has this property. }
 
 Recall the  functor $K_{\bC}^{G,\cX}$ from Definition~\ref{qriofjqofewfefqeqff}, and 
  the $K$-theory functor $\Kcat$ for $C^{*}$-categories from \eqref{rewgwergwergegwergrewgregwregegwre}.}
 
  {We assume that $\bC$  in $\Fun(BG,\nCcat)$ is effectively additive and admits countable AV-sums.}
 \begin{lem}\label{wrtoihgjoergergegwgrgwerg}
 We have a canonical  equivalence of functors
 \begin{equation}\label{regvoijoifevfdvafvfdsv}
K_{\bC}^{G,\cX} \simeq \Kcat\circ \bQ \colon  G\UBC  \to \Sp^{\la}\, .
\end{equation} 
 \end{lem}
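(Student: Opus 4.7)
The plan is to apply the $K$-theory functor $\Kcat$ to the exact sequence of $C^{*}$-categories $\bC_{\tau}(X)\to \bD_{\tau}(X)\to \bQ_{\tau}(X)$ defining $\bQ_{\tau}$ in \eqref{qwefoujujfqew09ufewewfewfqfqef}, and then match the resulting fibre sequence with the cone fibre sequence \eqref{wefqwefqewewfwdqewdewdqede} for the equivariant coarse homology theory $E\coloneqq K\bC\cX^{G}_{c,G_{can,max}}$ of Definition~\ref{qrogijeqoifefewfefewqffe} (twisted by $G_{can,max}$). Since $\Kcat$ sends exact sequences of $C^{*}$-categories to fibre sequences of spectra (this is one of the defining properties of the functor $\kkA$ and hence of $\Kcat=\KK(\C,-)$, cf.\ \cite[Sec.~12]{cank}), we obtain a natural fibre sequence
\begin{equation*}
\Kcat(\bC_{\tau}(X))\to \Kcat(\bD_{\tau}(X))\to \Kcat(\bQ_{\tau}(X))\, .
\end{equation*}

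Next, I would identify the first two terms. By the very definition \eqref{rgfqfewfqewf1},
$\Kcat(\bD_{\tau}(X))=E(\cO_{\tau}(X))$, and by Proposition~\ref{eqroguerogeregwergwre} the natural morphism $\cO_{\tau}(X)\to \tilde \cO(X)\leftarrow \cO(X)$ is sent to an equivalence by every coarse homology theory, hence $\Kcat(\bD_{\tau}(X))\simeq E(\cO(X))=E\cO(X)$. For the ideal $\bC_{\tau}(X)=\bCgtsmc(\cZ\subseteq \cO_{\tau}(X)\otimes G_{can,max})$ associated to the big family $\cZ=(Z_{n})_{n\in\nat}$ of \eqref{fqewqewfkqwpefowqefewfef}, the analogous colimit description of $K$-theory of an ideal coming from a big family (as explained around \eqref{f23r8z89fz2893zf892334234f2} and in \cite[Sec.~5]{coarsek}) gives
\begin{equation*}
\Kcat(\bC_{\tau}(X))\simeq \colim_{n\in\nat} E(Z_{n})\, .
\end{equation*}
Each $Z_{n}=[0,n]\times X\times G$, with the coarse/bornological structure induced from $\cO_{\tau}(X)\otimes G_{can,max}$, retracts coarsely to $\{0\}\times X\times G$, so by the homotopy invariance and $u$-continuity of $E$ (Theorem~\ref{qeroigqwefqewfefewfq}) the comparison map $E(X\otimes G_{can,max})\to E(Z_{n})$ is an equivalence for every $n$, and the colimit over $n$ yields $\Kcat(\bC_{\tau}(X))\simeq E(X\otimes G_{can,max})=E(X)$ (where I am implicitly absorbing the twist by $G_{can,max}$ into the functor).

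Combining these identifications with the fibre sequence from the first step, and comparing with the cone fibre sequence of Proposition~\ref{qroigjqowfewfqwefqw} applied to $E$, we obtain a commutative diagram of fibre sequences
\begin{equation*}
\begin{tikzcd}
\Kcat(\bC_{\tau}(X))\ar[r]\ar[d,"\simeq"] & \Kcat(\bD_{\tau}(X))\ar[r]\ar[d,"\simeq"] & \Kcat(\bQ_{\tau}(X))\ar[d,dashed] \\
E(X)\ar[r] & E\cO(X)\ar[r] & E\cO^{\infty}(X)
\end{tikzcd}
\end{equation*}
and hence an induced equivalence $\Kcat(\bQ_{\tau}(X))\simeq E\cO^{\infty}(X)=K_{\bC}^{G,\cX}(\iota^{\scale}(X,\tau))$ on cofibres. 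All constructions are functorial in $(X,\tau)\in G\UBC^{\scale}$: the functoriality of $\bD_{\tau}$ and $\bC_{\tau}$ in \eqref{rgfqfewfqewf1} and \eqref{rgfqfewfqewf} is built into the definition, the functoriality of $E$ on $G\BC$ provides the bottom row, and Proposition~\ref{eqroguerogeregwergwre} provides the vertical equivalences naturally. The main obstacle is ensuring this naturality is compatible on the nose with the quotient construction: one has to check that the identification $\Kcat(\bC_{\tau}(X))\simeq E(X)$ is natural in morphisms of $G\UBC^{\scale}$, which requires that the restriction of a scale-respecting map to the family $\cZ$ is compatible under the retractions $Z_{n}\to X\otimes G_{can,max}$. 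This is precisely where the condition on $m$ in Definition~\ref{qreoighwoefewqffqewffewfq}.\ref{rogijtoigjwegerwergwerwregwerg} enters, ensuring that scale-compatible morphisms induce morphisms of the big family $\cZ$ and hence of the ideals $\bC_{\tau}(X)$.
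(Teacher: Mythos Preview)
Your proof is correct and takes essentially the same approach as the paper, which likewise compares the fibre sequence obtained by applying $\Kcat$ to $\bC_{\tau}\to\bD_{\tau}\to\bQ_{\tau}$ with the cone fibre sequence \eqref{wefqwefqewewfwdqewdewdqede} for $E=K\bC\cX^{G}_{c,G_{can,max}}$ (the paper introduces an auxiliary cofibre $P$ and two successive morphisms of fibre sequences rather than a single comparison diagram, but this is cosmetic). One small correction: the equivalence $E(X\otimes G_{can,max})\to E(Z_{n})$ is not a matter of ``homotopy invariance and $u$-continuity'' (those are properties of functors on $G\UBC$, see Definition~\ref{qeroigjoerwgwergwrgwe9111}) but of coarse invariance of the coarse homology theory $E$, since the inclusion $X\otimes G_{can,max}\hookrightarrow Z_{n}$ is a coarse equivalence---the paper invokes \cite[Lem.~5.6]{coarsek} at this point.
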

 \begin{proof}
  We have a  natural (naturality here and below  refers to   $X$ in $G\UBC $)   commutative diagram of  $C^{*}$-categories
\begin{equation}\label{qfqwfqewfeqedw}
\xymatrix{
& \bCgtsmc( X\otimes G_{can,max})
\ar[r]
\ar[d]& \bCgtsmc( \cO(X)\otimes G_{can,max})\ar@{=}[d]
&
&
\\0\ar[r]&\bC(X)\ar[r]&\bD(X)\ar[r]&\bQ(X)\ar[r]&0}
\end{equation}
where the {top} horizontal and left vertical morphisms are induced from canonical inclusions of bornological coarse spaces. 
We  apply $\Kcat$ to Diagram \eqref{qfqwfqewfeqedw}. Since 
  $\Kcat$   sends {exact} sequences in $\nCcat$ to fibre sequences in $\Sp^{\la}$ (\cite[Thm.~1.32.5]{KKG} or \cite[Prop.\ 14.7]{cank}) we get  a natural  morphism of fibre sequences
\begin{align}\label{qfwefeeqfeqfqwfqewwfeqwf}  
\\
\mathclap{\xymatrix{ 
 \Kcat( \bCgtsmc( X\otimes G_{can,max}))\ar[r]\ar[d]^{\simeq}&
\Kcat( \bCgtsmc( \cO (X)\otimes G_{can,max}))\ar@{=}[d]\ar[r]&
P
\ar[d]^{\simeq}\\ 
\Kcat (\bC (X))\ar[r]&\Kcat(\bD (X))\ar[r]&\Kcat(\bQ (X))
}}\notag 
\end{align}
in $\Sp^{\la}$, where $P$ is  defined as the cofibre of the left upper horizontal morphism.
In order to see that the left vertical morphism is an equivalence we argue as in the proof of  \cite[Thm.\ 7.2]{coarsek}.
 For every $n$ in $\nat$ the
  inclusion $X\otimes G_{can,max}\to Z_{n}$ (see  \eqref{fqewqewfkqwpefowqefewfef}) is a coarse equivalence and hence induces an equivalence
$$  \Kcat( \bCgtsmc( X\otimes G_{can,max})) \stackrel{def}{=} K\bC\cX^{G}( X\otimes G_{can,max})\stackrel{\simeq}{\to}   K\bC\cX^{G}(Z_{n})\ .$$ The inclusion
$$\bCgtsmc(Z_{n})\to  \bCgtsmc(Z_{n}\subseteq \cO(X)\otimes G_{can,max})$$ is a unitary equivalence by  \cite[Lem.\ 6.10(2)]{coarsek} and therefore induces an equivalence
$$   K\bC\cX^{G}(Z_{n})\stackrel{def}{=} \Kcat(\bCgtsmc(Z_{n}))\stackrel{\simeq}{\to}  \Kcat(\bCgtsmc(Z_{n}\subseteq \cO(X)\otimes G_{can,max}))\ .$$
We therefore get an equivalence
$$  \Kcat( \bCgtsmc( X\otimes G_{can,max}))  \stackrel{\simeq}{\to} \colim_{n\in \nat}\Kcat(\bCgtsmc(Z_{n}\subseteq \cO(X)\otimes G_{can,max}))\ .$$
Finally using  \eqref{f23r8z89fz2893zf892334234f2}, \eqref{rgfqfewfqewf}
and the fact that $\Kcat$ preserves filtered colimits  (see  \cite[Thm.\ 14.4]{cank})
we get the  equivalence
$$ \Kcat( \bCgtsmc( X\otimes G_{can,max})) \stackrel{\simeq}{\to}\Kcat (\bC (X))$$ appearing as the left vertical arrow in 
\eqref{regvoijoifevfdvafvfdsv}. 

It follows that the right vertical morphism is an equivalence, too.

Using the Definition \ref{qrogijeqoifefewfefewqffe} of $K\bC\cX^{G}$
 we get a natural morphism of fibre sequences
\begin{align}\label{qfwefeeqfeqfqwfqewwffeqwf}  
\\
\mathclap{ \xymatrix{ 
\Kcat( \bCgtsmc( X\otimes G_{can,max})) \ar[r]\ar@{=}[d]&\Kcat( \bCgtsmc( \cO(X)\otimes G_{can,max})) \ar@{=}[d] \ar[r] &\ar[d]^{\simeq}P
 \\     K\bC\cX_{G_{can,max}}^{G} ( X ) \ar[r]&  K\bC\cX_{G_{can,max}}^{G} ( \cO(X) )\ar[r]&K\bC\cX_{G_{can,max}}^{G} ( \cO^{\infty}(X) )
 }}\notag
 \end{align}
 where, by inserting definitions,  we have rewritten the lower sequence as an instance  of the cone sequence
   \eqref{wefqwefqewewfwdqewdewdqede} applied to $E \coloneqq K\bC\cX_{G_{can,max}}^{G}$.   
   
  Composing the inverse of the  right vertical equivalence  in \eqref{qfwefeeqfeqfqwfqewwffeqwf} with the right vertical equivalence in 
 \eqref{qfwefeeqfeqfqwfqewwfeqwf} and invoking   Definition~\ref{qriofjqofewfefqeqff} yields the natural equivalence\begin{equation}\label{eqwfjoiwefqwfewfqewf}
K_{\bC}^{G,\cX}(X)\simeq \Kcat(\bQ(X))\, .
\end{equation}
as desired.
\end{proof}

 In the present paper $\otimes$ denotes the maximal tensor product of $C^{*}$-categories \cite[Def.~7.2]{KKG}. By \cite[Prop.~1.21]{KKG} the stable $\infty$-category category $\KKG$ has a presentably symmetric monoidal 
 structure induced by the maximal tensor product of $C^{*}$-algebras, and by \cite[Thm.\ 1.35]{KKG} the functor
  $\kkGA$ 
   has a symmetric monoidal refinement.   We define  the functor
 \begin{equation}\label{saCWDQKOIOJ1} 
 - \hatotimes - \colon
\Fun(BG,\nCalg ) \times    \KK\xrightarrow{\kkG\times \Res^{\{1\}}_{G}}\KKG\times \KKG  \xrightarrow{\otimes} \KKG\, ,
\end{equation}
where $\otimes $ is structure map of  the symmetric monoidal structure of $\KKG$  
and $\Res^{\{1\}}_{G}$ is the restriction induced by the projection $G\to \{1\}$ from  \cite[Thm.~1.22]{KKG}  {(on $C^*$-algebras $\Res^{\{1\}}_{G}$ is given by equipping a $C^*$-algebra with the trivial $G$-action).}  
Using that  $\KKG$ is   presentably symmetric monoidal  category and $\Res^{\{1\}}_{G}$ preserves
small colimits we see that   $\hatotimes$ preserves
small colimits in its second variable.

Let   $    A$ be in $\Fun(BG,\Calg ) $ and $\bQ$ be in $\nCcat$.
\begin{lem}\label{glkbijowerfvvfevsdfvsfdv} 
 We have an  equivalence
 $$     A \hatotimes \kkA(\bQ)\simeq  \kkGA (    A\otimes \Res^{\{1\}}_{G}(\bQ))$$
 which is natural in $A$ and $\bQ$.
\end{lem}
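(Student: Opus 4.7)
The plan is to unfold the definition of $\hatotimes$ from \eqref{saCWDQKOIOJ1} and reduce the claim to two structural features of the equivariant kk-construction from \cite{KKG}: its symmetric monoidal refinement, and its compatibility with restriction along the unique homomorphism $G \to \{1\}$. By definition,
$$A \hatotimes \kkA(\bQ) \simeq \kkG(A) \otimes \Res^{\{1\}}_G(\kkA(\bQ))$$
in $\KKG$, with $\otimes$ denoting the symmetric monoidal structure on $\KKG$ constructed in \cite[Prop.~1.20]{KKG}.

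The first step is to establish a natural equivalence
$$\Res^{\{1\}}_G \circ \kkA \simeq \kkGA \circ \Res^{\{1\}}_G$$
as functors from $\nCcat$ to $\KKG$, where on the left $\Res^{\{1\}}_G \colon \nCcat \to \Fun(BG, \nCcat)$ is the inclusion equipping a $C^*$-category with the trivial $G$-action, and on the right $\Res^{\{1\}}_G \colon \KK \to \KKG$ is the restriction functor of \cite[Thm.~1.21]{KKG}. This identification is essentially tautological from the construction in the cited theorem, where $\Res^{\{1\}}_G$ on $\KKG$ is built by transporting the category-level restriction through the universal properties of $\kkA$ and $\kkGA$; alternatively, one can argue from the universal property of $\kkA$ directly, since the composite $\kkGA \circ \Res^{\{1\}}_G$ is homotopy invariant, stable, and excisive on $\nCcat$.

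Combining this identification with the symmetric monoidal refinement of $\kkGA$ provided by \cite[Thm.~1.31]{KKG}, one then computes
\begin{align*}
\kkG(A) \otimes \Res^{\{1\}}_G(\kkA(\bQ))
&\simeq \kkG(A) \otimes \kkGA(\Res^{\{1\}}_G(\bQ)) \\
&\simeq \kkGA(A \otimes \Res^{\{1\}}_G(\bQ)),
\end{align*}
which is the asserted equivalence. Naturality in $(A,\bQ)$ follows from the functoriality of each step. The only delicate point is the first equivalence above: one must ensure that the two restriction functors are compatible at the $\infty$-categorical level, not merely on the level of homotopy categories. This compatibility is precisely what is encoded in the construction of $\Res^{\{1\}}_G \colon \KK \to \KKG$ in \cite[Thm.~1.21]{KKG}, so the task reduces to citing this result carefully rather than proving anything genuinely new.
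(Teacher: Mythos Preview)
Your proof is correct and follows essentially the same approach as the paper's: unfold the definition of $\hatotimes$, use \cite[Thm.~1.21]{KKG} to commute $\Res^{\{1\}}_G$ past $\kkA$, and then apply the symmetric monoidal refinement of $\kkGA$ from \cite[Thm.~1.31]{KKG}. The paper presents this as a bare chain of three equivalences with the same citations; your version simply adds more commentary on the naturality and on the ``delicate point'' concerning the $\infty$-categorical compatibility of the two restriction functors.
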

\begin{proof}
The chain of natural equivalences
\begin{eqnarray*}
     A \hatotimes \kkA( \bQ)&\stackrel{\text{def.}}{\simeq}&
\kkG(     A)    \otimes   \Res^{\{1\}}_{G}( \kkA  (\bQ))\\
&\stackrel{\scriptsize{\cite[\text{Thm.\,1.22}]{KKG}}}{\simeq}&
\kkGA   ( A)    \otimes  \kkGA (\Res^{\{1\}}_{G} (\bQ))\\
&\stackrel{\scriptsize{\cite[\text{Thm.\,1.35}]{KKG}}}{\simeq}&
 \kkGA(  A\otimes \Res^{\{1\}}_{G} (\bQ))
\end{eqnarray*}
gives the desired equivalence, where in the last two lines we implicitly consider $A$ as a $G$-$C^{*}$-category with a single object.
\end{proof}

From now on, in order to simplify the notation, we will write  $\bQ$ instead of $\Res^{\{1\}}_{G} (\bQ)$.

 For $ X $ in $G\UBC $ we have  {the objects} $C_{0}(X)$ in $\Fun(BG, {\nCalg})$ and 
    $\bQ(X)$ in $ \nCcat$ and can thus define     $C_{0}(X)\otimes \bQ (X)$ in $\Fun(BG,\nCcat)$, where  consider the left tensor factor as a $C^{*}$-category. The objects of this category are
    the objects of $ \bQ(X)$, and the morphism spaces are certain completions of the algebraic tensor products of the 
 morphism spaces of $ \bQ(X)$ with $C_{0}(X)$. 
 {For concreteness, we will work with the maximal tensor product \cite[Def. 7.2]{KKG}.} 

    Recall the Definition \ref{qeroighjoreqgreqfefwfqwef}.\ref{wrtogwprtgwergwergfreferf}   of $   \bQ^{(G)}_{\std}$ in $\Fun(BG,\nCcat)$.
    We  define the multiplication  morphism     
  \begin{equation}\label{oreihoijfvoisfdvervfdsvdfvsvv}
\mu_{X } \colon C_{0}(X)\otimes \bQ (X)\to    \bQ^{(G)}_{\std}
\end{equation} 
  in $\Fun(BG,\nCcat)$ as follows.
    \begin{enumerate} \item
    objects: The morphism $\mu_{X  }$ sends the object $(C,\rho,\mu)$ to the object $(C,\rho)$ of
    $  \bQ^{(G)}_{\std}$. 
    Note that 
    $(C,\rho)$ belongs to  $  \bQ^{(G)}_{\std}$ since the underlying $G$-set of $\cO (X)\otimes G_{can,max}$ is a free $G$-set {(see \eqref{rgfqfewfqewf1}, \eqref{rgfqfewfqewf} and  \eqref{qwefoujujfqew09ufewewfewfqfqef})}.
    \item \label{vefvvdfvweewfvfdsv} morphisms: The morphism $\mu_{X}$ is defined on morphisms uniquely by the universal property of the maximal tensor product 
    of $C^{*}$-categories 
    such that it sends the morphism $f\otimes  [A]$    
    in $C_{0}(X)\otimes \bQ(X)$ with 
    $A \colon (C',\rho',\mu')\to (C,\rho,\mu)$ to the morphism $[\phi(f)A]$ in $ { \bQ^{(G)}_{\std}}$. Here the brackets {$[-]$} indicate  classes in the respective   quotients \eqref{qwefoujujfqew09ufewewfewfqfqef} and \eqref{wefwfwfwefflkjlqwepofp}, and $\phi(f)$ is defined in \eqref{ewqfpojlqkrmeflkwefqef}.
    
    To see that this map is well-defined note that if $A$ is in $\bC(X)$, then
    $\phi(f)A\in  {\bC}^{(G)}_{\std}$ by Lemma \ref{ewiogjoerwgfdsfdgsfg}.
    Further, by  Lemma \ref{wtioghjwergfgrefwref} we have 
  $[\phi(f)A]=[A\phi'(f)]$ which  
    implies that
  this prescription is compatible with the composition and the involution.
        \end{enumerate}

  Finally, we define the diagonal morphism $\delta_{X}$ as the composition
 \begin{eqnarray}\label{rgergergegwegerg}
\delta_{X} \colon \KK(\C,\bQ (X))&\stackrel{}{\simeq}&\KK(\kkA(\C),\kkA(\bQ(X))\\
&\stackrel{C_{0}(X) \hatotimes -}{\to}& \KKG(C_{0}(X) \hatotimes\kkA(\C),C_{0}(X) \hatotimes \kkA(\bQ (X)))\nonumber\\&\stackrel{!}{\simeq} & \KKG(\kkGA(C_{0}(X)\otimes \C) ,\kkGA( C_{0}(X) \otimes \bQ (X)))\nonumber\\&\simeq& 
 \KKG(C_{0}(X) ,C_{0}(X) \otimes \bQ (X)) \, .\nonumber
\end{eqnarray}
The last  equivalence  is given by  the identification $C_{0}(X)\otimes \C\cong C_{0}(X)$, and the equivalence marked by $!$ uses Lemma  \ref{glkbijowerfvvfevsdfvsfdv}

 We now define the Paschke morphism whose existence was claimed in Theorem~\ref{qreoigjoergegqrgqerqfewf}.\ref{weiufhiqwefewwfqewfewf}.  {We assume that $\bC$  in $\Fun(BG,\nCcat)$   is effectively additive and admits countable AV-sums.}
 \begin{ddd}\label{qeorigjoqfeqwfqfewf} 
The Paschke morphism {for $X$} in $G\UBC$ is defined as the composition
\begin{align}\label{nlkkmlmvfdvsdva}
p_{X } \colon K_{\bC}^{G,\cX}( X ) \qquad & \mathclap{\stackrel{\eqref{regvoijoifevfdvafvfdsv}, \eqref{rewgwergwergegwergrewgregwregegwre}}{\simeq}}  \qquad \KK(\C,\bQ (X))\\
& \mathclap{\stackrel{\delta_{X }}{\to}} \qquad \KKG(C_{0}(X),C_{0}(X)\otimes \bQ (X))\notag\\
& \mathclap{\stackrel{\mu_{X }}{\to}} \qquad \KKG(C_{0}(X),   \bQ^{(G)}_{\std})\notag\\
& \mathclap{\stackrel{\eqref{vfdsvsvvsvsdvdsadsvdsva}}{\simeq}} \qquad K_{\bC}^{G,\An}(\iota^{\topp}(X))\,.\notag
\end{align}
\end{ddd}

Note that from this definition is not clear that the Paschke morphism is natural in $X$. The naturality will be discussed in the next Section  \ref{wtgiojioergerfwerfewfwerf}.

\section{Naturality of the Paschke morphism}\label{wtgiojioergerfwerfewfwerf}

In this subsection   {we} discuss the naturality of the Paschke morphism {from} Definition \ref{qeorigjoqfeqwfqfewf}.
More precisely, we will construct a natural transformation whose component on $X$ in $G\UBC$ is the Paschke morphism of Definition \ref{qeorigjoqfeqwfqfewf}.
 Note that naturality in the $\infty$-categorical sense is more {than} the existence of a filler for the square 
\begin{equation}\label{fqwefoiwjfwoqefewfwqfe}
\xymatrix{K_{\bC}^{G,\cX}( X )\ar[r]^{f_{*}}\ar[d]^{p_{X}}&K_{\bC}^{G,\cX}( X')\ar[d]^{p_{X', }}\\ K_{\bC}^{G,\An}(\iota^{\topp}(X ))\ar[r]^{f_{*}}& K_{\bC}^{G,\An}(\iota^{\topp}(X'))}
\end{equation}  
for all  morphisms $f \colon X \to X', $ in $G\UBC $.
 The existence of such a filler  can indeed be easily seen by considering the big diagram \eqref{fqweflkfnlfewfqewfwfqfw} below.
 In  order to produce the data of a natural transformation we  must reformulate the construction 
 of the Paschke morphisms appropriately. 
 The main problem is that $\KKG(C_{0}(X),C_{0}(X)\otimes \bQ(X))$ is not a functor on $X$ so that  $\delta_{X}$ and $\mu_{X}$ can not be interpreted as natural transformations separately.

We {assume that $\bC$  in $\Fun(BG,\nCcat)$    is effectively additive and admits countable AV-sums.}
In order to get an idea  what we have to do  {to} get the existence of a filler of \eqref{fqwefoiwjfwoqefewfwqfe} we first consider the diagram
\begin{equation}\label{fqweflkfnlfewfqewfwfqfw}
\xymatrix{\KK(\C,\bQ(X))\ar[ddd]_{\KKG(-,\bQ(f))}\ar[r]^-{\delta_{X }}\ar@/_1.5pc/[ddr]^-(0.3){\delta_{X'}}&\KKG(C_{0}(X),C_{0}(X)\otimes \bQ(X))\ar[d]^{\KKG(f^{*},-)}\ar[r]^-{\mu_{X}}&\KKG(C_{0}(X),  \bQ_{\std}^{(G)})\ar[ddd]^{\KKG(f^{*},-)}\\
&\KKG(C_{0}(X'),C_{0}(X)\otimes \bQ (X))\ar@/^1.5pc/[ddr]^-(0.7){\mu_{ X }}&\\
&\KKG(C_{0}(X'),C_{0}(X')\otimes \bQ (X))\ar[u]_{\KKG(-,f^{*})}\ar[d]^{\KKG(-,\bQ(f)}&\\
\KK(\C,\bQ (X'))\ar[r]^-{\delta_{ X }}&\KKG(C_{0}(X'),C_{0}(X')\otimes \bQ (X'))\ar[r]^-{\mu_{ X }}&\KKG(C_{0}(X'),  \bQ_{\std}^{(G)})}
\end{equation}
all of whose  cells have essentially obvious fillers.  This  already  implies that the Paschke morphism is natural on the level of homotopy categories. 

\begin{rem}
Our idea for showing that the Paschke morphism is an equivalence is to reduce this by homotopy invariance to $G$-simplicial complexes, and then by excision to $G$-orbits where it can be verified by an explicit calculation. 
The excision step  requires a natural transformation on the spectrum level. 
If one is only interested in homotopy groups, then it would be sufficient to know the compatibility
of the Paschke map with the Mayer--Vietoris boundary maps which is an immediate consequence of the
spectrum-valued naturality.
So even if we were   finally  {only} interested in the Paschke isomorphism on the level of  homotopy groups
we would  {still} need the spectrum level natural transformation for the proof that it is an isomorphism.  

For similar reasons, the spectrum-valued version is also crucial in the proof of our second Theorem  \ref{wtoiguwegwergergregwe}
comparing the two assembly maps,  {though} the latter is indeed a statement on the level of homotopy groups.
\hB
\end{rem}

In the following remarks  about general $\infty$-categorical constructions we prepare the actual construction of the natural Paschke  transformation.

  \begin{rem} \label{wegijowergfwerfgwrf}For a category $\cC$ let $\Tw(\cC)$ denote the twisted arrow category.
 Objects are morphisms $C\to C'$ in $\cC$, and morphisms $(C_{0}\to C_{0}')\to (C_{1}\to C_{1}')$ are commut{ative} diagrams 
 \begin{equation}\label{qewfoijoiqfewfqewfqewfqef}
\xymatrix{C_{0}\ar[r]&C_{0}'\ar[d]\\C_{1}\ar[r]\ar[u]&C_{1}'}
\end{equation}
 We have a canonical functor
 $$(\ev,\ev') \colon \Tw(\cC)\to \cC^{\op}\times \cC\, , \quad (C\to C')\mapsto (C,C')\, .$$
 If $F,G \colon \cC\to \cD$ are two functors to a stable $\infty$-category, then we can express the spectrum of natural transformations 
 between $F$ and $G$ {as}
 \begin{equation}\label{wetliogjwoiergregregef}
\mathrm{nat}(F,G)\simeq \lim_{\Tw(\cC)} \map_{\cD}(F\circ \ev,G\circ \ev')\, .
\end{equation}
 We refer to \cite{Gepner:2015aa,Glasman:2014aa} where this is discussed even  in the more general  case  of $\cC$ being an $\infty$-category.
 \hB
  \end{rem}

  \begin{rem}\label{qiuhqriegergrggregwergwegwerg}
  Recall that our universe in which we do homotopy theory is the one of small sets. 
  The corresponding categories then belong to the  large universe. 
  A   locally small,  large presentable {stable} $\infty$-category  $\cC$  
 is enriched and tensored over $\Sp$. We thus
  a functor
  \begin{equation}\label{sdvjqr0evj0fvsfvvfv}
\cC\times \Sp^{\la}\to \cC\, ,\quad (C,E)\mapsto C\wedge E
\end{equation} 
preserving small colimits in both variables and such that \begin{equation}\label{qwfqewfqewfqewfqewffefefqewefefdfqef}-\wedge S\simeq \id_{\cC}\ .
\end{equation} Furthermore,  for every object 
  $C_{0}$ in $\cC$  we  have an adjunction
  \begin{equation}\label{ewfoijqoiwejfoqwefqewqefef}
  C_{0}\wedge -:\Sp^{\la}\leftrightarrows \cC: \map_{\cC}(C_{0},-)\, .
\end{equation} 
  
The counit of the adjunction in \eqref{ewfoijqoiwejfoqwefqewqefef} is a natural transformation 
  \begin{equation}\label{qroujioqrfqefefeqwf}
    C_{0}\wedge \map_{\cC}(C_{0},-)\to \id_{\cC} (-) 
\end{equation} of endofunctors of $\cC$.
\hB
    \end{rem}

  \begin{rem}\label{weoigjwoigregwegwergrewgw}
  Let $\cC,\cD,\cE$ be $\infty$-categories and $- \hatotimes - \colon \cC\times \cD\to \cE$ be a functor. We consider  $\infty$-categories $\cI$, $\cJ$  and  
   natural transformations of functors $ {(F\stackrel{\alpha}{\to}  F')}\colon \cI\to \cC$ and ${(G\stackrel{\beta}{\to} G')}\colon\cJ\to\cD$. 
  Then we get a natural transformation of functors
  \[
  {(F\times G \stackrel{\alpha\times \beta}{\to} F'\times G')} \colon \cI\times \cJ\to \cC\times \cD\,,
  \]
  and by composition with $- \hatotimes -$ a natural transformation  \begin{equation}\label{fwrefwpofkwerpoffewrfwerf}
 {(F \hatotimes G \stackrel{\alpha \hatotimes \beta}{\to} F' \hatotimes G')} \colon \cI\times \cJ\to \cE\, ,
\end{equation}
 where we write
  $F \hatotimes G$ {for} $(- \hatotimes -)\circ (F\times G) $.
  \hB
    \end{rem}

 Applying    \eqref{sdvjqr0evj0fvsfvvfv}  to $\cC=\KK$ we get a functor 
 $$(B,E)\mapsto B\wedge E \colon \KK\times \Sp^{\la}\to \KK\, .$$
 In the following we specialize $B$ to $\kk(\C)$.
 We then have a functor $ (A,E)\mapsto A\wedge E$  given as the composition  
 \begin{equation}\label{qewfpojfopqwfqewfeqf}
  \Fun(BG,\nCalg)\times \Sp^{\la}\xrightarrow{\id\times {( \kk(\C)\wedge -)}} \Fun(BG,\nCalg)\times \KK \xrightarrow{- \hatotimes -} \KKG\, ,
\end{equation}
where $\hat \otimes$ is as in \eqref{saCWDQKOIOJ1}.
Note that $$A\wedge S\stackrel{\eqref{qewfpojfopqwfqewfeqf}}{\simeq} A\hatotimes {(\kk(\C)\wedge S)}\stackrel{\eqref{qwfqewfqewfqewfqewffefefqewefefdfqef}}{\simeq}    A \hatotimes \kkA(\C) \stackrel{Lem. \ref{glkbijowerfvvfevsdfvsfdv}}{\simeq} \kkG(A\otimes {\Res^{G}_{\{1\}}(\C)})\simeq \kkG(A)\, .$$
{Since} the functor $-\hat \otimes-$ in  \eqref{saCWDQKOIOJ1} preserves small colimits in its second variable, the functor in \eqref{qewfpojfopqwfqewfeqf} is essentially uniquely determined by
the equivalence
$A\wedge S\simeq \kkG(A)$ and the fact that it preserves small colimits in the second variable.
Furthermore, by the adjunction \eqref{ewfoijqoiwejfoqwefqewqefef} we have a natural equivalence 
\begin{equation}\label{wergoihweriogergrwfere}
\map_{\Sp^{\la}}(E,\KKG(A,B))\simeq \KKG(A\wedge E,B)
\end{equation}
for $E$ in $\Sp^{\la}$, $A$ in $\Fun(BG,\nCalg)$, and $B$ in $\KK^{G}$.

   We consider the functor 
\begin{equation}\label{ewfiuhqiowefqwefewfqewfw}
F \colon G\UBC^{\op}\times \KK\xrightarrow{C_{0}(-)\times   \KK(\C,-)} \Fun(BG,\nCalg) \times \Sp^{\la}\xrightarrow{ -\wedge -,\eqref{qewfpojfopqwfqewfeqf}} \KKG
\end{equation}
written as 
$$(X ,B)\mapsto C_{0}(X)\wedge     \KK(\C,B)\, .$$
We further consider the functor 
$$H \colon G\UBC ^{\op}\times \KK\xrightarrow{C_{0}(-)\times \id(-)}  \Fun(BG,\nCalg) \times \KK \xrightarrow{- \hatotimes -} \KKG$$ written as
$$(X ,B)\mapsto C_{0}(X) \hatotimes B\, .$$

We now  construct the diagonal
transformation 
\begin{equation}\label{qewfoijfiofwfwfqwfef}
{(F \stackrel{\tilde \delta}{\to} H)} \colon   G\UBC^{\op}\times \KK\to \KKG\, .
\end{equation}
 Its specialization at $X $ in $ G\UBC $ and $B$ in $\KK$ is a morphism 
 \begin{equation} 
\tilde \delta_{X,B} \colon C_{0}(X) \wedge \KK(\C,B)\to C_{0}(X) \hatotimes B
\end{equation}
in $\KKG$.  Inserting   \eqref{qewfpojfopqwfqewfeqf} into the definition \eqref{ewfiuhqiowefqwefewfqewfw} of $F$
we get 
 $$F=(- \hatotimes -)\circ (C_{0}(-)\times \kk(\C)\wedge   \KK(\C,-))\, .$$
We now obtain $\tilde \delta$ in \eqref{qewfoijfiofwfwfqwfef} by specializing \eqref{fwrefwpofkwerpoffewrfwerf} to the transformations
$$ {(C_{0}(-) \stackrel{\id}{\to} C_{0}(-))} \colon  G\UBC^{\op}\to  \Fun(BG,\nCalg )$$ 
and 
$$ (\kk(\C)\wedge   \KK(\C,-)\to  \id(-)) \colon \KK\to \KK$$  given by  \eqref{qroujioqrfqefefeqwf}.

We define the functor 
\begin{equation}\label{wregpkpwergwgreegergw}
Q \colon G\UBC  \to \KK\, , \quad   X \mapsto Q (X) \coloneqq \kkA (\bQ (X))\,,
\end{equation}
 {see \eqref{qwefoujujfqew09ufewewfewfqfqef} for $\bQ (X)$.} 
 Then we consider the functor
 \begin{equation}\label{wqefqwefwefqflkmklqf}
\Tw(G\UBC)^{\op}\to G\UBC^{\op}\times \KK\, , \quad  (X \to  X' )\mapsto (X',   Q (X))\, .
\end{equation}
 The pull-back of $\tilde \delta$ in \eqref{qewfoijfiofwfwfqwfef} along \eqref{wqefqwefwefqflkmklqf} yields 
 a natural transformation 
 \begin{equation}\label{roiegoijviojfeqwfqewfewfq}
 (\delta \colon C_{0}(-')\wedge \KK(\C,Q(-))\to  C_{0}(-') \hatotimes Q(-)):\Tw(G\UBC)^{\op}\to \KKG
\end{equation} 
 whose evaluation at {an object} $f \colon  X \to X'  $ in $\Tw(G\UBC )^{\op}$ is a morphism
 \begin{equation}\label{ewqoifjeiqjfioqewfqewfwef}
  \delta_{f} \colon C_{0}(X')\wedge \KK(\C,\bQ (X))\to C_{0}(X') \hatotimes Q (X)
\end{equation}
in $\KKG$.
 This is our version of the diagonal \eqref{rgergergegwegerg} as a natural transformation. In fact,  under the  canonical  equivalence
  \begin{align}\label{eq_identification_deltas_X_nat}
\KKG & (C_{0}(X)\wedge \KK(\C,\bQ (X)), C_{0}( {X}) \hatotimes Q (X))\\
& \stackrel{\eqref{wergoihweriogergrwfere}}{\simeq} \map( \KK(\C,\bQ (X)),\KKG(C_{0}(X),C_{0}(X) \otimes \bQ (X))\notag
\end{align}
the map $\delta_{\id_{X}}$ in \eqref{ewqoifjeiqjfioqewfqewfwef} corresponds to $\delta_{X}$ from \eqref{rgergergegwegerg}.

We now construct {the} refinement \eqref{ewfiuhiu34gferqgfqfewfeqwfqe} of the family of multiplication maps $\mu_{X}$ from \eqref{oreihoijfvoisfdvervfdsvdfvsvv}  for all $X $ in 
$G\UBC $. We start with 
the functor
\[\begin{tikzcd}[column sep=large, row sep=tiny]
	 \Tw(G\UBC)^{\op} \ar[r,"{C_{0}(-')\otimes \bQ (-)}"] & \Fun(BG,\nCcat) \\ 
	 (X \to  X' ) \ar[r, mapsto] & C_{0}(X'){\otimes}  \bQ (X) \,.
\end{tikzcd}\]

We also consider ${\bQ}^{(G)}_{\std}
$ as a constant functor from $\Tw(G\UBC^{\scale})^{\op}$ to $  \Fun(BG,\nCcat)$.  
{We first} construct 
  a natural transformation 
  \begin{equation}\label{frvjvoijoqvvvqwvwevqw}
(\tilde \mu \colon C_{0}(-')\otimes \bQ (-)\to   \bQ^{(G)}_{\std}) \colon \Tw(G\UBC )^{\op}\to \Fun(BG,\nCcat)\, .
\end{equation} 
 For  every object $f \colon  X \to   X' $ in $\Tw(G\UBC )^{\op}$ we must define a functor 
 \begin{equation}\label{eq_identification_mu_X_nat}
 \tilde \mu_{f} \colon C_{0}(X')\otimes \bQ (X)\to   \bQ^{(G)}_{\std}\, .
 \end{equation}
 This construction  extends the construction of ${\mu_{X }}$ in \eqref{oreihoijfvoisfdvervfdsvdfvsvv} which {will be} 
 recovered as $\mu_{X }=\tilde \mu_{\id_{X }}$.
 \begin{enumerate}
 \item objects: The functor $\tilde \mu_{f}$ sends the object $(C,\rho,\mu)$ in $C_{0}({X'})\otimes \bQ (X)$ (hence an object of  $\bQ (X)$) to the object $(C,\rho)$ in $   \bQ^{(G)}_{\std}$.
 \item morphisms: If $[A] \colon (C',\rho',\mu')\to (C,\rho,\mu)$ is a morphism in $\bQ (X)$ and $h$ is in $C_{0}({X'})$, then 
 $\tilde \mu_{f}(h\otimes [A]) \coloneqq [\phi(f^{*}h)A]${, see \eqref{ewqfpojlqkrmeflkwefqef} for the definition of $\phi$.}
 \end{enumerate}
The argument that the functor $\tilde \mu_{f}$ is well-defined is  the same  as for  ${\mu_{ X }}$. 
We now check that $\tilde \mu \coloneqq ({\tilde{\mu}}_{f})_{f\in \Tw(G\UBC)^{\op}}$ is a natural transformation.
We consider a morphism $f\to g$ in $ \Tw(G\UBC )^{\op}$, see \eqref{qewfoijoiqfewfqewfqewfqef}. Since we work with the opposite of the twisted arrow category, it is given by a commutative diagram \begin{equation}\label{ewqfoijhqoiwefqwefewqfqewfef}
\xymatrix{X\ar[r]^{f}\ar[d]^{\alpha}&X' \\Y\ar[r]^{g}&Y'\ar[u]^{\beta}}
\end{equation}
 We must show that 
 $$\xymatrix{ C_{0}(X')\otimes \bQ (X)\ar[rr]^{\beta^{*}\otimes \bQ(\alpha)}\ar[dr]_{{\tilde{\mu}}_{f}}&&\ar[dl]^{{\tilde{\mu}}_{g}}C_{0}(Y')\otimes \bQ (Y) \\& \bQ^{(G)}_{\std} &}$$
 commutes.
 \begin{enumerate}
 \item objects:  Let   $(C,\rho,\mu)$ be an object  in $C_{0}(X')\otimes \bQ (X)$. Then we have the equality
   $${\tilde{\mu}}_{g}((\beta^{*}\otimes \bQ(\alpha))(C,\rho,\mu))= {\tilde{\mu}}_{g}(C,\rho, {\alpha}_{*}\mu)=(C,\rho)= {\tilde{\mu}}_{f}(C,\rho,\mu)\, .$$
   \item morphisms:  Let $[A] \colon (C',\rho',\mu')\to (C,\rho,\mu)$ be  a morphism in $\bQ (X)$ and  $h$ {be} in $C_{0}(X')$.
   Then we have the equality
   $$ {\tilde{\mu}}_{g}((\beta^{*}\otimes \bQ(\alpha))( h\otimes [A]))= {\tilde{\mu}}_{g}(  \beta^{*}h\otimes [\alpha_{*}A]) =  {[\phi(g^*(\beta^*(h))) \alpha_*A]} = [(\alpha_{*}\phi)(g^{*}\beta^{*}h)A]\, .$$
   On the other hand,
   $$ {\tilde{\mu}}_{f}( h\otimes [A])=[\phi( {f^*}h)A]\, .$$
   The desired equality
   $$[\phi( {f^*} h)A]=[(\alpha_{*}\phi)( {g^*}\beta^{*}h)A]$$ now follows from the identity
  $$(\alpha_{*}\phi)(g^{*}\beta^{*}h)=\phi(\alpha^{*}g^{*}\beta^{*}h)=\phi(f^{*}h)$$ since
   $\alpha^{*}g^{*}\beta^{*}h=f^{*}h$ by the commutativity of \eqref{ewqfoijhqoiwefqwefewqfqewfef}.
 \end{enumerate}

 We post-compose the transformation in \eqref{frvjvoijoqvvvqwvwevqw} with the functor $\kkGA$ 
  and get   a natural transformation
 \begin{equation}\label{r9u89fqfewewqfwefqwefqwefqwef}
(\kkG(\tilde \mu) \colon \kkGA(C_{0}(-')\otimes \bQ (-))\to  Q^{(G)}_{\std}) \colon \Tw(G\UBC )^{\op}\to\KKG\, ,
\end{equation}
    where we use the  abbreviation 
 \[
  Q^{(G)}_{\std} \coloneqq \kkGA( \bQ^{(G)}_{\std})\,.
 \]
 Composing the transformation   \eqref{r9u89fqfewewqfwefqwefqwefqwef}
 with the   equivalence $$C_{0}(-') \hatotimes     Q (-)\simeq \kkGA(C_{0}(-')\otimes \bQ(-))$$    given by Lemma \ref{glkbijowerfvvfevsdfvsfdv}  
 (see \eqref{wregpkpwergwgreegergw} for {the} notation $Q(-)$) we get a natural transformation   \begin{equation}\label{ewfiuhiu34gferqgfqfewfeqwfqe}
(  \mu \colon C_{0}(-') \hatotimes Q(-)\to \tilde Q^{(G)}_{\std}) \colon \Tw(G\UBC )^{\op}\to \KKG\, .
\end{equation}

The composition of \eqref{roiegoijviojfeqwfqewfewfq} and  \eqref{ewfiuhiu34gferqgfqfewfeqwfqe}
then gives a natural transformation
$$  (\mu\circ   \delta \colon C_{0}(-')\wedge \KK(\C,Q(-))\to {C_0(-')\otimes Q(-)} \to      Q^{(G)}_{\std}) \colon \Tw(G\UBC )^{\op} \to \KKG$$
 whose value at the object  $f \colon X \to  X' $ is the morphism 
$$\mu_{f}\circ \delta_{f} \colon C_{0}(X')\wedge \KK(\C,Q  (X)) \to  {C_0(X') \otimes Q (X)} \to     Q^{(G)}_{\std}\, .$$
Equivalently, by \eqref{wetliogjwoiergregregef} and since the target functor is constant we can interpret 
this as a map   {of  spectra}
\begin{equation}\label{gpojeopkgwpegregrgergw}
S\to \KK^{G}(\colim_{\Tw(G\UBC )^{\op}} C_{0}(-')\wedge \KK(\C,Q(-)),      Q^{(G)}_{\std})\, .
\end{equation}
Note that $\Tw(G\UBC )^{\op}$ is small    and  the  presentable category $\KKG$ admits all small colimits.
We now use the chain of canonical equivalences  
\begin{eqnarray*}\lefteqn{
 \KK^{G}(\colim_{\Tw(G\UBC )^{\op}} C_{0}(-')\wedge \KK(\C,Q(-)),      Q^{(G)}_{\std})}&&\\&\simeq& \lim_{\Tw(G\UBC )}
 \KK^{G}(  C_{0}(-')\wedge \KK(\C,Q(-)),      Q^{(G)}_{\std})\\&\stackrel{\eqref{wergoihweriogergrwfere}}{\simeq}& \lim_{\Tw(G\UBC )}
 \map( \KK(\C,Q(-)) , \KK^{G}(  C_{0}(-') ,     Q^{(G)}_{\std}))
  \\
 &\stackrel{\eqref{wetliogjwoiergregregef}}{\simeq}&\mathrm{nat}(   \KK(\C,Q(-)),  \KK^{G}(  C_{0}(-) ,     Q^{(G)}_{\std}  ))\, ,
\end{eqnarray*}
where $\mathrm{nat}$ denotes the spectrum of natural transformations between functors from 
$G\UBC $ to $\Sp^{\la}$.
Therefore \eqref{gpojeopkgwpegregrgergw} provides a map 
$$S\to \mathrm{nat}(   \KK(\C,Q(-)),  \KK^{G}(  C_{0}(-) ,     Q^{(G)}_{\std}  ))
\, .$$
 This is the desired natural transformation 
  \begin{equation}\label{ekjbjkbejkefeqwfqwefe}
p \colon \KK(\C,Q(-))\to  \KKG(C_{0}(-), Q^{(G)}_{\std})
\end{equation} 
of functors from 
$G\UBC $ to $\Sp^{\la}$.
It follows from the identifications of $\delta_{\id_{X }}$ {with $\delta_{  X }$ by \eqref{eq_identification_deltas_X_nat}} and  {of} ${\tilde{\mu}}_{\id_{ X }}$ {with $\mu_{ X }$ stated after \eqref{eq_identification_mu_X_nat}} that the evaluation of $p$ at $X$ in $ G\UBC $ is equivalent to the morphism
$p_{ X }$ from \eqref{nlkkmlmvfdvsdva}.

    Recall that we use the notation 
    $$ \KK(\C,Q (X))\simeq  \KK(\C,\bQ (X))\simeq K_{\bC}^{G,\cX}( X ) \, , $$ and  $$  \KKG(C_{0}(X),  Q^{(G)}_{\std})\simeq   \KKG(C_{0}(X),  \bQ^{(G)}_{\std})\simeq K_{\bC}^{G,\An}(\iota^{\topp}(X ))\, .$$
  Therefore \eqref{ekjbjkbejkefeqwfqwefe}  is the desired Paschke transformation
$$p \colon  K_{\bC}^{G,\cX} \to K_{\bC}^{G,\An}\circ \iota^{\topp}\, .$$
By construction, we see that the   Paschke transformation is natural in the coefficient category $\bC$  in $\Fun(BG,\nCcat_{\ndeg,\eadd,\omega\add})$.
This finishes the proof of  Theorem \ref{qreoigjoergegqrgqerqfewf}.\ref{weiufhiqwefewwfqewfewf}.

 \phantomsection \label{eroigheifogegegergewgerg}
\section{ Reduction to  \texorpdfstring{$\boldsymbol{G}$}{G}-orbits}\label{ewoijegoewrgrewfe}

In {this} section we reduce the verification of  the Assertions   \ref{qreoigjoergegqrgqerqfewf}.\ref{qregiojqwfewfqwfqewf} and   \ref{qreoigjoergegqrgqerqfewf}.\ref{wrtigjiogowrefwerfewrf} to the case of $G$-orbits.
A discrete $G$-uniform bornological coarse space is a $G$-set   with
  the minimal coarse and bornological {structures} and the discrete uniform structure.
An object  $Y$ of $G\Set$ can canonically be considered as a discrete object  in $G\UBC$ which we will also denote by~$Y$. Alternatively we may use the more informative, but lengthier notation $Y_{min,min,disc}$, where the first $min$ indicates the minimal coarse structure, the second $ min$ the minimal bornology, and finally $disc$ the discrete uniform structure.
 Note that the construction $Y\mapsto Y_{min,min,disc}$ is functorial only for maps between $G$-sets with finite fibres.

Let $\cF$ denote a family of subgroups of $G$. We will be mainly interested in the family $\Fin$ of finite subgroups{, but the {following} proposition  {is valid} for any family $\cF$.} We let $G_{\cF}\Set$ be the category of very small $G$-sets with stabilizers in $\cF$. 

Let $X $ be in $G\UBC $.    {We assume that $\bC$ in $\Fun(BG,\nCcat)$ is effectively additive and admits countable AV-sums and} recall the  Definition \ref{qeorigjoqfeqwfqfewf} of the Paschke morphism.
\begin{prop}\label{eoirgjwergerregwgreg} 
Assume: 
\begin{enumerate}
\item \label{ergoiwejrgrgrgwrgwergw}
The Paschke morphism {for $S$} is an equivalence for every
 $S$ in $ G_{\cF}\Orb$.
\item \label{weliorgjwgwreegwre9} $X$ is homotopy equivalent to a  $G$-finite  $G$-simplicial complex with stabilizers in $\cF$  {and} with  structures induced by its spherical path metrics.
\end{enumerate}
Then the Paschke morphism for $X $ is an equivalence.
\end{prop}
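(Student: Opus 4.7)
The strategy is a standard Mayer--Vietoris induction on the simplicial structure, reducing the statement to Assumption \ref{ergoiwejrgrgrgwrgwergw}. The key is that the Paschke transformation $p$ has been constructed as a natural transformation between spectrum-valued functors on $G\UBC^\scale$, so the class of objects $(Y,\sigma)$ for which $p_{(Y,\sigma)}$ is an equivalence is closed under both homotopy equivalences and closed excisive decompositions.

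First, using the homotopy invariance of both functors $K^{G,\cX}_\bC\circ \iota^\scale$ and $K^{G,\An}_\bC\circ \iota^\topp$ (which follow from Proposition \ref{oirgjoerwgwegregw9}, \cite[Prop.\ 1.10]{KKG}, and Definition \ref{ergoijogiwregergergwergwegr}), together with the naturality of $p$, it suffices to prove the statement for $(K,\tau_{d})$, where $K$ is a $G$-finite $G$-simplicial complex with stabilizers in $\cF$ and $\tau_{d}$ is the scale associated to the spherical path metric. That $(K,\tau_{d})$ lies in $G\UBC^\scale$ is guaranteed by Proposition \ref{qwfiuhqwifweewfefffqedqwedew}.

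Next, I induct on the dimension $n$ of $K$, and within each dimension on the number of $G$-orbits of $n$-simplices. In the base case $n=0$, the complex $K$ is a finite disjoint union $\bigsqcup_{i} G/H_{i}$ with $H_{i}\in \cF$. Repeated application of closed excision (with empty intersection) reduces to checking $p_{(G/H_{i},\tau_{disc})}$ on each orbit, which is an equivalence by Assumption \ref{ergoiwejrgrgrgwrgwergw}. For the inductive step, choose a top-dimensional simplex $\sigma$ with stabilizer $H\in \cF$ and write $K=K'\cup (G\cdot \bar\sigma)$, where $K'$ is the closure of $K\setminus G\cdot \mathrm{int}(\sigma)$ and $K'\cap (G\cdot \bar\sigma)=G\cdot \partial\sigma$. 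This is a uniformly and coarsely excisive decomposition of $(K,\tau_d)$ in $G\UBC^\scale$, and also a closed excisive decomposition of the underlying $G$-topological spaces. By the induction hypothesis $p$ is an equivalence on $K'$ (fewer top orbits) and on $G\cdot\partial\sigma$ (strictly smaller dimension). The piece $G\cdot \bar\sigma\cong G/H\otimes \bar\sigma$ is homotopy equivalent to $G/H$ in $G\UBC^\scale$ via the contraction of $\bar\sigma$ to a barycenter, so the case of $G/H$ handled by Assumption \ref{ergoiwejrgrgrgwrgwergw} suffices. Applying closed excision to both functors and comparing via $p$ in the resulting pushout squares then yields that $p_{(K,\tau_d)}$ is an equivalence.

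\textbf{The main obstacle} is verifying that the inductive decomposition $K=K'\cup (G\cdot \bar\sigma)$ meets the hypotheses of closed excision simultaneously on both sides of $p$: that is, that $(K',G\cdot\bar\sigma)$ is both uniformly and coarsely excisive as a pair in $G\UBC^\scale$ (with the spherical metric scale), and is closed excisive as a decomposition of second countable locally compact $G$-spaces. The coarse part is easy from bounded geometry, but the uniform excisiveness requires showing that a uniform neighbourhood of $G\cdot \partial\sigma$ in $K$ is controlled by the union of uniform neighbourhoods in $K'$ and $G\cdot\bar\sigma$ separately, which uses the concrete structure of the spherical metric on adjacent simplices. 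One also needs to check that the homotopy $G\cdot \bar\sigma\simeq G/H$ is realized by a morphism in $G\UBC^\scale$ in the sense of Definition \ref{ergoijogiwregergergwergwegr}, and that the scale restricted to the orbit $G/H$ is equivalent (as a coarse structure, via Proposition \ref{eqroguerogeregwergwre}) to the discrete scale $\tau_{disc}$ used in Assumption \ref{ergoiwejrgrgrgwrgwergw}.
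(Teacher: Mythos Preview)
Your approach is correct and follows the same broad strategy as the paper (homotopy invariance plus Mayer--Vietoris induction on the simplicial structure), but the specific decomposition you use in the induction step is different.

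The paper does not remove one top-dimensional orbit at a time. Instead, for an $n$-dimensional $K$ it sets $Y$ to be the closed $1/2$-neighbourhood (in the spherical metric) of the $(n-1)$-skeleton $K_{n-1}$, and $Z:=K\setminus\inter(Y)$. Then $Y$ deformation retracts onto $K_{n-1}$, $Z$ onto the set of barycenters of the $n$-simplices, and $Y\cap Z$ onto the disjoint union of boundaries of the $n$-simplices. Thus all three pieces are homotopy equivalent in $G\UBC^{\scale}$ to $G$-finite complexes of dimension strictly less than $n$, and a single induction on dimension suffices. Your decomposition $K=K'\cup(G\!\cdot\!\bar\sigma)$ keeps $K'$ at dimension $n$, which is why you need the secondary induction on the number of top-dimensional orbits.

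Both variants require the same kind of verification: that the chosen pair is uniformly and coarsely excisive for $K^{G,\cX}_{\bC}$ and closed-excisive for $K^{G,\An}_{\bC}$, and that the homotopy equivalences used are morphisms in $G\UBC^{\scale}$. The paper's choice of a metric neighbourhood rather than a subcomplex arguably makes the retractions to lower-dimensional pieces more explicitly Lipschitz, but otherwise the technical burden is comparable. One point worth noting in your version: when you pass to $K'$ with the structure induced from $K$ and then invoke the induction hypothesis, you are implicitly using that the induced and intrinsic spherical path metrics on the subcomplex $K'$ give the same (or at least homotopy-equivalent) object of $G\UBC^{\scale}$; this holds because the two metrics are bi-Lipschitz on a $G$-finite complex, but it deserves a sentence.
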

\begin{proof}
We argue by induction on the dimension  $n$ of the $G$-simplicial complex in  Assumption  {\ref{eoirgjwergerregwgreg}.}\ref{weliorgjwgwreegwre9}.  
In order to simplify the notation we drop the functor $\iota^{\topp}$ from the notation if we apply 
  $K_{\bC}^{G,\An}$ to an object of $G\UBC$.
  
 Assume that  $n=0$ and that  $K  $ is in $G\UBC $
 such that    $K$ is a $0$-dimensional  $G$-finite $G$-simplicial complex with stabilizers in $\cF$.   For every orbit  $S$ in $G\backslash K$  we consider the closed  invariant partition  $(S,K\setminus S)$ of $K$.
 Applying excision for the functors $K_{\bC}^{G,\cX}$ and $K_{\bC}^{G,\An}$  
we get the respective projections 
$q^{\cX}_{S } \colon K_{\bC}^{G,\cX}(K)\to K_{\bC}^{G,\cX}(S)$
and
$q^{{\An}}_{ S} \colon K_{\bC}^{G,\An}(\iota^{\topp}(K))\to K_{\bC}^{G,\An}(\iota^{\topp}(S))$ for all $S$ in $G\backslash K$. 
We have a {commutative} square 
$$\xymatrix{K_{\bC}^{G,\cX}(K)\ar[rr]_-{\simeq}^-{\oplus_{S}q^{\cX}_{S}}\ar[d]^{p_{ K }}&&\ar[d]_{\simeq}^{\oplus_{S}p_{ S }} \bigoplus_{S\in  G\backslash K}   K_{\bC}^{G,\cX}(S) \\
K_{\bC}^{G,\An}(\iota^{\topp}(K)) \ar[rr]_-{\simeq}^-{\oplus_{S}q^{\An}_{S}}&&\bigoplus_{S\in   G\backslash K} K_{\bC}^{G,\An}(\iota^{\topp}(S))  }$$
Since we assume that  $G\backslash K$ is finite  the  horizontal morphisms are equivalences by excision. Furthermore,  the right vertical morphism is an equivalence by Assumption \ref{eoirgjwergerregwgreg}.\ref{ergoiwejrgrgrgwrgwergw}. 
Consequently, the left vertical morphism is an equivalence.

Let $n$ be in $\nat$ and    assume that we have shown  that $p_{K  } $ is an equivalence provided   $K$ is  $G$-finite  $G$-simplicial complex  {of dimension $n$ with stabilizers in $\cF$}   {and} with  structures induced by its spherical path metrics.
Let then $ X $ be in $G\UBC $ and 
assume that there exists a homotopy equivalence
 $X \to  K $. By the naturality of the Paschke transformation
  we can consider the  commutative square 
$$\xymatrix{K_{\bC}^{G,\cX}(X)\ar[r]^{\simeq}\ar[d]^{p_{ X }}&K_{\bC}^{G,\cX}(K)\ar[d]_{\simeq}^{p_{ K }}\\ K_{\bC}^{G,\An}(\iota^{\topp}(X))\ar[r]^{\simeq}&K_{\bC}^{G,\An}(\iota^{\topp}(K))}$$
{Since} the functors and $K_{\bC}^{G,\An}$ and $K_{\bC}^{G,\cX}$   are homotopy invariant by \cite[Thm.~1.15]{KKG}  and Proposition \ref{oirgjoerwgwegregw9}, respectively, the horizontal morphisms are equivalences. By assumption  the right vertical morphism in an equivalence, too. Consequently, the left vertical morphism is also an equivalence.

We now show {the 
induction} step.
Assume that $K $ in $G\UBC $ is such that $K$ is a $G$-finite $G$-simplicial complex  {of dimension $n$ with stabilizers in $\cF$}  with  structures induced by its spherical path metrics.
 Let $Y$ be the closed $1/2$-neighbourhood of the $(n-1)$-skeleton $K_{n-1}$ of $K$ and set $Z \coloneqq K\setminus \inter(Y)$.  Then $(Y,Z)$ is a closed decomposition of~$K$.
 
We can consider $Y$, $Z$ and $Y\cap Z$ as objects in $G\UBC $ with the induced structures.   We then have the following commutative diagram
\begin{equation}
\xymatrix{K_{\bC}^{G,\cX}(Y\cap Z)\ar[ddd]\ar[dr]_-{\simeq}^-{\quad p_{Y\cap Z}}\ar[rrr]& &&K_{\bC}^{G,\cX}(Z)\ar[ddd]\ar[dl]^-{\simeq}_-{p_{Z}}\\
&K_{\bC}^{G,\An}(\iota^{\topp}(Y\cap Z))\ar[r]\ar[d]&K_{\bC}^{G,\An}( \iota^{\topp}(Z))\ar[d]&\\
&K_{\bC}^{G,\An}(\iota^{\topp}(Y) )\ar[r]&K_{\bC}^{G,\An}(\iota^{\topp}(K))&\\
K_{\bC}^{G,\cX}(Y)\ar[rrr]\ar[ur]^-{\simeq}_-{\ p_{Y}}&&&K_{\bC}^{G,\cX}(K)\ar[ul]^-{p_{(K,\tau_{K})}}}\ .
\end{equation} 
  Since $Y, Z$ and $Y\cap Z$ are homotopy equivalent in $G\UBC $ to $G$-finite $G$-simplicial complexes of dimension $<n$ with stabilizers in $\cF$  their Paschke morphisms are equivalences by {the} induction hypothesis. Since the functors $K_{\bC}^{G,\An}\circ \iota^{\topp}$ and $K_{\bC}^{G,\cX}$ 
 are excisive for  this closed decomposition {(for $K^{G,\An}_{\bC}$ we use \cite[Prop. 5.1.2]{KKG})} the inner and the outer square are push-out squares. 
 Alltogether we can then conclude that the Paschke morphism $p_{K }$ is an equivalence, too.
 \end{proof}

In order to prepare the proof of Theorem \ref{qreoigjoergegqrgqerqfewf}.\ref{wrtigjiogowrefwerfewrf}  we replace the Paschke morphism $p$ in Proposition \ref{eoirgjwergerregwgreg}  by the locally finite version $p^{\mathrm{lf}}$ with target $K^{G,\An,\mathrm{lf}}_{\bC}$. In Assumption  \ref{eoirgjwergerregwgreg}.\ref{ergoiwejrgrgrgwrgwergw} we further replace $G_{\cF}\Orb$ by $G_{\cF}\Set$. {Note that this is a stronger assumption.}
 Let $X $ be in $G\UBC $.  The argument for Proposition \ref{eoirgjwergerregwgreg}
then also shows the following statement. 
\begin{prop}\label{eoirgjwergerregwgreg-allg} 
Assume: 
\begin{enumerate}
\item \label{ergoiwejrgrgrgwrgwergw-allg}
The Paschke morphism ${p^{\mathrm{lf}}_{ S }\colon K^{G,\cX}_{\bC}(S)\to K^{G,\An,\mathrm{lf}}_{\bC}(\iota^{\topp}(S))}$ is an equivalence for every countable
 $S$ in $ G_{\cF}\Set$.
\item \label{weliorgjwgwreegwre9-allg} $X$ is homotopy equivalent to a countable,  finite-dimensional  $G$-simplicial complex with stabilizers in $\cF$  {and} with  structures induced by its spherical path metrics.
\end{enumerate}
Then the Paschke morphism ${p^{\mathrm{lf}}_{ X }\colon K^{G,\cX}_{\bC}(X)\to K^{G,\An,\mathrm{lf}}_{\bC }(\iota^{\topp}(X))}$ is an equivalence.
\end{prop}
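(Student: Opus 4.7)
The plan is to mimic the proof of Proposition \ref{eoirgjwergerregwgreg} almost verbatim, substituting the locally finite Paschke morphism $p^{\mathrm{lf}}$ for $p$ and the target $K_{\bC}^{G,\An,\mathrm{lf}}$ for $K_{\bC}^{G,\An}$. All the ingredients needed for that argument have direct counterparts in the locally finite setting: as recorded in the introduction, $K_{\bC}^{G,\An,\mathrm{lf}}$ is homotopy invariant, and its restriction to second countable $G$-spaces with proper $G$-action is excisive for closed decompositions. The first step is to invoke homotopy invariance of both source and target to reduce to the case where $(X,\tau)$ is itself a countable finite-dimensional $G$-simplicial complex of dimension $n$ with stabilizers in $\cF$, equipped with the spherical-path scale $\tau_K$, and then induct on $n$.

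For the base case $n=0$, the complex $K$ is a countable $G$-set in $G_\cF\Set$, and inspection shows that $(K,\tau_K)$ coincides with $(K,\tau_{\mathrm{disc}})$ in $G\UBC^{\scale}$, so the Paschke morphism is an equivalence directly by Assumption \ref{ergoiwejrgrgrgwrgwergw-allg}. The strengthening of the hypothesis from $G_\cF\Orb$ to all of $G_\cF\Set$ is precisely what allows us to avoid decomposing $K$ as a coproduct of (possibly infinitely many) orbits; without this, one would have to compare the sum $\bigoplus_S K_{\bC}^{G,\cX}(S)$ on the source side with the product $\prod_S K_{\bC}^{G,\An,\mathrm{lf}}(S)$ on the target side, which match only in the $G$-finite situation.

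For the induction step I would form the decomposition $K = Y \cup Z$ as in the proof of Proposition \ref{eoirgjwergerregwgreg}, with $Y$ the closed $1/2$-neighborhood of $K_{n-1}$ and $Z = K \setminus \mathrm{int}(Y)$, so that $Y$, $Z$, and $Y \cap Z$ lie in $G\UBC^{\scale}$ by Lemma \ref{qeroighioerwfqweewfqffew} and are homotopy equivalent to countable finite-dimensional complexes of dimension strictly less than $n$. The induction hypothesis gives equivalences on the three pieces, and excision for both $K_{\bC}^{G,\cX}$ (Proposition \ref{oirgjoerwgwegregw9}) and $K_{\bC}^{G,\An,\mathrm{lf}}$ then reduces $p^{\mathrm{lf}}_{(K,\tau_K)}$ to these via the Mayer--Vietoris diagram. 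The main technical obstacle will be ensuring the hypotheses of the excision statement for $K_{\bC}^{G,\An,\mathrm{lf}}$: second countability of $K$ is automatic from its countable cellular structure, and properness of the $G$-action follows from the condition that stabilizers lie in $\cF$ whenever $\cF \subseteq \Fin$---a condition that is automatic for the intended application to Theorem \ref{qreoigjoergegqrgqerqfewf}.\ref{wrtigjiogowrefwerfewrf}, where $G$ itself is finite.
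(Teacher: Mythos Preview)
Your proposal is correct and follows essentially the same approach as the paper: the paper's proof simply says to redo the argument of Proposition \ref{eoirgjwergerregwgreg} for $p^{\mathrm{lf}}$, using the stronger hypothesis on all countable $G$-sets to bypass the decomposition of the $0$-skeleton into orbits. Your observation that this strengthening is exactly what avoids the sum-versus-product mismatch, and your care in checking the hypotheses (second countability, proper action) for excision of $K^{G,\An,\mathrm{lf}}_{\bC}$, are on point and in fact more explicit than the paper's own treatment.
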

\begin{proof}
Using the stronger Assumption  \ref{eoirgjwergerregwgreg-allg}.\ref{ergoiwejrgrgrgwrgwergw} instead of Assumption \ref{eoirgjwergerregwgreg}.\ref{ergoiwejrgrgrgwrgwergw} one can redo the proof of Proposition \ref{eoirgjwergerregwgreg} for {$ p^{\mathrm{lf}}$} avoiding the step where we decompose the zero-dimensional complex $K$ into a finite union of $G$-orbits.
\end{proof}

In the following lemma we show that   Assumption   {\ref{eoirgjwergerregwgreg}.}\ref{ergoiwejrgrgrgwrgwergw} implies  Assumption  {\ref{eoirgjwergerregwgreg-allg}.}\ref{ergoiwejrgrgrgwrgwergw-allg}
  provided $G$ is finite {and $\bC$ admits all very small AV-sums.} 
  
  \begin{lem} \label{wergoepgrwegefwf}We assume that $G$ is finite {and that  $\bC$  admits all very  small  {orthogonal} AV-sums.}
If 
the Paschke morphism $p_{T }$ is an equivalence for every
 $T$ in $ G\Orb$, then  the Paschke morphism ${p^{\mathrm{lf}}_{ S }}$ is an equivalence for every countable
 $S$ in $ G\Set$.
\end{lem}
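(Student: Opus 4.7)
The plan is to reduce the countable case to the finite case by exhibiting both sides of $p^{\mathrm{lf}}_{(S,\tau_{disc})}$ as cofiltered limits of the values on finite subunions of $G$-orbits, and to invoke Proposition~\ref{eoirgjwergerregwgreg} on each finite subunion. Since $G$ is finite, I write $S=\bigsqcup_{i\in\nat} T_i$ with each $T_i\in G\Orb$, and set $S_n:=\bigsqcup_{i\le n}T_i$. Each $S_n$ is then finite, hence $G$-compact, so $c_{S_n}$ is an equivalence and $p^{\mathrm{lf}}_{(S_n,\tau_{disc})}\simeq p_{(S_n,\tau_{disc})}$. Regarding $S_n$ as a $0$-dimensional $G$-finite $G$-simplicial complex with stabilizers in the family $\cF$ of all subgroups of $G$, Proposition~\ref{eoirgjwergerregwgreg} (with the lemma's orbit hypothesis providing the input) then shows that each $p_{(S_n,\tau_{disc})}$ is an equivalence.

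Next I would express both sides of $p^{\mathrm{lf}}_{(S,\tau_{disc})}$ as inverse limits over~$n$. The target identification $K^{G,\An,\mathrm{lf}}_{\bC}(S)\simeq\lim_n K^{G,\An}_{\bC}(S_n)$ is immediate from the defining formula \eqref{wergeefeferfewrferf} and cofinality of the $S_n$ among the open $G$-invariant subsets of the countable discrete space~$S$ with $G$-compact closure. For the source, I would first observe that because $\psi_{disc}(t)=\diag(S)$ for $t>0$, the coarse structure of $\cO^\infty(S)$ connects no pair of distinct points of $S$, so $\cO^\infty(S)\otimes G_{can,max}\simeq\bigsqcup_i \cO^\infty(T_i)\otimes G_{can,max}$ in $G\BC$. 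Strong additivity of $K\bC\cX^G_{c,G_{can,max}}$ (Theorem~\ref{qeroigqwefqewfefewfq}) then yields $K^{G,\cX}_{\bC}(S)\simeq\prod_i K^{G,\cX}_{\bC}(T_i)\simeq\lim_n K^{G,\cX}_{\bC}(S_n)$, with transition maps realized as the projections to the first summand in the closed-excision splitting $K^{G,\cX}_{\bC}(S_{n+1})\simeq K^{G,\cX}_{\bC}(S_n)\oplus K^{G,\cX}_{\bC}(T_{n+1})$ arising from the decomposition $S_{n+1}=S_n\sqcup T_{n+1}$.

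To conclude I would identify $p^{\mathrm{lf}}_{(S,\tau_{disc})}$ with $\lim_n p_{(S_n,\tau_{disc})}$ under these identifications. For each~$n$ I apply closed excision to $S=S_n\sqcup(S\setminus S_n)$, valid for $K^{G,\cX}_{\bC}$ by Proposition~\ref{oirgjoerwgwegregw9} and for $K^{G,\An,\mathrm{lf}}_{\bC}$ by its closed excision property on second countable spaces. Naturality of the Paschke transformation on the two inclusions $S_n\hookrightarrow S$ and $(S\setminus S_n)\hookrightarrow S$, which are morphisms in $G\UBC^{\scale}$, then forces $p^{\mathrm{lf}}_{(S,\tau_{disc})}$ to be block-diagonal with respect to the excision decomposition, with $S_n$-block equal to $p^{\mathrm{lf}}_{(S_n,\tau_{disc})}=p_{(S_n,\tau_{disc})}$. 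Projecting to the first summand produces the commutative square
\[\xymatrix{K^{G,\cX}_{\bC}(S)\ar[r]^-{p^{\mathrm{lf}}_{(S,\tau_{disc})}}\ar[d] & K^{G,\An,\mathrm{lf}}_{\bC}(S)\ar[d] \\ K^{G,\cX}_{\bC}(S_n)\ar[r]^-{p_{(S_n,\tau_{disc})}} & K^{G,\An}_{\bC}(S_n)}\]
for every~$n$, whence $p^{\mathrm{lf}}_{(S,\tau_{disc})}\simeq \lim_n p_{(S_n,\tau_{disc})}$, and since cofiltered limits of equivalences of spectra are equivalences the lemma follows. The main obstacle I anticipate is precisely this last identification: the canonical projections onto the $K(S_n)$-values on both sides are not induced by morphisms in $G\UBC^{\scale}$, so their compatibility with $p^{\mathrm{lf}}$ cannot be read off directly from naturality but must be extracted from the excision-induced block-diagonal structure produced by naturality on the two complementary inclusions.
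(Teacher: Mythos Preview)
Your overall strategy—expressing both sides as products over the orbits and identifying $p^{\mathrm{lf}}_{(S,\tau_{disc})}$ with the product of the orbit Paschke morphisms—is exactly the paper's approach, and your excision-based block-diagonal argument for the compatibility of $p^{\mathrm{lf}}$ with the projections is fine. The gap is in your justification of the source decomposition.

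You claim that $\cO^{\infty}(S)\otimes G_{can,max}\simeq\bigsqcup_{i}\cO^{\infty}(T_{i})\otimes G_{can,max}$ as a free union in $G\BC$, so that strong additivity applies. This is false. While the bornologies match and the coarse structure of $\cO^{\infty}(S)$ indeed does not connect distinct points of $S$, the free union coarse structure is strictly larger: in $\bigsqcup^{\mathrm{free}}_{i}\cO^{\infty}(T_{i})$ one may choose an independent scale $(\phi_{i},\psi_{i})$ and an independent $\R$-entourage for each $i$, whereas every generating entourage of $\cO^{\infty}(S)$ arises from a \emph{single} scale and a \emph{single} entourage of $\R\otimes S_{min,min}$, uniformly across all of $S$. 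So the identity of underlying sets is a morphism $\cO^{\infty}(S)\to\bigsqcup^{\mathrm{free}}_{i}\cO^{\infty}(T_{i})$ but not an isomorphism, and you have not argued that it induces an equivalence in $K$-theory. Note also that your decomposition, as written, makes no use of the finiteness of $G$ beyond the finiteness of orbits; this is a warning sign, since the lemma is not expected to hold for infinite $G$.

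The paper repairs this as follows. Instead of decomposing at the level of the cone, one first applies the cone boundary $\partial^{\Cone}$, which for the discrete object $S_{min,min,disc}$ is an equivalence $K^{G,\cX}_{\bC}(S)\simeq\Sigma\,K\bC\cX^{G}_{c,G_{can,max}}(S_{min,min})$ by \cite[Prop.~9.35]{equicoarse}. At this base level one has, crucially using $G_{can,max}=G_{max,max}$ for finite $G$, a genuine free union decomposition
\[
S_{min,min}\otimes G_{can,max}\;\cong\;\bigsqcup^{\mathrm{free}}_{T\in G\backslash S}\bigl(T_{min,min}\otimes G_{can,max}\bigr),
\]
and now strong additivity of $K\bC\cX^{G}_{c}$ yields the desired product decomposition. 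The rest of your argument then goes through.
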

\begin{proof}
 {The functor 
 $K^{G,\An,\mathrm{lf}}_{\bC}\circ \iota^{\topp}$}  sends countable disjoint unions  into
products.
Hence we have an equivalence
\begin{equation}\label{qewfqojkdjsd}
K^{G,\An{,\mathrm{lf}}}_{\bC}(S_{disc})\simeq \prod_{T\in G\backslash S} K^{G,\An{,\mathrm{lf}}}_{\bC}(\iota^{\topp}(T_{disc}))\, .
\end{equation}
   If $G$ is finite, then we have an equality $G_{can,max}=G_{max,max}$. Recall the notion of the free union from \cite[Ex.~2.16]{equicoarse}.
As in the proof 
   of \cite[Lem.\ 3.13]{equicoarse}, by exploiting the equality $G_{can,max}=G_{max,max}$,  we have an isomorphism
\begin{equation}\label{ccdscqcasdcascasdca}
S_{min,min}\otimes G_{can, {max}}\cong (\bigsqcup^{\mathrm{free}}_{T\in G\backslash S} T_{min,min}) \otimes G_{can,max}\cong 
\bigsqcup_{T\in G\backslash S}^{\mathrm{free}} (T_{min,min} \otimes G_{can,max})\, .
\end{equation}
in $G\BC$.
 {The additional assumption on $\bC$ implies that $ K\bC\cX^{G}$} is strongly additive by
\cite[Thm.~11.1]{coarsek}, see also Theorem \ref{qeroigqwefqewfefewfq}. {It therefore} sends free unions to products. Applying now $K\bC\cX^{G}$ to \eqref{ccdscqcasdcascasdca} and
  using Definition \ref{qriofjqofewfefqeqff}  we consequently   have an equivalence 
\begin{equation}\label{qewfqojkdjsd1}
K^{G,\cX}_{\bC}(S_{min,min,disc})\simeq \prod_{T\in G\backslash S} K^{G,\An{,\mathrm{lf}}}_{\bC}(\iota^{\topp}(T_{min,min,disc}))
\end{equation}
 {arising in the following way:
\begin{align*}
K^{G,\cX}_{\bC}(S_{min,min,disc}) \ & = \ K\bC\cX^{G}_{G_{can,max}}( \cO^\infty ( S_{min,min,disc} ) )\\
&\stackrel{{!}}{ \simeq }\ \Sigma K\bC\cX^{G}_{G_{can,max}}( S_{min,min,disc} )\\
& \stackrel{\mathclap{\eqref{ccdscqcasdcascasdca}}}\simeq \ \Sigma K\bC\cX^{G} \Big( \bigsqcup_{T\in G\backslash S}^{\mathrm{free}} (T_{min,min} \otimes G_{can,max}) \Big)\\
& \simeq \ \Sigma \prod_{T\in G\backslash S} K\bC\cX^{G} ( T_{min,min} \otimes G_{can,max} )\\
& \stackrel{{!}}{ \simeq } \ \prod_{T\in G\backslash S} K^{G,\cX}_{\bC} ( T_{min,min,disc} )\\
& \stackrel{\mathclap{\prod_{T}p_{T }}}\simeq \:\:\:\:\: \prod_{T\in G\backslash S} K^{G,\An}_{\bC}(\iota^{\topp}(T_{min,min,disc}))\\   &\simeq \ \prod_{T\in G\backslash S} K^{G,\An{,\mathrm{lf}}}_{\bC}(\iota^{\topp}(T_{min,min,disc}))\, .
\end{align*}
{Here we use \cite[Prop.\ 9.35]{equicoarse} for the  equivalences  marked by $!$.}
By} naturality of the Paschke transformation,  under the equivalences \eqref{qewfqojkdjsd} and   \eqref{qewfqojkdjsd1} the Paschke morphism $p^{{\mathrm{lf}}}_{S }$ corresponds to the product of the Paschke morphisms $p_{ T}$ for the $G$-orbits $T$  in $S$. 
If the latter are equivalences, then $p_{ S }$ is an equivalence.
\end{proof}

At the moment we do not know whether this lemma generalizes to infinite groups, possibly with restrictions on allowed stabilizers.

Combining Proposition \ref{eoirgjwergerregwgreg-allg} with Lemma \ref{wergoepgrwegefwf} we get the following result.
\begin{kor}\label{wtoigjwgowergwregwregw} 
Assume: 
\begin{enumerate}
\item $G$ is finite.
\item {$\bC$ admits all very small AV-sums.}
\item \label{ergoiwejrgrgrgwrgwergw-allg1} The Paschke morphism $p_{T }$ is an equivalence for every
 $T$ in $ G\Orb$.
 \item \label{weliorgjwgwreegwre9-allg1} $X$ is homotopy equivalent to a countable,  finite-dimensional  $G$-simplicial complex  with  structures induced by its spherical path metric.
\end{enumerate}
Then the Paschke morphism $p^{\mathrm{lf}}_{ X } \colon K^{G,\cX}_{\bC}(X)\to K^{G,\An,\mathrm{lf}}_{\bC}(\iota^{\topp}(X))$ is an equivalence.
\end{kor}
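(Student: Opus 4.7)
The plan is to deduce the statement by combining the two immediately preceding results, namely Proposition \ref{eoirgjwergerregwgreg-allg} and Lemma \ref{wergoepgrwegefwf}. Since $G$ is finite by hypothesis (i), I take $\cF$ to be the family of all subgroups of $G$; every subgroup is then automatically in $\cF$.

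First I would verify the two hypotheses of Proposition \ref{eoirgjwergerregwgreg-allg} for this choice of $\cF$. The hypothesis \ref{eoirgjwergerregwgreg-allg}.\ref{weliorgjwgwreegwre9-allg} is immediate from the corollary's hypothesis \ref{wtoigjwgowergwregwregw}.\ref{weliorgjwgwreegwre9-allg1}, because the $G$-simplicial complex provided there has, tautologically, stabilizers in the family of all subgroups. The hypothesis \ref{eoirgjwergerregwgreg-allg}.\ref{ergoiwejrgrgrgwrgwergw-allg} requires that $p^{\mathrm{lf}}_{(S,\tau_{disc})}$ be an equivalence for every countable $S$ in $G_{\cF}\Set = G\Set$. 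This is precisely the conclusion supplied by Lemma \ref{wergoepgrwegefwf}, whose hypotheses (finiteness of $G$ and the equivalence of $p_{(T,\tau_{disc})}$ for every $T$ in $G\Orb$) coincide with hypotheses \ref{wtoigjwgowergwregwregw}.\ref{ergoiwejrgrgrgwrgwergw-allg1} of the corollary together with the assumption that $G$ is finite.

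With both hypotheses of Proposition \ref{eoirgjwergerregwgreg-allg} verified, applying it directly yields that $p^{\mathrm{lf}}_{(X,\tau)}\colon K^{G,\cX}_{\bC}(X)\to K^{G,\An,\mathrm{lf}}_{\bC}(X)$ is an equivalence, which is the desired conclusion. There is no real obstacle here: the content has already been done in the preceding lemma and proposition, and the corollary is essentially a packaging statement. The only thing worth double-checking is that the notion of scale $\tau_{disc}$ used in Lemma \ref{wergoepgrwegefwf} agrees (via Lemma \ref{lem_discrete_good_scale}) with the scale appearing in the hypothesis of Proposition \ref{eoirgjwergerregwgreg-allg}, which it does since the first entry $\phi$ of any scale on a nonempty object of $G\UBC^{\scale}$ is good.
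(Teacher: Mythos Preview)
Your proposal is correct and follows exactly the paper's approach: the corollary is stated as an immediate combination of Proposition \ref{eoirgjwergerregwgreg-allg} with Lemma \ref{wergoepgrwegefwf}, and you have spelled out precisely how the hypotheses match up. The additional remark about the compatibility of the scale $\tau_{disc}$ is a sensible sanity check but not strictly needed, since both results are already formulated with the same $\tau_{disc}$.
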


\begin{rem}\label{aergijoarvvavadsvsdva}
We can not expect that the Paschke morphism is an equivalence for spaces which are not proper $G$-spaces. More precisely, we do not expect that  Assumption \ref{eoirgjwergerregwgreg}.\ref{ergoiwejrgrgrgwrgwergw}    is  satisfied if $\cF$ contains infinite subgroups.

 {Indeed, a}ssume that $S =G/H$ with $H$ infinite. 
Then we have  \begin{eqnarray}
K_{\bC}^{G,\cX}((G/H)_{min,min,disc})&\stackrel{\text{def.}}{\simeq} &K\bC\cX^{G}_{G_{can,max}}(\cO^{\infty}((G/H)_{min,min,disc} )))\nonumber\\&\stackrel{(1)}{\simeq}&\Sigma K\bC\cX^{G}_{G_{can,max}}((G/H)_{min,min})
\label{rgpfojewropgegwgwrgrgwegwerg}\\
&\stackrel{\text{def.}}{\simeq}&
 \Sigma K\bC\cX^{G}( (G/H)_{min,min}\otimes G_{can,max})\nonumber\\&\stackrel{(2)}{\simeq} &
 0\, , \nonumber
\end{eqnarray}
 {where t}he equivalence $(1)$ is an instance of \cite[Prop.\ 9.35]{equicoarse} since $(G/H)_{min,min,disc}$ is discrete. 
In order to see the equivalence $(2)$  we use that the functor   $K\bC\cX^{G}$ is continuous: We refer  
 to \cite[Def.\ 5.15]{equicoarse} for the definition of this notion and to \cite[Thm.\ 6.3]{coarsek} for the fact.
Continuity implies that the value of $K\bC\cX^{G}(X)$ for any $X$ in $G\BC$  is given as a colimit  of the values
$K\bC\cX^{G}(L)$ over the locally finite invariant subsets $L$ of $X$. 
We now observe that  if $H$ is infinite, then
$(G/H)_{min,min}\otimes G_{can,max}$ does not admit any non-empty invariant locally finite subset. 
Indeed, if $L$ would be such a subset, then on the one hand 
$(eH\times G)\cap L$ is finite,  but the infinite group $H$ acts freely on this set on the other hand.

 In contrast,  the spectrum
$$K_{\bC}^{G,\An} (  (G/H)_{disc} )\simeq \KKG(C_{0}((G/H)_{disc}),  \bQ^{(G)}_{\std})$$ does not vanish in general.
As an example we consider the case $G=H$, and we  further specialize to 
 {$\bC=\Hilb^{G}_{c}(A)$} for a {unital} $G$-$C^{*}$-algebra $A$.
 By Proposition \ref{wegojeogrregregwergwegre}
 we have an equivalence 
$$K_{\bC}^{G,\An} ( (G/H)_{disc})  \simeq \Sigma  \KKG(\C,A)\, .$$
We claim that this spectrum is non-trivial if we take $A=\C$ with the trivial $G$-action.
Indeed, in this case we have the class $ \id_{\kkG(\C)}$ in $\KKG_{0}(\C,\C)$ and
$ \id_{\kkG(\C)}\simeq 0$ if and only if $\KKG(\C,\C)\simeq 0$.
 Since $\kkG(\C)$ is the  tensor  unit of $\KKG$
 we have  $  \KKG(\C,\C)\simeq 0$ if and only if $\KKG\simeq 0$. 
 But since
 $$\KKG(C_{0}(G),\C)\simeq \Kast(\C)\simeq KU$$ by \cite[Thm.\ 1.23]{KKG}
this never happens.
\hB
\end{rem}

Consider $Y$  in $\ppGTop$.
At various places we will use the following properties of this functor.  
\begin{lem}\label{eroigjweogregwgefwe}
If $Y$ is homotopy equivalent to a $G$-finite $G$-CW-complex with finite stabilizers, then:
\begin{enumerate}
\item\label{qroigwoeigwjereferwfwerf} $\KKG(C_{0}(Y),-)$ sends exact sequences  in $\Fun(BG,\nCcat)$ to fibre sequences.
\item\label{qroigwoeigwjereferwfwerf1}  $\KKG(C_{0}(Y),-)$ annihilates flasque objects  in $\Fun(BG,\nCcat)$.
\item\label{qroigwoeigwjereferwfwerf3}  {$\KKG(C_{0}(Y),-)$  sends   relative    Morita equivalences to equivalences.}
\end{enumerate}
\end{lem}
\begin{proof}
By  \cite[Prop.~1.26]{KKG} the object $\kkG(C_{0}(Y))$ is  {$G$-proper and therefore $\mathrm{ind}$-$G$-proper} in the sense of   \cite[Def.~1.25]{KKG}.
{The} assertions now follow from  \cite[Thm.~1.32]{KKG}.
\end{proof}

Let $X$ be in $G\UBC $.  Then we have the multiplication map \eqref{oreihoijfvoisfdvervfdsvdfvsvv}
$$\mu^{\bQ}_{X} \colon C_{0}(X)\otimes \bQ(X)\to  { \bQ}^{(G)}_{\std}\, .$$
We add a superscript $\bQ$ since we are going to consider other versions of this map which will be distinguished by other choices for this superscript.  The main ingredient in the verification that $\mu^{\bQ}_{X}$ is well-defined was
 Lemma  \ref{wtioghjwergfgrefwref}
 saying that for a morphism 
  $A \colon (C,\rho,\mu)\to (C',\rho',\mu')$ in $\bCgtsmc(\cO(X)\otimes G_{can,max})$
we have $\phi'(f)A-A\phi(f)\in {\bC}$  for all  $f$ in $C_{0}(X)$.
If $X$ is discrete, 
then we actually have
$\phi'(f)A-A\phi(f)=0$ for all  such $f$.
This has the effect that  in the construction of $\mu_{X}$  {in \eqref{oreihoijfvoisfdvervfdsvdfvsvv}} on morphisms  {(see Item  \ref{vefvvdfvweewfvfdsv} in the list below   \eqref{oreihoijfvoisfdvervfdsvdfvsvv})} we do not have to go to the quotients in order to ensure compatibility with the composition.

From now one we assume that $X $  is discrete.   
Using the observation just made  we can lift $\mu^{\bQ}_{X}$ to a multiplication map
$$\mu^{\bD}_{X} \colon C_{0}(X)\otimes  \bD(X)\to   \bM\bC^{(G)}_{\std}\, , \quad    f\otimes A\mapsto fA\, ,$$
where    $\bD(X)$ is defined in \eqref{rgfqfewfqewf1}.
Using in addition  Lemma  \ref{ewiogjoerwgfdsfdgsfg}  and the definition \eqref{rgfqfewfqewf}  of $\bC(X)$
the map $\mu^{\bD}_{X} $ restricts to a map $\mu^{\bC}_{X}$ so that 
 we get a morphism of exact sequences in $\Fun(BG,\nCcat)$ 
\begin{equation}\label{qfqwfqewfeqrrredw}
\xymatrix{0\ar[r]& C_{0}(X)\otimes \bC (X)\ar[r]\ar[d]^{\mu_{X}^{\bC}}& C_{0}(X)\otimes \bD (X)\ar[d]^{\mu_{X}^{\bD}} \ar[r]& C_{0}(X)\otimes \bQ (X)\ar[d]^{\mu_{X}^{\bQ}}\ar[r]&0\\0\ar[r]&  {\bC}^{(G)}_{\std}\ar[r]& {\bM}\bC^{(G)}_{\std}\ar[r]& \bQ^{(G)}_{\std}\ar[r]&0}
\end{equation}
 Here in the upper line  we used \eqref{qwefoujujfqew09ufewewfewfqfqef} and  that  $C_{0}(X)\otimes -$  {(involving the maximal tensor product) preserves exact sequences  of $C^{*}$-categories  by  
  \cite[Prop. 7.23.1]{KKG}.} 

In the definition \eqref{rgergergegwegerg} of  the diagonal morphism $ \delta_{X }$
we could replace $\bQ (X)$ by $\bC (X)$ or $\bD (X)$. 
Using the obvious   naturality of the construction of $\delta_{X }$ in this  variable
we get a commutative diagram \begin{align}\label{qfwefeeqfeqfqwfqerrrwwfeqwf}
\\
\mathclap{
\xymatrix{ 
 \KK(\C,\bC (X))\ar[r]\ar[d]^{\delta_{X}^{\bC}} & \KK(\C,\bD (X))
 \ar[d]^{\delta_{X}^{\bD}}\ar[r]& \KK(\C,\bQ (X))
 \ar[d]^{\delta_{X}^{\bQ}} 
 \\ \KKG(C_{0}(X),C_{0}(X)\otimes \bC (X))  \ar[r]& \KKG(C_{0}(X),C_{0}(X)\otimes \bD  (X)) \ar[r]& \KKG(C_{0}(X),C_{0}(X)\otimes \bQ (X))  
 }\ .\notag }
\end{align}
Recall that we assume that $X$ is discrete. We now in addition assume that $X$ is $G$-finite and has finite stabilizers.
Using the exactness of the upper horizontal sequence in \eqref{qfqwfqewfeqrrredw} and  \eqref{qwefoujujfqew09ufewewfewfqfqef} 
we can conclude with Lemma  \ref{eroigjweogregwgefwe}.\ref{qroigwoeigwjereferwfwerf}
 that the horizontal sequences are segments of fibre sequences.
 Applying $\KKG(C_{0}(X),-)$ to   \eqref{qfqwfqewfeqrrredw} and composing  the resulting   morphism of fibre sequences with the morphism   \eqref{qfwefeeqfeqfqwfqerrrwwfeqwf} 
we get the morphism of fibre sequences
\begin{equation}\label{qfwefeeqfeqfqwfqffferrrwwfeqwf}  
\xymatrix{ 
 \KK(\C,\bC (X))\ar[r]\ar[d]^{p_{X}^{\bC}} & \KK(\C,\bD (X))
 \ar[d]^{p_{X}^{\bD} }\ar[r]& \KK(\C,\bQ (X))
 \ar[d]^{p_{X}^{\bQ}}
 \\ \KKG(C_{0}(X), {\bC}^{(G)}_{\std})\ar[r]&\KKG(C_{0}(X), {\bM}\bC^{(G)}_{\std})\ar[r]& \KKG(C_{0}(X),  \bQ^{(G)}_{\std}) }
\end{equation}
where $p_{X}^{\bQ}$ is the Paschke morphism \eqref{nlkkmlmvfdvsdva}.

For a family of subgroups $\cF$ we denote by $G_{\cF}\Set$   the full subcategory of $G\Set$ of $G$-sets with   stabilizers in $\cF$.
Let $Y $ be  a discrete object  of $G\UBC $.
\begin{prop}\label{lem_pseudoloc_vanishes_discrete}\mbox{}
\begin{enumerate}
\item
We have $ \KK(\C,\bD(Y))\simeq 0$.
\item \label{erthojrehtrhtrgegrge}
If $Y$ is in $G_{\Fin}\Set$ 
and $G\backslash Y$ is finite,   then   $\KKG(C_{0}(Y),{\bM}\bC^{(G)}_{\std})\simeq 0$.
\end{enumerate}
\end{prop}
\begin{proof}
We have the chain of equivalences: 
\begin{eqnarray*}
 \KK(\C,\bD(Y))&\stackrel{\eqref{rgfqfewfqewf1} \ \& \  \text{Def.~}\ref{qrogijeqoifefewfefewqffe}}{\simeq}&
 K\bC\cX^{G}_{G_{can,max}}(\cO (Y))\\ &\simeq&
 0
\end{eqnarray*}
since  the cone $\cO(Y)$  of a discrete object in $G\UBC$ is a flasque object in $G\BC$ by \cite[Ex. 9.25]{equicoarse}  and
the coarse homology theory  $K\bC\cX^{G}_{G_{can,max}}$ vanishes on flasques.
 %
%
%
%
%
Since $ \bM\bC^{(G)}_{\std}$ is flasque by Lemma \ref{wrtijhoerhgrtgertg} we conclude Assertion \ref{erthojrehtrhtrgegrge} with  
  Lemma   \ref{eroigjweogregwgefwe}.\ref{qroigwoeigwjereferwfwerf1}.
\end{proof}

Using {Proposition} \ref{lem_pseudoloc_vanishes_discrete} and  the morphism of fibre sequences \eqref{qfwefeeqfeqfqwfqffferrrwwfeqwf}
we get the following corollary.
\begin{kor}\label{qerpgoijqpwfeqfewfq}
If $X$ is in $G_{\Fin}\Orb$, then we have a commutative square $$\xymatrix{\Omega \KK(\C,\bQ (X))\ar[r]^{\simeq}\ar[d]^{\Omega p^{\bQ}_{X}}& \KK(\C,\bC (X))\ar[d]^{p_{X}^{\bC}}\\ \Omega  \KKG(C_{0}(X),\bQ^{(G)}_{\std})\ar[r]^{\simeq}& \KKG(C_{0}(X), {\bC}^{(G)}_{\std})}$$
In particular, the Paschke morphism for $X$ in $G_{\Fin} \Orb$ is an equivalence if and only if
the morphism $p_{X}^{\bC} \coloneqq \mu_{X}^{\bC}\circ \delta_{X}^{\bC}$ is an equivalence.
\end{kor}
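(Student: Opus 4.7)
The plan is to extract the corollary directly from the morphism of fibre sequences in \eqref{qfwefeeqfeqfqwfqffferrrwwfeqwf} by applying the vanishing results of Lemma~\ref{lem_pseudoloc_vanishes_discrete} to kill the middle column.

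First, I would observe that any $X$ in $G_{\Fin}\Orb$ is in particular a discrete object of $G\UBC^{\scale}$ (equipped with $\tau_{disc}$, which is admissible by Lemma~\ref{lem_discrete_good_scale}.\ref{item_discrete_good_scale_countable}), lies in $G_{\Fin}\Set$, and has finite quotient $G\backslash X = *$. Thus both hypotheses of Lemma~\ref{lem_pseudoloc_vanishes_discrete}.\ref{erthojrehtrhtrgegrge} are met. Consequently:
\begin{enumerate}
\item $\KK(\C,\bD_{\tau}(X))\simeq 0$, and
\item $\KKG(C_{0}(X),\bC^{(G)}_{\std})\simeq 0$.
\end{enumerate}

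Next, I would invoke the fact (already established in the discussion preceding Corollary~\ref{qerpgoijqpwfeqfewfq}) that both rows of \eqref{qfwefeeqfeqfqwfqffferrrwwfeqwf} are segments of fibre sequences: the top row comes from applying $\KK(\C,-)$ to the ideal inclusion $\bC_{\tau}(X)\to \bD_{\tau}(X)$ with quotient $\bQ_{\tau}(X)$ (here one uses that $\KK(\C,-)=\Kcat$ sends exact sequences of $C^{*}$-categories to fibre sequences), and the bottom row comes from applying $\KKG(C_{0}(X),-)$ to the bottom row of \eqref{qfqwfqewfeqrrredw}, using Lemma~\ref{eroigjweogregwgefwe}.\ref{qroigwoeigwjereferwfwerf} (which applies since $X$ is a $G$-orbit with finite stabilizer, hence homotopy equivalent to a $G$-finite $G$-CW-complex with finite stabilizers).

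Since the middle terms in both fibre sequences vanish, the connecting maps furnish canonical equivalences
\[
\KK(\C,\bC_{\tau}(X))\;\simeq\;\Omega\KK(\C,\bQ_{\tau}(X))\qquad\text{and}\qquad\KKG(C_{0}(X),\bK^{(G)}_{\std})\;\simeq\;\Omega\KKG(C_{0}(X),\bQ^{(G)}_{\std}),
\]
and by naturality of the fibre sequence boundary the vertical arrows $p_{X}^{\bC}$ and $\Omega p_{X}^{\bQ}$ fit into precisely the commuting square displayed in the statement. The ``in particular'' part is then immediate: $p_{X}^{\bQ}$ is an equivalence iff $\Omega p_{X}^{\bQ}$ is, iff $p_{X}^{\bC}$ is.

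There is essentially no obstacle here; the entire content has already been packaged in the preceding lemma and diagram. The only point requiring a moment of care is to verify that the lower horizontal sequence in \eqref{qfwefeeqfeqfqwfqewwfeqwf} really extends to a fibre sequence upon applying $\KKG(C_{0}(X),-)$, for which one just needs to confirm that $C_0(X)$ for $X\in G_{\Fin}\Orb$ fits into the class of algebras covered by Lemma~\ref{eroigjweogregwgefwe}.
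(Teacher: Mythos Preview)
Your proposal is correct and follows essentially the same approach as the paper, which simply states that the corollary follows from Lemma~\ref{lem_pseudoloc_vanishes_discrete} together with the morphism of fibre sequences~\eqref{qfwefeeqfeqfqwfqffferrrwwfeqwf}. You have in fact spelled out a few details (verifying the hypotheses of Lemma~\ref{lem_pseudoloc_vanishes_discrete}, and why the rows are fibre sequences) that the paper leaves implicit.
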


In view of Corollary \ref{qerpgoijqpwfeqfewfq} and Proposition \ref{eoirgjwergerregwgreg} and  {Corollary  \ref{wtoigjwgowergwregwregw}\footnote{This corollary is needed only for Theorem~\ref{qreoigjoergegqrgqerqfewf}.\ref{wrtigjiogowrefwerfewrf}.}} the following proposition finishes the proof of the
Theorems \ref{qreoigjoergegqrgqerqfewf}.\ref{qregiojqwfewfqwfqewf} and  \ref{qreoigjoergegqrgqerqfewf}.\ref{wrtigjiogowrefwerfewrf}. 
{We assume that $\bC$ in $\Fun(BG,\nCcat)$ is effectively additive and  admits countable AV-sums.}
 \begin{prop}\label{qeoigjqergqwedewdqed}
  {If  $X$ is in $G_{\Fin}\Orb$, 
then} 
\begin{equation}\label{verwejvweopvwerverwv}
p_{X}^{\bC} \colon \KK(\C,\bC (X))\to  \KKG(C_{0}(X),{\bC}^{(G)}_{\std})
\end{equation} 
is an equivalence.
\end{prop}

The whole of Section \ref{oifjqweofwefewffqfef} is devoted to the proof of this proposition.

\section{Verification of the Paschke equivalence on \texorpdfstring{$\boldsymbol{G}$}{G}-orbits}\label{oifjqweofwefewffqfef}

{We assume that $\bC$ in $\Fun(BG,\nCcat)$ is effectively additive and  admits countable AV-sums.}
We fix a finite subgroup  $H$   of $G$ and consider the $G$-set  $G/H$ in $G_{\Fin}\Orb$. 
As a first step we construct an explicit functor $\Theta$ in $\nCcat$ and show in Proposition \ref{wqeroighowergergregrgwergwreg} that
$p^{\bC}_{G/H}$ is an equivalence if and only if $\Kcat(\Theta)$ is an equivalence.
In the second  step we then verify in Proposition \ref{4oijotrherhrthrhe} that $\Kcat(\Theta)$  is an equivalence.

We form
 the $G$-bornological coarse space
$(G/H)_{min,min}\otimes G_{can,max}$.
It contains  
  the locally finite subset   
\begin{equation}\label{qrewfpokfpowefqweffqwfwefq}
X \coloneqq G(H, e)\, ,
\end{equation}
the $G$-orbit of the point $(H,e)$ in $G/H\times G$.  
Note that in contrast to the example in Remark \ref{aergijoarvvavadsvsdva}  the group $H$ is finite.
We equip $X$
 with the  bornological coarse structures  induced from $(G/H)_{min,min}\otimes G_{can,max}$.    {The map $g\mapsto g(H,e)$ is a $G$-equivariant  bijection of sets  between $G$ and $X$  which will be used below to name points and subsets of $X$.} The   induced bornology on $X$ is the minimal one. The induced $G$-coarse structure
 reflects the information about the finite subgroup $H$ and is in general smaller than the canonical coarse structure on $G$.  {For instance,    the subset $H $ is a coarse component of $X$.}

  The following lemma states 
  that the inclusion  $X\to  (G/H)_{min,min}\otimes G_{can,max}$
  is a continuous equivalence in the sense of   
\cite[Sec.~7]{desc}. 
 \begin{lem}\label{wegoijwegowiergwegerw}
 The inclusion $X\to  (G/H)_{min,min}\otimes G_{can,max}$ induces an equivalence
 $E(X)\to E((G/H)_{min,min}\otimes G_{can,max})$  for any continuous equivariant {coarse} homology theory $E$.
 \end{lem}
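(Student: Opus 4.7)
Write $Y := (G/H)_{min,min} \otimes G_{can,max}$. The strategy is to use continuity of $E$ to reduce to coarse equivalences between locally finite invariant subsets: by continuity applied to $Y$, we have $E(Y) \simeq \colim_L E(L)$ where $L$ runs over the filtered poset of $G$-invariant locally finite subsets of $Y$ (with induced structures). I will show that $X$ is itself such a subset, that the $L$'s containing $X$ are cofinal, and that each inclusion $X \hookrightarrow L$ (for $L \supseteq X$) is a $G$-equivariant coarse equivalence. Since $G$-equivariant coarse equivalences induce equivalences on every coarse homology theory, this gives the desired result.

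\textbf{Step 1 (Parameterization of invariant locally finite subsets).} The diagonal stabilizer of $(eH, e)$ is $H \cap \{e\} = \{e\}$, so $X = G \cdot (eH, e)$ is a free $G$-orbit, and its fibre over each coset $kH \in G/H$ is the finite set $\{(kH, kh) : h \in H\}$ of cardinality $|H|$. Since a subset of $Y$ is bounded iff its projection to $G/H$ is finite, $X$ is locally finite. More generally, the stabilizer of $eH$ in $G/H$ is $H$ acting on the second $G$-factor by left multiplication, so every $G$-invariant subset of $Y$ is of the form
$$L_{F'} := G \cdot (\{eH\} \times F') = \{(mH, n) : m^{-1} n \in F'\}$$
for a unique left-$H$-stable $F' \subseteq G$, and local finiteness corresponds to $F'$ being finite. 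The subset $X$ corresponds to $F' = H$, and $L_{F'} \supseteq X$ iff $F' \supseteq H$; the $L_{F'}$'s with $F' \supseteq H$ are cofinal since any $L_{F'}$ is contained in $L_{F' \cup H}$.

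\textbf{Step 2 (Coarse density).} I claim that for every finite left-$H$-stable $F' \supseteq H$, the inclusion $X \hookrightarrow L_{F'}$ is a $G$-equivariant coarse equivalence. The $G$-invariant entourages of $Y$ are generated by sets $\Delta_{G/H} \times W_S$ where $W_S := \{(g, gs) : g \in G, s \in S\}$ is the invariant entourage of $G_{can,max}$ corresponding to a finite subset $S \subseteq G$. Given $(mH, n) \in L_{F'}$ with $n = mf'$ for some $f' \in F'$, the point $(mH, m) \in X$ lies over the same coset $mH$, and $(n, m) = (mf', m) \in W_{(F')^{-1}}$. Hence $L_{F'}$ is contained in the $(\Delta_{G/H} \times W_{(F')^{-1}})$-thickening of $X$, i.e., $X$ is coarsely dense in $L_{F'}$ with respect to an invariant entourage. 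Combined with the fact that the inclusion is a controlled equivariant injection and the image carries the subspace coarse structure, this exhibits $X \hookrightarrow L_{F'}$ as an equivariant coarse equivalence in $G\BC$.

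\textbf{Main obstacle.} The delicate point is the last assertion of Step 2: verifying that a $G$-equivariant controlled inclusion with coarsely dense image is an equivariant coarse equivalence. A direct construction of an equivariant coarse inverse $L_{F'} \to X$ would require a $G$-equivariant choice of $H$-coset representatives, which does not exist in general. Instead one argues via the standard criterion that coarse density with respect to an invariant entourage is sufficient for the inclusion to become an equivalence under every equivariant coarse homology theory, bypassing the explicit construction of an inverse. Together with continuity this yields $E(X) \simeq \colim_{F' \supseteq H} E(L_{F'}) \simeq E(Y)$, as required.
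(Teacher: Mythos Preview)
Your overall strategy matches the paper's exactly: use continuity to reduce to the inclusions $X \hookrightarrow L$ for $G$-invariant locally finite $L \supseteq X$, and then show these are equivariant coarse equivalences. Steps 1 and 2 are correct and essentially the same as in the paper.

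The gap is in your ``Main obstacle'' paragraph. You assert that an equivariant coarse inverse cannot be constructed directly and instead invoke an unnamed ``standard criterion'' that coarsely dense invariant inclusions are inverted by every equivariant coarse homology theory. This criterion is not standard and is not obviously true in general (when the action has nontrivial stabilizers, a coarsely dense invariant subset need not admit an equivariant retraction). More importantly, your diagnosis of the obstacle is wrong: an equivariant inverse \emph{does} exist here, and the paper constructs one explicitly.

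The point you are missing is that $G$ acts \emph{freely} on $L_{F'}$ (indeed on all of $Y$, since it acts freely on the second factor). Choose for each left $H$-coset in $F'$ a representative $h_i$ (with $h_0 = e$ for the coset $H$). Then every point of $L_{F'}$ is uniquely of the form $g\cdot(H,h_i)$ for some $g\in G$ and some $i$, and the map
\[
p\colon L_{F'} \to X,\qquad g\cdot(H,h_i) \mapsto g\cdot(H,e)
\]
is well-defined (by freeness), $G$-equivariant (by construction), restricts to the identity on $X$, and satisfies $(\id,p)(\diag(L_{F'})) \subseteq \diag(G/H)\times W_{\{h_i^{-1}\}}$, so $\iota\circ p$ is close to the identity via an invariant entourage. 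This is exactly the paper's construction. Your worry about ``$G$-equivariant choices of $H$-coset representatives'' conflates this with trying to build an equivariant section $G/H\to G$, which is indeed impossible for nontrivial $H$; but $p$ uses the second coordinate $n$ as well, not just the coset $mH$, and this extra data resolves the ambiguity.
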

 \begin{proof} 
For $Y$ in $G\BC$ we let $\LF(Y)$ denote the poset of $G$-invariant locally finite subsets.
Let $L$ be  in $\LF((G/H)_{min,min}\otimes G_{can,max})$.
Then $L_{0} \coloneqq L\cap ( \{H\}\times G)  $ is a finite set which we will sometimes consider as a subset of $G$. Since every $G$-orbit in $L$ meets $L_{0}$ we have $L = GL_{0}$.
  
  We claim that 
  for every $L$ in $\LF((G/H)_{min,min}\otimes G_{can,max})$  the inclusion  $i \colon X\to L\cup X$ is a coarse equivalence.
 Indeed, we can construct
 an  inverse equivalence $p \colon L\cup X\to X$. The 
 map $p$ is the identity on $X$, and  it sends a point $g(H,h)$ (with $h$ in $L_{0}\setminus \{e\}$) in $L\setminus X$ to $g(H,e)$ in $X$. Then $p\circ i=\id_{X}$
 and  $i\circ p $ is close to the identity. In order to see the second assertion note that $L_{0}$ is finite and therefore $  \diag(G/H)\times \{(gh,g)\:|\: h\in L_{0},g\in G\}$ is a coarse entourage   
 of $(G/H)_{min,min}\otimes G_{can,max}$. We then use that    
  $$(\id_{X},i\circ p)(\diag(L\cup X)) \subseteq \diag(G/H)\times \{(gh,g)\:|\: h\in L_{0}, g\in G\}\, .$$  
  
 If $E$ is any equivariant coarse homology theory, 
 then  the canonical morphism $$E(X)\to \colim_{L\in \LF((G/H)_{min,min}\otimes G_{can,max}) } E(L)$$
  is an equivalence since the elements of $ \LF((G/H)_{min,min}\otimes G_{can,max}) $ containing $X$ are cofinal and for {those} elements the 
 inclusions $X\to   L$ are coarse equivalences. Since we assume in addition that 
  $E$  is continuous, the canonical morphism      $$  \colim_{L\in \LF((G/H)_{min,min}\otimes G_{can,max}) } E(L)\to E((G/H)_{min,min}\otimes G_{can,max})$$ is an equivalence. Hence  the composition of these equivalences is an equivalence
\[E(X)\to E((G/H)_{min,min}\otimes G_{can,max})\, . \qedhere\]
\end{proof}

Using the inclusion
\begin{equation}\label{evrwvjoivjiowewrervewrv}
i \colon {X}  \to  G/H\times G\to Z_{0}
\end{equation}
(see \eqref{fqewqewfkqwpefowqefewfef} for the notation $Z_{0}$ {as a subspace of $\cO((G/H)_{min,min})\otimes G_{can,max}$})   
\begin{equation}\label{evrwvjoivjiowewrervewrv234}
i_{*} \colon \bCgtsmc(X)\to \bC(G/H )
\end{equation}
(where we use \eqref{rgfqfewfqewf} for $\bC(G/H):=\bC((G/H)_{min,min,disc})$) we get an  inclusion
which identifies $\bCgtsmc(X)$
  with the full subcategory of objects of $ \bC(G/H ) $ supported on  $i(X)$.

    In the following $
    \Idem(\Res^{G}_{H}(   {\bC}^{(G) }_{\std})\rtimes H ) $
 is
 the relative idempotent completion using the embedding of $\Res^{G}_{H}(  {\bC}^{(G) }_{\std})\rtimes H$ as an ideal into $\Res^{G}_{H}( {\bM} \bC^{(G) }_{\std})\rtimes H $, \cite[{Def.~17.5}]{cank}. {In order to keep the notation readable\footnote{{i.e., to avoid symbols like $\Idem^{\Res^{G}_{H}( {\bM} \bC^{(G) }_{\std})\rtimes H)}(\Res^{G}_{H}(  {\bC}^{(G) }_{\std})\rtimes H)$}}, in contrast to the reference we will not indicate the bigger unital category by a superscript.}
Recall the notation for morphisms in crossed {products} from   \cite[Def.~5.1]{crosscat}. {In the formulas below, e.g., in order to interpret  the term $\mu(H)$ in \eqref{erergwgerw4r43r3},   we use the bijection between $G$ and $X$ mentioned above.}
    \begin{ddd}\label{weoigjowergrgweerffrfwff}
We define the functor 
   \begin{equation}\label{qwefewfefefweqfewfqwef}
\Theta \colon \bCgtsmc(X)\to  \Idem(\Res^{G}_{H}(  {\bC}^{(G) }_{\std})\rtimes H ) 
\end{equation}
   as follows:
 \begin{enumerate}
 \item objects: $\Theta$ sends the object $(C,\rho,\mu)$ in $ \bCgtsmc(X)$ to the object
 $( C,\rho ,{\pi} )$ in the category $\Idem(\Res^{G}_{H}(  {\bC}^{(G) }_{\std})\rtimes H)$,
 where the orthogonal projection  $\pi$ on $(C,\rho)$ is given by  \begin{equation}\label{erergwgerw4r43r3}
\pi: =\frac{1}{|H|}\sum_{h\in H} (\mu(H) ,h)\, .
\end{equation}
  \item \label{qewrglijqeovewfwerfwerfwerferfw} morphisms:    $\Theta$ sends $A \colon (C,\rho,\mu)\to (C',\rho',\mu')$ in $  \bCgtsmc(X)$ to the morphism $$\pi' (A,e)\pi \colon ( C,\rho,  \pi)
\to ( C',{\rho'} ,\pi')$$
in $\Idem(\Res^{G}_{H}(   {\bC}^{(G) }_{\std})\rtimes H)$.   
\end{enumerate}
\end{ddd}
{Note that $A \colon C\to C'$ belongs to $\bM\bC$, but since $H $ is a finite and hence bounded subset of $X$,
the projection  $\mu(H )$ belongs to $\bC$ by the local finiteness of $(C,\rho,\mu)$. Therefore $\pi'(A,e)\pi$ belongs to the ideal
$\Idem(\Res^{G}_{H}(   {\bC}^{(G) }_{\std})\rtimes H)$ as stated.} 
{In order to see that $\Theta$ is compatible with the composition}
     note that the relations    {$A\mu(H )=\mu'(H )A$ (since $H$ is a coarse component of $X$)} and   $h\cdot A=A$ for all $h$ in $H$  imply that
$(A,e)\pi=\pi'(A,e)$. 

 \begin{prop}\label{wqeroighowergergregrgwergwreg}
 The morphism $p^{\bC}_{G/H}$ in \eqref{verwejvweopvwerverwv} is an equivalence if and only if the morphism
  $\Kcat(\Theta) $ is  an equivalence, where $\Theta$ is as  in Definition \ref{weoigjowergrgweerffrfwff}.
  \end{prop}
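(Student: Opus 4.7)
The plan is to identify both the source and target of $p^{\bC}_{G/H}$ with $\Kcat$ applied to explicit $C^{*}$-categories, and then to check that under these identifications $p^{\bC}_{G/H}$ corresponds precisely to $\Kcat(\Theta)$.

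For the source, I would combine the left vertical equivalence in \eqref{qfwefeeqfeqfqwfqewwfeqwf} (applied to $X'=G/H$) with the continuity and coarse invariance of $K\bC\cX^{G}_{c}$ (Theorem \ref{qeroigqwefqewfefewfq}) and Lemma \ref{wegoijwegowiergwegerw}. This chain, together with the inclusion $i_{*}$ from \eqref{evrwvjoivjiowewrervewrv234}, yields an equivalence
\[
\KK(\C,\bC_{\tau}(G/H)) \;\simeq\; \Kcat\bigl(\bCgtsmc((G/H)_{min,min}\otimes G_{can,max})\bigr) \;\simeq\; \Kcat(\bCgtsmc(X)).
\]
For the target, I would use that $C_{0}(G/H)$ is induced from the trivial $H$-$C^{*}$-algebra $\C$ and apply the induction/restriction adjunction in equivariant $KK$-theory to get $\KKG(C_{0}(G/H),\bK^{(G)}_{\std}) \simeq \KKH(\C,\Res^{G}_{H}\bK^{(G)}_{\std})$. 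Since $H$ is finite, Green--Julg for $C^{*}$-categories identifies $\KKH(\C,-)\simeq \Kcat(-\rtimes H)$, and invariance of $\Kcat$ under passage to the relative idempotent completion (which is a Morita-type equivalence, see \cite{cank}) gives
\[
\KKG(C_{0}(G/H),\bK^{(G)}_{\std}) \;\simeq\; \Kcat\bigl(\Idem(\Res^{G}_{H}(\bK^{(G)}_{\std})\rtimes H)\bigr).
\]

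The main step is to verify that under these two identifications the Paschke morphism $p^{\bC}_{G/H}=\mu^{\bC}_{G/H}\circ\delta^{\bC}_{G/H}$ corresponds to $\Kcat(\Theta)$. I would unwind this at the cycle level. Starting from an object $(C,\rho,\mu)$ in $\bCgtsmc(X)$, the diagonal $\delta^{\bC}$ produces the exterior product with $C_{0}(G/H)$, and the multiplication $\mu^{\bC}$ turns an elementary tensor $f\otimes A$ into $\phi(f)A$. Pairing with the generator of $C_{0}(G/H)$ supported at the coset $eH$ (under the induction adjunction) selects the projection $\mu(H)$, and the Green--Julg equivalence implements the $H$-averaging which produces exactly the idempotent $\pi=\tfrac{1}{|H|}\sum_{h\in H}(\mu(H),h)$ from \eqref{erergwgerw4r43r3}. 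For morphisms, which are already $G$-invariant in $\bCgtsmc(X)$, the same unwinding produces the cycle $(A,e)$, matching Definition \ref{weoigjowergrgweerffrfwff}.\ref{qewrglijqeovewfwerfwerfwerferfw}.

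The main obstacle will be the cycle-level matching in this last step: while the induction adjunction and Green--Julg theorem are available abstractly at the level of the $\infty$-category $\KKG$, producing the explicit symmetrisation $\tfrac{1}{|H|}\sum_{h}(\mu(H),h)$ requires making both equivalences explicit and checking compatibility with the $G$-action on $\bK^{(G)}_{\std}$ as recorded in \eqref{eqwfijoifqjowiefjwoefewqfewqf}. Once this compatibility is established, $\Kcat(\Theta)$ and $p^{\bC}_{G/H}$ become identified as morphisms in $\Sp^{\la}$, so one is an equivalence iff the other is.
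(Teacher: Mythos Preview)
Your overall strategy matches the paper's: identify the source via $i_{*}$ and Lemma~\ref{wegoijwegowiergwegerw}, identify the target via the induction/restriction adjunction and Green--Julg, and then unwind the composite at the level of $C^{*}$-categories. The paper carries this out by a chain of large commuting diagrams (\eqref{erjioqergqrfqwefwqefqwfqef}, \eqref{oigjeroigergergergergwe}, \eqref{efqefeqfqefefqewf}) which track $p^{\bC}_{G/H}$ through $r^{G}_{H}$ and $j^{H}$ exactly as you outline.

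There is, however, a genuine subtlety you are glossing over in your ``main step''. When one actually unwinds the composite $\epsilon^{*}\circ(-\rtimes H)\circ r^{G}_{H}$ applied to $\mu'\circ\delta'$, the resulting explicit functor is \emph{not} $\Theta$ but a different functor
\[
\Theta' \colon \bCgtsmc(X)\to \Res^{G}_{H}(\bK^{(G)}_{\std})\rtimes H
\]
which sends $(C,\rho,\mu)$ to the \emph{full} object $(C,\rho)$ (no projection) and sends $A$ to the averaged morphism $\tfrac{1}{|H|}\sum_{h\in H}(\mu'(H)A,h)$. The Green--Julg map $\epsilon$ produces averaging in the morphism formula, but it does \emph{not} cut down the target object to the image of $\pi$; the functor $\Theta'$ is not even full. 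Your claim that the unwinding lands directly on $(C,\rho,\pi)$ with morphism $(A,e)$ is therefore incorrect as stated.

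The paper closes this gap in Lemma~\ref{egeroigjregwergege} by constructing a relative Murray--von Neumann equivalence between $c\circ\Theta'$ and $\Theta$ (where $c$ is the inclusion into the idempotent completion), implemented by the canonical partial isometries $(C,\rho,\pi)\hookrightarrow(C,\rho)$. One then invokes \cite[Prop.~15.10]{cank} to conclude $\Kcat(\Theta)\simeq\Kcat(c\circ\Theta')$, and Morita invariance of $\Kcat$ to remove $c$. Without this additional step your argument is incomplete: you have correctly located the obstacle, but its resolution requires this extra MvN comparison rather than a direct identification.
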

  \begin{proof}
  Recall that we consider $G/H$ as the object $G/H_{min,min,disc}$ of $G\UBC$ so that $C_{0}(G/H)$ is given by Definition \ref{kohperthrtgertg}.\ref{kohethertgtergelabel}.  
   In analogy to the diagonal  {morphism} \eqref{rgergergegwegerg} we define 
  \begin{eqnarray}\label{rgergergegffewefwefwefewfwfwegerg}
\delta' \colon \KK(\C,\bCgtsmc(X))
&\stackrel{C_{0}(G/H)\otimes -}{\to}&
 \KKG(C_{0}(G/H) ,C_{0}(G/H) \otimes \bCgtsmc(X)) \, .\nonumber
\end{eqnarray}
We then have a commutative diagram
\begin{align}
\label{erjioqergqrfqwefwqefqwfqef}\\
\mathclap{
\xymatrix{
\KK(\C,\bCgtsmc(X))\ar@/^-4cm/[dd]_{p'}\ar[rr]^{\KK(\C,i_{*})}\ar[d]^{\delta'} && \ar@/^4cm/[dd]^{p^{\bC}_{G/H}}  KK(\C,\bC(G/H))\ar[d]^{\delta_{G/H}^{\bC}}\\
\KKG(C_{0}(G/H) ,C_{0}(G/H) \otimes \bCgtsmc(X))\ar[rr]^{C_{0}(G/H)\otimes i_{*}}
\ar[d]^{\mu'} &&           \KKG(C_{0}(G/H) ,C_{0}(G/H) \otimes \bC(G/H) )\ar[d]^{\mu^{\bC}_{G/H}}\\
\KKG(C_{0}(G/H), {\bC}^{(G)}_{\std})\ar@{=}[rr] && \KKG(C_{0}(G/K),   {\bC}^{(G)}_{\std})
}
}
\notag 
\end{align}
where $\mu' \coloneqq \mu^{\bC}_{G/H}\circ ( C_{0}(G/H)\otimes i_{*})$ {and $i_*$ is as in \eqref{evrwvjoivjiowewrervewrv234}.}
The filler of the upper square is induced from the fact that \eqref{saCWDQKOIOJ1} is a bifunctor. Implicitly we also   used the Lemma \ref{glkbijowerfvvfevsdfvsfdv} in order to relate $\hatotimes$ and $\otimes$.

\begin{lem}\label{wegiojergwergwrefrfref}
The morphism $\KK(\C,i_{*}) \colon \KK(\C,\bCgtsmc(X))\to  \KK(\C,\bC(G/H))$ is an equivalence.
\end{lem}
\begin{proof}
 Using the definitions
$K\bC\cX^{G}(-):=\Kcat(\bCgtsmc(-))$ and $\Kcat(-) \coloneqq \KK(\C, -)$  and \eqref{evrwvjoivjiowewrervewrv} we can rewrite the morphism in question as 
\begin{equation}\label{wegergeferferfwef}K\bC\cX^{G}(X)\to K\bC\cX^{G}((G/H)_{min,min}\otimes G_{can,max})\to \Kcat (\bC(G/H)){\, ,}
\end{equation}
where the morphisms are induced by the canonical inclusions of $C^{*}$-categories.  We have seen in the proof of Proposition \ref{wrtoihgjoergergegwgrgwerg}  that the second morphism in \eqref{wegergeferferfwef} (it  is an instance of the left vertical morphism in \eqref{qfwefeeqfeqfqwfqewwfeqwf} applied  to
  $(G/H)_{min,min,disc}$ in place of $X$)  
  is an equivalence. The first morphism  in \eqref{wegergeferferfwef}  is induced by the inclusion $X\to (G/H)_{min,min}\otimes G_{can,max}$. Since $ K\bC\cX^{G}$ is a continuous equivariant coarse homology theory {it is an equivalence by} Lemma \ref{wegoijwegowiergwegerw}.
  \end{proof}
{We {continue with} the proof of Proposition \ref{wqeroighowergergregrgwergwreg}.} We define
$p':=\mu'\circ \delta'$.
In view of \eqref{erjioqergqrfqwefwqefqwfqef} and Lemma \ref{wegiojergwergwrefrfref} 
we conclude that
\begin{equation}\label{eq14toirj13o}
p'\simeq p^{\bC}_{G/H}\, .
\end{equation}

 We consider the morphism $\epsilon \colon \C\to \C\rtimes H$ which sends $1$ to the projection $\frac{1}{|H|}\sum_{h\in H} (1,h)$. 
Let furthermore  $\iota \colon \C\to \Res^{G}_{H}(C_{0}(G/H))$ be the homomorphism sending $z$ in $\C$ to $z\chi_{H}$, where $\chi_{H}$ is the characteristic function of the orbit $H$ in $G/H$. 
We then  have the following {commutative} diagram:
\begin{align}
\label{oigjeroigergergergergwe}\\
\mathclap{
\xymatrix{\KK^{G}(C_{0}(G/H),C_{0}(G/H)\otimes \bCgtsmc(X))\ar[r]^-{\mu'}\ar[d]\ar@/_5.3cm/[dd]_-{r^{G}_{H}}^{\simeq}& \KKG(C_{0}(G/H),  {\bC}^{(G)}_{\std}) \ar[d]\ar@/^4cm/[dd]_{r^{G}_{H}}^{\simeq}\\ 
\KKH(\Res^{G}_{H}(C_{0}(G/H)), \Res^{G}_{H}(C_{0}(G/H)\otimes \bCgtsmc(X)))\ar[r]^-{\Res^{G}_{H}(\mu')}\ar[d]^{\iota^{*}}&\KKH(\Res^{G}_{H}(C_{0}(G/H)),\Res^{G}_{H}(   {\bC}^{(G)}_{\std}))\ar[d]^{\iota^{*}}\\
\KKH(\C, \Res^{G}_{H}(C_{0}(G/H)\otimes \bCgtsmc(X)))\ar@/_5.3cm/[dd]_{j^{H}}^{\simeq}\ar[r]^-{\Res^{G}_{H}(\mu')}\ar[d]^{-\rtimes H}&\ar@/^4cm/[dd]_{j^{H}}^{\simeq}\KKH(\C,\Res^{G}_{H}( {\bC}^{(G)}_{\std}))\ar[d]^{-\rtimes H }\\\KK(\C\rtimes H, (\Res^{G}_{H}(C_{0}(G/H)\otimes \bCgtsmc(X)))\rtimes H)\ar[r]^-{\Res^{G}_{H}(\mu')\rtimes H}\ar[d]^{\epsilon^{*}}&\KK(\C\rtimes H,\Res^{G}_{H} (    {\bC}^{(G) }_{\std}) \rtimes H)\ar[d]^{\epsilon^{*}}\\\KK(\C, (\Res^{G}_{H}(C_{0}(G/H)\otimes \bCgtsmc(X)))\rtimes H)\ar[r]^-{\Res^{G}_{H}(\mu')\rtimes H} &\KK(\C,\Res^{G}_{H}(     {\bC}^{(G) }_{\std})\rtimes H)
}
}
\notag 
\end{align}
The second and the last middle {square} commute by the associativity of the composition in $\KKH$ and $\KK$, respectively. The first and the third {square} commute since
$\Res^{G}_{H}
$ and $-\rtimes H$ are functors. In order to see that 
  $r^{G}_{H}$ and $j^{H}$ are equivalences we observe that
  $\iota$ and $\epsilon$ are instances of the units of the adjunctions in  \cite[Thm.~1.23.1 \& 2]{KKG} (induction and restriction $(\Ind^{G}_{H} \dashv \Res^{G}_{H})$ and the Green-Julg adjunction $(\Res^{H} \dashv -\rtimes H)$) and that  
  $r^{G}_{H}$ and $j^{H}$ are precisely the corresponding equivalences
  of mapping spectra.

We furthermore have the diagram
\begin{align}
\label{efqefeqfqefefqewf}\\
{\scriptsize\mathclap{
\xymatrix{ \Hom_{\Fun(BG,\nCalg )}(C_{0}(G/H),C_{0}(G/H))\times \KK(\C,\bCgtsmc(X))\ar[r]^-{{\hat  \otimes}}\ar[d]&\KK^{G}(C_{0}(G/H),C_{0}(G/H)\otimes \bCgtsmc(X)) \ar[d] \ar@/^4.3cm/[dd]_{r^{G}_{H}}^{\simeq}\\ 
\Hom_{\Fun(BH,\nCalg )}(\Res^{G}_{H}C_{0}(G/H),\Res^{G}_{H}C_{0}(G/H))\times \KK(\C,\bCgtsmc(X))\ar[r]^-{ {\hat  \otimes}}\ar[d]^{\iota^{*}\times \id} &\KKH(\Res^{G}_{H}(C_{0}(G/H)), \Res^{G}_{H}(C_{0}(G/H))\otimes \bCgtsmc(X))\ar[d]^{\iota^{*}} \\
\Hom_{\Fun(BH,\nCalg )}(\C,\Res^{G}_{H}C_{0}(G/H))\times \KK(\C,\bCgtsmc(X))\ar[r]^-{{\hat  \otimes}}\ar[d]^{(-\rtimes H)\times \id} &\KKH(\C, \Res^{G}_{H}(C_{0}(G/H))\otimes \bCgtsmc(X)) \ar[d]^{-\rtimes H}\ar@/^4.3cm/[dd]_{j^{H}}^{\simeq} \\
\Hom_{\nCalg }(\C\rtimes H,\Res^{G}_{H}C_{0}(G/H)\rtimes H) \times \KK(\C,\bCgtsmc(X))\ar[r]^-{ {\hat  \otimes}}\ar[d]^{\epsilon^{*}\times \id} &\KK(\C\rtimes H, (\Res^{G}_{H}(C_{0}(G/H))\otimes \bCgtsmc(X))\rtimes H)  \ar[d]^{\epsilon^{*}} \\
\Hom_{\nCalg }(\C,\Res^{G}_{H}C_{0}(G/H)\rtimes H) \times \KK(\C,\bCgtsmc(X))\ar[r]^-{{\hat  \otimes} }&\KK(\C, (\Res^{G}_{H}(C_{0}(G/H))\otimes \bCgtsmc(X))\rtimes H)
}}}\notag \end{align}
In the targets of the two lower maps  we implicitly used the identification \begin{equation}\label{eqwffiouhq9wefewfqfewf}
(A\rtimes H)\otimes \bB\cong  (A\otimes \bB) \rtimes H 
\end{equation}  for
$A$ in $\Fun(BH,\nCalg)$ and $\bB$ in $\nCcat$.
The second and the last {square} commute since  ${\hat  \otimes}$  in \eqref{saCWDQKOIOJ1} is a bifunctor. 
We now provide the fillers for the first and the third {square}. 
We consider the diagram 
$$\xymatrix{ \Fun(BG,\nCalg )\times  \nCalg\ar[r]^-{\otimes}\ar[d]^{\Res^{G}_{H}\times \id}&\ar[d] \Fun(BG,\nCalg) \ar[r]^-{\kkG}\ar[d]^{\Res^{G}_{H}}&\KKG \ar[d]^{\Res^{G}_{H}} \\  \Fun(BH,\nCalg )\times  \nCalg \ar[r]^-{\otimes}& \Fun(BG,\nCalg)\ar[r]^-{\kkH}& \KKH  }$$ 
The left cell obviously commutes, and the right cell commutes 
by \cite[Thm.\ 1.22]{KKG}.
 We now extend using the universal property of $\kk \colon 
  \nCalg\to \KK$ \cite[Thm.\ 1.19]{KKG}  in order to get a {commutative} diagram
  $$\xymatrix{ \Fun(BG,\nCalg )\times  \KK \ar[r]^-{  {\hat  \otimes}}\ar[d]^{\Res^{G}_{H}\times \id}&\ar[d] \KKG \ar[d]^{\Res^{G}_{H}}  \\  \Fun(BH,\nCalg )\times  \KK   \ar[r]^-{ {\hat  \otimes}}& \KKH   }$$
  This applied to morphism spaces yields the filler of the first  middle square in 
  \eqref{efqefeqfqefefqewf}. 
  In order to justify the third middle square we argue similarly.
We consider the diagram 
$$\xymatrix{ \Fun(BH,\nCalg )\times  \nCalg\ar[r]^-{\otimes }\ar[d]^{ -\rtimes H\times \id}&  \Fun(BH,\nCalg) \ar[r]^-{\kkG}\ar[d]^{ - \rtimes H}&\KKH \ar[d]^{-  \rtimes H} 
\\  \Fun(BH,\nCalg )\times  \nCalg  \ar[r]^-{\otimes}& \Fun(BG,\nCalg )\ar[r]^-{\kk}& \KK  }$$ 
The left square commutes  because of \eqref{eqwffiouhq9wefewfqfewf}, and the right cell commutes by \cite[Thm. 1.22]{KKG}. 
We now extend using the universal property of $\kk \colon 
  \nCalg\to \KK$  in  \cite[Thm.\ 1.19]{KKG} in order to get a {commutative} diagram 
$$\xymatrix{ \Fun(BH,\nCalg )\times\KK \ar[r]^-{ {\hat  \otimes} }\ar[d]^{ -  \rtimes H\times \id}& \KK \ar[d]^{ -  \rtimes H} 
\\  \Fun(BH,\nCalg )\times \KK  \ar[r]^-{ {\hat  \otimes}}&\KK  }$$
This square yields the of the  third middle square in
  \eqref{efqefeqfqefefqewf}. 
  
  We specialize the diagram \eqref{efqefeqfqefefqewf} at $\id_{C_{0}(G/H)}$ in
  $ \Hom_{\Fun(BG,\nCalg )}(C_{0}(G/H),C_{0}(G/H))$. Then we get 
\begin{align}
\label{efqefeqfqfvdfvdfvefefqfwefwefewf}
\xymatrix{  \KK(\C,\bCgtsmc(X))\ar[rr]^-{ \delta'}\ar@{=}[d]&&\KK^{G}(C_{0}(G/H),C_{0}(G/H)\otimes \bCgtsmc(X)) \ar[d] \ar@/^5.35cm/[dd]_{r^{G}_{H}}^{\simeq}\\ 
  \KK(\C,\bCgtsmc(X))\ar[rr]^-{\id_{\Res^{G}_{H}C_{0}(G/H)}{\hat  \otimes}}\ar@{=}[d]  &&\KKH(\Res^{G}_{H}(C_{0}(G/H)), \Res^{G}_{H}(C_{0}(G/H))\otimes \bCgtsmc(X))\ar[d]^{\iota^{*}} \\
  \KK(\C,\bCgtsmc(X))\ar[rr]^-{\iota {\hat  \otimes}}\ar@{=}[d] &&\KKH(\C, \Res^{G}_{H}(C_{0}(G/H))  \otimes  \bCgtsmc(X)) \ar[d]^{-\rtimes H}\ar@/^5.35cm/[dd]_{j^{H}}^{\simeq} \\
 \KK(\C,\bCgtsmc(X))\ar[rr]^-{( \iota\rtimes H) {\hat  \otimes}}\ar@{=}[d]  &&\KK(C^{*}(H), (\Res^{G}_{H}(C_{0}(G/H))  \otimes  \bCgtsmc(X))\rtimes H)  \ar[d]^{\epsilon^{*}} \\
  \KK(\C,\bCgtsmc(X))\ar[rr]^-{  \delta''}&&\KK(\C, (\Res^{G}_{H}(C_{0}(G/H)  \otimes  \bCgtsmc(X))\rtimes H)  }
\end{align}
where  
\begin{align*}
\delta'' \coloneqq {\epsilon^{*}}(\iota\rtimes H){\hat  \otimes}- \colon \KK(\C,\bCgtsmc(X)) & \to \KK(\C, (\Res^{G}_{H}(C_{0}(G/H)) \rtimes H)\otimes \bCgtsmc(X)))\\
&\simeq  \KK(\C, (\Res^{G}_{H}(C_{0}(G/H))\otimes \bCgtsmc(X))\rtimes H)    \, .
\end{align*}
Composing \eqref{efqefeqfqfvdfvdfvefefqfwefwefewf} with \eqref{oigjeroigergergergergwe} we get a {commutative} square
$$\xymatrix{  \KK(\C,\bCgtsmc(X))\ar[rrr]^-{p'=  \mu' \circ \delta'}\ar@{=}[d]&&&  \KKG(C_{0}(G/H),    {\bC}^{(G)}_{\std})\ar[d]_{\simeq}^{j^{H}\circ r^{G}_{H}}\\  \KK(\C,\bCgtsmc(X))\ar[rrr]^-{p'':=\Res^{G}_{H}(\mu')\rtimes H\circ \delta''}
&&&\KK(\C,\Res^{G}_{H}(  {\bC}^{(G) }_{\std})\rtimes H)
}$$
 We therefore have an equivalence 
 \begin{equation}\label{qwfewpeofkqwepfqew1f}
p''\simeq p'\stackrel{\eqref{eq14toirj13o}}{\simeq} p^{\bC}_{G/H}\, .
\end{equation}
 By construction the morphism $p''$ is induced 
 by an explicit  functor 
 \begin{equation}\label{vervevervwekvpewvervevwev}
\Theta' \colon \bCgtsmc(X)\to \Res^{G}_{H}(  {\bC}^{(G)}_{\std})\rtimes H\, .
\end{equation}

 Inserting all definitions we see that $\Theta'$  is  given by follows:
 \begin{enumerate}
 \item objects:  $\Theta'$ sends the object $(C,\rho,\mu)$ in $ \bCgtsmc(X)$ to
 $(C, \rho  )$ in $ \Res^{G}_{H}( { \bC}^{(G) }_{\std})\rtimes H$.
\item morphisms:  The functor $\Theta'$ sends a morphism $A \colon (C,\rho,\mu)\to (C',\rho',\mu')$ in $  \bCgtsmc(X)$ to the morphism $$ {\pi'A\pi}\colon (C,\rho)\to  (C',\rho' )$$
in $\Res^{G}_{H}( {\bC}^{(G) }_{\std})\rtimes H$, {where $\pi$ is as in \eqref{erergwgerw4r43r3}}.  
 \end{enumerate}
 {The observations made after the Definition \ref{weoigjowergrgweerffrfwff} of $\Theta$ also show that $\Theta'$  is well-defined.} Note,  {however,} that $\Theta'$ is not full.

 Let $$ c\colon \Res^{G}_{H}(  {\bC}^{(G) }_{\std})\rtimes H\to \Idem(\Res^{G}_{H}(  {\bC}^{(G) }_{\std})\rtimes H )$$ be the inclusion {into the relative idempotent completion}.
We consider the two  functors  $$ \Theta , c\circ \Theta' \colon \bCgtsmc(X)\to   \Idem(\Res^{G}_{H}( {\bC}^{(G) }_{\std})\rtimes H)$$
 in $\nCcat$.
 
{Recall the notion of a   Murray von Neumann   (MvN)  equivalence  \cite[Def.\ {17.12}]{cank}.
\begin{lem}\label{egeroigjregwergege} There is a MvN equivalence $\Theta \to c\circ \Theta'$. 
  In particular  \begin{equation}\label{werfpokfpwerferffwrf}
\Kcat ( \Theta )\simeq \Kcat ( c\circ \Theta' ) \colon \Kcat ( \bCgtsmc(X ))\to \Kcat(\Idem(\Res^{G}_{H}(  {\bC}^{(G)}_{\std})\rtimes H)) \, .
\end{equation} 
\end{lem}}
\begin{proof}

{Applying   \cite[Rem.\ {17.13}]{cank} to the inclusion of $ \Idem(\Res^{G}_{H}( \bC^{(G)}_{\std})\rtimes H)$ as an ideal  into $ \Idem(\Res^{G}_{H}( \bM\bC^{(G)}_{\std})\rtimes H)$
 it suffices to construct a natural transformation 
 $v \colon \Theta\to c\circ \Theta'$ implemented 
  by a family $(v_{(C,\rho,\mu)})_{(C,\rho,\mu)\in  \bCgtsmc(X)}$ of partial isometries  in
  $\Idem(\Res^{G}_{H}( \bM\bC^{(G)}_{\std})\rtimes H)$.}

  {We define 
   $v_{(C,\rho,\mu)} \colon (C,\rho,p)\to (C,\rho)$ to be the canonical inclusion.}
Since the formulas for the actions of $\Theta$ and $\Theta'$ {on morphisms} are equal, this family is indeed a natural transformation.  
\end{proof}

{We  {continue with} the proof of Proposition \ref{wqeroighowergergregrgwergwreg}.} 
{Since  the homological functor $\Kcat$ is  Morita invariant by  \cite[Thm.\  {16.18}]{cank}
the morphism 
 $$\Kcat(c)\colon  \Kcat (\Res^{G}_{H}(  {\bC}^{(G)}_{\std})\rtimes H)\to \Kcat(\Idem(\Res^{G}_{H}(  {\bC}^{(G)}_{\std})\rtimes H))$$
is an equivalence {by  \cite[Prop.\ 17.{8}]{cank}}.} Therefore
$\Kcat(\Theta) $ is an  equivalence if and only if $ \Kcat(\Theta' )$ is an  equivalence. 
The Proposition \ref{wqeroighowergergregrgwergwreg} now follows from 
the
   combination of  \eqref{qwfewpeofkqwepfqew1f} and the fact that $p''$ is induced by the functor $\Theta'$.
  \end{proof}
  
 Recall the Definition \ref{weoigjowergrgweerffrfwff} of the functor $\Theta$ and that $H$ denotes a finite subgroup of $G$.
   The next proposition finishes the proof of 
 Proposition  \ref{qeoigjqergqwedewdqed} and hence of Theorem \ref{qreoigjoergegqrgqerqfewf}.
  \begin{prop}\label{4oijotrherhrthrhe}
   {
  $\Kcat(\Theta)$ is an equivalence.}
  \end{prop}
\begin{proof}
The proof of  Proposition \ref{4oijotrherhrthrhe} is based on the factorization of $\Theta$ as described by the commutative diagram \eqref{oijiojiovawvasfvvsdvdsva}. The functors in this diagram will all induce equivalences in $K$-theory, but for different reasons. {The rest of this section is devoted to the proof of Proposition \ref{4oijotrherhrthrhe} {which} is split in several lemmas.}

   \begin{lem}\label{qwroiqrgegwregrgwregwg}
 The functor $\Theta$ is fully faithful.
 \end{lem}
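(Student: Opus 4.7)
The plan is to prove full faithfulness of $\Theta$ by an explicit comparison of Hom-spaces. First, I would observe that since $H$ is finite, the coarse structure on $X=G(H,e)$ induced from $(G/H)_{min,min}\otimes G_{can,max}$ is generated by the single $G$-invariant entourage $E_H=\{(x,x\cdot h)\mid x\in X,\,h\in H\}$ coming from the right $H$-action on $X\cong G$. Consequently every morphism $A\colon(C,\rho,\mu)\to(C',\rho',\mu')$ in $\bCgtsmc(X)$ is $E_H$-controlled (so no closure is needed in Definition~\ref{rfquhwfiuqwhfiufewqefqwefqwefwefqwef}) and $G$-invariant.

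Next, I would show that such an $A$ is uniquely determined by the element $\bar{A}:=\mu'(H)A\mu(H)\in\mu'(H)\Hom_{\bK}(C,C')\mu(H)$, where $H\subset X$ now denotes the right $H$-orbit of the basepoint $(H,e)$. Indeed, $E_H$-control together with $E_H[H]=H$ yields the chain of equalities $A\mu(H)=\mu'(H)A\mu(H)=\mu'(H)A=\bar{A}$, and then $G$-invariance propagates $\bar{A}$ uniquely over the partition $X=\bigsqcup_{[g]\in G/H}gH$. Using the equivariance $h\cdot\mu(H)=\mu(hH)=\mu(H)$ from \eqref{wfoizeqwfu98u9e8wfqewf} together with the multiplicativity of the twisted action \eqref{eqwfijoifqjowiefjwoefewqfewqf} and the $G$-invariance of $A$, one checks that $\bar{A}$ is itself $H$-invariant. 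Conversely, every $H$-invariant element of $\mu'(H)\Hom_{\bK}(C,C')\mu(H)$ extends uniquely to a morphism of the desired type, since its $G$-translates are supported on the disjoint cosets $gH$ and hence produce a well-defined sum via the orthogonality criterion of Remark~\ref{rqfqiofioewjeowfewfqewfqwefe}.

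On the target side, a morphism $\phi\colon(C,\rho,\pi)\to(C',\rho',\pi')$ in the idempotent completion is characterized by $\phi=\pi'\phi\pi$. Writing $\phi=\sum_{h\in H}(b_h,h)$ and computing $\phi\pi$ and $\pi'\phi$ using the crossed product multiplication together with $h\cdot\mu(H)=\mu(H)$, I would deduce that all coefficients $b_h$ coincide with a single element $c$ which is $H$-invariant and satisfies $c=\mu'(H)c\mu(H)$. Thus both sides of $\Theta$ are parameterized by the same space, namely the $H$-invariant elements of $\mu'(H)\Hom_{\bK}(C,C')\mu(H)$. A direct computation
\begin{equation*}
\Theta(A)\;=\;\pi'(A,e)\pi\;=\;\frac{1}{|H|}\sum_{k\in H}(\bar{A},k),
\end{equation*}
using that $\bar{A}$ is $H$-invariant and satisfies $\mu'(H)\bar{A}=\bar{A}$, shows that $\Theta$ sends $A$ to the morphism determined by $c=\bar{A}/|H|$, so the induced map on Hom-spaces is a linear isomorphism.

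The main technical burden is keeping careful track of the several $G$-related actions in play --- the original $G$-action on $\bC$, the twisted conjugation action on morphisms in $\bC^{(G)}$ from \eqref{eqwfijoifqjowiefjwoefewqfewqf}, and the $H$-action used to form the crossed product --- and verifying that both the coarse control condition on $A$ and the projection condition $\phi=\pi'\phi\pi$ in the idempotent completion reduce the data to the same space of $H$-invariant elements of $\mu'(H)\Hom_{\bK}(C,C')\mu(H)$.
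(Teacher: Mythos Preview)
Your proof is correct and follows essentially the same strategy as the paper's: both arguments reduce to identifying each Hom-space with the $H$-invariant elements of $\mu'(H)\Hom_{\bK}(C,C')\mu(H)$ and observing that $\Theta$ induces a bijection between them. The paper is terser, writing down an explicit inverse $B\mapsto A:=\frac{1}{|H|}\sum_{g\in G}g\cdot B_e$, while you equivalently record the bijection as multiplication by $1/|H|$ under these parametrizations.
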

 \begin{proof}
 Recall that   $X=G(H,e)$ is a  subspace  of $(G/H)_{min,min}\otimes G_{can,max}$, see  \eqref{qrewfpokfpowefqweffqwfwefq}.  Let $(C,\rho,\mu)$ and $ (C',\rho',\mu')$ be objects of $ \bCgtsmc(X)$.
 Then $\Theta(C,\rho,\mu)=(C,\rho,\pi)$  with $\pi$ given by \eqref{erergwgerw4r43r3}, and similarly $\Theta(C',\rho',\mu')= (C',\rho',\pi')$.
 Let $$B \colon (C,\rho,\pi)\to (C',\rho',\pi')$$
 by any morphism. We can write $B=\sum_{h\in H} (B_{h},h)$, where $B_{h} \colon C\to C'$. The condition
 $\pi'B\pi=B$ implies that
 $B_{h}= \mu'(H)B_{e}\mu(H)$ and $h \cdot B_{e}=B_{e}$ for every $h$ in $H$.  {Using \cite[Lem.\ 7.8]{cank}
 we} can define the  morphism 
\begin{equation}\label{qewfqowifheiofewfqewfewf}
A \coloneqq \frac{1}{|H|}\sum_{g \in G}  g\cdot B_{e} \colon (C,\rho,\mu)\to (C',\rho',\mu')
\end{equation}  
in      $ \bCgtsmc(X)$.
 Then $\Theta(A)=B$.
 The formula \eqref{qewfqowifheiofewfqewfewf} defines an inverse of $\Theta$ on the level of morphisms.
  \end{proof}

 In $ \Idem(\Res^{G}_{H}(   {\bC}^{(G)}_{\std})\rtimes H)$
 we consider the full subcategory $\bD$  of objects of the form 
 $(C,\rho,(\mu(H ),e))$, where $(C,\rho,\mu)$ is in $\tCglf(X)$. 
 We let furthermore
 $\bD'$ be the {full} subcategory {of  $ \Idem(\Res^{G}_{H}( {\bC}^{(G)}_{\std})\rtimes H)$} on objects of the form $(C,\rho,(\mu(Z),e))$,
 where $(C,\rho,\mu)$ is in $ \tCglf(Y)$ for some free $G$-set $Y$ and $Z$ is a $H$-invariant subset  of $Y$.  By $\Lambda$ we denote the
 canonical inclusion of $\bD$ into $\bD'$.  {Below, the idempotent completions  of   $\bD$ and $\bD'$ are  formed relative to the full subcategories of $ \Idem(\Res^{G}_{H}(  \bM\bC^{(G)}_{\std})\rtimes H)$ on objects from $\bD$ or $\bD'$, respectively.}
 Then we have the following diagram
 \begin{equation}\label{oijiojiovawvasfvvsdvdsva}
\xymatrix{\ar@/^1cm/@{..>}[rrrr]^{\Theta} \bCgtsmc(X) \ar[r]^{\Phi}&\Idem(\bD)\ar@{..>}[rr]^{\Idem(\Lambda)} && \Idem(\bD')\ar[r]^-{\Delta} & \Idem(\Res^{G}_{H}(  {\bC}^{(G)}_{\std})\rtimes H)\, ,\\
&\bD\ar[u]^{\Xi}\ar[rr]^{\Lambda} && \bD'\ar[u]^{\Psi}&}
\end{equation}
where $\Delta$ is again  the canonical inclusion.
The upper line is then a factorization of $\Theta$ as indicated.

 In the following we will show that  all {solid}  morphisms in \eqref{oijiojiovawvasfvvsdvdsva} induce equivalences after applying $\Kcat$.
  It is clear that this implies  that   $\Kcat(\Theta) $ is  an equivalence.  
 To this end
we   use that $\Kcat$ sends   {unitary equivalences, Morita equivalences, relative  idempotent completions, and weak Morita equivalences (see \cite[Sec.~16--18]{cank})} to equivalences. 
In the following lemmas we argue case by case that all  {solid} arrows  {in the above diagram} have one of these properties. 

{Recall the notion of a relative  idempotent completion \cite[Def. 17.5]{cank}.}
\begin{lem}\label{wotihgjogregwgregw}  {$\Xi$ and $\Psi$   are relative idempotent completions.} 
\end{lem}
\begin{proof} {This is true by construction.}
\end{proof}{
Therefore $\Kcat(\Xi)$ and $\Kcat(\Psi)$ are  equivalences by  \cite[Prop.\ 17.4]{cank}.}

\begin{lem}\label{qergpoqjekrgpgqgre}
 $\Delta$  
is a  {unitary} equivalence in the sense of \cite[Def. 3.19]{cank}.
 \end{lem}
\begin{proof}{
It suffices to show the claim that  every object of $\Idem(\bD')$  admits a unitary   isomorphism to an object of
$\Idem(\Res^{G}_{H}(\bC^{(G)}_{\std})\rtimes H)$ in $\Idem(\Res^{G}_{H}(\bM\bC^{(G)}_{\std})\rtimes H)$. 
Since 
 $\bD'$ in particular contains  all objects of the form 
$(C,\rho,(\mu(Y),e))$ for all   free $G$-sets $Y$ and all $(C,\rho,\mu)$  in $ \tCglf(Y)$,
every object of $  \Res^{G}_{H}(\bC^{(G)}_{\std})\rtimes H$ is unitarily isomorphic   in $\Res^{G}_{H}(\bM\bC^{(G)}_{\std})\rtimes H$ to an object of
$\bD'$. 
This implies the claim by going over to the relative idempotent completions.}
%
%
%
%
\end{proof}
{Since $\Kcat$ is a homological functor by \cite[Thm.\ 14.4]{cank} the morphism $\Kcat(\Delta)$ is an equivalence.}
%

\begin{lem}\label{wetiogwtgwgreregwr}
$\Phi$  is a Morita equivalence.
\end{lem}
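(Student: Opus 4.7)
The plan is to verify the two standard conditions for a functor of $C^*$-categories to be a Morita equivalence: $\Phi$ is fully faithful, and every object of $\Idem(\bD)$ is isomorphic to a unitary summand of some $\Phi(C,\rho,\mu)$.

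Full faithfulness will be deduced from the factorization $\Theta = \Delta \circ \Idem(\Lambda) \circ \Phi$ appearing in diagram \eqref{oijiojiovawvasfvvsdvdsva}. The functor $\Idem(\Lambda)$ is fully faithful because $\Lambda \colon \bD \to \bD'$ is the inclusion of a full subcategory, and $\Delta$ is fully faithful by construction of the relative idempotent completion (the morphism spaces in $\Idem(\bD')$ are inherited from the ambient $\Idem(\Res^{G}_{H}(\bK^{(G)}_{\std}) \rtimes H)$). Combined with Lemma \ref{qwroiqrgegwregrgwregwg}, which states that $\Theta$ is fully faithful, this forces $\Phi$ to be fully faithful as well.

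For the essential surjectivity up to summands, fix an object $((C,\rho,(\mu(H),e)), q)$ of $\Idem(\bD)$, where $q$ is a projection in the endomorphism algebra
\[
R \coloneqq \End_{\bD}((C,\rho,(\mu(H),e))) = (\mu(H),e)\bigl(\End_{\bK}(C) \rtimes H\bigr)(\mu(H),e)\,.
\]
The central input is the classical Morita equivalence, valid for any finite group $H$ acting on a unital $C^*$-algebra $A$, between $A \rtimes H$ and its fixed-point subalgebra $A^{H}$, implemented by the symmetrization projection. In our situation, applied with $A = \mu(H)\End_{\bK}(C)\mu(H)$ and the $H$-action induced by $\rho$, this identifies $\pi R \pi$ with the $G$-invariant endomorphism algebra $\End_{\bCgtsmc(X)}(C,\rho,\mu)$, where $\pi$ is as in \eqref{erergwgerw4r43r3}. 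Consequently, after passing to an $|H|$-fold orthogonal amplification, every projection $q \in R$ becomes Murray--von Neumann equivalent to a projection lying in the endomorphism algebra of $\Phi$ applied to the amplified object, exhibiting $((C,\rho,(\mu(H),e)), q)$ as a unitary summand of $\Phi(C', \rho', \mu')$ for a suitable amplification $(C',\rho',\mu')$ of $(C,\rho,\mu)$.

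The main obstacle is organizing the amplification step rigorously at the level of $C^*$-categories. Given $(C,\rho,\mu) \in \bCgtsmc(X)$, one has to produce a $G$-equivariant $X$-controlled object realising an orthogonal sum of $|H|$ copies of $(C,\rho,\mu)$, while preserving local finiteness and the control by the coarse structure of $X$, and then construct the explicit partial isometry witnessing the Murray--von Neumann equivalence. This is precisely where the hypothesis of Proposition \ref{qeoigjqergqwedewdqed}, namely that $\bK$ is compatible with orthogonal sums (Definition \ref{wefiqrjhfoifewqdqwedqwedqewd}), enters: it ensures that the amplified object exists in $\bCgtsmc(X)$, so that the equivalence yields a genuine summand relation in $\Idem(\bD)$.
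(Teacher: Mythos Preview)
Your argument for full faithfulness is correct and is exactly what the paper does: use the factorization $\Theta=\Delta\circ\Idem(\Lambda)\circ\Phi$, the full faithfulness of $\Delta$ and $\Idem(\Lambda)$, and Lemma~\ref{qwroiqrgegwregrgwregwg}.

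The second half, however, has a genuine gap. The statement that ``the classical Morita equivalence, valid for any finite group $H$ acting on a unital $C^{*}$-algebra $A$, between $A\rtimes H$ and its fixed-point subalgebra $A^{H}$, implemented by the symmetrization projection'' is false in general: for the trivial $H$-action on $\C$ one gets $\C\rtimes H\cong \C[H]$, which is not Morita equivalent to $\C=\C^{H}$. What is always true is the corner identification $\pi(A\rtimes H)\pi\cong A^{H}$, but the Morita equivalence requires $\pi$ to be full in $A\rtimes H$. Your amplification step (``after passing to an $|H|$-fold orthogonal amplification, every projection $q\in R$ becomes Murray--von Neumann equivalent to a projection lying in the endomorphism algebra of $\Phi$ applied to the amplified object'') is precisely the statement that $\pi$ is full, so you are assuming what needs to be proved. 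In the particular situation at hand $\pi$ \emph{is} full, but this is not a general fact about finite group actions and must be verified.

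The paper avoids this entirely by a short explicit computation. One writes down the partial isometry
\[
U:=\frac{1}{\sqrt{|H|}}\sum_{h\in H}(\mu(\{h\}),h)
\]
and checks directly that $UU^{*}=(\mu(\{e\}),e)$ and $U^{*}U=\pi$. Conjugating by the unitaries $V_{h}:=(\mu(H),h^{-1})$ then shows that each rank-one piece $(\mu(\{h\}),e)$ is MvN-equivalent to $\pi$. Since these pieces are mutually orthogonal and sum to $(\mu(H),e)$, every object of $\bD$ is already an orthogonal sum of $|H|$ objects each isomorphic to $\Phi(C,\rho,\mu)$; objects of $\Idem(\bD)$ are then automatically summands. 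No amplification in $\bCgtsmc(X)$ is needed, and in particular the hypothesis that $\bK$ is compatible with orthogonal sums is \emph{not} used here (the paper only invokes it later, in Lemma~\ref{ergoijreogwergegwgwrgwr}). Your claim that this hypothesis is ``precisely where'' the argument needs it is therefore also incorrect.
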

\begin{proof}  
The functor 
{$
\Idem(\Lambda)$ is fully faithful by construction.} Since 
 $ \Theta$ is fully faithful by Lemma \ref{qwroiqrgegwregrgwregwg}  {and $\Delta$ is also fully faithful},  we can conclude  that $\Phi$ is fully faithful, too.

Let $(C,\rho,\mu)$ be an object of $\bCgtsmc(X)$. Then we define 
  $$U:=\frac{1}{\sqrt{|H|}} \sum_{h\in H}(\mu({\{h\}})  ,h)$$ in $\End_{\bD}( 
  {(C,\rho,(\mu(H) ,e)))}$.
We calculate that 
$$UU^{*}= (\mu( {\{e \}}),e)\, , \quad U^{*}U=\pi\, ,$$
{where $\pi$ is as in \eqref{erergwgerw4r43r3}.}
This calculation shows that the projection 
$\pi$ is {MvN}-equivalent to $(\mu({\{e\}}),e)$.
For $h$ in $H$ we consider the unitary $V_{h}:=(\mu(H ),h^{-1})$ in $\End_{\bD}((C,\rho,{({\mu}(H),e))})$.
 Then  $$V_{h}(\mu(\{e\}),e) V_{h}^{*}=(\mu(\{h\}),e)\, .$$
So the projection $(\mu({\{h\}}),e)$ is also MvN-equivalent to $\pi$ for every $h$ in $H$.
Since the projections
$((\mu(\{h\}),e))_{h\in H}$  are mutually orthogonal and
$\sum_{h\in H} (\mu(\{h\}),e)=(\mu(H),e)$
we see that any object of $\bD$ is an orthogonal summand  of a finite orthogonal  sum of objects in the essential image of $\Phi$.  This implies that also every object of $\Idem(\bD)$ is an orthogonal summand of a   finite orthogonal  sum of objects in the essential image of $\Phi$. 
\end{proof}

Since $\Kcat$ is Morita invariant by \cite[Thm.~16.18]{cank} the morphism $\Kcat(\Phi)$ is an equivalence.


\begin{lem}\label{ergoijreogwergegwgwrgwr} 
  {$\Lambda$ is a weak Morita equivalence.}
\end{lem}
\begin{proof}
{The functor $\Lambda$ is fully faithful by definition. 
{Furthermore, $\bD$ is unital since the identity on an object  $(C,\rho,(\mu(H ),e))$ of $\bD$ is   given by $(\mu(H),e)$ and $\mu(H)$ is in $\bC$.} 
It remains to  show} that the set of objects of $\bD$ is weakly generating in $\bD'$, see \cite[Def.\  {18.1}]{cank}.

Let $(C,\rho,(\mu(Z),e))$ be any object of $\bD'$, where $(C,\rho,\mu)$ is in $   \tCglf(Y)$ for some free $G$-set $Y$ and $Z$ is a $H$-invariant subset of $Y$.
 Let $y$ be a point in $Y$.
Then we can form the object $(C,\rho,(\mu(Hy),e))$ in $\bD'$.
 We claim that this object is isomorphic to an object in $\bD$. 
 We consider the  $G$-equivariant   injection $i \colon X\to Y$ which sends $(H,e)$ to $y$. 
 We choose an image $u \colon C'\to C$ {in $\bM\bC$} of the projection $\mu(Gy)$. 
 Then we define 
 $(C',\rho',\mu')$ in $\bCgtsmc(X)$ {by setting}
 {$\rho'_{g}=gu^{*}\rho_{g}u$ for every $g$ in $G$} and $\mu'(W)=u^{*}\mu(i(W)) u$ for every subset $W$ of $X$.
  Then we have an isomorphism
 $$(u,e) \colon (C',\rho',(\mu'(H),e))\to (C,\rho,(\mu(Hy),e))$$
 in  $\bD'$. 
 More generally, if $Z$ is any finite $H$-invariant subset of $Y$ (note that $H$ is finite), then $(C,\rho,(\mu(Z),e))$ is isomorphic to a finite sum of objects in $\bD$.

Let now $(A_{j})_{j\in J}$ with $A_{j} \colon (C_{j},\rho_{j},p_{j})\to (C,\rho,p) 
 $ be a finite family of morphisms in $\bD'$.
 
Let  $\epsilon$ in $(0,\infty)$ be given.
 Since 
 $C$ is  isomorphic to the  AV-sum {in $\bC$} of the family of 
 projections $(\mu(S))_{S\in H\backslash Y}$ {the sum
 $\sum_{S\in H\backslash Y} \mu(S)$ converges strictly  in $\bM\bC$ to $\id_{C}$.}
 Since the morphisms $A_{j}$ belong to $\bC$ 
  there exists a finite $H$-invariant subset $Z$ of
  $Y$ such that $$\|A_{j}- (\mu(Z),e)A_{j}\|<\epsilon$$ for all $j$ in $J$.   \end{proof}

{By \cite[Thm.\  {18.6}]{cank} the morphism $\Kcat(\Lambda)$ is an equivalence.}

{Applying $\Kcat$ to the diagram in \eqref{oijiojiovawvasfvvsdvdsva} and combining the results above we conclude 
the proof of Proposition \ref{4oijotrherhrthrhe}.} 
\end{proof}
{Therefore  the proofs of the Theorems \ref{qreoigjoergegqrgqerqfewf}.\ref{qregiojqwfewfqwfqewf} and  \ref{qreoigjoergegqrgqerqfewf}.\ref{wrtigjiogowrefwerfewrf} are   also complete.}\phantomsection\label{wefgiojwoegergregegw}

\section{Calculation of the domain and target of the Paschke transformation}\label{weogiwjiergreggwrgwerg}

\newcommand{\pcc}{\mathrm{pcc}}

The domain of the Paschke transformation is  the functor 
 $$
 K^{G,\cX}_{\bC}\colon G\UBC\to \Sp\, .$$
 {The   first goal  of} this section is to describe its values  on sufficiently  nice spaces in terms
 of the equivariant  homology theory $$K\bC^{G} \colon G\Orb\to \Sp$$ introduced in    \eqref{asdvijqoirgfvfsdvsva}, see Definition \ref{qroeigjoqergerqfewewfqewfeqwf} below for the technical description. {Our final result is stated in Proposition \ref{weroigjwerogregrgwffe}.}
 
{In order to understand why the construction of the comparison map in Proposition \ref{weroigjwerogregrgwffe} is  difficult, note that
 on the one hand for $X$ in $G\UBC$ the spectrum $ K^{G,\cX}_{\bC}(X)$ is defined as the $K$-theory of an explicitly constructed $C^{*}$-category associated to $X$ and the coefficient category $\bC$. On the other  hand the spectrum $K\bC^{G}(X)$ is the value on the underlying $G$-topological space of $X$ of the equivariant homology theory given by a spectrum-valued functor $K\bC^{G}$ on the orbit category $G\Orb$ of $G$ determined by $\bC$. The construction of a natural  map between $K^{G,\cX}_{\bC}(X)$ and
$K\bC^{G}(X)$ will involve a classification of functors with certain homological properties on subcategories of $G\Top$. This classification    is related to 
  {Elmendorf's} theorem 
and the techniques behind it.}

{The second theme of the present section is the calculation of the domain and target of the Paschke transformation. Our main example of a coefficient category is $\bC =\Hilb_{c}(A)$ for a $C^{*}$-algebra $A$ with an action of $G$.  If $A$ is unital, then  one can express the values of the functors 
$K^{G,\cX}_{\bC}$ on $G$-orbits  and of $K^{G,\An}_{\bC}$  on sufficiently nice spaces directly in terms of constructions with the algebra $A$. The results are stated as Corollary \ref{wtiejgowegwergwrefff} and Propositions \ref{wegojeogrregregwergwegre} and \ref{werigowergrefwerfwe}.}

  
  We start with the statement of Elmendorf's theorem.
Let $\bM$ be a cocomplete stable $\infty$-category.  
In the present paper we adopt the following simple definition which in some sense reverses the history of this notion. \begin{ddd}\label{weigjorgdg}
An equivariant $\bM$-valued homology theory  is  a functor 
 $$E \colon G\Orb\to \bM\, .$$ 
 \end{ddd}

 Recall that a weak equivalence between topological spaces is a  continuous map which induces a bijection between the sets of connected components and  isomorphisms between the higher homotopy groups at all base points. We have  a functor
 \begin{equation}\label{adsfasdfasdfadsf}
\ell \colon \Top\to \Spc
\end{equation}
   which presents $\Spc$ as the localization of $\Top$ at the weak equivalences. 
We now consider the functor 
\begin{equation}\label{wefqwefweqdxasdc}
Y^{G} \colon G\Top\to \PSh(G\Orb)
\end{equation}
 which sends  
$X$ in $ G\Top$ to the presheaf $$S\mapsto \ell(\Map_{ G\Top}(S_{disc},X))\, ,$$
where $\Map_{ G\Top}(S_{disc},X)$  in $\Top$ is the topological mapping space of equivariant maps. 
By definition, a map $f:X\to Y$  between $G$-topological spaces is an equivariant weak equivalence if it induces weak equivalences $\Map_{G\Top}(S_{disc},X)\to \Map_{G\Top}(S_{disc},Y)$  for  all $S$ in $G\Orb$.
 \begin{theorem}[Elmendorf's theorem] \label{weiogjodfbsefsffgsfgdfg}The functor $Y^{G}$ presents $ \PSh(G\Orb)$ as the Dwyer-Kan  localization  of $ G\Top $ at the  equivariant weak equivalences. \end{theorem}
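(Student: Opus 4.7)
The plan is to identify the localization $G\Top[W^{-1}]$, where $W$ denotes the class of equivariant weak equivalences, with $\PSh(G\Orb)$ via a restricted Yoneda construction, and then match the resulting equivalence with $Y^{G}$.

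First, let $j \colon G\Orb \to G\Top$ be the functor sending $S$ to $S_{disc}$. The composition $\bar{\jmath} \colon G\Orb \to G\Top[W^{-1}]$ with the localization is fully faithful: indeed, in the standard equivariant model structure on $G\Top$ (where $W$ are the weak equivalences and fibrations are maps inducing Serre fibrations on all fixed-point subspaces), each $S_{disc}$ is both cofibrant and fibrant, and for $S = G/H$, $T = G/K$ the ordinary mapping space $\Map_{G\Top}(S_{disc}, T_{disc}) = (G/K)^{H}$ is already discrete and hence models the derived mapping space. On mapping spaces, $\bar{\jmath}$ therefore agrees with the Yoneda embedding $G\Orb \hookrightarrow \PSh(G\Orb)$.

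Second, the essential image of $\bar{\jmath}$ generates $G\Top[W^{-1}]$ under small colimits. This uses $G$-CW approximation: every $X$ in $G\Top$ admits an equivariant weak equivalence from a $G$-CW complex, and such a complex is built by iterated pushouts from cells $G/H \times D^{n}$ along $G/H \times S^{n-1}$. Since $D^{n}$ is contractible and $S^{n-1}$ is a colimit of points in $\Spc$, each cell attachment realizes the cell as a colimit of orbits after localization. Combined with fully faithfulness and the universal property of $\PSh(G\Orb)$ as the free cocompletion of $G\Orb$ in presentable $\infty$-categories, this yields an equivalence $G\Top[W^{-1}] \simeq \PSh(G\Orb)$ given by the restricted Yoneda functor $X \mapsto \Map_{G\Top[W^{-1}]}(\bar{\jmath}(-), X)$.

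Finally, one matches this equivalence with $Y^{G}$. For $X$ in $G\Top$ with a $G$-CW approximation $X' \to X$, the derived mapping space satisfies $\Map_{G\Top[W^{-1}]}(S_{disc}, X) \simeq \ell(\Map_{G\Top}(S_{disc}, X'))$, since $S_{disc}$ is cofibrant; and replacing $X'$ by $X$ along the weak equivalence reproduces the formula defining $Y^{G}(X)(S)$ in \eqref{wefqwefweqdxasdc}. The main technical obstacle is the setup of the equivariant model structure on $G\Top$, which requires Quillen's small object argument applied to the generating (trivial) cofibrations $G/H \times (S^{n-1} \hookrightarrow D^{n})$ and the characterization of $W$ and fibrations through the fixed-point functors. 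Once this is in place, the rest of the argument is a formal consequence of the Yoneda paradigm for $\infty$-categorical cocompletions, and no further computation is needed.
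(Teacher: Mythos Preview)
The paper does not give a proof of this statement: it is quoted as Elmendorf's theorem and used as a black box, with no argument supplied. So there is nothing in the paper to compare your proposal against.

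Your sketch is a reasonable outline of a standard modern proof. A few small points worth tightening. First, in the last step you introduce a $G$-CW approximation $X'\to X$ to compute the derived mapping space out of $S_{disc}$, but this is unnecessary: in the fixed-point model structure every object is fibrant, so $\ell(\Map_{G\Top}(S_{disc},X))$ already models the derived mapping space once $S_{disc}$ is cofibrant. This is exactly what makes the formula for $Y^{G}$ in \eqref{wefqwefweqdxasdc} correct on the nose, not just up to replacement. Second, to invoke the universal property of $\PSh(G\Orb)$ as a free cocompletion you should note that $G\Top[W^{-1}]$ is presentable (it is the underlying $\infty$-category of a combinatorial model category), so that the colimit-preserving extension of $\bar{\jmath}$ exists and is the candidate inverse. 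Third, the generation statement is cleaner phrased as: the localization functor sends $G$-CW pushouts to pushouts (homotopy pushouts of cofibrations) and sequential unions of subcomplexes to filtered colimits, so $G$-CW approximation exhibits every object as an iterated colimit of orbits in $G\Top[W^{-1}]$. With these adjustments the argument goes through.
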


    By the universal property of  presheaves, the pull-back along the  Yoneda embedding   $\yo \colon G\Orb\to \PSh(G\Orb)$ induces  an equivalence 
  $$\yo^{*} \colon \Fun^{\colim}(\PSh(G\Orb),\bM)\stackrel{\simeq}{\to} \Fun(G\Orb,\bM)\, .$$
  Let $E \colon G\Orb\to \bM$ be an equivariant homology theory. Its
 colimit preserving extension  
  to presheaves     is the  left Kan-extension  $\yo_{!}E \colon \PSh(G\Orb)\to \bM$  of $E$ along $\yo$.
    \begin{ddd} \label{weiogwegerffs}
    The evaluation of $E$ on $G$-topological spaces is  defined as composition
 (which we will again denote by $E$)
 \begin{equation}\label{adfiuhuihavfvasdcacsadcasca}
E \colon G\Top  \stackrel{Y^{G}}{\to} \PSh(G\Orb)\stackrel{\yo_{!}E}{\to} \bM\, .
\end{equation}
\end{ddd}
 If  $S$ is in $G\Orb$, then the value
 of the original functor  $E$ on $S$ and the evaluation of $E$ on the discrete  $G$-space $S_{disc}$ coincide so that there is no conflict of notation.
The value of the   equivariant homology theory  on a general  space $X$ in $   G\Top$ is given by the coend
\begin{equation}\label{qwefpojfopwefqwefqwefqew}
E(X) \coloneqq \int^{G\Orb}  E \wedge  \Sigma^{\infty}_{+} Y^{G}(X)\, ,
\end{equation}
where  
 $\wedge \colon \bM\times \Sp\to \bM$ is the tensor structure of $\bM$ (the same as \eqref{sdvjqr0evj0fvsfvvfv})  which exists by the cocompleteness and stability assumptions on $\bM$.

 We let $G\UBC^{\pcc}$ be the full subcategory of 
$G\UBC$ of $G$-uniform bornological coarse spaces which have the following properties:
\begin{enumerate} 
\item the underlying topological space is Hausdorff,
\item  the bornology is generated by relatively compact subsets,
\item the coarse structure is generated by all entourages of the form
$G(K\times K)$, where $K$ is a relatively compact connected subset,
\item $G$ acts properly and cocompactly.
\end{enumerate}
The category $G\UBC^{\pcc}$ contains all $G$-finite $G$-simplicial complexes with finite stabilizers with the structures induced by the spherical path metric. 
   We consider the functor  
$ \iota \colon G\UBC\to G\Top$ which takes the underlying $G$-topological space. 
\begin{lem}\label{hfegwiugheifofdsfd} The restriction
$\iota_{|G\UBC^{\pcc}} \colon   G\UBC^{\pcc} \to G\Top$ is fully faithful.  
\end{lem}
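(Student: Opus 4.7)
Faithfulness is immediate: morphisms in $G\UBC$ are by definition continuous $G$-equivariant maps satisfying further compatibility conditions, so two of them that coincide as continuous $G$-maps must coincide as morphisms. Hence the content of the lemma is fullness.

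For fullness, fix $X, X' \in G\UBC^{\pcc}$ and a continuous $G$-equivariant map $f\colon \iota X \to \iota X'$. I have to verify that $f$ is proper for the bornology, controlled for the coarse structure, and uniform for the uniform structure. Using cocompactness, choose a compact subset $K\subseteq X$ with $GK=X$. For properness, let $B'\subseteq X'$ be relatively compact. Since $f(K)$ and $\overline{B'}$ are compact and $G$ acts properly on $X'$, the set $F\coloneqq\{g\in G \mid g\cdot f(K)\cap \overline{B'}\ne\emptyset\}$ is finite. Every $y\in f^{-1}(\overline{B'})$ can be written as $y=g\cdot k$ with $k\in K$, and equivariance together with $f(y)=g\cdot f(k)\in\overline{B'}$ forces $g\in F$. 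Hence $f^{-1}(\overline{B'})\subseteq\bigcup_{g\in F}g\cdot K$, a compact set, so $f^{-1}(B')$ is relatively compact.

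Control is direct from the description of the coarse structure: it is generated by entourages $G(K_0\times K_0)$ for relatively compact connected $K_0$, and $(f\times f)(G(K_0\times K_0))=G(f(K_0)\times f(K_0))$ lies in the coarse structure of $X'$, since $f(K_0)$ is again relatively compact connected (as the continuous image of such a set).

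The main obstacle is uniformity. The plan is to exploit the fact that the quotients $G\backslash X$ and $G\backslash X'$ are compact Hausdorff and therefore carry unique uniform structures compatible with their topologies; consequently, the continuous map $\bar f\colon G\backslash X \to G\backslash X'$ induced by $f$ is automatically uniformly continuous. The key step I expect to require care is to show that the $G$-invariant uniform structure on a $\pcc$-space is uniquely determined by its topology — equivalently, that any $G$-invariant entourage is determined by its restriction to $K\times K$ (a compact Hausdorff space, which admits a unique compatible uniform structure), and that conversely every neighborhood of the diagonal in $K\times K$ extends by $G$-translation to an entourage of $X$. Once this characterization is in place, uniform continuity of $\bar f$ transfers to $f$ via the quotient map, completing the verification that $f$ is a morphism in $G\UBC$.
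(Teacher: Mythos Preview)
Your properness and control arguments are correct; in fact your properness argument is more direct than the paper's, which proceeds by contradiction using nets. The control argument is essentially identical to the paper's.

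The uniformity part, however, has a genuine gap. You correctly identify that the heart of the matter is showing that the $G$-uniform structure on a $\pcc$-space is uniquely determined by its topology (equivalently: every $G$-invariant open neighbourhood of the diagonal is already a uniform entourage). But your proposed route to this fact does not work as stated. First, the detour through the quotient $\bar f \colon G\backslash X \to G\backslash X'$ is a red herring: entourages in a $G$-uniform structure are invariant under the \emph{diagonal} $G$-action on $X\times X$, whereas the preimages under $\pi\times\pi$ of entourages on $G\backslash X$ are invariant under the $G\times G$-action. These are very different, and uniform continuity of $\bar f$ does not directly give uniform continuity of $f$. Second, and more seriously, your claim that ``any $G$-invariant entourage is determined by its restriction to $K\times K$'' is false: for $(x,y)\in U$ with $x=gk$, $k\in K$, invariance only gives $(k,g^{-1}y)\in U$, and $g^{-1}y$ need not lie in $K$. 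Thus $G\cdot(K\times K)\subsetneq X\times X$ in general, so restricting to $K\times K$ and extending by $G$-translates does not recover $U$.

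The paper avoids this by arguing directly: assuming a $G$-invariant open entourage $U$ is not uniform, one produces for each invariant uniform $V$ a point $(x_V,y_V)\in V\setminus U$, and then uses cocompactness of $G\backslash X$ and a net argument to reach a contradiction with $U$ being an open neighbourhood of the diagonal. Once this claim is established, uniform continuity of $f$ follows immediately since $(f\times f)^{-1}$ of a $G$-invariant uniform entourage of $X'$ is a $G$-invariant open entourage of $X$. (Your idea can be salvaged by working with a compact neighbourhood $L\supseteq K$ and using that for any $W$ in the uniform structure one can arrange $W[K]\subseteq L$; then $G$-invariance does control $W$ on all of $X\times X$. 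But this extra step is essential and is not present in your sketch.)
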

  \begin{proof}
  It is clear that $\iota_{|G\UBC^{\pcc}}$ is faithful. We must show that it is full.
  Let $X,Y$ be in $ G\UBC^{\pcc}$ and $f \colon X\to Y$ be an equivariant continuous map.
  We must show that it is controlled, uniformly continuous and proper.
  
  We first show 
  that $f$ is proper.
{Let $K$ be a  relatively compact subset of $Y$ and let $(x_{\alpha})_{\alpha}$ be a net in $f^{-1}(K)$.}
 Since $K$ is relatively compact, and $G\backslash X$ is compact,  we can assume by taking a subnet that
 $(f(x_{\alpha}))_{\alpha}$ and  $([x_{\alpha}])_{\alpha}$ converge in $Y$ and $G\backslash X$, respectively.
 By the {latter} there exists a family $(g_{\alpha})_{\alpha}$ in $G$ such that $(g_{\alpha}x_{\alpha})_{\alpha}$ converges. 
 Since then $(g_{\alpha}f(x_{\alpha}))_{\alpha}$ also converges  and $G$ acts properly on $Y$ we can  assume after taking a subnet that
 $(g_{\alpha})_{\alpha}$ is constant. 
 {But this means that  $(x_{\alpha})_{\alpha}$ has a subnet converging in $X$, which shows that $f^{-1}(K)$ is relatively compact.}
  
 We claim that any invariant open entourage of the diagonal of $X$ is uniform. 
 The claim 
 implies that $f$ is uniformy continuous: Indeed, if $V$ is any uniform entourage of $Y$, then by the axioms for a $G$-uniform structure there exists an invariant  uniform entourage $V'$ of $Y$ such that $V'\subseteq V$. But then $(f\times f)^{-1}(V')$ is invariant and open, hence a uniform entourage of $X$ by the claim. The relation $(f\times f)^{-1}(V')\subseteq (f\times f)^{-1}(V)$ implies that $ (f\times f)^{-1}(V)$ is uniform.

 We now show the claim. Assume by contradiction that $U$ is not uniform.
 Then for every invariant uniform entourage $V$ of $X$ there exists 
 $(x_{V},y_{V})$ in $V\setminus U$. By compactness of the quotient we can assume, after taking a cofinal subnet $(V_{\alpha})_{\alpha}$ of uniform entourages,  that 
 $[x_{V_{\alpha}}]\to [x]$ and $[y_{V_{\alpha}}]\to [y]$.  We can find a net $(g_{ \alpha})_{\alpha}$ in $G$ such that $g_{\alpha}x_{V_{\alpha}}\to x$ in $X$.
 But then  also $g_{\alpha}y_{V_{\alpha}}\to x$ since $X$ is Hausdorff and the  net $(V_{\alpha})_{\alpha}$ of uniform entourages is cofinal. Since $U$ is $G$-invariant we have
 $(g_{\alpha}x_{V_{\alpha}},g_{\alpha}y_{V_{\alpha}})\not\in U$ for all $\alpha$, and since $U$ is open we conclude that also $(x,x)\not\in U$.
 But this is impossible since $U$ was an open neighbourhood of the diagonal.  
   
 We check on generators that $f$ is controlled. Let $K$ be a  relatively compact    connected subset of $X$ and consider the generator 
 $G(K\times K)$ of the coarse structure of $X$. Then $f(K)$ is relatively compact and connected, too.
  Therefore
 $(f\times f)G(K\times K) =G(f(K)\times f(K))$ is a coarse entourage of $Y$. 
 \end{proof}

Recall that  {$K^{G,\cX}_{\bC}$} 
is 
defined on $G\UBC$. By the Lemma \ref{hfegwiugheifofdsfd} 
 we  can restrict {$K^{G,\cX}_{\bC} $} 
  to a functor defined on the full subcategory $G\UBC^{\pcc}$ of $G\Top$. In contrast, the equivariant homology theory   $K\bC^{G}$ gives rise to a 
  functor defined on all of $G\Top$ by Definition \ref{weiogwegerffs}.
 Therefore, as a preparation we present a  general result which helps to compare a functor with homological  properties defined on some full subcategory of  $G\Top$ with  an associated equivariant homology theory.

Let $\bV$ be a simplicial model category  with weak equivalences  $W$, {homotopy equivalences $W_h$}, and
with functorial  factorizations. 
 The associated $\infty$-category of  $\bV$ is defined by $\bV_{\infty}:=\bV[W^{-1}]$. We let $\ell \colon \bV\to \bV_{\infty}$ denote the canonical functor.
We furthermore  let $\bV^{\cf}$ denote the full subcategory of cofibrant/fibrant objects in $\bV$. {The following lemma is of course well-known, but for lack of reference, we include a proof here.}

\begin{lem}
The inclusion $\bV^\cf \to \bV$ induces an equivalence of Dwyer--Kan localizations $\bV^\cf[W_h^{-1}] \simeq \bV[W^{-1}]$.
\end{lem}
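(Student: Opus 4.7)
The plan is to use the functorial factorizations to construct cofibrant and fibrant replacement functors $Q,R\colon \bV\to\bV$, together with natural weak equivalences $Q\to \id$ and $\id\to R$, and then to show that the composite $RQ\colon \bV\to \bV^\cf$ descends to an inverse (up to localization) of the localized inclusion $\ell(\iota)\colon \bV^\cf[W_h^{-1}]\to \bV[W^{-1}]$. First, I would split the statement into the following two equivalences:
\[
\bV^\cf[W_h^{-1}] \;\simeq\; \bV^\cf[W^{-1}] \;\simeq\; \bV[W^{-1}]\,.
\]
The left equivalence is essentially formal once one knows the classical fact (a.k.a.\ Whitehead's theorem for model categories) that on $\bV^\cf$ the classes $W\cap\Mor(\bV^\cf)$ and $W_h\cap\Mor(\bV^\cf)$ coincide: indeed, $W_h\subseteq W$ in any model category, and a weak equivalence between cofibrant--fibrant objects is a homotopy equivalence, see e.g.\ Quillen's original argument using the cylinder/path object and two-out-of-three. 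Hence the identity functor on $\bV^\cf$ passes to the same localization regardless of whether one inverts $W_h$ or $W$, giving the first equivalence.

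For the second equivalence I would use the fact that $Q$ and $R$ preserve weak equivalences (this is automatic for fibrant replacement built by factoring $X\to *$, and dually for cofibrant replacement; it relies on functoriality of the factorizations together with the two-out-of-three property). Consequently $RQ$ descends to a functor $\overline{RQ}\colon \bV[W^{-1}]\to \bV^\cf[W^{-1}]$. The natural weak equivalences $Q\to \id$ and $\id\to R$ assemble to a zigzag of natural weak equivalences between $RQ$ and $\id_\bV$, which, once inverted in the localization, produce an equivalence $\overline{RQ}\circ \ell(\iota)\simeq \id$ and $\ell(\iota)\circ \overline{RQ}\simeq \id$ in the respective $\infty$-categories. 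This shows that $\ell(\iota)\colon \bV^\cf[W^{-1}]\to \bV[W^{-1}]$ is an equivalence of $\infty$-categories.

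Combining the two equivalences yields the statement of the lemma. The only slightly non-formal ingredient is the coincidence $W=W_h$ on $\bV^\cf$, which however is classical; everything else is a standard manipulation of Dwyer--Kan localizations at classes of maps closed under the two-out-of-three property. Since the argument is entirely formal once these inputs are in place, I do not expect any genuine obstacle.
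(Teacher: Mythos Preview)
Your proof is correct and follows essentially the same approach as the paper: both construct an inverse to the localized inclusion using the functorial cofibrant--fibrant replacement together with the zigzag of natural weak equivalences relating it to the identity, and both rely on Whitehead's theorem to pass between $W$ and $W_h$ on $\bV^{\cf}$. The only minor organizational difference is that you explicitly factor the equivalence through the intermediate localization $\bV^{\cf}[W^{-1}]$, whereas the paper treats the comparison $\bV^{\cf}[W_h^{-1}]\simeq \bV[W^{-1}]$ in one step, using directly that the replacement functor sends weak equivalences to homotopy equivalences.
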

\begin{proof}
We consider  the following square 
$$\xymatrix{\bV^{\cf}\ar[r]^-{\ell_{h}}\ar[d]&\bV^{\cf}[W^{-1}_{h}]\ar@{..>}[d]\\ \bV\ar[r]^-{\ell}&\bV_{\infty}}$$
where the dotted arrow is obtained from the universal property of the localization $\ell_{h}$. {We claim that it is} an equivalence as desired. In order to produce an inverse 
 we consider the square 
$$\xymatrix{\bV\ar[r]^-{\ell}\ar[d]^{RL}&\bV_{\infty}  \ar@{..>}[d]\\ \bV^{\cf}\ar[r]^-{\ell_{h}}& \bV^{\cf}[W^{-1}_{h}]}$$
where $RL$ is the fibrant-cofibrant replacement functor. 
The dotted arrow is obtained from the universal property
of $\ell$ since $RL$ sends weak equivalences to homotopy equivalences.   
We have a diagram $$\xymatrix{ &L\ar[dr]\ar[dl]& \\ \id &&RL}$$ of endofunctors of $\bV$, where $L$ and $R$ are the fibrant and cofibrant replacement functors. It is 
  sent by $\ell$ to a diagram of equivalences. 
Similarly, $\ell_{h}$ sends  the restriction of this  diagram to $\bV^{\cf}$ to a diagram of equivalences.
From this we can conclude that the two dotted arrows are inverse to each other.
\end{proof}

Let $E \colon \bV\to \bM$ be a homotopy invariant functor.

\begin{lem}\label{ewiorgjowergwrewfre}
There exists a functor
$E^{\infty}\colon\bV_{\infty}\to \bM$ such that the following square commutes:
$$\xymatrix{\bV^{\cf}\ar[r]^{E_{|\bV^{\cf}}}\ar[d]&\bM\\ \bV \ar[r]&\bV_{\infty}\ar[u]_{E^{\infty}}}$$
\end{lem}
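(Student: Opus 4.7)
The plan is to leverage the previous lemma, which identifies $\bV^\cf[W_h^{-1}]$ with $\bV_\infty = \bV[W^{-1}]$, together with the universal property of the Dwyer--Kan localization.

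First I would observe that homotopy invariance of $E \colon \bV \to \bM$ means $E$ sends $W$ to equivalences in $\bM$. Restricting to cofibrant-fibrant objects, the functor $E_{|\bV^\cf} \colon \bV^\cf \to \bM$ then sends $W_h$-maps to equivalences, because between cofibrant-fibrant objects every simplicial homotopy equivalence is a weak equivalence. By the universal property of the Dwyer--Kan localization $\ell_h \colon \bV^\cf \to \bV^\cf[W_h^{-1}]$, this factorization yields an essentially unique functor $\tilde E \colon \bV^\cf[W_h^{-1}] \to \bM$ with $\tilde E \circ \ell_h \simeq E_{|\bV^\cf}$.

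Next I would invoke the preceding lemma, which provides an equivalence $\Phi \colon \bV^\cf[W_h^{-1}] \xrightarrow{\simeq} \bV_\infty$, with a chosen inverse $\Psi \colon \bV_\infty \xrightarrow{\simeq} \bV^\cf[W_h^{-1}]$ (built from cofibrant-fibrant replacement). I would then define $E^\infty \coloneqq \tilde E \circ \Psi \colon \bV_\infty \to \bM$. To verify that the required square commutes, I would use that, by construction of $\Phi$ in the proof of the previous lemma, the triangle
\[
\xymatrix{
\bV^\cf \ar[r]^-{\ell_h}\ar[d]_-{i} & \bV^\cf[W_h^{-1}] \ar[d]^-{\Phi}_-{\simeq} \\
\bV \ar[r]^-{\ell} & \bV_\infty
}
\]
commutes (this triangle is exactly the dotted filler in the first diagram of the preceding proof). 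Composing with $\Psi$ and using $\Psi \circ \Phi \simeq \id$, one gets $\Psi \circ \ell \circ i \simeq \ell_h$, and hence
\[
E^\infty \circ \ell \circ i \;\simeq\; \tilde E \circ \Psi \circ \ell \circ i \;\simeq\; \tilde E \circ \ell_h \;\simeq\; E_{|\bV^\cf}\,,
\]
which is the desired commutativity.

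The only genuine content is the universal property step and the compatibility of $\Phi$ with the evident inclusion-and-localization map $\ell \circ i \colon \bV^\cf \to \bV_\infty$; the latter is built into the construction of $\Phi$ in the previous lemma. There is no real obstacle beyond bookkeeping: the main technical input has already been supplied by the preceding lemma.
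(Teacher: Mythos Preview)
Your overall approach is the same as the paper's: factor $E_{|\bV^{\cf}}$ through the localization $\bV^{\cf}[W_h^{-1}]$ via its universal property, then use the equivalence $\bV^{\cf}[W_h^{-1}]\simeq \bV_\infty$ from the preceding lemma to define $E^\infty$. The paper's proof is exactly this, compressed into a single commuting diagram.

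There is, however, a terminological slip in your first step. You write that ``homotopy invariance of $E$ means $E$ sends $W$ to equivalences.'' That is not the intended meaning here: in this context homotopy invariant means that $E$ sends \emph{homotopy equivalences} $W_h$ to equivalences (this is precisely what the paper invokes: ``the dotted arrow exists since $E$ sends homotopy equivalences to equivalences''). If $E$ really inverted all of $W$, the lemma would be trivial---$E$ would factor through $\bV_\infty=\bV[W^{-1}]$ directly, and the detour through $\bV^{\cf}$ and the previous lemma would be unnecessary. Your subsequent sentence (``between cofibrant-fibrant objects every simplicial homotopy equivalence is a weak equivalence'') is true but not the relevant implication; the point is simply that $E$ inverts $W_h$ by hypothesis, hence $E_{|\bV^{\cf}}$ does too, and so factors through $\ell_h$. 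With that correction the argument is complete and matches the paper.
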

\begin{proof}
{We obtain} the desired square from 
$$\xymatrix{\bV^{\cf}\ar[r]^-{\ell_{h}}\ar[d]\ar@/^1cm/[rr]^{E_{|\bV^{\cf}}}&\bV^{\cf}[W^{-1}_{h}]\ar@{..>}[r]\ar[d]^{\simeq}&\bM\ar@{=}[d]\\ \bV\ar[r]^{\ell}&\bV_{\infty}\ar[r]^{E^{\infty}}&\bM}$$
where the dotted arrow exists since $E$ sends homotopy equivalences to equivalences.
\end{proof}

We consider  some full subcategory  $\bW$      of  $G\Top$  and let 
$E \colon \bW\to \bM$ be some functor. 
We assume that $\cF$ is a family of subgroups of $G$
and that $G_{\cF}\Orb\subseteq \bW$.  
We then define the equivariant homology theory  $E^{\%,\cF} \colon G\Orb\to \bM$ as the left Kan extension of the functor  $E_{|G_{\cF}\Orb}$ along $i_{\cF}$:
 $$\xymatrix{G_{\cF}\Orb\ar[rr]^{E_{|G_{\cF}\Orb}}\ar[d]_{i_{\cF}} &&\bM\,. \\G\Orb \ar[urr]_{E^{\%,\cF} }&&  }$$
Following Definition \ref{weiogwegerffs} we will consider
$E^{\%,\cF}$ also as a functor $E^{\%,\cF}\colon G\Top\to \bM$.

We now use that $G\Top$ admits a simplicial model category structure
with the  weak  equivalences  as described {after \eqref{wefqwefweqdxasdc}} and such that the notion of homotopy is the usual one. By Theorem \ref{weiogjodfbsefsffgsfgdfg} the functor
$Y^{G} \colon G\Top\to \PSh(G\Orb)$ is equivalent to the functor $G\Top\to G\Top_{\infty}$
 in the notation introduced before Lemma \ref{ewiorgjowergwrewfre}.  {Let $j \colon \bW\to G\Top$ denote the inclusion.}
 \begin{lem} \label{sogipegrggwege}Assume:
\begin{enumerate}
\item $G_{\cF}\Orb\subseteq \bW\subseteq G\Top^{\cf}$
\item  $\bW$ is closed under taking the product with $[0,1]$.
\item  
$E$ is homotopy invariant. 
\end{enumerate}
Then we have a canonical natural transformation of functors
$$j^{*}E^{\%,\cF} \to E \colon \bW\to \bM\, .$$
\end{lem}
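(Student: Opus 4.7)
The plan is to construct the desired natural transformation by interpreting $j^{*}E^{\%,\cF}|_{\bW}$ as a left Kan extension of $E|_{G_{\cF}\Orb}$ along the inclusion $G_{\cF}\Orb \hookrightarrow \bW_{\infty}$, where $\bW_{\infty}$ denotes the Dwyer--Kan localization of $\bW$ at (weak/homotopy) equivalences, and then invoking the universal property of left Kan extensions.

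First, I would apply Lemma~\ref{ewiorgjowergwrewfre} to the functor $E \colon \bW \to \bM$. Since $\bW \subseteq G\Top^{\cf}$ and $\bW$ is closed under $-\times[0,1]$, the notion of homotopy in $\bW$ coincides with the intrinsic notion in $G\Top$, and so the homotopy invariance of $E$ implies that $E$ descends to a functor $\widetilde{E} \colon \bW_{\infty} \to \bM$ after localization. Elmendorf's theorem (Theorem~\ref{weiogjodfbsefsffgsfgdfg}) tells us that the composition $Y^{G} \circ j$ factors through $\bW_{\infty}$, yielding a fully faithful embedding $\overline{Y^{G}} \colon \bW_{\infty} \hookrightarrow \PSh(G\Orb)$. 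Moreover, since mapping spaces between orbits are discrete, the inclusion $G_{\cF}\Orb \hookrightarrow \bW_{\infty}$ is fully faithful and composes with $\overline{Y^{G}}$ to the canonical Yoneda embedding $G_{\cF}\Orb \hookrightarrow G\Orb \hookrightarrow \PSh(G\Orb)$.

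Next, I would use transitivity of left Kan extensions. By construction, $E^{\%,\cF} \colon G\Top \to \bM$ is, via $Y^{G}$, the left Kan extension of $E|_{G_{\cF}\Orb}$ along $G_{\cF}\Orb \to G\Orb \xrightarrow{\yo} \PSh(G\Orb)$. Pre-composing with $\bW \hookrightarrow G\Top$ and using the factorization $\bW \to \bW_{\infty} \hookrightarrow \PSh(G\Orb)$, the pointwise formula gives, for $X$ in $\bW$,
\[
(j^{*}E^{\%,\cF})(X) \simeq \colim_{(S,\sigma) \in (G_{\cF}\Orb)_{/X}^{\bW_{\infty}}} E(S),
\]
which identifies $j^{*}E^{\%,\cF}$ (factored through $\bW_{\infty}$) with the left Kan extension of $E|_{G_{\cF}\Orb}$ along the fully faithful inclusion $G_{\cF}\Orb \hookrightarrow \bW_{\infty}$.

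Finally, since $\widetilde{E} \colon \bW_{\infty} \to \bM$ is another functor whose restriction to $G_{\cF}\Orb$ agrees with $E|_{G_{\cF}\Orb}$, the universal property of the left Kan extension produces a canonical natural transformation $j^{*}E^{\%,\cF}|_{\bW_{\infty}} \to \widetilde{E}$, classifying the identity transformation on $E|_{G_{\cF}\Orb}$. Pre-composing with the localization $\bW \to \bW_{\infty}$ yields the desired natural transformation $j^{*}E^{\%,\cF} \to E$ on $\bW$. The main technical obstacle is verifying that the pointwise formula for the Kan extension behaves well under the restriction from $\PSh(G\Orb)$ to the full subcategory $\bW_{\infty}$, which comes down to showing that the slice $\infty$-category $(G_{\cF}\Orb)_{/X}$ computed inside $\bW_{\infty}$ agrees with the one computed inside $\PSh(G\Orb)$ for $X \in \bW_{\infty}$, and this follows from the fully faithfulness of $\overline{Y^{G}}$ combined with the factorization of the Yoneda embedding of $G_{\cF}\Orb$.
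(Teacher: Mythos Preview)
Your strategy is sound in outline, and it is close in spirit to the paper's argument, but there is a technical gap in your first step. You invoke Lemma~\ref{ewiorgjowergwrewfre} for the functor $E\colon\bW\to\bM$, but that lemma is stated for a simplicial model category $\bV$ (with functorial factorizations), and $\bW$ is merely a full subcategory of $G\Top^{\cf}$ closed under cylinders, not a model category in its own right. Similarly, you assert that $\overline{Y^G}\colon\bW_\infty\hookrightarrow\PSh(G\Orb)$ is fully faithful, but this also requires justification: one has to know that the localization $\bW[W_h^{-1}]$ computes the same mapping spaces as the ambient $G\Top^{\cf}[W_h^{-1}]$ on objects of $\bW$. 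Both statements are true, but they are exactly what the machinery of Lemma~\ref{ewiorgjowergwrewfre} is designed to supply for model categories, and you have not provided an alternative argument for $\bW$.

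The paper sidesteps this issue by pushing everything up to $G\Top$ first. It left Kan extends $E$ to $j_!E\colon G\Top\to\bM$, and then proves the nontrivial claim that $j_!E$ is homotopy invariant: this is where the hypothesis that $\bW$ is closed under $[0,1]\times(-)$ actually enters, via a cofinality argument in the pointwise colimit formula (the maps $[0,1]\times Z\to[0,1]\times X$ are cofinal among maps from $\bW$ into $[0,1]\times X$). Once $j_!E$ is homotopy invariant, Lemma~\ref{ewiorgjowergwrewfre} applies to the honest model category $G\Top$ and yields $(j_!E)^\infty\colon\PSh(G\Orb)\to\bM$. The desired transformation then comes from the counit $\yo_!\,i_{\cF,!}\,i_\cF^*\,\yo^*\to\id$ applied to $(j_!E)^\infty$, followed by restriction along $j^*(Y^G)^*$ and the identification $j^*j_!E\simeq E$ (which uses that $j$ is fully faithful). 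In effect, the paper trades your ``localize $\bW$ first'' step for an ``extend to $G\Top$ and show the extension is still homotopy invariant'' step; the latter is what lets Lemma~\ref{ewiorgjowergwrewfre} be applied as stated.
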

\begin{proof}
{Since $j$ is fully faithful,
 we have an equivalence $E\stackrel{\simeq}{\to} j^{*}j_{!}E$.}  We claim that $j_{!}E$ is homotopy invariant.  Let 
$X$ be in $G\Top$. Then we must show that $(j_{!}E)([0,1]\times X)\to (j_{!}E)(X)$ is an equivalence.
We use the point-wise formula for the left Kan extension in order to rewrite this map as
\begin{equation}\label{vfdvdfvfdvasfvsfv}
\colim_{(Y\to [0,1]\times X)\in \bW_{/[0,1]\times X}} E(Y)\to \colim_{(Z\to   X)\in \bW_{/  X}}E(Z)\, . 
\end{equation} 
We now observe that the maps of the form
$[0,1]\times Z\to [0,1]\times X$ for maps $Z\to X$ are cofinal in the index category of the left colimit. At this point we use that $\bW$ is closed under taking products with an interval.
Indeed, let $(a,b) \colon Y\to [0,1]\times X$ be a map. Then we consider the factorization
$$Y\stackrel{(a,\id_{Y})}{\to}[0,1]\times Y\stackrel{(\id_{[0,1]},b)}{\to} [0,1]\times X\, .$$
Consequently, the morphism  in \eqref{vfdvdfvfdvasfvsfv} is equivalent to 
$$\colim_{(Z\to X)\in \bW_{/  X}} E([0,1]\times Z)\to \colim_{(Z\to   X)\in \bW_{/  X}}E(Z)\, .$$
This map is an equivalence since $E$ is homotopy invariant. This finishes the proof of the claim.

 By Lemma \ref{ewiorgjowergwrewfre} we get a functor
$(j_{!}E)^{\infty}\colon \PSh(G\Orb)\to \bM$ fitting into the commutative square in \begin{equation}\label{gfdndfngfbdbdgb} \xymatrix{ &&& \bW\ar[ddl]^(0.65){j} \ar[rrd]^{E}\ar[dl]&& \\G_{\cF}\Orb\ar[d]^{i_{\cF}}\ar[urrr] &&G\Top^{\cf}\ar[d]\ar[rrr]^{(j_{!}E)_{|G\Top^{\cf}}}&&&\bM\\ G\Orb\ar[rr]\ar@/_1cm/[rrrrr]_{\yo} &&G\Top \ar[rrr]^{Y^{G}} &&& \PSh(G\Orb) \ar[u]^{(j_{!}E)^{\infty}} }
\end{equation}
Here the   triangle involving $(j_{!}E)_{|G\Top^{\cf}}$  commutes since $j^{*}j_{!}E\simeq E$ as observed already above.
The commutative diagram provides an equivalence $  E_{|G_{\cF}\Orb}   \simeq    i_{\cF}^{*}\yo^{*}(j_{!}E)^{\infty}$.
Applying the left Kan extension $\yo_{!} i_{\cF,!} $ we get an equivalence 
 $$\yo_{!} E^{\%,\cF} \simeq  \yo_{!}  i_{\cF,!} E_{|G_{\cF}\Orb}    \simeq  \yo_{!}   i_{\cF,!} i_{\cF}^{*}\yo^{*}(j_{!}E)^{\infty}   \, .$$
The counit $ \yo_{!}   i_{\cF,!} i_{\cF}^{*}\yo^{*}\to \id   $ then yields the transformation 
 $\yo_{!}E^{\%,\cF}\to (j_{!}E)^{\infty}$.
 We finally apply $j^{*} (Y^{G})^{*}$ and get the desired transformation 
 $$j^{*}E^{\%,\cF}\to j^{*} (Y^{G})^{*} (j_{!}E)^{\infty}\simeq E \colon \bW\to \bM\, ,$$
where the second equivalence follows form the commutativity of  {a}  part {of} the diagram \eqref{gfdndfngfbdbdgb} above.
\end{proof}

 Recall that $\bW$  is a full subcategory of $G\Top$ and that
 $E \colon \bW\to \bM$ is some functor. We call $E$ reduced if $E(\emptyset)\simeq 0$.
We let $ \bW_{\cF}^{\hfin}$ denote the  full subcategory of $\bW$ of spaces which are homotopy equivalent to 
a  $G$-finite $G$-CW complex with stabilizers in $\cF$.  
\begin{prop}\label{wefgbkjowgefwevffvsvfdvsfd}
Assume:
\begin{enumerate}
\item    $\bW\subseteq G\Top^{\cf}$  and $\bW$ contains all $G$-finite $CW$-complexes with stabilizers in $\cF$.
\item
$\bW$ is closed under taking the product with $[0,1]$.
\item $E$ is reduced, homotopy invariant, and excisive for  cell attachments.
\end{enumerate}
Then the natural transformation from Lemma \ref{sogipegrggwege}   induces an equivalence
$$(j^{*}E^{\%,\cF})_{| \bW_{\cF}^{\hfin}}  \to E_{| \bW_{\cF}^{\hfin}}\, .$$
\end{prop}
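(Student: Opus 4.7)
My plan is to prove this by induction on the structure of a $G$-finite CW-complex representative of the homotopy type. Since $E$ is homotopy invariant by assumption and $j^{*}E^{\%,\cF}$ is homotopy invariant (because both $Y^{G}$ and $\yo_{!}E^{\%,\cF}$ factor through $\infty$-categorical localizations, as used in Lemma~\ref{sogipegrggwege}), the comparison transformation is a natural transformation between homotopy invariant functors. Combined with the hypothesis that $\bW$ contains every $G$-finite CW-complex with stabilizers in $\cF$, this reduces the claim to showing that the comparison is an equivalence on every $G$-finite $G$-CW-complex $K$ with stabilizers in $\cF$. I would then induct on the dimension of $K$, with a secondary induction on the number of top-dimensional $G$-cells.

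The base cases are two-fold. First, for $K=\emptyset$ we have $E(\emptyset)\simeq 0$ by the reducedness assumption, and $j^{*}E^{\%,\cF}(\emptyset)\simeq 0$ because $Y^{G}$ sends $\emptyset$ to the initial (empty) presheaf and $\yo_{!}E^{\%,\cF}$, being a left adjoint, preserves initial objects. Second, for $K=G/H$ with $H\in \cF$, the defining property of left Kan extensions along the fully faithful inclusion $i_{\cF}\colon G_{\cF}\Orb\hookrightarrow G\Orb$ gives $E^{\%,\cF}(G/H)\simeq E(G/H)$, and inspection of the construction of the natural transformation in Lemma~\ref{sogipegrggwege} shows that it realizes exactly this identification. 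For the inductive step, I would express $K$ as a $G$-equivariant cell-attachment pushout
\[
K = K' \cup_{\bigsqcup_{\alpha} G/H_{\alpha}\times S^{n-1}}\bigsqcup_{\alpha} G/H_{\alpha}\times D^{n},
\]
where $K'$ is of strictly lower dimension and the $H_{\alpha}$ lie in $\cF$. By the inductive hypothesis the comparison is an equivalence on $K'$ and on $\bigsqcup_{\alpha} G/H_{\alpha}\times S^{n-1}$ (a complex of dimension $n-1$); by homotopy invariance both sides on $\bigsqcup_{\alpha} G/H_{\alpha}\times D^{n}$ agree with their values on $\bigsqcup_{\alpha} G/H_{\alpha}$, where the comparison is an equivalence by the orbit case together with finite-coproduct preservation on both sides.

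The main obstacle will be showing that $j^{*}E^{\%,\cF}$ sends such cell-attachment pushouts to pushouts in $\bM$. This requires that $Y^{G}$ carries a CW-cell-attachment pushout in $G\Top$ to a pushout in $\PSh(G\Orb)$. That in turn follows from the fact that the inclusion $\bigsqcup_{\alpha} G/H_{\alpha}\times S^{n-1}\hookrightarrow \bigsqcup_{\alpha} G/H_{\alpha}\times D^{n}$ is a cofibration in the standard model structure on $G\Top$ (so the strict pushout is a homotopy pushout) and that $Y^{G}$, being equivalent to the $\infty$-categorical localization provided by Elmendorf's theorem, sends homotopy pushouts to pushouts; then $\yo_{!}E^{\%,\cF}$ preserves all colimits since it is a left Kan extension. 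Combined with the assumption that $E$ itself is excisive for cell attachments, the comparison map becomes a map of pushout squares in the stable $\infty$-category $\bM$ whose three other vertices are equivalences, so the induction closes.
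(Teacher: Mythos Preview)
Your proposal is correct and follows essentially the same approach as the paper: reduce to $G$-finite $G$-CW-complexes with stabilizers in $\cF$ by homotopy invariance, then induct on the cell structure using that both functors are reduced and excisive for cell attachments. The paper inducts directly on the number of $G$-cells (one cell at a time) whereas you induct on dimension with a secondary count of top cells, but these are interchangeable schemes; you are also more explicit than the paper in isolating $G/H$ as a separate base case and in justifying why $j^{*}E^{\%,\cF}$ is excisive via Elmendorf's theorem, which the paper simply asserts.
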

\begin{proof}   We note that $j^{*}E^{\%,\cF} \colon \bW\to \bM$ is reduced, homotopy invariant, and excisive for cell attachments.

We must show that $E^{\%,\cF}(X)\to E(X)$ is an equivalence for all $X$ in  $\bW_{\cF}^{\hfin} $.   Since $j^{*}E^{\%,\cF}$ and $E$ are homotopy invariant we can assume that $X$ is  a $G$-finite $CW$-complex with stabilizers in $\cF$.

 We  then argue by induction by the number of cells. 
The assertion is clear for the empty $G$-CW-complex since both functors are reduced.
 Assume now that the assertion is true for  the $G$-CW-complex $Y$, and that $X$ is obtained from $Y$ by a cell-attachement. Then we have a push-out diagram
 $$\xymatrix{G/K\times S^{n}\ar[r]\ar[d]&Y\ar[d]\\G/K\times D^{n+1}\ar[r]&X}$$
 where $n$ is in $\nat$ and $K$ is a subgroup of $G$ belonging to $\cF$.
 The natural transformation induces a map of push-out diagrams
 $$\xymatrix{E^{\%,\cF}(G/K\times S^{n})\ar[r]\ar[d]&E^{\%,\cF}(Y)\ar[d]\\E^{\%,\cF}(G/K\times D^{n+1})\ar[r]&E^{\%,\cF}(X)}\qquad \to\qquad  \xymatrix{E(G/K\times S^{n})\ar[r]\ar[d]&E(Y)\ar[d]\\E(G/K\times D^{n+1})\ar[r]&E(X)}$$
 which is implemented by equivalences  at the two upper and the lower left corners by  the induction hypothesis.
 We conclude that
 $E^{\%,\cF}(X)\stackrel{\simeq}{\to} E(X)$.
 \end{proof}

We now consider two functors $E,F \colon \bW\to \bM$ and assume that we are given  an equivalence
$$\phi \colon E_{|G_{\cF}\Orb}\to F_{|G_{\cF}\Orb}\, .$$
 \begin{kor}\label{wegtopwgwgregergw}
Assume:
\begin{enumerate}
\item    $\bW\subseteq G\Top^{\cf}$  and $\bW$ contains all $G$-finite $CW$-complexes with stabilizers in $\cF$.
 \item
$\bW$ is closed under taking the product with $[0,1]$.
\item $E$ and $F$ are reduced, homotopy invariant, and excisive for  cell attachments.
\end{enumerate}
Then 
$\phi$ extends to an equivalence
$$\tilde \phi \colon E_{|\bW_{\cF}^{\hfin}}\to F_{ |\bW_{\cF}^{\hfin}}\, .$$
\end{kor}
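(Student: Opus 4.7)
The proof is a direct consequence of Proposition \ref{wefgbkjowgefwevffvsvfdvsfd} together with the functoriality of the left Kan extension construction $(-)^{\%,\cF}$. The plan is as follows.

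First, I would apply Proposition \ref{wefgbkjowgefwevffvsvfdvsfd} to both $E$ and $F$ separately. Since both functors satisfy all three hypotheses, this yields natural equivalences
\[ j^{*}E^{\%,\cF}_{|\bW_{\cF}^{\hfin}} \xrightarrow{\simeq} E_{|\bW_{\cF}^{\hfin}} \quad \text{and} \quad j^{*}F^{\%,\cF}_{|\bW_{\cF}^{\hfin}} \xrightarrow{\simeq} F_{|\bW_{\cF}^{\hfin}} \]
of functors on $\bW_{\cF}^{\hfin}$. These are the comparison maps produced by Lemma \ref{sogipegrggwege} restricted to $\bW_{\cF}^{\hfin}$.

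Next, the given equivalence $\phi \colon E_{|G_{\cF}\Orb} \xrightarrow{\simeq} F_{|G_{\cF}\Orb}$ is, by definition, an equivalence in $\Fun(G_{\cF}\Orb,\bM)$. Applying the left Kan extension functor $i_{\cF,!} \colon \Fun(G_{\cF}\Orb,\bM) \to \Fun(G\Orb,\bM)$ (which preserves equivalences) yields an induced equivalence $\phi^{\%,\cF} \colon E^{\%,\cF} \xrightarrow{\simeq} F^{\%,\cF}$ of equivariant homology theories on $G\Orb$, and hence, after passing to $G\Top$ via \eqref{adfiuhuihavfvasdcacsadcasca}, also on all of $G\Top$. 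In particular, restriction along $j$ and further to $\bW_{\cF}^{\hfin}$ gives an equivalence $j^{*}E^{\%,\cF}_{|\bW_{\cF}^{\hfin}} \xrightarrow{\simeq} j^{*}F^{\%,\cF}_{|\bW_{\cF}^{\hfin}}$.

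Finally, I would define $\tilde\phi$ as the composite
\[ E_{|\bW_{\cF}^{\hfin}} \xleftarrow{\simeq} j^{*}E^{\%,\cF}_{|\bW_{\cF}^{\hfin}} \xrightarrow{\phi^{\%,\cF}} j^{*}F^{\%,\cF}_{|\bW_{\cF}^{\hfin}} \xrightarrow{\simeq} F_{|\bW_{\cF}^{\hfin}}\,, \]
which is an equivalence since all three constituent morphisms are. There is no real obstacle here, as all steps are formal: the only thing worth a brief verification is that, under the chain of equivalences, $\tilde\phi$ restricted back to $G_{\cF}\Orb$ recovers $\phi$, which follows from the compatibility of Kan extension with the unit $\id \to i_{\cF}^{*} i_{\cF,!}$ being an equivalence (because $i_{\cF}$ is fully faithful).
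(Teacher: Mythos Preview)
Your proof is correct and follows essentially the same approach as the paper: apply Proposition \ref{wefgbkjowgefwevffvsvfdvsfd} to both $E$ and $F$, Kan-extend $\phi$ to an equivalence $E^{\%,\cF}\simeq F^{\%,\cF}$, and then compose the resulting three equivalences. The paper's proof is just a terser version of what you wrote, and your additional remark about recovering $\phi$ on $G_{\cF}\Orb$ is a harmless elaboration.
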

\begin{proof}
The equivalence $\phi$ induces an equivalence
$\tilde \phi \colon E^{\%,\cF}\stackrel{\simeq}{\to} F^{\%,\cF}$.
The desired equivalence is now given by the composition 
$$ E_{|\bW_{\cF}^{\hfin}}\stackrel{\simeq}{\leftarrow}(j^{*}E^{\%,\cF})_{|\bW_{\cF}^{\hfin}}\stackrel{\tilde \phi,\simeq}{\to}
(j^{*}F^{\%,\cF})_{|\bW_{\cF}^{\hfin}} \stackrel{\simeq}{\to}F_{|\bW_{\cF}^{\hfin}}$$
where the outer equivalences are supplied by Proposition \ref{wefgbkjowgefwevffvsvfdvsfd}
\end{proof}

  We let $G\UBC^{\pcc,\hfin}$ be the full subcategory of $G\UBC^{\pcc}\cap G\Top^{\cf}$ of $G$-spaces which are homotopy equivalent to $G$-finite $G$-CW complexes with stabilizers in $\Fin$.
{We consider  $\bC$  in $  \Fun(BG,\nCcat)$  which is  effectively additive and admits countable  AV-sums.}
  
 \begin{prop} \label{weroigjwerogregrgwffe}We have an equivalence 
$$K^{G,\cX}_{\bC}(-)_{|G\UBC^{\pcc,\hfin}}\simeq \Sigma K\bC^{G}(-)_{|G\UBC^{\pcc,\hfin}}\,  .$$
\end{prop}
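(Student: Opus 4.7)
The strategy is to invoke Corollary \ref{wegtopwgwgregergw} with $\bW = G\UBC^{\pcc}$, family $\cF = \Fin$, $E = K^{G,\cX}_{\bC}$ and $F = \Sigma K\bC^{G}$. Here $\Sigma K\bC^{G}$ is viewed as a functor on $G\UBC^{\pcc}$ via the underlying $G$-topological space functor $\iota \colon G\UBC^{\pcc}\to G\Top$, which is fully faithful by Lemma \ref{hfegwiugheifofdsfd}, together with the Davis--L\"uck extension of $K\bC^{G}$ from $G\Orb$ to $G\Top$ (Definition \ref{weiogwegerffs}). This reduces the claim to the construction of a natural equivalence $K^{G,\cX}_{\bC}(S)\simeq \Sigma K\bC^{G}(S)$ for $S$ in $G_{\Fin}\Orb$, where each orbit is endowed with the discrete uniform bornological coarse structure.

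The hypotheses of Corollary \ref{wegtopwgwgregergw} are verified as follows. The category $G\UBC^{\pcc}$ is closed under products with $[0,1]$ and, by Proposition \ref{qwfiuhqwifweewfefffqedqwedew}, contains all $G$-finite $G$-simplicial complexes with finite stabilizers endowed with their spherical path metric. Both functors are reduced. Homotopy invariance of $K^{G,\cX}_{\bC}$ is part of Proposition \ref{oirgjoerwgwegregw9}, while that of $\Sigma K\bC^{G}$ is automatic from the coend formula \eqref{qwefpojfopwefqwefqwefqew}. Excision for cell attachments for $K^{G,\cX}_{\bC}$ follows from its closed excision property (Proposition \ref{oirgjoerwgwegregw9}) applied to the closed decomposition into the attached cell and its complement; for $\Sigma K\bC^{G}$ it is automatic since \eqref{qwefpojfopwefqwefqwefqew} turns pushouts in $G\Top$ into pushouts in $\Sp$.

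To construct the natural equivalence on $G_{\Fin}\Orb$, I first apply the cone fibre sequence of Proposition \ref{qroigjqowfewfqwefqw} to the strong equivariant coarse homology theory $K\bC\cX^{G}_{c,G_{can,max}}$. For $S$ discrete, $\cO(S_{disc})$ is flasque in $G\BC$ by \cite[Ex.~9.25]{equicoarse}, and Theorem \ref{qeroigqwefqewfefewfq} gives that $K\bC\cX^{G}_{c,G_{can,max}}$ vanishes on flasques; this yields
\[ K^{G,\cX}_{\bC}(S)\simeq \Sigma K\bC\cX^{G}_{c}(S_{min,min}\otimes G_{can,max}). \]
It then remains to produce a natural equivalence $K\bC\cX^{G}_{c}(S_{min,min}\otimes G_{can,max})\simeq K\bC\cX^{G}_{c}(S_{min,max}\otimes G_{can,min})=K\bC^{G}(S)$ on $S$ in $G_{\Fin}\Orb$. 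For $S=G/H$ with $H$ finite, I consider the $G$-invariant subset $X=G\cdot (H,e)\subseteq G/H\times G$ introduced in Section \ref{oifjqweofwefewffqfef}. It is locally finite in both product spaces (in $(G/H)_{min,min}\otimes G_{can,max}$ by the argument of Lemma \ref{wegoijwegowiergwegerw}, and in $(G/H)_{min,max}\otimes G_{can,min}$ because bounded sets there are of the form $S\times F$ with $F\subseteq G$ finite, and $X\cap(S\times F)$ is finite). Repeating the argument of Lemma \ref{wegoijwegowiergwegerw} in each case, the inclusion of $X$ is a continuous equivalence, so that $K\bC\cX^{G}_{c}$ applied to both sides agrees with $K\bC\cX^{G}_{c}(X)$ for the corresponding induced structures; a direct comparison identifies these structures on $X\cong G$.

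\textbf{The main obstacle} lies in the last step: one must carefully verify that the induced bornological coarse structures on $X$ from the two different product spaces actually coincide (or are related by a coarse equivalence) and, more importantly, that the resulting chain of equivalences is natural with respect to $G$-equivariant maps $G/H\to G/K$ between orbits. The interplay between the opposite choices of twist $G_{can,max}$ versus $G_{can,min}$ and the opposite bornologies $S_{min,min}$ versus $S_{min,max}$ reflects a duality which must be tracked on the level of the controlled $C^{*}$-categories $\bCgtsmc$ entering the definition of $K\bC\cX^{G}_{c}$.
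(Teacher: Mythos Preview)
Your overall strategy is correct and matches the paper's: apply Corollary \ref{wegtopwgwgregergw} with $E=K^{G,\cX}_{\bC}$, $F=\Sigma K\bC^{G}$, $\cF=\Fin$, and reduce to constructing a natural equivalence on $G_{\Fin}\Orb$. (A minor correction: take $\bW=G\UBC^{\pcc}\cap G\Top^{\cf}$ rather than $G\UBC^{\pcc}$, since Corollary \ref{wegtopwgwgregergw} requires $\bW\subseteq G\Top^{\cf}$; this does not affect the conclusion since $G\UBC^{\pcc,\hfin}$ is already contained in $G\Top^{\cf}$.) Your cone-boundary step is also essentially the paper's: the paper cites \cite[Prop.~9.35]{equicoarse} directly for $K\bC\cX^{G}_{c,G_{can,max}}(\cO^{\infty}(S_{min,min,disc}))\simeq \Sigma K\bC\cX^{G}_{c,G_{can,max}}(S_{min,min})$.

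The point of divergence is the twist comparison $K\bC\cX^{G}_{c}(S_{min,min}\otimes G_{can,max})\simeq K\bC\cX^{G}_{c}(S_{min,max}\otimes G_{can,min})$. Here the paper bypasses your detour through the subset $X=G\cdot(H,e)$ entirely, and thereby dissolves what you call the main obstacle. The observation is that the two $G$-bornological coarse spaces $S_{min,min}\otimes G_{can,max}$ and $S_{min,max}\otimes G_{can,min}$ share the \emph{same} underlying $G$-coarse space (minimal on $S$, canonical on $G$) and, for $S\in G_{\Fin}\Orb$, the \emph{same} poset of $G$-invariant locally finite subsets. Since the induced bornology on any locally finite subset is automatically the minimal one, continuity of $K\bC\cX^{G}_{c}$ then gives the equivalence directly---and functorially in $S$, because the identity of underlying sets intertwines the two functors $G_{\Fin}\Orb\to G\BC$ on the nose (morphisms in $G_{\Fin}\Orb$ have finite fibres, so $S\mapsto S_{min,min}$ is functorial there). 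No choice of a particular dense subset is needed, and the naturality issue you flagged simply does not arise.

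Your route via $X$ does work---the induced coarse and bornological structures on $X$ from the two ambient spaces do coincide---but making it natural would require tracking how $X_{H}$ maps to $X_{K}$ under $G/H\to G/K$ and checking compatibility with the continuous-equivalence argument on both sides. The paper's argument avoids all of this.
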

\begin{proof}
We start with the equivalence
$$K^{G,\cX}_{\bC}(S_{min,min,disc}) \stackrel{\textrm{def}}{=}  K\bC\cX^{G}_{G_{can,max}}(\cO^{\infty}(S_{min,min,disc})) \simeq \Sigma  K\bC\cX^{G}_{G_{can,max}}( S_{min,min})\, ,$$ where the second equivalence is given by the cone boundary  \cite[Prop.\ 9.35]{equicoarse}.
  For every $S$ in $G_{\Fin}\Orb$ the sets of invariant  locally finite subsets 
$\LF(S_{min,max}\otimes G_{can,min})$
and
$\LF(S_{min,min}\otimes G_{can,max})$ are equal.  
 Using that  $  K\bC\cX^{G}$ is a continuous equivariant coarse homology theory we get the  
middle equivalence in 
\begin{align*}
\mathclap{
K\bC\cX^{G}_{G_{can,max}}( (-)_{min,min})\stackrel{\textrm{def}}{=} K\bC\cX^{G}((-)_{min,min}\otimes G_{can,max})
\simeq  K\bC\cX^{G}((-)_{min,max}\otimes G_{can,min}))\stackrel{\textrm{def}}{=}  K\bC^{G}(-)
}
\end{align*}
of functors
on $G_{\Fin}\Orb$.
We now apply Corollary \ref{wegtopwgwgregergw}  with $\bW= G\UBC^{\pcc}\cap G\Top^{\cf} $, $\cF=\Fin$,
$E= K^{G,\cX}_{\bC}(-)$ and $F=\Sigma K\bC^{G}(-)$ in order to get the desired equivalence.
 \end{proof}

Using Proposition \ref{weroigjwerogregrgwffe} we can express  the domain of the Paschke transformation in terms of the equivariant homology theory $K\bC^{G}$.
In the following we describe the values of this functor on $G$-orbits in some detail.
 We use remark environments in order to be able to refer to this discussion later.

\begin{rem}\label{eroigjeogiegregegegweg}
{We assume that $\bC$ in $\Fun(BG,\nCcat)$ is effectively additive.}
By \cite[Prop.\ 8.2.{3}]{coarsek} we  have an explicit description of the values of the functor $K\bC^{G}$ on $G$-orbits $S$: \begin{equation}\label{qrforfofwqewfqefqefe}
K\bC^{G}(S)\simeq \Kcat(\btCtsmc(S_{min,max})\rtimes_{r} G)\, .
\end{equation}
Here  $\btCtsmc(S_{min,max})$ 
in $\Fun(BG,\nCcat)$ is the $C^{*}$-category $\bCtsmc(S_{min,max})$
with the $G$-action induced by functoriality by the actions of $G$ on $S$ and $ \bC$,
and $-\rtimes_{r} G$ is the reduced crossed product for $G$-$C^{*}$-categories introduced in {\cite[Thm.\ 12.1]{cank}.}
Note that the objects of $\bCtsmc(S_{min,max})$ are objects of $\bC$ which are decomposed   as    AV-sums of $S$-indexed families of objects of ${\bC}^{u}$ with finitely many non-zero terms, and morphisms are morphisms in ${\bM\bC}$ which are diagonal with respect to this decomposition. 
We note that \eqref{qrforfofwqewfqefqefe} implies that the functor $K\bC^{G}$ is the functor
defined in \cite[Def.\ {19.12}]{cank} {for $\Homol=\Kcat$ and denoted there  by $(\Kcat)^{G}_{{\bC^{u}},r}$.}

The right-hand  side of the equivalence in \eqref{qrforfofwqewfqefqefe} reflects the functorial dependence on $S$ in an obvious manner.  If one is not interested in functoriality, then  one can give even simpler formulas.  For a   subgroup
 $H$ of $G$ we have the equivalence
$$ K\bC^{G}(G/H)\stackrel{\eqref{qrforfofwqewfqefqefe}}{\simeq}  \Kcat(\bCtsmc((G/H)_{min,max})\rtimes_{r} G)\simeq  \Kcat( {\bC}^{u}\rtimes_{r} H)$$
by using \cite[Cor.\ {19.13}]{cank} and the Morita invariance of $\Kcat$.
  \hB
\end{rem}

\begin{rem}\label{oiejgoigsvfdvfvs}
We continue the calculations from Remark \ref{eroigjeogiegregegegweg} but now specialize further
to the case {$\bC= \Hilb_{c}(A)$}  for an $A$ in $\Fun(BG,\Calg)$.
Since $A$ is unital, {the inclusion $A\to \Hilb_{c}(A)^{u}$ is a Morita equivalence {(combine \cite[Ex.\ 16.9 \&  18.15]{cank})}  and therefore induces by \cite[Prop.\ 16.11]{cank} (stating that $-\rtimes_{r}H$ preserves Morita equivalences) and \cite[Thm.\   {16.18}]{cank} (stating that $\Kcat$ is Morita invariant)
 an equivalence 
$$  \Kast( A\rtimes_{r} H) \stackrel{\simeq}{\to} \Kcat(  \Hilb_{c}(A)^{u}\rtimes_{r} H) \, .  $$}
So in this case
$$K\bC^{G}(G/H)\simeq   \Kast( A\rtimes_{r} H)\, .$$
We see that  the functor $K\bC^{G}$ has the same values as the functor introduced  in  \cite{davis_lueck}  (with additions by \cite{joachimcat}
  or alternatively  by \cite{Land:2017aa})\footnote{To be precise, in   \cite{davis_lueck} only the case $A=\C$ is considered, but the generalization to unital $C^{*}$-algebras with trivial $G$-action is straightforward. 
  The additions concern a correction in the construction of a $K$-theory functor  for $C^{*}$-categories.}.
 If $A$ is unital and {is equipped with} the trivial $G$-action, 
 then  by \cite[Prop.\  {19.18}]{cank}  the functor  $K\bC^{G}$ and the Davis--L\"uck functor are  actually  equivalent as functors.  
\hB
\end{rem}

Using  \eqref{rgpfojewropgegwgwrgrgwegwerg} and Proposition \ref{weroigjwerogregrgwffe} combined with Remark 
\ref{oiejgoigsvfdvfvs} we can describe the values  on the orbit category for the functor   $K^{G,\cX}_{{\Hilb_{c}(A)}}$ appearing in the domain of the Paschke morphism.
{Let $A$ be in $\Fun(BG,\nCalg)$.}
 \begin{kor}\label{wtiejgowegwergwrefff}{ If $A$ is unital, then}
 for every subgroup $H$ of $G$ we have  an equivalence $$K^{G,\cX}_{{\Hilb_{c}(A)}}((G/H)_{min,min,disc})\simeq \left\{\begin{array}{cc}
 0&|H|=\infty\,,
 \\\Sigma  \Kast(A\rtimes_{r}H)&|H|<\infty\,.
 \end{array}\right.$$
 \end{kor}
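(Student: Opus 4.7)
The plan is to treat the two cases of $|H|=\infty$ and $|H|<\infty$ separately, in each case assembling the listed references.

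For $|H|=\infty$, the argument has already essentially been carried out in Remark~\ref{aergijoarvvavadsvsdva} and displayed in the chain \eqref{rgpfojewropgegwgwrgrgwegwerg}. One first rewrites $K^{G,\cX}_{\bC}((G/H)_{min,min,disc})$ as $\Sigma K\bC\cX^{G}_{c,G_{can,max}}((G/H)_{min,min})$ using \cite[Prop.\ 9.35]{equicoarse}, which applies because $(G/H)_{min,min,disc}$ is discrete. By continuity of the coarse homology theory $K\bC\cX^{G}_{c}$ (Thm.~\ref{qeroigqwefqewfefewfq}), this suspension is the colimit of its values on $G$-invariant locally finite subsets $L$ of $(G/H)_{min,min}\otimes G_{can,max}$. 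The observation recalled in Remark~\ref{aergijoarvvavadsvsdva} is that no such non-empty $L$ exists: if there were one, $L\cap (eH\times G)$ would be a finite set on which the infinite group $H$ acts freely, a contradiction. So the colimit is zero.

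For $|H|<\infty$ the space $(G/H)_{min,min,disc}$ is a zero-dimensional $G$-finite $G$-simplicial complex with finite stabilizers; equipping it with the structures inherited from its (trivial) spherical path metric exhibits it as an object of $G\UBC^{\pcc,\hfin}$. Proposition~\ref{weroigjwerogregrgwffe} then gives a natural equivalence
\[
K^{G,\cX}_{\bC}((G/H)_{min,min,disc}) \simeq \Sigma\, K\bC^{G}(G/H).
\]
To identify $K\bC^{G}(G/H)$ in terms of $A$, one combines Remark~\ref{eroigjeogiegregegegweg}, which gives $K\bC^{G}(G/H)\simeq \Kcat(\bK^{u}\rtimes_{r}H)$ for the pair \eqref{qewfwefewfwdqewd}, with the computation of Remark~\ref{oiejgoigsvfdvfvs} (invoking \cite[Lem.~17.25]{cank}) which identifies $\Kcat(\bK^{u}\rtimes_{r} H)\simeq \Kast(A\rtimes_{r} H)$. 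Concatenating these equivalences yields the desired formula. There is no serious obstacle: the entire proof is a direct concatenation of results already established, and both cases are covered by assembling the cited three ingredients.
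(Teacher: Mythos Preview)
Your proposal is correct and follows essentially the same route as the paper: the case $|H|=\infty$ is exactly the computation \eqref{rgpfojewropgegwgwrgrgwegwerg} from Remark~\ref{aergijoarvvavadsvsdva}, and for $|H|<\infty$ you invoke Proposition~\ref{weroigjwerogregrgwffe} together with Remarks~\ref{eroigjeogiegregegegweg} and~\ref{oiejgoigsvfdvfvs}, which is precisely the combination the paper indicates in the sentence preceding the corollary.
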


We now turn {our} attention to the target of   the Paschke morphism.   We {show  {that in the case of  {$ \bC = \Hilb_{c}(A)$} for unital $A$,
  we} can express the functor 
  $$K^{G,\An}_{{\bC}}(-)\stackrel{\eqref{vfdsvsvvsvsdvdsadsvdsva}}{=}\KKG(C_{0}(-),\bQ^{(G)}_{\std})$$
  in terms of the more familiar functor
 $$ K_{A}^{G,\an}(-) \coloneqq \KKG(C_{0}(-),A)
$$
from $\ppGTop$ to $\Sp$, see \cite[Def.\ 1.14]{KKG}. 
 {In order to state the results properly, we introduce the following notation.}  {
\begin{ddd}
\label{werigjweogwergwregwerg}\mbox{}\begin{enumerate} \item  \label{wrijgowegewrgwregwreg}We let  $\ppGTopf$ denote  the full subcategory of $\ppGTop $ on spaces which are  homotopy equivalent in $\ppGTop $  to   $G$-finite $G$-CW complexes with finite stabilizers. \item \label{qewfwefdwdwqedweq1} We let $\ppGTopo$ denote the full subcategory of $\ppGTop$ of second countable spaces with proper $G$-action which are homotopy equivalent in $\ppGTop $ to countable  $G$-CW complexes with  proper $G$-action. \end{enumerate} \end{ddd}}

{Let $A$ be in $\Fun(BG,\nCalg)$.}
 \begin{prop}\label{wegojeogrregregwergwegre}\mbox{} If $A$ is unital, then
 we have  an equivalence of functors  $$(\Sigma K_{A}^{G,\an})_{|\ppGTopf}\simeq  (K_{{\Hilb_{c}(A)}}^{G,\An})_{|\ppGTopf}\, .$$
  \end{prop}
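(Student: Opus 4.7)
The strategy is to exploit the defining short exact sequence
\[ 0 \to \bK^{(G)}_{\std} \to \bC^{(G)}_{\std} \to \bQ^{(G)}_{\std} \to 0 \]
in $\Fun(BG,\nCcat)$, applying $\KKG(C_{0}(X),-)$, and to identify the outer terms appropriately. First, for $X$ in $\ppGTopf$, $\kkG(C_0(X))$ is ind-$G$-proper (with finite stabilizers), so Lemma~\ref{eroigjweogregwgefwe}.\ref{qroigwoeigwjereferwfwerf} yields a fibre sequence of spectrum-valued functors
\[ \KKG(C_{0}(-),\bK^{(G)}_{\std}) \to \KKG(C_{0}(-),\bC^{(G)}_{\std}) \to K_{\bC}^{G,\An}(-) \]
on $\ppGTopf$.

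\textbf{Vanishing of the middle term.} Since $\bC = \Hilb^{G}(A)$ is countably additive and the class of standard objects is closed under countable orthogonal sums (a countable sum of copies of $L^{2}(G)\otimes H'_{i}$ is again of the form $L^{2}(G)\otimes (\bigoplus_{i} H'_{i})$), the category $\bC^{(G)}_{\std}$ is countably additive. Hence, by an Eilenberg swindle (\cite[Ex.~9.4]{cank}), it is flasque in $\Fun(BG,\nCcat)$. Lemma~\ref{eroigjweogregwgefwe}.\ref{qroigwoeigwjereferwfwerf1} then gives $\KKG(C_{0}(X),\bC^{(G)}_{\std})\simeq 0$ for all $X$ in $\ppGTopf$. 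The fibre sequence therefore collapses to a natural equivalence
\[ K_{\bC}^{G,\An}(X) \simeq \Sigma\, \KKG(C_{0}(X),\bK^{(G)}_{\std})\, . \]

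\textbf{Identification of $\bK^{(G)}_{\std}$ with $A$.} It remains to produce a natural equivalence $\KKG(C_{0}(X),\bK^{(G)}_{\std}) \simeq \KKG(C_{0}(X),A)$, for which it suffices to exhibit an equivalence $\kkG(\bK^{(G)}_{\std}) \simeq \kkG(A)$ in $\KKG$. Because $A$ is unital, the Hilbert $A$-module $A$ itself is a unital object of $\bK = \Hilb^{G}_{c}(A)$, so the $G$-equivariant standard object $(L^{2}(G)\otimes A, \lambda\otimes\id)$ lies in $\bK^{(G),u}_{\std}$ and its equivariant endomorphism $C^*$-algebra is $\cK(L^{2}(G))\otimes A$ equipped with the $G$-action $\Ad\lambda \otimes \id_{A}$. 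By the Morita invariance of $\Kcat$/$\kkG$ for $C^{*}$-categories (\cite[Thm.~14.14]{cank}, generated by the fact that every standard object of $\bK^{(G)}_{\std}$ is a (countable) orthogonal sum of copies of $L^{2}(G)\otimes A$), one obtains
\[ \kkG(\bK^{(G)}_{\std}) \simeq \kkG(\cK(L^{2}(G))\otimes A, \Ad\lambda \otimes \id_{A}). \]
Finally, the classical equivariant Morita equivalence between $(\cK(L^{2}(G)), \Ad\lambda)$ and $(\C, \triv)$ (a form of Green--Julg duality, which underlies the equivalence $r^{G}_{H}$ used in Section~\ref{oifjqweofwefewffqfef}) gives $\kkG(\cK(L^{2}(G))\otimes A, \Ad\lambda \otimes \id_{A}) \simeq \kkG(A)$ in $\KKG$. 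Composing these equivalences yields the desired natural identification on $\ppGTopf$.

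\textbf{Main obstacle.} The technical crux lies in the last paragraph: carefully justifying the Morita reduction from the whole category $\bK^{(G)}_{\std}$ to the single $G$-$C^*$-algebra $\cK(L^{2}(G))\otimes A$ in an equivariant fashion. One must verify that $L^{2}(G)\otimes A$ is a weak generator in $\bK^{(G)}_{\std}$ compatible with the $G$-action, invoking that arbitrary standard objects $L^{2}(G)\otimes H'$ can be obtained from $L^{2}(G)\otimes A$ by countable orthogonal sums (which exist by Proposition~\ref{qroigoergergwggrgwerg}) together with idempotent cut-downs, in a way compatible with the conjugation $G$-action on morphisms. The unitality hypothesis on $A$ enters precisely here, to ensure that $A$ itself is a unital object of $\bK$ so that $L^{2}(G)\otimes A$ lies in the unital part $\bK^{(G),u}_{\std}$ to which Morita invariance applies.
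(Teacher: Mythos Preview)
Your overall structure matches the paper's proof exactly: apply $\KKG(C_{0}(-),-)$ to the exact sequence $0\to\bK^{(G)}_{\std}\to\bC^{(G)}_{\std}\to\bQ^{(G)}_{\std}\to 0$, use ind-$G$-properness of $\kkG(C_{0}(X))$ for $X\in\ppGTopf$ to get a fibre sequence, kill the middle term by flasqueness, and then identify $\bK^{(G)}_{\std}$ with $A$ in $\KKG$.

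The difference is entirely in the last step, and here the paper is both shorter and cleaner. Instead of your two-stage detour through $\cK(L^{2}(G))\otimes A$, the paper simply observes that the inclusion $A\to\bK^{(G)}_{\std}$ (viewing the unital $A$ as a one-object full subcategory) is a \emph{weak Morita equivalence} in $\Fun(BG,\nCcat)$, citing \cite[Ex.~16.13]{cank}. Since $\kkGA$ sends weak Morita equivalences to equivalences by \cite[Thm.~1.29.3]{KKG}, one obtains $K_{A}^{G,\an}(-)\xrightarrow{\simeq}K_{\bK^{(G)}_{\std}}^{G,\an}(-)$ directly as an equivalence of functors on all of $\ppGTop$ (not just on $\ppGTopf$). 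This bypasses your ``Main obstacle'' entirely: the weak generation you worry about is exactly what is packaged in \cite[Ex.~16.13]{cank}.

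One technical slip in your version: you write the $G$-action on $\cK(L^{2}(G))\otimes A$ as $\Ad\lambda\otimes\id_{A}$, but $A$ is a $G$-$C^{*}$-algebra with possibly non-trivial action, and the induced action on endomorphisms in $\bC^{(G)}$ (via $g\cdot B=\rho_{g}^{-1}g(B)\rho_{g}$, see \eqref{eqwfijoifqjowiefjwoefewqfewqf}) incorporates both the regular representation and the original $G$-action on $A$. Your appeal to Green--Julg is also not quite the right label; what you want is the standard equivariant stabilisation equivalence $(\cK(L^{2}(G)),\Ad\lambda)\simeq(\C,\triv)$ in $\KKG$. These are fixable, but the paper's route via weak Morita equivalence avoids having to name the intermediate algebra or its $G$-action at all.
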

\begin{proof} {We abbreviate $\bC \coloneqq \Hilb_{c}(A)$.}
{Using the notation of \cite[{Def.\ 1.14}]{KKG} we have the equality }  $$K_{\bQ^{(G)}_{\std}}^{G,\an}(-) = K^{G,\An}_{\bC}(-)\, .$$ 

 If $X$ is in $\ppGTopf$, then by 
Lemma \ref{eroigjweogregwgefwe}
 the functor 
$\bB\mapsto K_{\bB}^{G,\an}({X})$ sends exact sequences  in $\Fun(BG,\nCcat)$ to fibre sequences of functors on $\ppGTopf$,
 annihilates flasques, and sends relative  Morita   equivalences to equivalences. By \cite[Thm.\ 1.32.3]{KKG} it also sends weak Morita equivalences to equivalences.

We apply  the exactness property to the exact sequence \begin{equation}\label{QEWFQWEFFQWDWD}
0\to {\bC}^{(G)}_{\std}\to {\bM\bC}^{(G)}_{\std}\stackrel{{\pi}}{\to} \bQ^{(G)}_{\std}\to 0\, .
 \end{equation}
{Since  $  \bC^{(G)}_{\std}$  admits countable AV-sums,  we know by Lemma \ref{wrtijhoerhgrtgertg}  that $\bM\bC^{(G)}_{\std}$  is flasque. Therefore}   $ K_{ {\bM\bC}^{(G)}_{\std}}^{G,\an} (-)\simeq 0$
{and the boundary map of the fibre sequence obtained by applying {$ K^{G,\an}_{-}$}
to 
  \eqref{QEWFQWEFFQWDWD} is  an}
equivalence 
\begin{equation}\label{weogjkopfegwreffsv}
K_{\bC}^{G,\An}(-) =    K_{ \bQ^{(G)}_{\std}}^{G,\an} (-) \xrightarrow{\simeq} \Sigma K_{ {\bC}^{(G)}_{\std}}^{G,\an} (-)
\end{equation} of functors on $\ppGTopf$.
We  consider   the zig-zag 
\begin{equation}\label{aefefdfafadfdffsa}
A\to  (\bC^{u})^{(G)} \to \bC^{(G)}_{\std,+}\leftarrow   \bC^{(G)}_{\std}
\end{equation} 
 in $\Fun(BG,\nCcat)$,
where by Lemma \ref{eriughwierewfwerfwf}.\ref{wergjiergjosfdgdfg2} the first map is a Morita equivalence, the second is a weak Morita equivalence, and the third one is a split relative Morita equivalence  by Lemma \ref{eriughwierewfwerfwf}.\ref{wergjiergjosfdgdfg1}.   We therefore get an associated zig-zag of equivalences
  \begin{equation}\label{weogjkopfegwreffsv1}K^{G,\an}_{A}(-)\stackrel{\simeq}{\to} K^{G,\an}_{(\bC^{u})^{(G)}}(-) \stackrel{\simeq}{\to} K^{G,\an}_{ \bC^{(G)}_{\std,+}}(-)\stackrel{\simeq}{\leftarrow}  K^{G,\an}_{  \bC^{(G)}_{\std}}(-)\end{equation}
 of functors on $\ppGTopf$.

  Composing the equivalences in \eqref{weogjkopfegwreffsv} and \eqref{weogjkopfegwreffsv1}
  we get the asserted equivalence.
 \end{proof}

{In the next proposition we calculate the values}
  of the functor $K_{\bC}^{G,\An,\mathrm{lf}}$ from \eqref{regrwefefefewf}.
 {We use the notation introduced in Definition \ref{werigjweogwergwregwerg}.\ref{qewfwefdwdwqedweq1}}.
{Let $A$ be in $\Fun(BG,\nCalg)$.}

\begin{prop} \label{werigowergrefwerfwe}
If $A$ is unital and separable, then we have an equivalence
$$(\Sigma K_{A}^{G,\an})_{|\ppGTopo}\simeq (K_{{\Hilb_{c}(A)}}^{G,\An,\mathrm{lf}})_{|\ppGTopo}\, .$$
\end{prop}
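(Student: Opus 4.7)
The strategy is to reduce the statement to Proposition~\ref{wegojeogrregregwergwegre} via an exhaustion argument, using the defining limit of $K^{G,\An,\mathrm{lf}}_{\bC}$ in~\eqref{wergeefeferfewrferf} together with a continuity property of equivariant $\KKth$-theory for separable $G$-$C^{*}$-algebras.

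Given $X$ in $\ppGTopo$, local compactness, second countability, and properness of the action allow me to choose an exhaustion $U_{1}\subseteq U_{2}\subseteq\dots\subseteq X$ by open $G$-invariant subsets with $G$-compact closures. Picking the $U_{n}$ as sufficiently small invariant neighbourhoods of the $G$-finite subcomplexes of a countable proper $G$-CW model for $X$, I can moreover arrange that each $U_{n}$ lies in $\ppGTopf$. These $(U_{n})$ are cofinal in the indexing poset of~\eqref{wergeefeferfewrferf}, so
\[
K^{G,\An,\mathrm{lf}}_{\bC}(X)\simeq \lim_{n} K^{G,\An}_{\bC}(U_{n})\stackrel{\ref{wegojeogrregregwergwegre}}{\simeq} \Sigma\lim_{n}\KKG(C_{0}(U_{n}),A).
\]

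It remains to identify $\Sigma\KKG(C_{0}(X),A)$ with $\Sigma\lim_{n}\KKG(C_{0}(U_{n}),A)$. Extension by zero gives ideal inclusions $C_{0}(U_{n})\hookrightarrow C_{0}(X)$, and $C_{0}(X)=\overline{\bigcup_{n}C_{0}(U_{n})}$ exhibits $C_{0}(X)$ as a sequential colimit in $\Fun(BG,\nCalg)$ along these ideal inclusions. Second countability of $X$ makes $C_{0}(X)$ separable; combined with separability of $A$, this puts us in the regime where $\kkG$ is continuous, i.e.\ the canonical map $\colim_{n}\kkG(C_{0}(U_{n}))\to \kkG(C_{0}(X))$ is an equivalence in $\KKG$. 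Mapping into $\kkG(A)$ converts this colimit into a limit, giving $\KKG(C_{0}(X),A)\simeq \lim_{n}\KKG(C_{0}(U_{n}),A)$, which combined with the previous display yields the desired pointwise equivalence.

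To upgrade to a natural equivalence of functors on $\ppGTopo$, I would introduce the locally finite version $K^{G,\an,\mathrm{lf}}_{A}$ of $K^{G,\an}_{A}$ via the right Kan extension prescription of~\eqref{regrwefefefewf} applied to $\KKG(C_{0}(-),A)$. Proposition~\ref{wegojeogrregregwergwegre} then functorially promotes to an equivalence $\Sigma K^{G,\an,\mathrm{lf}}_{A}\simeq K^{G,\An,\mathrm{lf}}_{\bC}$ of functors on $\ppGTopo$, and the continuity statement above becomes the assertion that the natural transformation $K^{G,\an}_{A}\to K^{G,\an,\mathrm{lf}}_{A}$ is a pointwise equivalence on $\ppGTopo$. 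The main obstacle is this last continuity statement for $\kkG$ along the sequential colimit $C_{0}(X)=\overline{\bigcup_{n}C_{0}(U_{n})}$ of ideal inclusions of separable $C^{*}$-algebras, which is a standard property of equivariant $\KKth$-theory to be extracted from the construction of $\kkG$ in~\cite{KKG}.
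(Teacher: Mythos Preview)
Your approach is correct and runs parallel to the paper's, but the two executions differ in a way worth noting. Both proofs split the task into (a) identifying $\Sigma K^{G,\an,\mathrm{lf}}_{A}$ with $K^{G,\An,\mathrm{lf}}_{\bC}$ on $\ppGTopo$, and (b) showing that $c\colon K^{G,\an}_{A}\to K^{G,\an,\mathrm{lf}}_{A}$ is an equivalence on $\ppGTopo$. For (a) you apply $(-)^{\mathrm{lf}}$ directly to the natural equivalence of Proposition~\ref{wegojeogrregregwergwegre}, using cofinality of the $U_{n}\in\ppGTopf$. The paper instead introduces the intermediate object $F(\pi)=\mathrm{fib}(\kkG(\bC^{(G)}_{\std})\to\kkG(\bQ^{(G)}_{\std}))$: since the sequence $0\to\bK^{(G)}_{\std}\to\bC^{(G)}_{\std}\to\bQ^{(G)}_{\std}\to 0$ is not known to be semisplit, there is only a comparison map $\kkG(\bK^{(G)}_{\std})\to F(\pi)$, and the paper shows it becomes an equivalence after $\KKG(P,-)$ for ind-$G$-proper $P$. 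This buys natural transformations defined on all of $\ppGTop$ rather than just on $\ppGTopf$, so that $(-)^{\mathrm{lf}}$ can be applied without any auxiliary cofinality step for naturality. Your route is shorter but leans on the observation that restricting the indexing poset of $(-)^{\mathrm{lf}}$ to $U\in\ppGTopf$ gives an equivalent limit, functorially in $X$; this is true but deserves to be stated.

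For (b), you phrase the key input as the equivalence $\colim_{n}\kkG(C_{0}(U_{n}))\simeq\kkG(C_{0}(X))$. The paper instead works with the closed complements $Y_{n}=X\setminus U_{n}$: from the split-closed inclusions $Y_{n}\hookrightarrow X$ one gets fibre sequences $K^{G,\an}_{A}(Y_{n})\to K^{G,\an}_{A}(X)\to K^{G,\an}_{A}(U_{n})$, and then \cite[Thm.~1.14.6]{KKG} (which uses separability of $A$) gives $\lim_{n}K^{G,\an}_{A}(Y_{n})\simeq 0$ since $\bigcap_{n}Y_{n}=\emptyset$. This vanishing is precisely equivalent to your continuity statement after mapping into separable $A$, so the content is the same; the paper's formulation just points to a precise citation rather than leaving it as ``a standard property to be extracted.''
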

\begin{proof}
The argument is similar as for Proposition \ref{wegojeogrregregwergwegre}.   {However,} if $X$ is in $\ppGTopo$, then 
$\kkG(C_{0}(X))$ is not ind-$G$-proper in general so that $\bB\to K^{G,\an}_{\bB}(X)$ does not send all exact sequences to fibre sequences, i.e., Lemma \ref{eroigjweogregwgefwe} is not directly applicable.  

  In analogy with \eqref{wergeefeferfewrferf} we can define 
 the  locally finite evaluation  $ F^{\mathrm{lf}}$ of 
 any functor $F$ on $\ppGTop$  (with complete target)   
  by
$$ F^{\mathrm{lf}}(X)\coloneqq \lim_{U\subseteq X}F(U)\, ,$$
where $U$ runs over the open subsets of $X$ with $G$-compact closure.  
We have a natural transformation $c_{F}\colon F\to F^{\mathrm{lf}}$, and the transformation
$c_{F^{\mathrm{lf}}}\colon F^{\mathrm{lf}}\to (F^{\mathrm{lf}})^{\mathrm{lf}}$ is an equivalence by a cofinality argument.

We again  abbreviate $\bC \coloneqq \Hilb_{c}(A)$. 
We will construct an equivalence 
\begin{equation}\label{qwefrfrfrfrfrfqweoifjqwioefeqwfq1}
(\Sigma K_{A}^{G,\an,\mathrm{lf}})_{|\ppGTopo}\simeq (K_{\bC}^{G,\An,\mathrm{lf}})_{|\ppGTopo} 
\end{equation}
and furthermore show that the canonical morphism $c_{K_{A}^{G,\an}}$ induces an equivalence
 \begin{equation}\label{werfwefewfqwefqwedqdqwedwer}
  (K_{A}^{G,\an })_{|\ppGTopo}\stackrel{\simeq}{\to}  (K_{A}^{G,\an,\mathrm{lf}})_{|\ppGTopo}\, .
  \end{equation}
 The asserted equivalence is  then defined  as the composition of the equivalences
 in \eqref{qwefrfrfrfrfrfqweoifjqwioefeqwfq1} and \eqref{werfwefewfqwefqwedqdqwedwer}.

We start with the construction of  \eqref{qwefrfrfrfrfrfqweoifjqwioefeqwfq1}.
We consider the following diagram in $\KKG$ 
\begin{equation}\label{fqwfwefwefewfewffqewf} 
 \xymatrix{&{\kkGA}({\bC}^{(G)}_{\std})\ar[r]\ar@{..>}[d]^{i}&{\kkGA}({\bM\bC}^{(G)}_{\std})\ar[rr]^{{\kkGA}(\pi)}\ar@{=}[d]&&\kkG(\bQ^{(G)}_{\std})\ar@{=}[d]\\
 \Sigma^{-1} {\kkGA}(\bQ^{(G)}_{\std})\ar[r]^-{j}&
 F(\pi)\ar[r]&{\kkGA}({\bM\bC}^{(G)}_{\std})\ar[rr]^{{\kkGA}(\pi)}&&{\kkGA}(\bQ^{(G)}_{\std})\, .
}
 \end{equation}
 The lower part is a segment of  a fibre sequence with $F(\pi)$ defined as the fibre of $\kkG(\pi)${, where $\pi$ is the quotient morphism ${\bM\bC}^{(G)}_{\std} \to \bQ^{(G)}_{\std}$}.   The upper composition vanishes  since \eqref{QEWFQWEFFQWDWD} is exact, but it is not necessarily part of a fibre sequence since  $\kkGA$  is only conditionally exact.   
 The dotted arrow and the corresponding square is then given by the universal property of the fibre.

 We  consider an ind-$G$-proper object $P$ and apply the exact  functor 
 { $\KKG(P,-)\colon \KKG \to \Sp$  to \eqref{fqwfwefwefewfewffqewf}}. We then  get the following diagram in $\Sp$  (as usual we drop the symbol ${\kkGA}$ if we insert objects in $\KKG(-,-)$)
 \begin{equation}\label{fqwfwefwefewfewffqewf1} 
 \xymatrix{&\KKG(P, {\bC}^{(G)}_{\std})\ar[r]\ar@{..>}[d]^{i_{*}}_{\simeq}&\KKG(P,{\bM\bC}^{(G)}_{\std})\ar[r]^{\pi_{*}}\ar@{=}[d]&\KKG(P,\bQ^{(G)}_{\std})\ar@{=}[d]\\
 \Sigma^{-1} \KKG(P,\bQ^{(G)}_{\std})\ar[r]_-{\simeq}^-{j_{*}}&
 \KKG(P,F(\pi))\ar[r]&\KKG(P,{\bM\bC}^{(G)}_{\std})\ar[r]^{\pi_{*}}&\KKG(P,\bQ^{(G)}_{\std})\, .
}
 \end{equation}
By  \cite[Thm.\ 1.32.5]{KKG} the upper sequence becomes a fibre sequence, too. Therefore the dotted arrow becomes an equivalence. Furthermore, $ {\bM\bC}^{(G)}_{\std}$ is flasque by   Lemma \ref{wrtijhoerhgrtgertg}  so that 
$\KKG(P,{\bM\bC}^{(G)}_{\std})\simeq 0$ by \cite[Thm.\ 1.32.7]{KKG},  and $j_{*}$ becomes  an equivalence.


    We consider the following two natural transformations
 \begin{equation}\label{wqefqwefqewfdqdewdqwd}
\Sigma^{-1} K_{\bC}^{G,\An} \stackrel{\textrm{def}}{=}\Sigma^{-1} K_{\bQ^{(G)}_{\std}}^{G,\an}        \xrightarrow{j_{*}}  K^{G,\an}_{F(\pi)}
\end{equation}  
  and 
   \begin{equation}\label{wqefqwefqewfdqdewdqwd1}
K^{G,\an}_{A}\stackrel{\eqref{weogjkopfegwreffsv1}}{\simeq} K_{ {\bC}^{(G)}_{\std}}^{G,\an} \xrightarrow{i_{*}}  K^{G,\an}_{F(\pi)}\end{equation}  
  of $\Sp$-valued functors on $\ppGTop$, where $i_{*}$ and $j_{*}$ are induced by the morphisms $i$ and $j$ in \eqref{fqwfwefwefewfewffqewf}.
Since by \cite[Prop.\ 1.26]{KKG} the restriction of $\kkG\circ C_{0}(-)$ to $\ppGTopf$  takes values in ind-$G$-proper objects, 
the restrictions of $j_{*}$ in \eqref{wqefqwefqewfdqdewdqwd} and     $i_{*}$  in \eqref{wqefqwefqewfdqdewdqwd1} 
to $\ppGTopf$  are equivalences.

 We apply the $(-)^{\lf}$-construction to the transformations in \eqref{wqefqwefqewfdqdewdqwd} and
\eqref{wqefqwefqewfdqdewdqwd1} and get transformations
\begin{equation}\label{qwefrfrfrfrfrfqweoifjqwioewerwerwerrfeqwfq1}
 \Sigma^{-1} K^{G,\An,\mathrm{lf}}_{\bC} \to    K_{F(\pi)}^{G,\an,\mathrm{lf}}
 \end{equation}
 and
  \begin{equation}\label{qwefrfrfrfrfrfqweoifjqwioefeqwfq}
  K^{G,\an,\mathrm{lf}}_{A} \to    K_{F(\pi)}^{G,\an,\mathrm{lf}}\, .
 \end{equation}


We now show that the evaluations of  \eqref{qwefrfrfrfrfrfqweoifjqwioewerwerwerrfeqwfq1}  and \eqref{qwefrfrfrfrfrfqweoifjqwioefeqwfq}    at   $X$  in $\ppGTopo$  are  equivalences. By homotopy invariance of the domains and targets   we can assume that $X$ is a countable  $G$-CW-complex with proper $G$-action.  By local compactness, it admits a cofinal family of open subsets $U$ with $G$-compact closure
belonging to $\ppGTopf$. This implies that $j_{*}$ in \eqref{wqefqwefqewfdqdewdqwd}  and $i_{*}$  in \eqref{wqefqwefqewfdqdewdqwd1}  
become equivalences after evaluation at  such $U$. We get the equivalences  \eqref{qwefrfrfrfrfrfqweoifjqwioewerwerwerrfeqwfq1}  and \eqref{qwefrfrfrfrfrfqweoifjqwioefeqwfq} as   limits of equivalences. The desired equivalence \eqref{qwefrfrfrfrfrfqweoifjqwioefeqwfq1} is now defined as the suspension of the composition  
\begin{equation}\label{fqwefwefweqdwedqe}
 (K^{G,\an,\mathrm{lf}}_{A})_{|\ppGTopo}    \xrightarrow{\eqref{qwefrfrfrfrfrfqweoifjqwioefeqwfq},\simeq}
  (K_{F(\pi)}^{G,\an,\mathrm{lf}})_{|\ppGTopo}\xleftarrow{\simeq, \eqref{qwefrfrfrfrfrfqweoifjqwioewerwerwerrfeqwfq1}} (\Sigma^{-1} K_{\bC}^{G,\An,\mathrm{lf}})_{|\ppGTopo}  \, .
\end{equation}
%

  It now remains to show that the canonical  transformation
  \eqref{werfwefewfqwefqwedqdqwedwer}
      is an equivalence. 
We can again assume that  $X$ is a countable  $G$-CW-complex with proper $G$-action. 
 We let $(U_{n})_{n\in \nat}$ be an exhaustion of $X$ by an increasing family of invariant open  subsets with $G$-compact closure. Then setting $Y_{n}:=X\setminus  U_{n}$ the family $(Y_{n})_{n\in \nat}$ is a decreasing family of closed invariant  subsets  of $X$ with $\bigcap_{n\in \nat} Y_{n}=\emptyset$.    We get a diagram of  maps 
   $$\xymatrix{\vdots\ar[d]&\vdots\ar[d]&\vdots\ar[d]\\K^{G,\an}_{A}(Y_{n+1}) \ar[r]\ar[d]&K^{G,\an}_{A}(X)\ar[d]\ar[r]&K^{G,\an}_{A}(U_{n+1})\ar[d]\\
 K^{G,\an}_{A}(Y_{n})\ar[r]\ar[d]&K^{G,\an}_{A}(X)\ar[r]\ar[d]&K^{G,\an}_{A}(U_{n})\ar[d]\\\vdots &\vdots &\vdots }$$
 whose horizontal pieces are  fibre sequences by \cite[Thm. 1.15.3]{KKG}. 
Here we use that the inclusions $Y_{n}\to X$ are split-closed by \cite[Prop. 5.1.1]{KKG} and our topological assumptions on $X$.
 We now consider the fibre sequence  obtained as the limit of this diagram in the vertical direction. Using that $A$ is separable and 
  \cite[Thm. 1.15.6]{KKG}  the limit of the left column vanishes. Hence
  we get an equivalence
  $$K^{G,\an}_{A}(X)\stackrel{\simeq}{\to} \lim_{n\in \nat} K^{G,\an}_{A}(U_{n+1})\simeq K_{A}^{G,\an,\mathrm{lf}}(X)$$
  as desired.
  %
  \end{proof}

 \section{Comparison with classical constructions}\label{wrtiohgrhdgfhgh}

As explained already in the introduction 
the classical  definition of the domain of the Paschke morphism
does not involve a $C^{*}$-category of controlled Hilbert spaces but it involves the choice of
a single sufficiently large  continuously controlled Hilbert space.  So in order to compare the approach of the present paper with the classical one 
we 
 specialize to the case of trivial coefficients characterized by 
 {$\bC = \Hilb_{c}(\C)$ and $\bM\bC=\Hilb(\C)$.} 
 According to Definition \ref{qegijqgiofjfqewfqwef} the objects of {$\Hilb(\C)^{(G)}$} are pairs $(H,\rho)$ of a Hilbert space 
 $H$ and a unitary representation $\rho \colon G^{\op}\to U(H)$, $g\mapsto \rho_{g}$.
 The morphisms are given by  $\Hom_{{\Hilb(\C)^{(G)}}}((H,\rho),(H',\rho'))=B(H,H')$, {the bounded linear operators from $H$ to $H'$.}
 The group $G$ fixes the objects of {$\Hilb(\C)^{(G)}$} and acts on the morphisms
 by $g\cdot A \coloneqq \rho_{g}^{\prime, -1}A \rho_{g}$.

 We consider a second countable proper metric space $X$ with an isometric action of the 
 group $G$.
In the following we construct an exact sequence of  $C^{*}$-categories   
\begin{equation}\label{frefreferfererwwerfrwef}0\to \bC^{G}(X) \to \bD^{G}(X)\to \bQ^{G}(X)\to 0\, .
\end{equation}
 We start with the definition of a $C^{*}$-category   $\bB(X)$ with $G$-action. Its objects   are triples $(H,\rho,\phi)$, where $(H,\rho)$ is in $\Hilb(\C)^{(G)}$ such that  $H$ is separable   and
 $\phi \colon C_{0}(X)\to B(H)$ is  homomorphism of $C^{*}$-algebras 
 {satisfying the following properties:
 \begin{enumerate}
 \item The representation $\phi$ is equivariant, i.e., we have  $g^{-1}\cdot \phi(f)=\phi(g^{*}f)$ for all $f$ in $C_{0}(X)$ and $g$ in $G$,  see \eqref{gwwgegdfsfvfdvsdvsfv}.
  \item The representation $\phi$ is non-degenerate in the sense that $\overline{\phi(C_{0}(X))H}=H$.
 \item  There exists an equivariant unitary  isomorphism $(H,\rho)\cong (L^{2}(G)\otimes H',\lambda\otimes \id_{H})$, where $\lambda$ is the left-regular representation of $G$ on $ L^{2}(G)$ and $H'$ is some auxiliary separable Hilbert space.
   \end{enumerate}}
 
 The morphisms of $\bB(X)$  are inherited from ${\Hilb (\C)^{(G)}}$. The group $G$ fixes the objects of $\bB(X)$ and acts on morphisms as in $ {\Hilb (\C)^{(G)}}$.

 Let $(H,\rho,\phi)$ and $ (H',\rho',\phi')$ be objects of $\bB(X)$.
  An operator $A$ in $B(H,H')$ is called locally compact  if $\phi'(f)A$ and $A\phi(f)$  belong to $K(H,H')$
 for all $f$ in $C_{0}(X)$, {where $K(H,H')$ denotes the set of compact linear operators from $H$ to $H'$.} Further, $A$ is called pseudolocal if $\phi'(f)A-A\phi(f)\in K(H,H')$ for all $f$ in $C_{0}(X)$.
 Finally, it is called controlled if there exists $R$ in $(0,\infty)$ such that
 $d(\supp(f'),\supp(f))\ge R$ implies  that $\phi'(f)A\phi(f)=0$. 
 The $C^{*}$-category $\bC^{G}(X)$ is the wide $C^{*}$-subcategory of $\bB(X)$ 
   generated by the invariant,  locally compact and controlled operators. Similarly the $C^{*}$-category
   $\bD^{G}(X)$ is generated by the invariant, pseudolocal and controlled operators.
   Finally $\bQ^{G}(X)$ is defined as the quotient, see \eqref{frefreferfererwwerfrwef}.
   If $(H,\rho,\phi)$ is an object of $\bB(X)$, then the corresponding endomorphism algebras form  an exact sequence 
   $$0\to  C^{G}(H,\rho,\phi)\to D^{G}(H,\rho,\phi)\to Q^{G}(H,\rho,\phi)\to 0$$
   which is the equivariant generalization of \eqref{wergpowkgpowergregregwegrg} from the introduction.
  \begin{ddd}\label{werogihetgergfdsf}
{An object $(H,\rho,\phi)$ of $\bD^{G}(X)$} is called absorbing if for every other $(H',\rho',\phi')$ {in $\bD^{G}(X)$} there exists  an isometry $u \colon (H',\rho',\phi')\to (H,\rho,\phi)$
in $\bD^{G}(X)$.
\end{ddd}
 The existence of absorbing objects in the case of trivial $G$ follows from \cite[Lem.\ 7.7]{hr}.\footnote{We {neither} know a reference {nor have a proof} for the existence of absorbing objects in the equivariant case  {in full generality}, {see Remark \ref{eiruhguiewgewrgerfwefewrf}}.}
For the following {discussion}, we assume that we can choose an absorbing object $(H,\rho,\phi)$.  We set $Q^{G}(X):=Q^{G}(H,\rho,\phi)$ and let $Q(H)$ be the Calkin algebra of $H$ with the induced $G$-action.
 With these choices we can define the Paschke morphism 
 $$p^{(H,\rho,\phi)}_{X} \coloneqq \mu_{X}\circ \delta_{X} \colon \KK(\C,Q^{G}(X))\to \KK^{G}(C_{0}(X),Q(H))$$ as in 
\eqref{rfqwpofkjpqowefewfeqwfqef}.  
We can consider $X$ as an object of $G\UBC$ with the structures induced by the metric.
We furthermore assume that $X$ is homotopy equivalent to a  $G$-compact $G$-CW-complex with finite stabilizers.
The following proposition asserts that  the Paschke morphism $p_{X}$ from \eqref{Paschke-morphisms} 
is compatible with   $p^{(H,\rho,\phi)}_{X}$.

\begin{prop}\label{iuhugiergwergergwg}
There exists a commutative   square \begin{equation}\label{feqwfewfefqewfefqewfqe}
\xymatrix{K_{\bC}^{G,\cX}(X)\ar[r]^-{\gamma}\ar[d]^{p_{X}}&\KK(\C,Q^{G}(X))\ar[d]^{p_{X}^{(H,\rho,\phi)}}\\ K_{\bC}^{G,\An} (\iota^{\topp}(X))&\ar[l]_-{\simeq}  \KK^{G}(C_{0}(X) ,Q(H))  }
\end{equation}
 \end{prop}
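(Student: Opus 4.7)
The plan is to construct both $\gamma$ and the bottom equivalence of \eqref{feqwfewfefqewfefqewfqe} from a single object $\mathfrak H$ of $\bD_\tau(X)$ built from the absorbing triple $(H,\rho,\phi)$, and then to verify commutativity by unwinding the definitions of the two Paschke maps. Concretely, the aim is to build $\mathfrak H=(C,\rho_C,\mu)$ in $\bD_\tau(X)$ together with a $\ast$-homomorphism $D^G(H,\rho,\phi)\to \End_{\bD_\tau(X)}(\mathfrak H)$ that preserves the Roe ideals and so descends to a $\ast$-homomorphism $\alpha\colon Q^G(X)\to \End_{\bQ_\tau(X)}(\mathfrak H)$. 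Absorption of $(H,\rho,\phi)$ will then translate into the statement that the induced inclusion $Q^G(X)\hookrightarrow \bQ_\tau(X)$, picking out the object $\mathfrak H$, is a weak Morita equivalence in the sense of \cite[Def.~16.1]{cank}; hence $\Kcat$ sends it to an equivalence $\KK(\C,Q^G(X))\xrightarrow{\simeq}\KK(\C,\bQ_\tau(X))\simeq K^{G,\cX}_\bC(X)$, and $\gamma$ is defined as the inverse of this equivalence. The bottom equivalence of \eqref{feqwfewfefqewfefqewfqe} comes from the analogous observation that the inclusion $\{(H,\rho)\}\hookrightarrow \bQ^{(G)}_\std$ is a weak Morita equivalence by the same absorption hypothesis, combined with the fact that $\kkG(C_{0}(X))$ is ind-$G$-proper under the finiteness assumption on $X$ (cf.\ Lemma \ref{eroigjweogregwgefwe}), so that this inclusion becomes an equivalence after applying $\KKG(C_{0}(X),-)$.

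First I would build $\mathfrak H$ as follows. Using $(X,\tau)\in G\UBC^{\scale}$, pick a $G$-invariant locally uniformly finite subset $L\subseteq \cO_{\tau}(X)\otimes G_{can,max}$ as in Condition \ref{qreoighwoefewqffqewffewfq}.\ref{fhewquifhqwieufhiqwefqwef}, together with an equivariant Borel partition $(U_\ell)_{\ell\in L}$ of $[0,\infty)\times X\times G$ whose $X$-projections have diameter controlled by the scale $\tau$. Let $\pi$ denote the projection \eqref{eq_projection_cone_to_X}, set $C \coloneqq H\otimes \ell^2(L)$ with the tensor-product $G$-representation $\rho_C$ in which $G$ permutes the basis of $\ell^2(L)$, and extend $\phi$ to a Borel functional calculus in order to define
\[
\mu(Y)\coloneqq \sum_{\ell\in L}\phi\bigl(\chi_{\pi(Y\cap U_\ell)}\bigr)\otimes e_\ell,
\]
where $e_\ell$ is the rank-one projection onto $\delta_\ell\in\ell^2(L)$. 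Local uniform finiteness of $L$ guarantees that $\mu(B)\in\bK$ for bounded $B$, effectivity and invariance are immediate from the partition, and the support of $\mu$ is contained in $L$, so $\mathfrak H\in\bD_\tau(X)$. For $A\in D^G(H,\rho,\phi)$ the amplification $A\otimes \id_{\ell^2(L)}$ is invariant and $\tau$-controlled in the sense of Definition \ref{rfquhwfiuqwhfiufewqefqwefqwefwefqwef}; moreover it lands in the ideal $\End_{\bC_\tau(X)}(\mathfrak H)$ precisely when $A$ is locally compact. This yields the morphism of exact sequences between the classical Paschke sequence and the one in Diagram \eqref{qfqwfqewfeqedw}, and hence the homomorphism $\alpha$.

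Commutativity of the square \eqref{feqwfewfefqewfefqewfqe} is then a matter of tracing definitions. It suffices to check that after precomposing with the equivalence $\Kcat(\alpha)\colon \KK(\C,Q^G(X))\to K^{G,\cX}_{\bC}(X)$, the two resulting morphisms $\KK(\C,Q^G(X))\to K^{G,\An}_{\bC}(X)$ agree. Both factor as the external product $C_0(X)\hatotimes -$ (which is functorial, hence identified via $\alpha$) followed by a multiplication map: on the one hand $\mu_{(X,\tau)}$ from \eqref{oreihoijfvoisfdvervfdsvdfvsvv}, on the other hand the classical $\mu_X$ of \eqref{efqwefewdq} postcomposed with the inclusion $Q(H)\hookrightarrow \bQ^{(G)}_\std$ picking out $(H,\rho)$. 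Evaluated on $\mathfrak H$, the map $\mu_{(X,\tau)}$ sends $f\otimes[A\otimes \id_{\ell^2(L)}]$ to $[\phi_{\mathfrak H}(f)(A\otimes\id)]$, where $\phi_{\mathfrak H}$ is the homomorphism defined from $\mu$ by \eqref{ewqfpojlqkrmeflkwefqef}. By construction of $\mu$ one has $\phi_{\mathfrak H}(f)=\phi(f)\otimes \id_{\ell^2(L)}$, so this matches the amplification of the classical multiplication $\phi(f)A$, which is precisely its image under the bottom equivalence. Naturality of the external product in both variables then provides the required filler in the $\infty$-category of spectra.

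The main obstacle will be the verification that $\alpha$ is a weak Morita equivalence, i.e.\ that the object $\mathfrak H$ weakly generates $\bQ_\tau(X)$. Unwinding the definition, this amounts to showing that any object $(C',\rho',\mu')$ of $\bD_\tau(X)$ admits, modulo $\bC_\tau(X)$ and up to finite orthogonal sums, a controlled equivariant isometric embedding into $\mathfrak H$. This is an absorption statement at the cone-level for the representations $C_0(X)\to \End(C')/\bK$ obtained from the measures $\mu'$, and should follow from the classical equivariant Voiculescu theorem for $(H,\rho,\phi)$ together with the approximation results enabled by the compatibility of $\bK$ with orthogonal sums (Proposition \ref{qroigoergergwggrgwerg}). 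The delicate point is to arrange the covering isometries so as to be controlled with respect to the cone coarse structure, and here the freeness of the $G$-action on $\cO_\tau(X)\otimes G_{can,max}$ together with the locally uniformly finite skeleton furnished by Condition \ref{qreoighwoefewqffqewffewfq}.\ref{fhewquifhqwieufhiqwefqwef} will be used crucially.
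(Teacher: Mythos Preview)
Your approach has a genuine gap at the conceptual level: you are constructing $\gamma$ as the \emph{inverse} of a putative equivalence $\Kcat(\alpha)\colon \KK(\C,Q^G(X))\to K^{G,\cX}_{\bC}(X)$, but the proposition does not claim that $\gamma$ is an equivalence, and the paper explicitly does not know this in general. Indeed, immediately after the proposition the paper records (Corollary~\ref{tgjiogweregwergre}) that $\gamma$ is an equivalence \emph{if and only if} the classical map $p_X^{(H,\rho,\phi)}$ is, and remarks that an independent proof of either fact is lacking. Your ``main obstacle''---showing that the single object $\mathfrak H$ weakly generates $\bQ_\tau(X)$---is therefore not an obstacle to overcome but the very statement the paper leaves open.

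The paper's construction of $\gamma$ goes in the opposite direction and avoids this issue entirely. Given any object $(H',\rho',\mu')$ of $\bD_\tau(X)$, the associated homomorphism $\phi'$ from \eqref{ewqfpojlqkrmeflkwefqef} makes $(H',\rho',\phi')$ an object of $\bB(X)$; absorption then supplies an isometry $u\colon (H',\rho',\phi')\to (H,\rho,\phi)$ in $\bD^G(X)$. One forms the category $\bD_\tau^u(X)$ of pairs $((H',\rho',\mu'),u)$; the forgetful functor to $\bD_\tau(X)$ is a unitary equivalence (absorption guarantees essential surjectivity), while $A\mapsto u'Au^*$ defines a functor to the one-object category $D^G(X)$. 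This zig-zag on the level of $C^*$-categories, applied to the whole exact sequence, yields $\gamma$ directly as a map $\Kcat(\bQ_\tau(X))\to \Kast(Q^G(X))$ with no inversion needed. Commutativity of \eqref{feqwfewfefqewfefqewfqe} is then immediate from the definitions of both Paschke maps as diagonal-then-multiplication, since the functor $A\mapsto u'Au^*$ intertwines the two multiplication maps on the nose.

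There is also a technical problem with your proposed object $\mathfrak H$. The measure $\mu$ in Definition~\ref{qeroigergeggweerg} must be supported on a locally finite set and satisfy $\mu(\{y\})\in\bK$ for every point $y$. For your $\mu(Y)=\sum_{\ell}\phi(\chi_{\pi(Y\cap U_\ell)})\otimes e_\ell$ one has $\mu(\{y\})=\phi(\chi_{\{\pi(y)\}})\otimes e_{\ell(y)}$, which is nonzero for any $y$ with $\phi(\chi_{\{\pi(y)\}})\neq 0$; for a general (nondegenerate) $\phi$ this is not locally finite in $y$, and the spectral projection $\phi(\chi_{\{\pi(y)\}})$ need not be compact. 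So $\mathfrak H$ as written is typically not an object of $\bD_\tau(X)$.
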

\begin{proof}
 We use the identifications
$$K_{\bC}^{G,\cX}(X)\stackrel{\text{Lem.~}\ref{wrtoihgjoergergegwgrgwerg}}{\simeq}  \KK(\C,\bQ(X))$$
and
$$K_{\bC}^{G,\An} (\iota^{\topp}(X)) \stackrel{\eqref{vfdsvsvvsvsdvdsadsvdsva}}{=} \KK^{G}(C_{0}(X),\bQ^{(G)}_{\std})\, .$$ 
 The objects of  $\bQ(X) $ (and also of $\bD(X)$ and $\bC(X)$, see \eqref{rgfqfewfqewf1} and \eqref{rgfqfewfqewf})  are the objects of $\bCgtsmc(\cO(X)\otimes G_{can,max})$.   If $(H',\rho',\mu')$ is such an object,
 we get the object $(H',\rho',\phi')$ of {$\bD^{G}(X)$}  with $\phi'$ as in \eqref{ewqfpojlqkrmeflkwefqef}.  
  Note that  since $X$  is second countable and has the bornology of relatively compact subsets, the Hilbert space $H'$ is separable by the local finiteness conditions (see Definition \ref{qeroigergeggweerg}) on  $(H',\rho',\mu')$.
  {Furthermore, using that $X\times G$ is a free $G$-set we see that $(H',\rho') $ is a multiple of the regular representation of $G$ on $L^{2}(G)$.}
  Since we assume that  $(H,\rho,\phi)$ is absorbing 
 there exists  an isometry 
 ${u'} \colon (H',\rho',\phi')\to (H,\rho,\phi)$ in $\bD^{G}(X)$.
 
 We consider the category $\bD^{u}(X) $ consisting of pairs
 $((H',\rho',\mu'),{u'})$ of an object  $(H',\rho',\mu')$ in $\bD(X)$
 and an  isometry $u$ as above. A morphism 
 $A \colon ((H',\rho',\mu'),{u'})\to ((H'',\rho'',\mu''),{u''})$ is a morphism
 $A \colon (H',\rho',\mu')\to  (H'',\rho'',\mu'')$ in $\bD(X)$.
 We define $\bC^{u}(X) $ and $\bQ^{u}(X)$ similarly.
 Then we have a diagram of maps of exact sequences of $C^{*}$-categories 
 $$\xymatrix{0\ar[r]&\bC (X)\ar[r] &\bD (X)\ar[r] &\bQ (X) \ar[r]&0
 \\0\ar[r]&\bC^{u}  (X)\ar[r]\ar[d]\ar[u]&\bD^{u} (X)\ar[r]\ar[d]\ar[u]&\bQ^{u} (X)\ar[u]\ar[d]\ar[r]&0\\ 
0\ar[r]&C^{G}(X)\ar[r]&D^{G}(X)\ar[r]&Q^{G}(X)\ar[r]&0
 }$$
 where in the lower sequence we consider the $C^{*}$-algebras as $C^{*}$-categories with a single object. 
 The upper vertical functors  just forget the embedding ${u'}$
 {and  are} 
 unitary equivalences.
  The definition of the lower vertical functors  on the objects is clear. 
  The functor $\bD^{u}(X)\to D^{G}(X)$ sends a morphism $A \colon ((H',\rho',\mu'),{u'})\to ((H'',\rho'',\mu''),{u''})$ to ${u''Au^{\prime,*}}$. The other functors are defined similarly. 
  Since $\Kcat$ sends   unitary  equivalences to  equivalences,
  we get the following {morphism} 
$$\xymatrix{   \Kcat(\bC (X))\ar[r]\ar[d]^{\alpha}_{\simeq} &\Kcat(\bD (X))\ar[r]\ar[d] &\Kcat(\bQ (X) )\ar[d]^{\gamma}  \\ 
 \Kast(C^{G}(X))\ar[r]&\Kast(D^{G}(X))\ar[r]&\Kast(Q^{G}(X)) 
 }$$
of fibre sequences. {The} {right} vertical map is the map $\gamma$ in   the square \eqref{feqwfewfefqewfefqewfqe}.
The map $\alpha$ is an equivalence   by \cite[Thm.\ 6.1]{indexclass}, but this will not be used here.

If $X$ {is}  homotopy equivalent to a $G$-finite $G$-CW complex with finite stabilizers, {then} the functor  $\KKG(C_{0}(X),-)$ sends exact sequences in $\Fun(BG,\nCcat)$  to fibre sequences by a combination of \cite[Prop.\ 1.26]{KKG} and \cite[Thm.\ 1.32.5]{KKG}. 
 The lower horizontal map in  \eqref{feqwfewfefqewfefqewfqe} is induced by the functor
 $Q(H)\to \bQ^{(G)}_{\std}$ which just views $(H,\rho)$ as an object of $ \bQ^{(G)}_{\std}$. 
In order to show that it is an equivalence
we consider the map of fibre sequences obtained by applying $\KK^{G}(C_{0}(X),-)$  to the  map of exact sequences
\begin{equation}\label{sdaoijfiojafassdfasdf}  
\xymatrix{ 0\ar[r]&K(H) \ar[r]\ar[d] & B(H)\ar[r]\ar[d] & Q(H)\ar[d]\ar[r]&0\\ 
0\ar[r]&{\Hilb_{c}(\C)}^{(G)}_{\std}\ar[r]&{\Hilb(\C)}^{(G)}_{\std}\ar[r]&\bQ^{(G)}_{\std}\ar[r]&0
 }
 \end{equation}
The vertical maps send the unique object of the domain to the object $(H,\rho)$.
 We    have   $\KKG(C_{0}(X),B(H))\simeq 0 $ by \cite[Cor.\ 6.{22}]{KKG}, and we also have   $\KKG(\C,{\Hilb(\C)^{(G)}_{\std}})\simeq 0 $ by  \cite[Thm.\ 1.32.7]{KKG} 
since ${\Hilb(\C)^{(G)}_{\std}}$   is flasque {by Lemma \ref{wrtijhoerhgrtgertg}.}

We will show that the left vertical map {in \eqref{sdaoijfiojafassdfasdf}} induces an equivalence after
applying $\KK^{G}(C_{0}(X), -)$. 
 We let $ {\Hilb_{c}(\C)}^{(G),\sepa}_{\std}$ and $ {\Hilb(\C)}^{(G),\sepa}_{\std}$ denote the full subcategories of $ {\Hilb_{c}(\C)}^{(G)}_{\std}$  and $ {\Hilb(\C)}^{(G)}_{\std}$, respectively, 
 {of separable Hilbert spaces.}
{Then we have a factorization of the left vertical morphism in \eqref{sdaoijfiojafassdfasdf} as}
\begin{equation}\label{qkweflkqwfnlkwefweqfqweqdqwd} 
K(H) \to   {\Hilb_{c}(\C)}^{(G),\sepa}_{\std} \to  {\Hilb_{c}(\C)}^{(G)}_{\std}\, .
\end{equation}
{We claim that first morphism is an idempotent completion   relative to the ideal inclusion $K(H)\to B(H)$, and therefore
 a relative Morita equivalence by \cite[Prop.\ 17.8]{cank}.
  In order to see  the claim  note  we have an equivariant  unitary isomorphism  $(H,\rho)\cong (L^{2}(G)\otimes H',\lambda\otimes \id_{H'})$. Since $(H,\rho,\phi)$ is absorbing  we can in addition assume that    $\dim(H')=\infty
$. Since every separable Hilbert space is isomorphic to a subspace of $H'$ we see that
every object of $  \Hilb(\C)^{(G),\sepa}_{\std}$ admits an isometry to $(H,\rho)$.
 We now consider the square
$$\xymatrix{K(H)\ar[r]\ar[d]&B(H)\ar[d]\\ {\Hilb_{c}(\C)}^{(G),\sepa}_{\std}\ar[r]&  {\Hilb(\C)}^{(G),\sepa}_{\std}}$$
where the horizontal maps are ideal inclusions. By the observation above  the 
  right vertical map presents  $ \Hilb(\C)^{(G),\sepa}_{\std}$ as the idempotent completion of $B(H)$.} 
     
     {The  second morphism in \eqref{qkweflkqwfnlkwefweqfqweqdqwd} is easily seen to be a weak Morita equivalence.}
    Since $\KKG(C_{0}(X), -)$ sends both relative Morita equivalences and weak Morita equivalences to equivalences by   \cite[Thm.\ 1.32.8]{KKG} and  \cite[Thm.\ 1.32.3]{KKG}, respectively,  
   the left vertical morphism in \eqref{sdaoijfiojafassdfasdf} induces an equivalence after applying  $ \KKG(C_{0}(X),  -)$.


  This 
  {together   with 
  the fact that this functor annihilates $B(H)$ and $\Hilb(\C)_{\std}^{(G)}$}
  implies  that    $$\KKG(C_{0}(X),Q(H))\to \KKG(C_{0}(X), \bQ^{(G)}_{\std})$$ is an equivalence.
 This explains the lower horizontal equivalence in \eqref{feqwfewfefqewfefqewfqe}.

It is obvious from the definitions of the Paschke morphisms in \eqref{rfqwpofkjpqowefewfeqwfqef}  and Definition \ref{qeorigjoqfeqwfqfewf}   that the diagram commutes.
\end{proof}

 In the following we assume that $X$ satisfies the assumptions  {of  Theorem} \ref{qreoigjoergegqrgqerqfewf}.\ref{qregiojqwfewfqwfqewf} such that   $p_{X}$  is an equivalence.
\begin{kor}\label{tgjiogweregwergre}
The morphism $\gamma$ is an equivalence if and only if $p^{(H,\rho,\phi)}_{X}$ is an equivalence. 
\end{kor}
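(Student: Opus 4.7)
The proof is an immediate two-out-of-three argument applied to the commutative square
\[
\xymatrix{K_{\bC}^{G,\cX}(X)\ar[r]^-{\gamma}\ar[d]^{p_{(X,\tau)}}&\KK(\C,Q^{G}(X))\ar[d]^{p_{X}^{(H,\rho,\phi)}}\\ K_{\bC}^{G,\An} (X)&\ar[l]_-{\simeq}  \KK^{G}(C_{0}(X) ,Q(H))  }
\]
provided by Proposition \ref{iuhugiergwergergwg}. The plan is to combine the two equivalences that we already have at our disposal: first, the left vertical map $p_{(X,\tau)}$ is an equivalence by the standing assumption that $(X,\tau)$ satisfies the hypotheses of Theorem~\ref{qreoigjoergegqrgqerqfewf}.\ref{qregiojqwfewfqwfqewf}; second, the bottom horizontal map is an equivalence as part of the statement of Proposition \ref{iuhugiergwergergwg}, so in particular its inverse exists in the homotopy category of spectra.

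Consequently, in the commutative square the composition going down and then left from $K_{\bC}^{G,\cX}(X)$ to $\KK^G(C_0(X),Q(H))$ is an equivalence. By commutativity, this composition equals $p_X^{(H,\rho,\phi)} \circ \gamma$ (after inverting the bottom equivalence). It follows by the standard two-out-of-three property of equivalences in the stable $\infty$-category $\Sp$ that $\gamma$ is an equivalence if and only if $p_X^{(H,\rho,\phi)}$ is. Concretely, if $\gamma$ is an equivalence, then $p_X^{(H,\rho,\phi)}$ equals the composition of three equivalences $p_{(X,\tau)}$, the inverse of the bottom equivalence, and $\gamma^{-1}$ (in the appropriate order after rearranging the square), and is therefore itself an equivalence; conversely, if $p_X^{(H,\rho,\phi)}$ is an equivalence, the same reasoning shows that $\gamma$ is an equivalence.

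There is no substantial obstacle here: all the work has already been done in establishing the commutativity of the square (Proposition~\ref{iuhugiergwergergwg}), in the main Paschke duality Theorem~\ref{qreoigjoergegqrgqerqfewf}.\ref{qregiojqwfewfqwfqewf} which ensures $p_{(X,\tau)}$ is an equivalence, and in the verification that the bottom horizontal morphism in \eqref{feqwfewfefqewfefqewfqe} is an equivalence (carried out as part of the proof of Proposition~\ref{iuhugiergwergergwg}, using that $C_0(X)$ becomes ind-$G$-proper for the relevant class of $X$, together with flasqueness of $B(H)$ and $\bC^{(G)}_{\std}$ and the Morita/relative-Morita equivalences relating $K(H)$, $\bK^{(G),\sepa}_{\std}$ and $\bK^{(G)}_{\std}$). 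The corollary is therefore a formal consequence.
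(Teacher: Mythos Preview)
Your proof is correct and is exactly the argument the paper intends: the corollary is stated without proof because it is an immediate two-out-of-three consequence of the commutative square in Proposition~\ref{iuhugiergwergergwg} together with the standing assumption that $p_{(X,\tau)}$ is an equivalence. Your explanation of why the bottom map is an equivalence is also accurate and matches what is shown inside the proof of Proposition~\ref{iuhugiergwergergwg}.
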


This says that in all cases where the classical Paschke morphism $p^{(H,\rho,{\phi})}_{X}$ is an equivalence it is equivalent
to our morphism $p_{X}$ as a spectrum map.
An independent proof\footnote{We do not know a reference for such a proof.}  that $\gamma$ is an equivalence would then allow us to conclude  from  Theorem \ref{qreoigjoergegqrgqerqfewf}.\ref{qregiojqwfewfqwfqewf}  that
$p^{(H,\rho,{\phi})}_{X}$ is an equivalence.

\begin{rem}\label{eiruhguiewgewrgerfwefewrf}
This is a remark about the existence of absorbing objects an in Definition \ref{werogihetgergfdsf}. First of all the discussion above depends on the existence of an absorbing object in $\bD^{G}(X)$ for which we {neither} have a reference {nor a proof}.  Related results are \cite[Lem.\ 4.5.5 \& Prop.\ 4.5.14]{willett_yu_book}. They are adapted for the approach based on localization algebras but 
do not imply directly what we need.  
A similar remark applies to   \cite[Thm.\ 1.3]{Benameur:2020aa}.

In the non-equivariant {case} the existence of  absorbing objects  is settled {in   \cite[Lem.\ 7.7]{hr}
by an application of Voiculescu's Theorem.}

We furthermore  do not know a reference for the fact that
$p_{X}^{(H,\rho,{\phi})}$ is an equivalence. In fact,  
 \cite[Thm.\ 1.5]{Benameur:2020aa}  states a Paschke duality
 isomorphism in the equivariant case. But it is not obvious how to identify the targets and the maps {in} \cite[Thm.\ 1.5]{Benameur:2020aa} with $p_{X}^{(H,\rho,\phi)}$.
\hB
\end{rem}

 \section{Homotopy theoretic  and analytic assembly maps}\label{weroigjwoergergwgre}
 
 In this section we describe the homotopy theoretic and the analytic assembly maps which we will eventually compare in Theorem \ref{wtoiguwegwergergregwe}.  The homotopy theoretic assembly introduced in Definition \ref{qroeigjoqergerqfewewfqewfeqwf}  is a standard construction from equivariant homotopy theory \cite{davis_lueck}.  For the historic development of the analytic assembly map we refer to \cite{Aparicio:2019aa}.
 Our Definition \ref{wergoijowergerreggwgw} is a spectrum valued refinement of the assembly map of \cite{kasparovinvent,baum_connes_higson} which is new in this form.
 
We begin with the homotopy theoretic assembly map. Let $G\Orb$ denote the orbit category of $G$ and $\bM$ be some cocomplete stable $\infty$-category. Recall that by Definition \ref{weigjorgdg}
  an equivariant $\bM$-valued homology theory  is simply  a functor 
 $$E \colon G\Orb\to \bM\, .$$ 
   
 Let  $\cF $ be a  family of subgroups of $G$. By $G_{\cF}\Orb$ we denote the full subcategory of the orbit category $G\Orb$ of transitive $G$-sets with stabilizers in the family $\cF$.  
Since 
  $*$ is a final object of $G\Orb$ we have a natural transformation    $E\to\underline{E(*)}$ in $\Fun( G\Orb , \bM)$. This transformation induces the  homotopy theoretic assembly map:
\begin{ddd}\label{woiejtgoiwegegrgergwer}The homotopy theoretic assembly map for $E$ and $\cF $  is the canonical morphism  $$\Ass^{h}_{E,\cF} \colon \colim_{G_{\cF}\Orb}E\to E(*)$$
  in $\bM$.  
\end{ddd}

Recall that we can evaluate the equivariant homology theory $E$
on $G$-topological spaces using \eqref{adfiuhuihavfvasdcacsadcasca}.
For every  $X$ in $ G\Top$ we get  a morphism \begin{equation}\label{qwefeqwdqwed}
\Ass_{E,X}^{h} \colon E(X)\to E(*)
\end{equation}
which is induced by the projection $X\to *$.  
We let $E_{\cF}G^\cw$ be a $G$-CW complex representing the homotopy type of the classifying space for the family $\cF$. It is characterized 
essentially uniquely by the condition that \begin{equation}\label{sggsfdge}Y^{G}(E_{\cF}G^\cw)(S)\simeq \left\{\begin{array}{cc} \emptyset&S\not\in G_{\cF}\Orb\,,\\{*}&S\in G_{\cF}\Orb\,. \end{array}\right.
\end{equation}
$$ $$
As a consequence  of \eqref{qwefpojfopwefqwefqwefqew} we then get the equivalence
$E(E_{\cF}G^\cw)\simeq \colim_{G_{\cF}\Orb}E$, and under this identification we have the equivalence \begin{equation}\label{ewfqpijoijoioij}
 \Ass_{E,\cF}^{h}\simeq   \Ass_{E,E_{\cF}G^\cw}^{h} 
\end{equation}
 of assembly maps.
 {Further below, in the special case of the  functor $E=\hat K_{A}^{G}$ introduced in Definition \ref{egiueheiugheiugwegerggwfewr} for  $A$ in $ \KKG$  we will use the notation 
 \begin{equation}\label{rtherhthetfff}\mu^{DL}_{A,X} \coloneqq \Ass_{\hat K^{G}_{A},X}^{h}\, , \quad \mu^{DL}_{A,\cF} \coloneqq \Ass_{\hat K^{G}_{A},\cF}^{h}
\end{equation}  indicating that 
 $\mu^{DL}_{A,\cF}$ is the assembly map  introduced by Davis--L\"uck  \cite{davis_lueck}.

We have a functor \begin{equation}\label{adfvoijqiorjvoifvafsvsdv}
\iota \colon G\Orb\to G\BC\, , \quad S\mapsto S_{min,max}\, ,
\end{equation} 
where $S_{min,max}$ is the $G$-set $S$ equipped with the minimal coarse structure and  the maximal bornology. For a coefficient category $\bC$ in $\Fun(BG,\nCcat)$ which is effectively additive and admits countable AV-sums we have an
equivariant coarse  $K$-homology functor 
$$K\bC\cX_{G_{can,min}}^{G} \colon G\BC\to \Sp^{\la}$$
(see Definition \ref{qrogijeqoifefewfefewqffe} for $K\bC\cX^{G}$ and  Definition \ref{regoiergowregrwergwreg} for the   {twist of an}{ equivariant  coarse homology theory by an object of $G\BC$,   in the present case by $G_{can,min}$}). The following is the technical definition of the functor described in \eqref{asdvijqoirgfvfsdvsva}.
\begin{ddd}\label{qroeigjoqergerqfewewfqewfeqwf}
We define the functor 
$$K\bC^{G} \colon  G\Orb \stackrel{\iota}{\to} G\BC \stackrel{K\bC\cX^{G}_{G_{can,min}}}{\lto}  \Sp^{\la} \, .$$
\end{ddd}
 
We now apply the definitions of assembly maps explained above  to the functor $K\bC^{G}$ in place of $E$ and introduce a shorter notation.
\begin{ddd}\label{wrtiogrgergfgs}
The homotopy theoretic assembly   map  associated to $G$, $\cF$ and {$ \bC$} is defined to be the map
$$\Ass_{{\bC},\cF}^{h} \coloneqq \Ass_{K\bC^{G},\cF}^{h} \colon\colim_{G_{\cF}\Orb} K\bC^{G} \to K\bC^{G}(*)\, .$$
\end{ddd}

More generally, for every $X$ in  $G\Top$, specializing \eqref{qwefeqwdqwed}, we have the  morphism
\begin{equation}\label{vevopjrepovervfvdfv}
\Ass_{{\bC},X}^{h} \coloneqq \Ass^{h}_{K\bC^{G},X}:K\bC^{G}(X)\to K\bC^{G}(*)
\end{equation}
  induced by the projection $X\to *$.
Since $K\bC\cX^{G}$ depends naturally  on the coefficient category $\bC$ in $\Fun(BG,\nCcat_{\ndeg,\eadd,\omega\add})${, see \eqref{eq_defn_functoriality_category} for the definition of this category}, so do the assembly maps  
$\Ass_{{\bC},X}^{h}$  and $ \Ass_{{\bC},\cF}^{h}$.

We now turn to the analytic assembly map  whose final definition will be stated in Definition \ref{wergoijowergerreggwgw}.
  We start with introducing the notation for its domain. Recall that
    $\ppGTop$ is the  category of  locally compact {Hausdorff} $G$-spaces with partially defined   proper maps.
     \begin{ddd}\label{wrtoigjiowrgerferfw}
 We {denote by} $\pGTopc$   the full category  of $\ppGTop$
  of spaces on which $G$ acts properly and cocompactly.
  \end{ddd}
We will  describe the analytic assembly map
$\Ass^{\an}_{{\bC},\cF}$ associated to {$  \bC $    in $\Fun(BG,\nCcat)$} and a family $\cF$ contained in $\Fin$. In analogy to \eqref{qwefeqwdqwed} we will further  describe  a natural  transformation 
$$\Ass_{{\bC}}^{\an} \colon K_{\bC}^{G,\An}(-)\to \underline{\Sigma \KK(\C, {\bC}_{\std}^{(G)}\rtimes_{r}G)}
$$ of functors from $ \pGTopc$ to $\Sp^{\la}$.
  Note that  for infinite $G$ the morphism  
 \begin{equation}\label{fwfqwefedewdedwqedqewd}
\Ass^{\an}_{{\bC},X} \colon K_{\bC}^{G,\An}(X)\to \Sigma \KK(\C, {\bC}_{\std}^{(G)}\rtimes_{r}G)
\end{equation} 
 can  not  simply be  induced by a map $X\to *$ since $*$ and therefore this map   are  not in the category $\pGTopc$.
  If $E_{\cF}G^\cw$ happens to be in $\pGTopc$, then we will have an equivalence
$\Ass^{\an}_{{\bC},\cF}\simeq \Ass^{\an}_{{\bC},E_{\cF}G^\cw}$ in analogy to \eqref{ewfqpijoijoioij}.

The classical definition of the analytic assembly map is based on  a construction of a    family $(\Ass^{\an}_{{\bC},X,*})_{X\in\pGTopc}$ of homomorphisms in $\Ab^{\Z}$
  \begin{equation}\label{efovjefoivfdvsdfvsfdv}
\Ass_{{\bC},X,*}^{\an} \colon K_{\bC,*}^{G,\An}(X)= \KK_{*}^{G}(C_{0}(X),  \bQ^{(G)}_{\std}  )\to  \KK_{*-1}(\C, {\bC}^{(G)}_{\std}\rtimes_{r} G)\, ,
\end{equation}
which implement a natural transformation
\begin{equation}\label{qewfoihioqjfiofewfqwefqwefqewfefqwef}
K_{\bC,*}^{G,\An}(-)\to \underline{\KK_{*-1}(\C, {\bC}^{(G)}_{\std}\rtimes_{r} G)}
\end{equation}
of functors from $\pGTopc$ to $\Ab^{\Z}$.


 In the following we describe the details of the construction of $\Ass_{{\bC},X,*}^{\an}$ {in \eqref{efovjefoivfdvsdfvsfdv}}
thereby lifting it to the spectrum level.
The construction has three steps. The first is an application of   functor $-\rtimes G$ from \cite[Thm. 1.22.3]{KKG}, where $\rtimes$ without subscript refers to the maximal crossed product. The second is a pull-back along the Kasparov projection given by \eqref{wefqwfefqfqewfqfe1} below.  
The last step consists of changing target categories \eqref{wefqwfefqfqewfqfe}.

\phantomsection\label{afvoijvodvasdvsdvavdv}The following discussion will be used to get rid of the choice of cut-off functions involved in the Kasparov projection. Here we can take full advantage of the $\infty$-categorical set-up.
 We let $$\cR \colon  {\pGTopc}^{\op}\to \Set$$ be the following functor:
 \begin{enumerate}\item objects: The  functor $\cR$ sends $X$ to the set $\cR(X)$ of all   functions 
 $\chi$ in $C_{c}(X)$ such that 
 \begin{equation}
 \label{goqiejgoigqfqwfwefqwef}\sum_{g\in G} g^{*}\chi^{2}=1\, .
 \end{equation}   \item morphisms: 
 The functor $\cR$ sends a morphism
 $f \colon X\to X'$ in $ \pGTopc $ to the map
$\cR(f) \colon \cR(X')\to \cR(X)$ which sends $\chi'$ in $\cR(X')$ to $f^{*}\chi'$ in $\cR(X)$. 
\end{enumerate}

For $\chi$ in $\cR(X)$
we define the Kasparov  projection \begin{equation}\label{398z9823zf983rferwfwefwrffwr}
p_{\chi} \coloneqq \sum_{g\in G} (\chi \cdot g^{*}\chi ,g)
\end{equation}
 in $C_{0}(X)\rtimes G$. Note that this sum has finitely many non-zero terms.

 If $f \colon X\to X'$ is a morphism in  
 $  {\pGTopc}$ and $\chi'$ is in $\cR(X')$, then we have the relation
 $$(f^{*}\rtimes G)(p_{\chi'}) = p_{f^{*}\chi'}\, .$$
 Hence  we get a natural transformation of contravariant $\Set$-valued functors
 $$\cR(-)\to \Hom_{\nCalg}(\C,C_{0}(-)\rtimes G)$$ on $  {\pGTopc}$
 which sends $\chi$ in $\cR(X)$ to the homomorphism
 $$\C\ni \lambda\mapsto  \lambda p_{\chi} \in C_{0}(X)\rtimes G\, .$$
 Composing with $\kk$  we get a natural transformation of $\Spc$-valued functors 
 $$\ell'\cR(-)\to \Omega^{\infty}\KK(\C,C_{0}(X)\rtimes G)\, ,$$
 where $\ell' \colon \Set\to \Spc$ is the canonical inclusion.
 Using the      $(\Sigma^{\infty}_{+},\Omega^{\infty})$-adjunction we can interpret the result as   a transformation 
 \begin{equation}\label{adsckuhqiuhfiucdcdscads}
\Sigma^{\infty}_{+}\ell'\cR(-)\to \KK(\C,C_{0}(-)\rtimes G)
\end{equation}
 of $\Sp^{\la}$-valued functors.
 
 Let $E \colon   {\pGTopc}^{\op}\to \bM$ be  any functor to a cocomplete target.
We have a  functor  $$q \colon  {\pGTopc}\times \Delta\to    {\pGTopc}$$ which sends $(X,[n])$ to $X\times \Delta^{n}$ with the $G$-action only  on the first factor.      
We define the homotopification of $E$ by 
 $$ \cH(E) \coloneqq q_{*} q^{*} E \colon   {(}{\pGTopc}{)}^{\op}\to \Sp^{\la}\, ,$$   where $q^{*}$ is the pull-back along $q$ and $q_{*}$ is the right-adjoint of $q^{*}$, the right Kan-extension functor. 
  The unit of the adjunction $(q^{*},q_{*})$ provides a natural transformation
$E\to \cH(E)$.
We say that $E$ is homotopy invariant if the projection $  X\times \Delta^{1}  \to X$ induces an equivalence
$E(X)\stackrel{\simeq}{\to} E(X\times \Delta^{1})$. A proof of the following lemma  {is for instance implicitly given in the proof of \cite[Lem.\ 7.5]{Bunke:2013uq1}} 
\begin{lem}[{cf.\ \cite[Lem.\ 7.5]{Bunke:2013uq1}}]\label{fjewoifjqowfqewqwefqwefq}\mbox{}\begin{enumerate} \item $\cH(E)$ is homotopy invariant.
\item \label{fqwoeihfeowdqewdwqedewd}
 $E$ is homotopy invariant if and only if the canonical morphism $E\to \cH(E)$ is an equivalence.
\end{enumerate}
\end{lem}

Let $S$ denote the sphere spectrum and $\underline{S} \colon {\pGTopc}^{\op}\to \Sp$ be the constant functor with value $S$.
\begin{lem}\label{riojoijiowervervfsdvfdvsfd} The projection $\cR\to \underline{*}$ induces
 an equivalence $\cH(\Sigma^{\infty}_{+}\ell' \cR)\simeq \underline{S}$.
\end{lem}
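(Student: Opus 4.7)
The canonical projection $\cR \to \underline{*}$ induces a transformation $\cH(\Sigma^{\infty}_{+}\ell'\cR) \to \cH(\underline{S})$. Since the constant functor $\underline{S}$ is homotopy invariant, Lemma \ref{fjewoifjqowfqewqwefqwefq} gives $\cH(\underline{S}) \simeq \underline{S}$, so it suffices to show this transformation is an equivalence at every $X \in \pGTopc$.

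By the standard presentation of $\cH$ as a simplicial realization, for any functor $E \colon \pGTopc^{\op} \to \Sp^{\la}$ we have $\cH(E)(X) \simeq \colim_{[n] \in \Delta^{\op}} E(X \times \Delta^{n})$. Since $\Sigma^{\infty}_{+}$ and $\ell'$ both preserve colimits, this yields
$$
\cH(\Sigma^{\infty}_{+}\ell'\cR)(X) \simeq \Sigma^{\infty}_{+} |K|,
$$
where $|K|$ denotes the realization in $\Spc$ of the simplicial set $K_{\bullet} := \cR(X \times \Delta^{\bullet})$. The map in question becomes $\Sigma^{\infty}_{+}$ applied to $|K| \to *$, so we must show $|K|$ is contractible.

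The key observation is that $\cR$ is ``quadratically convex'': given $\chi_{0},\ldots,\chi_{n} \in \cR(X)$ and $(t_{0},\ldots,t_{n}) \in \Delta^{n}$, the function $\sqrt{\sum_{i} t_{i}\chi_{i}^{2}}$ lies in $\cR(X)$ because $\sum_{g} g^{*}\bigl(\sum_{i} t_{i}\chi_{i}^{2}\bigr) = \sum_{i} t_{i}\sum_{g}g^{*}\chi_{i}^{2} = \sum_{i} t_{i} = 1$. Since $G$ acts properly and cocompactly on $X$, a standard partition-of-unity construction produces a basepoint $\chi_{0} \in \cR(X)$, so in particular $K_{0}$ is non-empty.

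We exhibit a contraction of $K_{\bullet}$ by constructing extra degeneracies $s_{-1} \colon K_{n} \to K_{n+1}$ via
$$
(s_{-1}\chi)(x,t_{0},t_{1},\ldots,t_{n+1}) := \sqrt{t_{0}\chi_{0}(x)^{2} + (1-t_{0})\,\chi\bigl(x,\tfrac{t_{1}}{1-t_{0}},\ldots,\tfrac{t_{n+1}}{1-t_{0}}\bigr)^{2}}
$$
for $t_{0} < 1$, extended continuously by $\chi_{0}(x)$ at $t_{0}=1$. This extension is well defined because $\chi$ is bounded, so $(1-t_{0})\chi(x,\cdot)^{2} \to 0$ uniformly as $t_{0} \to 1$; the support of $s_{-1}\chi$ is contained in $(\supp\chi_{0} \cup \pr_{X}(\supp\chi)) \times \Delta^{n+1}$, hence is compact; and the cut-off identity holds by the convexity computation above. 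A direct check verifies the simplicial identities $d_{0}s_{-1} = \id$, $d_{i}s_{-1} = s_{-1}d_{i-1}$ for $i \geq 1$, and $s_{i}s_{-1} = s_{-1}s_{i-1}$ for $i \geq 0$. By the classical fact that a simplicial set admitting extra degeneracies is simplicially contractible, we conclude $|K| \simeq \ast$, whence $\Sigma^{\infty}_{+}|K| \simeq S$ as required.

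The only technical points are the continuity of $s_{-1}\chi$ at $t_{0} = 1$ (controlled by $\|\chi\|_{\infty}$) and the simplicial identities, both direct computations in barycentric coordinates.
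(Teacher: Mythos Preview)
Your argument is correct and closely parallels the paper's, but packages the contractibility differently. Both proofs reduce to showing that the simplicial set $K_\bullet = \cR(X\times\Delta^\bullet)$ has contractible realization, and both rely on the same quadratic interpolation $\sqrt{t\chi_0^2 + (1-t)\chi^2}$. The paper uses this formula to show that $K_\bullet \to *$ is a \emph{trivial Kan fibration} (every $\chi\in\cR(X\times\partial\Delta^n)$ extends over $\Delta^n$), whereas you use it to build \emph{extra degeneracies} $s_{-1}$ and invoke the classical fact that an augmented simplicial set with extra degeneracies is simplicially contractible. Your route is a little more hands-on (one must verify the simplicial identities, which you correctly sketch), while the paper's route is slightly more conceptual; neither has an advantage in generality.

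One small caveat, which applies equally to the paper's proof: the square-root formula always returns a non-negative function, so strictly speaking $d_0 s_{-1}\chi = |\chi|$ rather than $\chi$ (and likewise the paper's filler extends $|\chi|$, not $\chi$, to the interior). Since $\cR(X)$ as defined in \eqref{goqiejgoigqfqwfwefqwef} does not require $\chi\ge 0$, the identity $d_0 s_{-1}=\id$ fails literally. The fix is immediate: replace $\cR$ by the subfunctor $\cR^{+}$ of non-negative cut-off functions, which is still nonempty and functorial, and on which your extra degeneracies satisfy all identities on the nose. This does not affect the downstream use of the lemma.
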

 \begin{proof}
 By the pointwise formula for the left Kan extension $q_{!}$ we must show that the projection $\cR\to \underline{*}$ induces for every $X$ in $ {\pGTopc}$ an equivalence 
 $$\colim_{[n]\in \Delta^{\op}} \Sigma^{\infty}_{+}\ell'\cR(X\times \Delta^{n})\stackrel{\simeq}{\to} S\, .$$
 Since $\Sigma^{\infty}_{+}\colon\Spc\to \Sp$ preserves colimits it actually suffices to show that
 $ \colim_{[n]\in \Delta^{\op}} \ell'\cR(X\times \Delta^{n})\stackrel{\simeq}{\to}  *$ in $\Spc$. 
For a simplicial set $W$ the colimit $\colim_{\Delta^{\op}}\ell' W $ is given by 
$\ell(|W|)$, where  $\ell$ is as in \eqref{adsfasdfasdfadsf} and
$|W|$ in $\Top$ is the geometric realization of $W$.  Since the  geometric realization of the total space of
a trivial Kan fibration over a point is contractible it therefore 
 suffices to show that
 the map of simplicial sets
 $\cR(X\times \Delta^{-}) \to *$ is a trivial Kan fibration.      
 So we must show that 
 for every $n$ in $\nat$  a function 
  $\chi$ in  $\cR( {X\times \partial \Delta^{n}}  )$  can be extended to a function $\tilde \chi $ in $\cR( {X\times \Delta^{n}} )$. 
  
  For the case $n=0$ we observe that for any $X$ in $  {\pGTopc}$ we have $\cR(X)\not=\emptyset$. For $n\ge 1$,
 using barycentric coordinates  we can write a point in $\Delta^{n}$ in the form $\sigma t$ where $\sigma$ is in $[0,1]$ and
 $t$ is   in $ \partial\Delta^{n}$. Then  an  extension of $\chi$ is given by 
 $$\tilde \chi({x,\sigma t}) \coloneqq \sqrt{\sigma \chi({x,t})^{2}+(1-\sigma) \chi({x,t_{0}})^{2}}\, ,$$ where $t_{0}$ is the zero'th vertex of the simplex.
 \end{proof}

 We now use that $ \KK(\C,C_{0}(-)\rtimes G)$ is a homotopy invariant $\Sp^{\la}$-valued functor. Applying $\cH$ to \eqref{adsckuhqiuhfiucdcdscads} we get a transformation \begin{equation}\label{qewfoihiew9ufj9qweofqewfewfq}
\epsilon \colon
\underline{S}\stackrel{\textrm{Lem.~}\ref{riojoijiowervervfsdvfdvsfd}}{\simeq} \cH({\Sigma^\infty_+}\ell'\cR)\stackrel{\cH\eqref{adsckuhqiuhfiucdcdscads}}{\to} \cH( \KK(\C,C_{0}(-)\rtimes G))\stackrel{\simeq, \textrm{Lem.~}\ref{fjewoifjqowfqewqwefqwefq}.\ref{fqwoeihfeowdqewdwqedewd}}{\leftarrow}  \KK(\C,C_{0}(-)\rtimes G)\ .
\end{equation}

Let $A$ be an object of $\KKG$ and consider the functor from \cite[Def. 1.14]{KKG}:
\begin{equation}\label{qwefqwefewdewdwqdwedqewd}
K^{G,\an}_{A} \coloneqq \KK(C_{0}(-),A) \colon \ppGTop\to \Sp\, .
\end{equation}
We have the maximal\footnote{In the present paper  we use the convention to denote the maximal crossed product by $\rtimes$ and the reduced by $\rtimes_{r}$.} crossed product functor {$-\rtimes G$   \cite[Thm. 1.22.3]{KKG} whose action on mapping spectra   induces the following natural transformation  \begin{equation}\label{wefqwfefqfqewfqfe2}
 -\rtimes G:K_{A}^{G,\an}(-)=\KK^{G}(C_{0}(-),  {A})\to \KK(C_{0}(-)\rtimes G,  {A}\rtimes G)\, .
\end{equation} of functors from $\pGTopc$ to $\Sp$}. 
The composition of morphisms  in $\KK$ provides a  natural transformation
$$ \KK(\C,C_{0}(-)\rtimes G)\to \map_{\Sp^{\la}}( \KK(C_{0}(-)\rtimes G, {A}\rtimes G), \KK(\C , {A}\rtimes G))\, .$$ We interpret its pre-composition with 
 \eqref{qewfoihiew9ufj9qweofqewfewfq}  as  a natural transformation 
\begin{equation}\label{wefqwfefqfqewfqfe1}
\epsilon^{*} \colon \KK(C_{0}(-)\rtimes G, {A}\rtimes G) \to
\underline{ \KK(\C , {A}\rtimes G)}
\end{equation}
of functors on $ \pGTopc$ with values in $\Sp^{\la}$. The composition of \eqref{wefqwfefqfqewfqfe2} and
\eqref{wefqwfefqfqewfqfe1} is a natural transformation \begin{equation}\label{rgfewrfwweferf}
\mu^{\Kasp}_{A,-,\max} \colon K^{G,\an}_{A}(-)\to \underline{ \KK(\C , {A}\rtimes G)}
\end{equation} of functors from $\pGTopc$ to $\Sp$.
 We  now assume $\cF\subseteq \Fin$. 
 In general 
$E_{\cF}G^\cw$ does not belong to $ {\pGTopc}$ so that we can  not apply $K^{G,\an}_{A}$  or $ \mu^{\Kasp}_{-,A,\max}$  to $E_{\cF}G^\cw$ directly. Therefore we adopt the following definition.
\begin{ddd}\label{rigrgbjoirjvorgvfbdfgbfgbdfgbdfbg}
We let $$RK_{{A}}^{G,{\an}}\colon G\Top\to \Sp^{\la}$$ be the left Kan extension of
$(K_{{A}}^{G,{\an}})_{| {\pGTopc}}$ along the inclusion
$$ {\pGTopc}\to G\Top\, .$$
 \end{ddd} In particular 
 we have the diagram
\begin{equation*}
\xymatrix{ {\pGTopc}\ar[rr]^{K_{{A}}^{G,{\an}}}\ar[dr]&& \Sp^{\la}\,.\\& G\Top\ar@{-->}[ur]_{RK_{{A}}^{G,\an}}\ar@{}[u]^{\Rightarrow}&}
\end{equation*}

The following {definition} introduces the spectrum-valued refinement of the classical  Kasparov assembly map as introduced in \cite{kasparovinvent,baum_connes_higson}.
 \begin{ddd}\label{wergoijowergerrrfrfrfrfeggwgw}
The  Kasparov   assembly map associated to $G$, $\cF$ and $A$
is defined as  the map $$ \mu^{\Kasp}_{{A,\cF},\max} \colon RK^{G,{\an}}_{{A}} (E_{\cF}G^\cw) \to   \KK(\C, A\rtimes G)$$ induced by
the  natural transformation in \eqref{rgfewrfwweferf}.
We further define
$$ \mu^{\Kasp}_{{A,\cF}}\colon RK^{G,{\an}}_{{A}} (E_{\cF}G^\cw) \to   \KK(\C, A\rtimes_{r} G)$$
as the composition of $ \mu^{\Kasp}_{{A,\cF},\max}$ with the canonical morphism
$A\rtimes G\to A\rtimes_{r}G$.
  \end{ddd}
  Note that both versions of the Kasparov assembly map are, by construction, natural in the coefficient object $A$ in $\KKG$.

Using the functor $\kk_{\Ccat}\colon \Fun(BG,\nCcat)\to \KKG$ we consider the Kasparov assembly map as depending on a coefficient  $C^{*}$-category with $G$-action in place of $A$. Recall that we drop $\kk_{\Ccat}$ from the notation. 

Consider a morphism $\bC\to \bD$ in $\Fun(BG,\Ccat) $.
 \begin{lem}\label{wtogopwerfgerwfwref}
 If $\bC\to \bD$ is a Morita equivalence, then the induced morphism $ \mu^{\Kasp}_{{\bC,\cF}}\to  \mu^{\Kasp}_{{\bD,\cF}}$ is an equivalence.
 \end{lem}
 \begin{proof}
 By the functoriality of the  Kasparov assembly map we have a commutative square
$$ \xymatrix{ RK^{G,{\an}}_{{\bC}} (E_{\cF}G^\cw) \ar[r]^-{ \mu^{\Kasp}_{{\bC,\cF}}}\ar[d] &  \KK(\C, \bC\rtimes_{r} G) \ar[d] \\  RK^{G,{\an}}_{{\bD}} (E_{\cF}G^\cw) \ar[r]^-{ \mu^{\Kasp}_{{\bD,\cF}}} &  \KK(\C, \bD\rtimes_{r} G) }$$
  It suffices to show that the vertical morphisms are equivalences. We start with the left vertical morphism.
  Note that
  $$RK^{G,{\an}}_{{\bC}} (E_{\cF}G^\cw) \simeq \colim_{W\subseteq E_{\cF}G^\cw} \KKG(C_{0}(W),\bC)\, ,$$
  where $W$ runs over the $G$-finite subcomplexes of $ E_{\cF}G^\cw$.
  By \cite[Prop.\ 1.26]{KKG} the objects $\kkG(C_{0}(W))$ of $\KKG$ are $G$-proper and hence ind-$G$-proper ({recall that we assume that the family $\cF$ is contained in $\Fin$).}
  By \cite[Thm.\ 1.32.8]{KKG} the functor  $\KKG(C_{0}(W),-)$ sends relative Morita equivalences to equivalences.
  Hence the left vertical arrow in the square above is equivalent to 
  the colimit of equivalences
  $$ \colim_{W\subseteq E_{\cF}G^\cw}   \KKG(C_{0}(W),\bC)\to  \colim_{W\subseteq E_{\cF}G^\cw}  \KKG(C_{0}(W),\bD)$$
  and hence itself an equivalence.
  
  The right vertical arrow in the square is an equivalence since $-\rtimes_{r}G$ preserves Morita equivalences by \cite[Prop.\ 16.11]{cank},  and    
   $\KK(\C,-)$ sends Morita equivalences to equivalences by \cite[Thm.\ 16.18]{cank}.
    \end{proof}

  \begin{ex}
  Assume that  $A$ is an object in $\Fun(BG,\Calg)$ and set $\bC \coloneqq \Hilb_{c}(A)$ in $\Fun(BG,\nCcat)$.
  Then by Lemma \ref{eriughwierewfwerfwf}.\ref{wergjiergjosfdgdfg2} we have a Morita equivalence $A\to  (\bC^{u})^{(G)}$ induced by the canonical inclusion. We then have an equivalence
 \begin{equation}\label{fdvsfdvvsfdvav} \mu^{\Kasp}_{{A,\cF}}\stackrel{\simeq}{\to}  \mu^{\Kasp}_{{(\bC^{u})^{(G)},\cF}}
\end{equation}  
by Lemma \ref{wtogopwerfgerwfwref}.
\hB  
\end{ex}

We now derive the analytic assembly map \eqref{fwfqwefedewdedwqedqewd} associated to  {$\bC $ in $\Fun(BG,\nCcat)$.}
The composition of the two transformations $-\rtimes G\to -\rtimes_{r}G$ and 
$\id\to \Idem$     
 {yields} a morphism of exact sequences 
 \begin{equation}\label{rfreffewqefewfwefewffwfwfwefewfewfewfwfwefewfewfqfdfdfwefqwef} 
\xymatrix{0\ar[r]&
  {\bC}^{(G)}_{\std}\rtimes G \ar[d] \ar[r] &  {\bM\bC}^{(G)}_{\std}\rtimes G \ar[d] \ar[r]&   \bQ^{(G)}_{\std}\rtimes G \ar[d]^{\ctc} \ar[r]&0\\0\ar[r]& \Idem( {\bC}^{(G)}_{\std}\rtimes_{r} G)\ar[r]& \Idem ( {\bU} )\ar[r]& \frac{\Idem(  {\bU}) }{\Idem( {\bC}^{(G)}_{\std}\rtimes_{r} G)}   \ar[r]&0}
\end{equation}
{where the middle vertical arrow   is the composition of \eqref{asvasvadvadsvadvdvoihjio} with the inclusion $\bU\to \Idem(\bU)$.}
In the upper line we also used that the functor $-\rtimes G$ is exact.
The functor $\ctc$  will be called the change of target categories functor.

The change of target categories functor $\ctc$ in \eqref{rfreffewqefewfwefewffwfwfwefewfewfewfwfwefewfewfqfdfdfwefqwef} yields the first morphism in the following composition. The second is  the boundary map associated to the second exact sequence in \eqref{rfreffewqefewfwefewffwfwfwefewfewfewfwfwefewfewfqfdfdfwefqwef}.
Finally, {the left-pointing morphism is an equivalence by}   the Moria invariance of $ \KK(\C,-)= \Kcat$ \cite[Thm.\ {16.18}]{cank}: \begin{align}
\KK(\C,\bQ^{(G)}_{\std}\rtimes G) & \xrightarrow{\ctc}  \KK(\C,  \frac{\Idem(  {\bU} )}{\Idem( {\bC}^{(G)}_{\std}\rtimes_{r} G)} )\label{wefqwfefqfqewfqfe}\\
& \xrightarrow{\phantom{\ctc}}  \Sigma \KK(\C, \Idem( {\bC}^{(G)}_{\std}\rtimes_{r} G))\xleftarrow{\simeq}\Sigma \KK(\C,  {\bC}^{(G)}_{\std}\rtimes_{r} G)\notag\, .
\end{align}
 
{We  now specialize {the assembly maps introduced in Definition \ref{wergoijowergerrrfrfrfrfeggwgw}}  to $A=\kkG(\bQ_{\std}^{(G)})$, but we will
drop the symbol $\kkG$   in order to shorten the formulas. We use that $K^{G,\an}_{\bQ^{(G)}_{\std}}= K^{G,\An}_{\bC}$, compare \eqref{qwefqwefewdewdwqdwedqewd} and \eqref{vfdsvsvvsvsdvdsadsvdsva}.}
\begin{ddd} \label{qeriughioergewgergwer9}
We define the natural transformation
{$$\Ass^{\an}_{{\bC},-} \colon K_{\bC}^{G,\An}(-) \stackrel{\mu^{\Kasp}_{{\bQ_{\std}^{(G)},-,}\max}}{\to}
 \underline{  \KK(\C,  \bQ_{\std}^{(G)}\rtimes  G)}\stackrel{\eqref{wefqwfefqfqewfqfe}}{\to}
 \underline{\Sigma \KK(\C,  {\bC}^{(G)}_{\std}\rtimes_{r} G)}$$ 
of functors from $ \pGTopc$ to $\Sp^{\la}$.}  
We then define $\Ass^{\an}_{{\bC},X,*} \coloneqq \pi_{*}(\Ass^{an}_{{\bC},X})$.
\end{ddd}

 We now use Definition \ref{rigrgbjoirjvorgvfbdfgbfgbdfgbdfbg} for $RK^{G,\An}_{\bC}=
 RK^{G,\an}_{\bQ^{(G)}_{\std}}$. 
     \begin{ddd}\label{wergoijowergerreggwgw}
The analytic assembly map associated to $G$, $\cF$ and {$ \bC$}
is defined as  the map $$\Ass^{\an}_{{\bC},\cF} \colon RK^{G,\An}_{\bC} (E_{\cF}G^\cw) \to  \Sigma \KK(\C, {\bC}^{(G)}_{\std}\rtimes_{r} G)$$ induced by  {the natural  transformation} $\Ass^{\an}_{{\bC}}$ in Definition \ref{qeriughioergewgergwer9}.
 \end{ddd}

The assembly maps $\Ass^{\an}_{\bC}$ and  $\Ass^{\an}_{\bC,\cF}$ depend naturally on the coefficient category $\bC$ in $\Fun(BG,\nCcat_{\ndeg,\eadd,\omega\add})$.
  
\section{\texorpdfstring{$\boldsymbol{C^{*}}$}{Cstar}-categorical model for the homotopy theoretic assembly map}\label{ggerigjogsdfgdfgds}

The homotopy theoretic assembly map $\Ass_{{\bC},\cF}^{h}$ in  Definition \ref{wrtiogrgergfgs} is defined  
  in terms of the equivariant homology theory  $K\bC^{G}$.
On the other hand, the analytic assembly map $\Ass^{\an}_{{{\bC},\cF}}$ is constructed in Definition \ref{wergoijowergerreggwgw} in terms of $\KK$-theory.  Our   goal is to compare these two assembly maps.  As a first step, in this section we  will construct  an assembly map $\Ass_{X}^{\Theta}$ 
induced by an explicit  functor $\Theta_{X}$ between $C^{*}$-categories and show that it is equivalent to the homotopy theoretic assembly map $\Ass^{h}_{{\bC},X}$ on $G$-finite $G$-simplicial complexes. 
$\Ass_{X}^{\Theta}$ also depends on  $\bC$, but we drop this subscript  from the notation   in order to simplify the notation.

{Let $\bC$ be in $\Fun(BG,\nCcat)$ and assume that it is  effectively additive and admits  countable   AV-sums. In the following we will use  the $C^{*}$-category $\bU$ defined 
in Definition \ref{witgjierojgoewrfre} which contains $\bC^{(G)}_{\std}\rtimes_{r}G$ as an ideal,   and the morphism
 $\sigma \colon {\bM\bC^{(G)}_{\std} \rtimes} G\to \bU$ from \eqref{asvasvadvadsvadvdvoihjio}.}
 Recall the Definition \ref{wtogjoergrwegreggwege} of the functor $\bCgtsmc \colon G\BC\to \Ccat$. 
   Let $X$ be in $G\BC$. For an object $(C,\rho,\mu)$ in $   \bCgtsmc(X\otimes G_{can,min})$ we use the abbreviation  \begin{equation}
\mu_{g} \coloneqq \mu(X\otimes \{g\})
\end{equation}
denoting a  projection in $\bM\bC$ on $C$. 
We refer to Proposition \ref{weoigjweogergrrewgwregwrg} below for  the verifications 
 related with the following definition.
 \begin{ddd}\label{eoigjoeirgrwegreregrgewrg}
 We define a functor   
$$\Theta_X \colon   \bCgtsmc(X\otimes G_{can,min})\to {\Idem( \bU)}\, .$$
 as follows:
\begin{enumerate}
\item objects: The functor $\Theta_X$ sends the object $(C,\rho,\mu)$ in $  \bCgtsmc(X\otimes G_{can,min})$ to the object $(C,\rho,{ p})$ in  ${\Idem( \bU)}$, where \begin{equation}\label{qewfeqwfpokpoqwefqwefqewf}
  p:= {\sigma}( \mu_{e},e)\, .
\end{equation} 

\item morphisms:
The functor $\Theta_{X}$ sends the morphism $A \colon (C,\rho,\mu)\to (C',\rho',\mu')$  in $  \bCgtsmc(X\otimes G_{can,min})$ to the morphism \begin{equation}\label{ewqifhwqeifewfewfwefwefqwef}
\Theta_{X}(A):=\sum_{g\in G} {\sigma} (    \mu'_{g^{-1}}  A \mu_{e}   ,g) \colon (C,\rho, p)\to (C',\rho',   p')
\end{equation}  in ${\Idem( \bU)}$. 
 \end{enumerate}
 \end{ddd}
{For the interpretation of the infinite sum in \eqref{rgoijerwoggwergregwgreg} we refer to the proof of Lemma \ref{rgoijerwoggwergregwgreg} below.}
 Let $G\BC_{\bd}$ denote the full category of $G\BC$ of bounded $G$-bornological coarse spaces. 

 \begin{prop}\label{weoigjweogergrrewgwregwrg}\mbox{}
 \begin{enumerate}
 \item\label{ewtgojkropgwergregergwerg} {For every $X$ in $G\BC$, t}he functor $\Theta_{X}$ is well-defined.
 \item  \label{ewtgojkropgwergregergwerg1} The family $(\Theta_{X})_{X\in G\BC}$ is a natural transformation \begin{equation}\label{grpogkprewgref}
\Theta \colon \bCgtsmc(-\otimes G_{can,min})\to \underline{ {\Idem(\bU)}}
\end{equation}
of functors from $G\BC$ to $\nCcat$.
\item \label{ewtgojkropgwergregergwerg2} The transformation $\Theta$ restricts to a transformation  
\begin{equation}\label{grpogkpfrfrfrfrfrewgref}
\Theta \colon \bCgtsmc(-\otimes G_{can,min})\to \underline{\Idem({ {\bC}^{(G)}_{\std}\rtimes_{r}G)}} 
\end{equation}
of functors from $G\BC_{\bd}$ to $\nCcat$.

 \end{enumerate}
 \end{prop}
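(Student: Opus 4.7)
The argument breaks naturally into parts corresponding to the three assertions, and the bulk of the work lies in Assertion \ref{weoigjweogergrrewgwregwrg}.\ref{ewtgojkropgwergregergwerg}. I begin there. That $(C,\rho)$ is an object of $\bC^{(G)}_{\std}$ follows because $(C,\rho,\mu)$ is locally finite over $X\otimes G_{can,min}$, whose underlying $G$-set is free, so by Definition~\ref{qeroighjoreqgreqfefwfqwef} the underlying $G$-object lies in the standard subcategory. That $\tilde p=(\mu_e,e)$ is a projection in $\bC^{(G)}_{\std}\rtimes_{r}G$ is immediate from $\mu_e^{*}=\mu_e=\mu_e^{2}$ together with the fact that the products $(a,e)(b,e)=(ab,e)$ in the crossed product agree with composition in $\bC^{(G)}_{\std}$.

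For the morphism formula, I would first restrict to a morphism $A$ which is $U$-controlled for some invariant coarse entourage $U$ of $X\otimes G_{can,min}$. Since the coarse structure of $G_{can,min}$ is generated by invariant entourages of the form $\{(g',g'g)\mid g'\in G\}$ for fixed $g\in G$, the set of $g\in G$ with $\mu'_{g^{-1}}A\mu_e\neq 0$ is finite, and the sum in \eqref{ewqifhwqeifewfewfwefwefqwef} is actually finite. The key technical point is then to establish a norm estimate of the form $\|\Theta_X(A)\|\le\|A\|$ using the defining regular representation of the reduced crossed product; this allows continuous extension of $\Theta_X$ from the dense subspace of controlled morphisms to all morphisms of $\bCgtsmc(X\otimes G_{can,min})$. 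Compatibility with composition is a direct computation: a product $\Theta_X(A')\Theta_X(A)$ expands as a double sum whose reindexing by $gh\mapsto g$ produces the sum defining $\Theta_X(A'A)$, using the equivariance $g\cdot \mu=\mu(g-)$ from \eqref{wfoizeqwfu98u9e8wfqewf} and the orthogonality of the projections $\mu_g$. Compatibility with involution is similar. Finally, one must verify that $\Theta_X(A)$ is indeed a morphism from $(C,\rho,\tilde p)$ to $(C',\rho',\tilde p')$ in the relative idempotent completion, which is again a direct expansion using orthogonality of the $\mu_g$.

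For naturality (Assertion \ref{weoigjweogergrrewgwregwrg}.\ref{ewtgojkropgwergregergwerg1}), a morphism $f\colon X\to X'$ in $G\BC$ induces $f_{*}$ on $\bCgtsmc$ via Definition~\ref{wtogjoergrwegreggwege}, and the target of $\Theta$ is constant. The relevant identities are $(f_{*}\mu)_e=\mu((f\times\id_{G})^{-1}(X'\otimes\{e\}))=\mu_e$ on objects, and $f_{*}A=A$ on morphisms, so $\Theta_{X'}\circ f_{*}=\Theta_X$ is immediate. For Assertion \ref{weoigjweogergrrewgwregwrg}.\ref{ewtgojkropgwergregergwerg2}, if $X$ is bounded, then $X\otimes\{e\}$ is bounded in $X\otimes G_{can,min}$, so by the local finiteness condition (Definition~\ref{qeroigergeggweerg}.\ref{ogureoiguogr9g02034} combined with \ref{qeroigergeggweerg}.\ref{qeroighjoergfqewfqf}) the projection $\mu_e$ belongs to $\bK$; hence $\tilde p$ lives in $\bK^{(G)}_{\std}\rtimes_{r}G$ and every morphism component $\mu'_{g^{-1}}A\mu_e$ factors through the ideal $\bK$.

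The main obstacle is the continuous extension in the proof of Assertion \ref{weoigjweogergrrewgwregwrg}.\ref{ewtgojkropgwergregergwerg}: bounding $\|\Theta_X(A)\|$ uniformly in $A$ requires realizing $\Theta_X(A)$ inside the reduced crossed product via its regular representation on the appropriate Hilbert module and exploiting that the family of corners $(\mu'_{g^{-1}}A\mu_e)_{g\in G}$ assembles into an element of controlled norm bounded by $\|A\|$. The compatibility with composition, while conceptually transparent, also needs some care: it hinges on the precise convention for the multiplication $(a,g)(b,h)$ in $\bC^{(G)}_{\std}\rtimes_{r}G$ used in \cite[Def.\ 5.1]{crosscat}, together with the $G$-equivariance of $\mu$.
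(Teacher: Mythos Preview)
Your proposal is correct and follows essentially the same approach as the paper: the well-definedness via the regular representation norm bound, the composition check by reindexing the double sum, and the naturality and bounded-case arguments are all as in the paper. Two small clarifications worth noting: in the composition computation the decisive ingredients are the $G$-invariance of the morphisms $A,A'$ (they live in $\lim_{BG}$) and the completeness relation $\sum_{h}\mu'_{h}=\id_{C'}$, rather than mere orthogonality of the $\mu_g$; and for the norm bound the paper makes the estimate completely explicit by showing that under the regular representation $\sigma$ one has $\sigma(\Theta_X(A))=u'Au^{*}$ for isometries $u,u'$ built from the $\mu_g$, whence $\|\Theta_X(A)\|\le\|A\|$ is immediate.
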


  \begin{proof}
 We start with Assertion  \ref{weoigjweogergrrewgwregwrg}.\ref{ewtgojkropgwergregergwerg}. 
  Since $X\times G$ is a free $G$-set and $(C,\rho,\mu)$ is locally finite it follows that
$(C,\rho)$ belongs to ${\bC}^{(G)}_{\std}$. 
Furthermore,   $  p$  belongs to $\bU$ since $\mu_{e}$ belongs to $\bM\bC$.
 Consequently, $(C,\rho,  p)$ is a well-defined object in  ${\Idem(  \bU)}$.

{The following lemma finishes the verification   that $\Theta_{X}$ is a well-defined functor between $C^{*}$-categories and therefore  proves   Assertion  \ref{weoigjweogergrrewgwregwrg}.\ref{ewtgojkropgwergregergwerg}.}

\begin{lem} \label{rgoijerwoggwergregwgreg}
The  {formula} \eqref{ewqifhwqeifewfewfwefwefqwef} {determines an isometric}   map {$\Theta_{X}(-)$} on morphism spaces {which is compatible with the composition and the involution.}
\end{lem}
\begin{proof}  We first observe that if {$A\colon (C,\rho,\mu)\to (C',\rho',\mu')$} has controlled propagation
then the sum  in  \eqref{ewqifhwqeifewfewfwefwefqwef}  has finitely many non-zero terms {which all belong to $\bU$ since $A$ belongs to $\bM\bC$.}

%

It follows from Definition \ref{qeroigergeggweerg}.\ref{qeroighjoergfqewfqf} and \cite[Lem.\ {7.10}]{cank} that 
 $C$ is isomorphic to the orthogonal {AV}-sum of the images of the family of projections  $(\mu_{g})_{g\in G}$.  {Using \cite[Lem.\ 7.8]{cank} we therefore get  a multiplier isometry}
 \begin{equation}\label{gwerfwerfwerfwerf}u\colon C \to  \bigoplus_{g\in G} C\, , \quad u \coloneqq \sum_{g\in G} e_{g}\mu_{g}  \, ,
\end{equation}
where the sum converges strictly.
We have an analogous {multiplier isometry} $u'\colon C' \to  \bigoplus_{g\in G} C'$.
{Still assuming that $A$ is controlled, we} calculate by using \eqref{wfoizeqwfu98u9e8wfqewf} (saying that $g\cdot \mu_{h}=\mu_{gh}$ for all $g,h$ in $G$) and the  $G$-invariance of $A$ {(saying that   $g\cdot A=A$  for all $g$ in $G$)} that 
\begin{equation} \label{wthoigwjopgwrregw}
 \Theta_{X}(A):=\sum_{g\in G} {\sigma(   \mu'_{g^{-1}}  A \mu_{e}   ,g)} =%
  u' A  u^{*} \, .
\end{equation}
{Since $\bU$ is closed in $\bL^{2}(G,\bC^{(G)}_{\std})$,}  this formula shows that 
{$\Theta_{X}$ extends by continuity} {to an isometric map  defined on all morphisms in  $\bCgtsmc(X\otimes G_{can,min})$ with values in $\bU$.}
{Using the equality \begin{equation}\label{gwergerfwerfrewf}uu^{*}= p
\end{equation}   and  \eqref{ewqifhwqeifewfewfwefwefqwef}
 we see that
 $${  p'}\Theta_{X}(A)
 = \Theta_{X}(A)  p= \Theta_{X}(A)\, .$$}
 {Altogether we obtain an
 isometric} map 
$$\Theta_{X}(-) \colon \Hom_{ \bCgtsmc(X\otimes G_{can,min})}((C,\rho,\mu),(C',\rho',\mu'))\to \Hom_{\Idem(\bU)}( (C,\rho,  p), (C',\rho', p'))\, .$$

    We finally show that $\Theta_{X}(-)$ is compatible with the composition and the involution. Let $A \colon (C,\rho,\mu)\to (C',\rho',\mu')$  and $A' \colon (C',\rho',\mu')\to (C'',\rho'',\mu'')$
 be   morphisms in $\bCgtsmc(X\otimes G_{can,min})$.
 Since $\Theta_{X}$ is  continuous, {as shown above,}   we can assume for simplicity that the morphisms are controlled. 
 We then calculate   using that $u$
  and $u'$ are isometries and \eqref{wthoigwjopgwrregw}  that 
 \[\Theta_{X}(A')\Theta_{X}(A)=\Theta_{X}(A'A)\, , \quad \Theta_{X}(A)^{*}=\Theta_{X}(A^{*})\, .\qedhere\]
%
 \end{proof}

 In order to see the Assertion  \ref{weoigjweogergrrewgwregwrg}.\ref{ewtgojkropgwergregergwerg1} we consider a map
 $f \colon X\to X'$ in $G\BC$. Then $\bCgtsmc(f)(C,\rho,\mu)=(C,\rho,f_{*} \mu)$.
 We observe by inspection of the definitions  that
 $$\Theta _{X'}(\bCgtsmc(f)(C,\rho,\mu))=\Theta_{X}(C,\rho,\mu)\, , \quad \Theta_{X}(\bCgtsmc(f)(A))=\Theta_{X}(A)\, .$$
 
 We finally show  Assertion  \ref{weoigjweogergrrewgwregwrg}.\ref{ewtgojkropgwergregergwerg2}. 
 If $X$ is bounded, then
 $X\times \{g\}$ is a bounded subset of $X\otimes G_{can,min}$ for every $g$ in $G$.
 Consequently,
 $\mu_{g}$ belongs to ${\bC}$, see the explanations in  Remark \ref{rqfqiofioewjeowfewfqewfqwefe}. 
 Every summand of 
 \eqref{ewqifhwqeifewfewfwefwefqwef}
is a morphism in $ {\bC}_{\std}^{(G)}\rtimes_{r}G$. {Since
$\bC_{\std}^{(G)}\rtimes_{r}G$ is closed in $\bU$  we conclude that}
 $\Theta_{X}$ takes values in the {wide subcategory
 $\Idem({\bC}_{\std}^{(G)}\rtimes_{r}G)$ of $\Idem(\bU)$}, {provided $X$ is bounded}.
 \end{proof}

For $X$ in $G\UBC_{\bd}$ we will also write 
\begin{equation}\label{fqoifoifjoeiwfjqowfqwefqwefqwefq}
\Theta_{X}\colon \bCgtsmc(\cZ\subseteq \cO(X)\otimes G_{can,min})\to {\underline{\Idem(\bC^{(G)}_{\std}\rtimes_{r}G)}}
\end{equation}
for the  restriction of the functor $\Theta_{\cO(X)}$ to the ideal $\bCgtsmc(\cZ\subseteq \cO(X)\otimes G_{can,min})${, see} \eqref{f23r8z89fz2893zf892334234f2} for the notation.  

We now apply the functor $\Kcat(-)=\KK(\C,-)$ to the transformations \eqref{grpogkprewgref} and \eqref{grpogkpfrfrfrfrfrewgref}. 
Using the    
  Definition \ref{qrogijeqoifefewfefewqffe}  of  $K\bC\cX^{G}$ in order to rewrite the domain and the Morita invariance of $\Kcat(-)$ {together with \cite[Prop.\ 17.4 \& 17.8]{cank}}   in order to remove $\Idem(-)$  in the target we get the assertions of the following corollary.
\begin{kor}\label{wopijgowergerfwerfwerfewrf}\mbox{} 
\begin{enumerate} \item 
We have a natural transformation
\begin{equation}
\theta \colon K\bC\cX^{G}_{G_{can,min}} \to \underline{\KK(\C, {\bU}) }
\end{equation} of functors from $G\BC$ to $\Sp^{\la}$.
\item \label{wegiojreogewrfwerffw}
We have a natural transformation
\begin{equation}
\theta \colon K\bC\cX^{G}_{G_{can,min}} \to \underline{\KK(\C, {\bC}^{(G)}_{\std}\rtimes_{r}G)}
\end{equation}
of functors from $G\BC_{\bd}$ to $\Sp^{\la}$.
\end{enumerate}
\end{kor}

 \begin{prop}\label{wrtoihjwogregergwergwreg}
 The  morphism  
\begin{equation}\label{qwefweofjwpoefeqfeqwfqwe}
\theta_{*} \colon K\bC\cX^{G}_{G_{can,min}}(*)\to \KK(\C,  {\bC}^{(G)}_{\std}\rtimes_{r}G)
\end{equation}
  is an equivalence.
\end{prop}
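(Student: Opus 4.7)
The strategy closely follows that of Proposition \ref{4oijotrherhrthrhe}: we factor the functor $\Theta_* \colon \bCgtsmc(G_{can,min}) \to \Idem(\bK^{(G)}_{\std} \rtimes_r G)$ from Definition \ref{eoigjoeirgrwegreregrgewrg}, specialised to $X = *$ (so that $* \otimes G_{can,min} = G_{can,min}$), through a chain of intermediate $C^*$-categories and verify that each arrow induces an equivalence on $\Kcat$. The essential difference from Proposition \ref{4oijotrherhrthrhe} is that the role formerly played by a finite subgroup $H$ is now played by the whole group $G$ and the relevant space is a point.

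Let $\bD$ denote the full subcategory of $\Idem(\bK^{(G)}_{\std} \rtimes_r G)$ on objects of the form $(C, \rho, (\mu_e, e))$ with $(C, \rho, \mu) \in \bCgtsmc(G_{can,min})$, and let $\bD'$ denote the full subcategory on objects $(C, \rho, (\mu(Z), e))$ where $(C, \rho, \mu) \in \tCglf(Y_{min})$ for some free $G$-set $Y$ and $Z \subseteq Y$ is such that $\mu(Z) \in \bK$. Then $\Theta_*$ factors through $\bD$, yielding
\begin{equation*}
\bCgtsmc(G_{can,min}) \xrightarrow{\Phi} \bD \hookrightarrow \Idem(\bD) \xrightarrow{\Idem(\Lambda)} \Idem(\bD') \xrightarrow{\Delta} \Idem(\bK^{(G)}_{\std} \rtimes_r G),
\end{equation*}
where $\Phi$ is the corestriction of $\Theta_*$, $\Lambda \colon \bD \hookrightarrow \bD'$ is the inclusion (identifying $G$ with itself as a free $G$-set), and $\Delta$ is the natural inclusion.

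I would show that $\Phi$ is in fact a unitary equivalence. The identity $\sigma(\Theta_*(A)) = u' A u^*$ from the proof of Lemma \ref{rgoijerwoggwergregwgreg}, with $u, u'$ isometries, implies that $\Theta_*$ is isometric on morphism spaces, and $G$-equivariance forces any $A \in \bCgtsmc(G_{can,min})$ to be determined by the family $(\mu'_{g^{-1}} A \mu_e)_{g \in G}$. Conversely, a controlled element $B = \sum_{g \in F}(B_g, g) \in \Hom_{\bD}$ with finite $F \subseteq G$ produces a controlled preimage $A$ via the prescription $\mu'_\ell A \mu_k = k \cdot B_{\ell^{-1}k}$ together with $G$-equivariance; combining the isometric property, density of controlled elements, and closedness of the image yields surjectivity of $\Theta_*$ on full morphism spaces. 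The subsequent $\bD \hookrightarrow \Idem(\bD)$ is then a Morita equivalence, since $\bD$ is unital. For $\Lambda$, I adapt Lemma \ref{ergoijreogwergegwgwrgwr}: each object $(C, \rho, (\mu_y, e)) \in \bD'$ for $y \in Y$ is isomorphic in $\Idem(\bK^{(G)}_{\std} \rtimes_r G)$ to an object of $\bD$ via restriction to the orbit $Gy$, equivariantly identified with $G$, so any $(C, \rho, (\mu(Z), e))$ with finite $Z$ is a finite orthogonal sum of objects of $\bD$. The hypothesis that $\bK$ is compatible with orthogonal sums (Definition \ref{wefiqrjhfoifewqdqwedqwedqewd}) is then used exactly as in Lemma \ref{ergoijreogwergegwgwrgwr} to approximate morphisms into $(C, \rho, (\mu(Z), e))$ by morphisms factoring through $(C, \rho, (\mu(Z_0), e))$ for finite $Z_0 \subseteq Z$, establishing the weak-generation condition and hence that $\Lambda$ is a weak Morita equivalence. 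Finally, $\Delta$ is a relative equivalence by the argument of Lemma \ref{qergpoqjekrgpgqgre}, since the essential image is closed under orthogonal sums. Combining these steps with Morita invariance of $\Kcat$ gives the claim.

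The main technical obstacle is the surjectivity of $\Theta_*$ on full morphism spaces: although controlled morphisms on both sides correspond via the explicit formula, one must combine the isometric identity with a density argument in the reduced crossed product to promote this to all morphisms in $\bCgtsmc(G_{can,min})$. A secondary point is the weak Morita equivalence of $\Lambda$, where the compatibility of $\bK$ with orthogonal sums is essential to approximate a general projection $(\mu(Z), e) \in \bK^{(G)}_{\std} \rtimes_r G$ by finite-sum projections $(\mu(Z_0), e)$; without this hypothesis, morphisms landing in $(C, \rho, (\mu(Z), e))$ cannot be approximated by morphisms factoring through $\bD$, and the argument breaks down.
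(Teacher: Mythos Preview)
Your overall strategy is correct and essentially the paper's own: both follow the template of Proposition~\ref{4oijotrherhrthrhe}, factoring $\Theta_*$ through an intermediate category and checking that each arrow is of a type that $\Kcat$ sends to an equivalence. The paper in fact compresses your steps $\Phi$ and $\Lambda$ into a single weak Morita equivalence $\bCgtsmc(G_{can,min})\to\bD_{\mathrm{paper}}$, where $\bD_{\mathrm{paper}}$ is your $\bD'$ \emph{without} the condition $\mu(Z)\in\bK$; it first proves full faithfulness (Lemma~\ref{reoithgjowrtegwrgwergreg}, with a careful convergence argument for the inverse that is more delicate than your sketch) and then weak generation directly. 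So your extra intermediate category $\bD$ is a harmless cosmetic addition.

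There is, however, a real gap in your $\Delta$ step caused by the constraint $\mu(Z)\in\bK$ in your definition of $\bD'$. With this constraint $\bD'$ is unital, and your $\Idem(\bD')$ is the ordinary idempotent completion. But the target $\Idem(\bK^{(G)}_{\std}\rtimes_r G)$ is the \emph{relative} idempotent completion (relative to $\bC^{(G)}_{\std}\rtimes_r G$) and contains non-unital objects such as $(C,\rho,\id_C)$ with $\id_C\notin\bK$. The argument of Lemma~\ref{qergpoqjekrgpgqgre} establishes essential surjectivity precisely because the ambient unital category $\bE$ contains these full-identity objects $(C,\rho,\tilde p_Y)=(C,\rho)$; with your constraint the corresponding $\bE'$ does not, and the pullback-square argument breaks. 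The paper's remedy is simply to drop the condition on $Z$: then $\bD'$ is a non-unital ideal in $\bE'$, the map $\bD'\to\Idem(\bD')$ is an idempotent completion of an ideal (not an ordinary Morita equivalence), and Lemma~\ref{qergpoqjekrgpgqgre} applies verbatim. Alternatively one could try to show that your $\Delta$ is a weak Morita equivalence, invoking compatibility of $\bK$ with orthogonal sums a second time, but that is additional work not covered by the lemma you cite.
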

\begin{proof}
The proof is very similar to the proof of Proposition \ref{4oijotrherhrthrhe}. But the difference is that here $G$ is infinite while {in Proposition \ref{4oijotrherhrthrhe}} $H$ was finite. 
  By definition the morphism  in question is 
$$ K \bC\cX^{G}_{G_{can,min}}(*)\simeq \Kcat(\bCgtsmc(G_{can,min}))\stackrel{\Kcat(\Theta_{*})}{\to} \Kcat(\Idem({\bC}^{(G)}_{\std}\rtimes_{r}G)) 
\,.$$

We will   construct a factorization of $\Theta_{*} $  as
$$\bCgtsmc(G_{can,min}) \to \bD\to \Idem(\bD)\to \Idem( {\bC}^{(G)}_{\std}\rtimes_{r}G)\, ,$$ where  $\bCgtsmc(G_{can,min}) \to \bD$ is a weak Morita equivalence, $\bD\to \Idem(\bD)$ is a {relative idempotent completion}, and $\Idem(\bD)\to  
 \Idem(  {\bC}^{(G)}_{\std}\rtimes_{r}G)$   is a  {unitary}  equivalence. Since $\Kcat$ sends   functors with  any of these properties to equivalences \cite[Sec. 14 -16]{cank} the assertion then follows.

\begin{lem}\label{reoithgjowrtegwrgwergreg}
 $\Theta_{*}$ is fully faithful.
 \end{lem}
 \begin{proof}
{By Lemma \ref{rgoijerwoggwergregwgreg} 
the functor  $\Theta_{*}$  is an isometric inclusion on morphisms. It remains to show that it is surjective.}
 
  Let $(C,\rho,\mu)$ and $(C',\rho',\mu')$ be two objects of $\bCgtsmc(G_{can,min})$.
  Note that $\Theta_{*}(C,\rho,\mu)=(C,\rho,  p)$ and $\Theta_{*}(C',\rho',\mu')=(C'\rho',  p')$ in $\Idem(  {\bC}^{(G)}_{\std}\rtimes_{r}G)$.
 Let $A \colon (C,\rho,  p) \to (C'\rho', p')$ be a morphism  in $\Idem( {\bC}^{(G)}_{\std}\rtimes_{r}G)$. We will construct a morphism $\hat A \colon (C,\rho,\mu)\to (C',\rho',\mu')$ in 
 $\bCgtsmc(G_{can,min})$ such that $\Theta_{*}(\hat A)=A$.
 
 Note that $A$ is a morphism $(C,\rho)\to (C',\rho')$ in $  {\bC}^{(G)}_{\std}\rtimes_{r}G  $ which in addition satisfies   $ \tilde p' A\tilde p=A$.  
{There is a unique family $(A_{g})_{g\in G}$ of morphisms    $A_{g}\colon C\to C$ in $\bC$ such that 
$$A=\sum_{g\in G} \sigma(A_{g},g)\, ,$$
where the sum converges in norm in $\bU$.   From the equality 
$$\sum_{g\in G} \sigma(A_{g},g)=A=
   p' A  p  = \sum_{g\in G}  \sigma(\mu'_{g^{-1}} A_{g}\mu_{e},g)  
$$
 we conclude that
\begin{equation}\label{ewqfqwedqwedqwed}
\mu'_{g^{-1}} A_{g}\mu_{e}=A_{g}
\end{equation} for all $g$ in $G$.
Using the notation from   \eqref{wthoigwjopgwrregw} 
 we define $$\hat A:=u^{\prime,*} \sum_{g\in G} \sigma(A_{g},g)u$$ in $\Hom_{\bM\bC}(C,C')$.  Inserting all definitions we get $\hat A=\sum_{k\in G}\sum_{g\in G}  k\cdot     A_{g}$
 where the $g$-sum converges in norm and the $k$-sum converges strictly. This formula shows that
 $\hat A$ is $G$-invariant. 
  Furthermore, by \eqref{ewqfqwedqwedqwed} for every  $g$ in $G$ the support of $ \sum_{k\in G}k\cdot     A_{g}$ is the coarse entourage $G(\{(g^{-1},e)\})$ of $G_{can,min}$. It follows that $\hat A$ 
  can be approximated in norm by controlled and invariant operators, i.e., we have $\hat A\in \bCgtsmc(G_{can,min})$.}
 By construction we have $\Theta_{*}(\hat A)=A$.
 This finishes the verification that $\Theta_{*}$ is full faithful. \end{proof}

  For every free $G$-set $Y$,   
  every  subset $F$ of $Y$, and  every object $(C,\rho,\mu)$ in $\tCglf(Y_{min})$
   we can consider the projection
$  p_{F}:={\sigma}(\mu(F),e)$ on $(C,\rho)$ considered as an object of  {$\bU$}. 
We let $\bD$ be the full subcategory of
$\Idem( {\bC}^{(G)}_{\std}\rtimes_{{r}}  G)$  
of objects of the form
$(C,\rho, p_{F})$ for some choice of $Y$, $F$ and $(C,\rho,\mu)$ as above. 
We can consider $Y=G$ and $F=\{e\}$. Then
$  p= p_{\{e\}}$ so that $\bD$ contains the image of $\Theta_{*}$. 

Recall the notion of a weak Morita equivalence from \cite[Def.\ {18.3}]{cank}.
\begin{lem} The functor
$\bCgtsmc(G_{can,min})\to \bD$ is a weak Morita equivalence.
\end{lem}
\begin{proof}
It follows from Lemma \ref{reoithgjowrtegwrgwergreg} that  the morphism in question is fully faithful. It remains to show that 
set of objects $\Theta_{*}(\Ob(\bCgtsmc(G_{can,min})))$ is weakly generating \cite[Def. 16.1]{cank}.
In order to simplify the notation we write $  p_{g}:= p_{\{g\}}$ and note that $  p= p_{e}$.
We have
$${\sigma}(\id_{C},g)  p_{e} {\sigma}(\id_{C},g)^{*}=  p_{g}\, .$$
This shows that for every $g$ in $G$ and $(C,\rho,\mu)$ in $\bCgtsmc(G_{can,min})
$ the  object 
$(C,\rho,  p_{g})$ in $\Idem( {\bC}^{(G)}_{\std}\rtimes_{r}G)$ is isomorphic to the object  $(C,\rho,  p)$  
  which belongs to the image of $\Theta_{*}$.
Furthermore, 
for every finite subset $F$  of some free $G$-set $Y$
and $(C,\rho,\mu)$ in  $\tCglf(Y_{min})$   the object
$(C,\rho, p_{F})$   is isomorphic to a finite sum of objects in the image of $\Theta_{*}$.

Let now $(C,\rho,  p_{F})$ be any object of $\bD$ and 
$(A_{i})_{i\in I}$ be a finite family of morphisms in $\Idem( {\bC}^{(G)}_{\std}\rtimes G) $ with target $(C,\rho, p_{F})$.  Let $\epsilon$ be in $(0,\infty)$. 
We write $A_{i}=\sum_{h\in G} {\sigma}(A_{i,h},h)$ where  {the} $A_{i,h}$ belong to ${\bC}$. Since these sums converge in norm and $I$ is finite 
  there exists a finite subset $F'$ of $G$ such that
$\|A_{i}- \sum_{h\in F'} {\sigma}(A_{i,h},h)\|\le \epsilon/2$ for all $i$ in $I$.
{Since $\sum_{y\in Y}\mu(\{y\})$ converges strictly to $\id_{C}$}
  we can find a finite subset $F''$ of $Y$ such that $$\|A_{i,g}-\mu(g^{-1}F'')A_{i,g}\| \le \frac{\epsilon}{2|F'| }$$ for all $i$ in $I$ and $g$ in $F'$. 
Then
$\|A_{i}- p_{F''}A_{i}\|\le \epsilon$ for all $i$ in $I$.
\end{proof}

 Recall the definition \cite[Def.\ {17.5}]{cank} of a relative idempotent completion. 
  In the following we let
   $\bE$ be 
     the full subcategory   of ${\Idem( \bU)}$ with the same objects as $\bD$.  Then $\bD$ is an ideal in $\bE$ and
     the  idempotent completion
  $\bD\to \Idem(\bD)$   is understood relative to $\bE$. {We summarize this in the following corollary:}

\begin{kor} The functor 
$\bD\to \Idem(\bD)$ is a relative idempotent completion.
\end{kor}

  
\begin{lem}
The inclusion
$\Idem(\bD)\to \Idem( {\bC}^{(G)}_{\std}\rtimes_{r}G)$ is a {unitary} equivalence.
\end{lem}
\begin{proof} {We  apply the characterization of unitary equivalence given in \cite[Rem.\ 3.20.3]{cank}. We consider the square $$\xymatrix{\Idem(\bD)\ar[r]\ar[d]&\Idem( \bE) \ar[d]\\ \Idem( {\bC}^{(G)}_{\std}\rtimes_{r}G)\ar[r]&\Idem({\bU})}$$
Its horizontal morphisms are ideal inclusions by construction.
It remains to show that the right vertical morphism is a unitary equivalence in $\Ccat$.
In fact, it is fully faithful by definition. Since $\bE$ contains the all objects  of the form
$(C,\rho,\tilde p_{Y})=(C,\rho)$  for  free $G$-sets $Y$  and $(C,\rho,\mu)$ in $\tCglf(Y_{min})$ conclude that it is also essentially surjective.}  
\end{proof}

This finishes the proof of Proposition \ref{wrtoihjwogregergwergwreg}.
 \end{proof}

We now apply the cone sequence \eqref{wefqwefqewewfwdqewdewdqede} to the functor 
 $K\bC\cX^{G}_{G_{can,min}}$ and obtain a boundary map
 $$\partial^{\Cone} \colon K\bC\cX^{G}_{G_{can,min}}(\cO^{\infty}(-))\to \Sigma K\bC\cX^{G}_{G_{can,min}}(-)$$
between functors
 from $G\UBC$ to $\Sp^{\la}$.

\begin{ddd}\label{werigowerferfrwef}We denote by $G\UBC_{\bd}$ the full subcategory of $G\UBC$ of bounded $G$-uniform bornological coarse spaces.
\end{ddd}
We have a forgetful functor $G\UBC_{\bd}\to G\BC_{\bd}$ which we always drop from the notation. We can also restrict the cone boundary
transformation along the inclusion $G\UBC_{\bd}\to G\UBC$.
  
 Let $X$ be in $G\UBC_{\bd}$. 
 We use the Corollary \ref{wopijgowergerfwerfwerfewrf}.\ref{wegiojreogewrfwerffw} in order to see the that the natural transformation defined  below  takes values in the correct target.
 \begin{ddd}\label{kopeherthrggertg}
 We define the natural transformation 
 $$\Ass^{\Theta} \coloneqq \theta \circ \partial^{\Cone} \colon K\bC\cX^{G}_{G_{can,min}}(\cO^{\infty}(-))\to  \underline{\Sigma \KK(\C,   {\bC}^{(G)}_{\std}\rtimes_{r}G)}$$
 of functors from $G\UBC_{\bd}$ to $\Sp^{\la}$.
   \end{ddd}

We consider the functor \begin{equation}
\tilde \iota\colon G\Set\to G\UBC_{\bd}\, , \quad S\mapsto S_{min,max,disc}\, ,
\end{equation}  
where $disc$ stands for the discrete uniform structure.  
A $G$-simplicial complex is a simplicial complex with a simplicial $G$-action.
We assume that if $g$ in $G$ fixes a point in the interior of a simplex, then it fixes the whole simplex pointwise.  This can always be ensured by going over to a barycentric subdivision.
We let $G\Simpl$
denote the category of $G$-simplicial complexes and simplicial equivariant maps.  

Let $G\Simpl^{\textrm{fin-dim}}$ 
 be the full subcategory of $G\Simpl$ of finite-dimensional $G$-simplicial complexes. 
We have a natural functor
$$\tilde s \colon G\Simpl\to G\UBC_{\bd}$$
which sends a $G$-simplicial complex $X$ to the $G$-uniform bornological coarse space
$\tilde s(X)$ 
given by $X$ with the coarse and the uniform structures induced by the spherical path metric, and with the maximal bornology. We have a commutative diagram 
of canonical functors   \begin{equation}\label{retgertg365ztheh}\xymatrix{G\Set\ar[rr]^{  {\tilde \iota}}\ar[dr]^{{(1)}}&&G\UBC_{\bd}\ar[dr]^{r}&&\\&G\Simpl^{\textrm{fin-dim}}\ar[dr]^{f}\ar[ur]^{ s}\ar[rr]^(0.3){ t}&&G\Top\\&&G\Simpl \ar[uu]^(0.3){\tilde s}\ar[ru]^{\tilde  t}&&}
\end{equation}
where  arrow  $(1)$ interprets a $G$-set as a zero-dimensional $G$-simplicial set, and arrow $r$ sends a uniform bornological coarse space to the underlying $G$-topological space.

\begin{prop}\label{weqfiuhqiuwhefiqwefewewfqfwefqwef}
\mbox{}
\begin{enumerate}
\item \label{qwhioqwehfiouqewfqewqewfqf} The transformation ${\tilde  \iota}^{*}\partial^{\Cone} \colon  {\tilde \iota}^{*}K\bC\cX^{G}_{G_{can,min}}(\cO^{\infty}(-))\to\tilde \iota^{*}r^{*} \Sigma K\bC^{G}(-)$ of functors from $G\Set$ to $\Sp^{\la}$ is an equivalence.
\item \label{qfeiohqoifweqewfqf}  We have an  equivalence \begin{equation}\label{adsvkanjovasdvassdvavads}
s^{*}K\bC\cX^{G}_{G_{can,min}}(\cO^{\infty}(-))\simeq  t^{*}\Sigma K\bC^{G}(-)
\end{equation}  of functors from $G\Simpl^{\textrm{fin-dim}}$ to $\Sp^{\la}$.
\item \label{fopepofkopqwkfpoqwefkqwefqwefqe}  We have a commutative square of natural transformations
  \begin{equation}\label{qerffqwefwfwefewqfwf} \xymatrix{ {t^{*}}\Sigma K\bC^{G}(-)\ar[rr]^{{t^{*}}\Sigma \Ass_{{\bC}}^{h}}\ar@{-}[d]_{\simeq}^{\eqref{adsvkanjovasdvassdvavads}}&&\underline{\Sigma K\bC^{G}(*)}\ar@{-}[d]_{\eqref{wrtoihjwogregergwergwreg}}^{\simeq}\\   {s^{*}}K\bC\cX^{G}_{G_{can,min}}(\cO^{\infty}(-))\ar[rr]^{{s^{*}}\Ass^{\Theta}}&&  \underline{\Sigma \KK(\C,  {\bC}^{(G)}_{\std}\rtimes_{r}G)}} 
\end{equation}
between functors from $G\Simpl^{\textrm{fin-dim}}$ to $\Sp$
which  depends naturally on the coefficient category $\bC$ in $\Fun(BG,\nCcat_{\ndeg,\eadd,\omega\add})$.

\end{enumerate}
\end{prop}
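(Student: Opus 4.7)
For Part~\ref{qwhioqwehfiouqewfqewqewfqf}, I would evaluate the cone fibre sequence \eqref{wefqwefqewewfwdqewdewdqede} for the coarse homology theory $E = K\bC\cX^{G}_{c,G_{can,min}}$ on the discrete object $S_{min,max,disc}$ for $S$ in $G\Set$. Since $S_{min,max,disc}$ is discrete as a $G$-uniform bornological coarse space, the cone $\cO(S_{min,max,disc})$ is flasque as a $G$-bornological coarse space by \cite[Ex.~9.25]{equicoarse}, and this property survives the twist by $G_{can,min}$. By Theorem \ref{qeroigqwefqewfefewfq} the functor $K\bC\cX^{G}_{c,G_{can,min}}$ vanishes on flasques, so the middle term of the cone sequence vanishes, making $\partial^{\Cone}$ an equivalence. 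The identification $K\bC\cX^{G}_{c,G_{can,min}}(S_{min,max}) \simeq K\bC^{G}(S)$ is then just Definition \ref{qroeigjoqergerqfewewfqewfeqwf}, and naturality is inherited from naturality of the cone sequence.

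For Part~\ref{qfeiohqoifweqewfqf}, I would extend this equivalence to $G\Simpl^{\fin-\dim}$ by induction on dimension. Both functors are homotopy invariant: the LHS by Proposition \ref{qroigjqowfewfqewfqw} applied to the cone-at-infinity construction, the RHS by the construction in Definition \ref{weiogwegerffs}. Both are excisive for suitable closed decompositions: the LHS by Proposition \ref{qroigjqowfewfqewfqw}, the RHS by the standard excision properties of equivariant homology theories on topological spaces. The base case (dimension zero) is Part~\ref{qwhioqwehfiouqewfqewqewfqf}. For the inductive step, given $X$ of dimension $n > 0$, I would decompose $X$ into the closed $\tfrac{1}{2}$-neighborhood $Y$ of the $(n-1)$-skeleton and the closure $Z$ of its complement, so that $Y \cap Z$ is a disjoint union of $G$-invariant $(n-1)$-spheres indexed by the $n$-simplex orbits. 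Each of $Y$, $Z$, and $Y \cap Z$ is homotopy equivalent in the appropriate sense to a complex of strictly smaller dimension (or to a $G$-set), so the inductive hypothesis together with the resulting Mayer--Vietoris squares on both sides yields the equivalence at $X$.

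For Part~\ref{fopepofkopqwkfpoqwefkqwefqwefqe}, the commutativity of \eqref{qerffqwefwfwefewqfwf} follows from the naturality of each ingredient. The projection $X \to \ast$ in $G\Top$ lifts to the morphism $s(X) \to \ast_{min,max,disc}$ in $G\UBC_{\bd}$. Applying $\cO^{\infty}$ produces a morphism in $G\BC$, and naturality of $\partial^{\Cone}$ together with naturality of $\theta$ from Corollary \ref{wopijgowergerfwerfwerfewrf} gives a commuting diagram. Under the identifications provided by Parts~\ref{qwhioqwehfiouqewfqewqewfqf} and \ref{qfeiohqoifweqewfqf} on the left vertical, and by Proposition \ref{wrtoihjwogregergwergwreg} (whose equivalence requires the hypothesis that $\bK$ is compatible with orthogonal sums, implicitly assumed for Assertion~\ref{fopepofkopqwkfpoqwefkqwefqwefqe}) on the right vertical, the top horizontal is identified with $\Sigma\Ass^{h}$ and the bottom with $\Ass^{\Theta} = \theta \circ \partial^{\Cone}$.

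The main obstacle will be the inductive step in Part~\ref{qfeiohqoifweqewfqf}: one must verify that the $\tfrac{1}{2}$-neighborhood decomposition is simultaneously uniformly and coarsely excisive in $G\UBC$ -- so that the closed-excision statement of Proposition \ref{qroigjqowfewfqewfqw} applies -- and topologically excisive in $G\Top$, while also making sure that the homotopy equivalences one invokes for $Y$, $Z$, and $Y \cap Z$ exist in each category. Handling the uniform-excisivity condition on a possibly non-$G$-compact simplicial complex and matching the intersection structure on $Y \cap Z$ to a disjoint union of $G$-orbits of spheres with the correct metric is the technically most delicate step.
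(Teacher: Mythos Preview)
Your proposal is correct and follows essentially the same route as the paper: Part~\ref{qwhioqwehfiouqewfqewqewfqf} is exactly the argument behind \cite[Prop.~9.35]{equicoarse}; Part~\ref{qfeiohqoifweqewfqf} is the same induction on dimension via a neighborhood decomposition (the paper uses a $1/3$--$2/3$ split rather than your $1/2$ split, and projects the three pieces to the $(n-1)$-skeleton, the $G$-set of barycenters, and the disjoint union of simplex boundaries, just as you describe); and Part~\ref{fopepofkopqwkfpoqwefkqwefqwefqe} is the same naturality chase. The only point you underemphasize is that the equivalence in Part~\ref{qfeiohqoifweqewfqf} must be \emph{natural} in $X$, which the paper secures by treating the decomposition $(Y,Z)$ itself as a functor of $X$; your identification of the excisivity verification as the main technical point is accurate, and the paper likewise does not spell this out beyond asserting it.
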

 \begin{proof}
 By  \cite[Prop. 9.35]{equicoarse} for every $S$ in $G\Set$ we have an equivalence
 $$\partial^{\Cone}:K\bC\cX^{G}_{G_{can,min}}(\cO^{\infty}(\tilde \iota (S))\stackrel{\simeq}{\to} \Sigma K\bC\cX^{G}_{G_{can,min}}( \tilde \iota (S))\ .$$
 If $L$ is a locally finite subset of $S_{min,max}\otimes G_{can,min}$, then
 $L\cap (S\times \{e\})$ is finite. It follows that $L$ is a finite union of $G$-orbits.
By continuity of $K\bC\cX^{G}$ we have
$$
  \Sigma K\bC\cX^{G}_{G_{can,min}}( \tilde \iota (S))\simeq  \bigoplus_{T\in G\backslash S} 
   \Sigma K\bC\cX^{G}_{G_{can,min}}(  \iota (T) )\ ,$$
   where $\iota$ is as in \eqref{adfvoijqiorjvoifvafsvsdv}.
By Definition \ref{qroeigjoqergerqfewewfqewfeqwf}
$$   
   \Sigma K\bC\cX^{G}_{G_{can,min}}(  \iota (T) )\simeq \Sigma K\bC^{G}(r(\tilde \iota(T)))\ .$$
  Since the homology theory $K\bC^{G}$ sends disjoint unions of orbits to sums we conclude that
  $$ \bigoplus_{T\in G\backslash S} 
 \Sigma  K\bC^{G}(r(\tilde \iota(T)))\simeq \Sigma K\bC^{G}(r(\tilde \iota(S)) )\ .$$
 Combining these equivalences we get Assertion  \ref{weqfiuhqiuwhefiqwefewewfqfwefqwef}.\ref{qwhioqwehfiouqewfqewqewfqf}.
 
%
%

We now show Assertion  \ref{weqfiuhqiuwhefiqwefewewfqfwefqwef}.\ref{qfeiohqoifweqewfqf}.
Note that $ K\bC^{G}(-)$ in the statement is   the evaluation of an equivariant homology   theory defined on all of $G\Top$ by \eqref{adfiuhuihavfvasdcacsadcasca}.     The 
other functor $K\bC\cX^{G}_{G_{can,min}}(\cO^{\infty}(-))$ is defined on $G\UBC$. By  restricting $K\bC^{G}(-)$ along the forgetful functor $ G\UBC\to G\Top$  we can consider them on the same domain $G\UBC$. Assertion  \ref{weqfiuhqiuwhefiqwefewewfqfwefqwef}.\ref{qwhioqwehfiouqewfqewqewfqf} then provides an equivalence between the further restrictions of both functors  to  zero-dimensional simplicial complexes. We then argue that  this natural equivalence   canonically extends to an equivalence between these functors at least on $G\Simpl^{\textrm{fin-dim}}$ since they are both homotopy invariant 
 and excisive for cell-attachements.


%
%
 
We will construct the desired equivalence by   induction with respect to the dimension. 
We let $G\Simpl_{\le n}$ be the full subcategory of $G$-simplicial complexes of dimension $\le n$.
We let $s_{n}$ and $t_{n}$ denote the restrictions of $s$ and $t$ to $G\Simpl_{\le n}$.

The case of zero-dimensional simplicial complexes is  done by Assertion \ref{qwhioqwehfiouqewfqewqewfqf}.

 We assume  {now} that we have constructed an equivalence
  \begin{equation}\label{weergwrgkpdsdsdsdogergwe}
s_{n-1}^{*}K\bC\cX^{G}_{G_{can,min}}(\cO^{\infty}(-))\simeq  t_{n-1}^{*}\Sigma K\bC^{G}(-)
\end{equation}
for $n\ge 1$.
The induction step exploits the fact that
$s_{n}(X)$ in $G\Simpl_{\le n}$  has a canonical decomposition 
$(Y,Z)$ in $G\UBC_{\bd}$, where $Z$ is the disjoint union of $2/3$-scaled $n$-simplices,
and $Y$ is  the complement of the disjoint union of the interiors of the $1/3$-scaled $n$-simplices 
 (see the pictures in \cite[P.~80]{buen}).
 {We equip the subspaces $Y$ and $Z$ with the uniform bornological coarse structures induced from $s_{n}(X)$.}


Since both functors $ K\bC\cX^{G}_{G_{can,min}}(\cO^{\infty}(-))$ and
$  \Sigma K\bC^{G}(-)$ are excisive for such decompositions
we get push-out squares
\begin{equation}
\xymatrix{K\bC\cX^{G}_{G_{can,min}}(\cO^{\infty}(Y\cap Z)) \ar[r]\ar[d]&K\bC\cX^{G}_{G_{can,min}}(\cO^{\infty}(Z)) \ar@{..>}[d]\\K\bC\cX^{G}_{G_{can,min}}(\cO^{\infty}(Y)) \ar@{..>}[r]&K\bC\cX^{G}_{G_{can,min}}(\cO^{\infty}(X))}
\end{equation} and
\begin{equation}
\xymatrix{\Sigma K\bC^{G}  (Y\cap Z) \ar[r]\ar[d]&\Sigma K\bC^{G}(  Z)\ar@{..>}[d]\\
\Sigma K\bC^{G} ( Y)\ar@{..>}[r]&\Sigma K\bC^{G} ( X)}
\end{equation}
We now use that both functors are homotopy invariant.
The projection of $Z$ to the $G$-set $Z_{0}$ of barycenters is a homotopy equivalence in $G\Top$ and $G\UBC_{\bd}$. Similarly, there is a projection of $Y$ to the $(n-1)$-skeleton  $X_{n-1}$ of $X$ and a projection of $Y\cap Z$ to a disjoint union $(Y\cap Z)_{n-1}$ of boundaries of the $n$-simplices. These two maps are  
homotopy equivalences in  $G\Top$ and $G\UBC_{\bd}$.  These projections identify the bold parts of the push-out squares above canonically with the respective bold parts {of the push-out} squares below:
\begin{equation}\label{ewffdfadsfadfasdfasdf}
\xymatrix{K\bC\cX^{G}_{G_{can,min}}(\cO^{\infty}( (Y\cap Z)_{n-1})) \ar[r]\ar[d]&K\bC\cX^{G}_{G_{can,min}}(\cO^{\infty}( Z_{0}))\ar@{..>}[d]\\K\bC\cX^{G}_{G_{can,min}}(\cO^{\infty}( X_{n-1}))\ar@{..>}[r]&K\bC\cX^{G}_{G_{can,min}}(\cO^{\infty}(X))}
\end{equation} and   
\begin{equation}\label{ewffdfadsfadfasdfasdf1}
\xymatrix{\Sigma K\bC^{G}  ( (Y\cap Z)_{n-1}) \ar[r]\ar[d]&\Sigma K\bC^{G}(  Z_{0} )\ar@{..>}[d]\\\Sigma  K\bC^{G} (  Y_{n-1} )\ar@{..>}[r]&\Sigma K\bC^{G} (  X )}
\end{equation}
The induction hypothesis now provides an equivalence between the bold parts of 
\eqref{ewffdfadsfadfasdfasdf} and \eqref{ewffdfadsfadfasdfasdf1}. This equivalence then provides the desired equivalence of push-outs
$$K\bC\cX^{G}_{G_{can,min}}(\cO^{\infty}( X ))\simeq \Sigma  K\bC^{G} (  X)\, .$$
The whole construction is functorial in $X$. To see this interpret  the symbols $X,Y,Z$ as placeholders for entries of diagram valued functors.

\begin{rem}\label{kohprhertgrtgetr}
In order to give a more formal argument for naturality we could proceed   as in the proof of  Corollary  \ref{wegtopwgwgregergw}.
 Let $q:G\Set \to G\Simpl$ be the canonical inclusion.
Then we have a counit  morphism
$$q_{!}q^{*}\tilde s^{*}  K\bC\cX^{G}_{G_{can,min}}(\cO^{\infty}(-))\to  \tilde  s^{*}  K\bC\cX^{G}_{G_{can,min}}(\cO^{\infty}(-))\ .
$$ Using excision and homotopy invariance one checks that
$$f^{*}q_{!}q^{*}\tilde s^{*}  K\bC\cX^{G}_{G_{can,min}}(\cO^{\infty}(-))\to    s^{*}  K\bC\cX^{G}_{G_{can,min}}(\cO^{\infty}(-))\ .
$$
is an equivalence.
 Since $K\bC^{G}$ is an equivariant homology theory the counit  
 $$q_{!}q^{*}\tilde t^{*}K\bC^{G}\stackrel{\simeq}{\to} \tilde t^{*}K\bC^{G}$$
 is an equivalence.
 Finally, applying $q_{!}$ to  the equivalence from Assertion \ref{qwhioqwehfiouqewfqewqewfqf}. we get the equivalence
 $$q_{!}q^{*}\tilde s^{*}  K\bC\cX^{G}_{G_{can,min}}(\cO^{\infty}(-))\stackrel{\simeq}{\to}
 q_{!} q^{*}\tilde t^{*}K\bC^{G}\ .$$
 The desired equivalence is now given by
 $$s^{*}  K\bC\cX^{G}_{G_{can,min}}(\cO^{\infty}(-))\stackrel{\simeq}{\leftarrow}
f^{*}q_{!}q^{*}\tilde s^{*}  K\bC\cX^{G}_{G_{can,min}}(\cO^{\infty}(-))\stackrel{\simeq}{\to}
f^{*}q_{!} q^{*}\tilde t^{*}K\bC^{G}
\stackrel{\simeq}{\to}    t^{*}K\bC^{G}\ .$$
\hB

\end{rem}

 Assertion 
 \ref{weqfiuhqiuwhefiqwefewewfqfwefqwef}.\ref{fopepofkopqwkfpoqwefkqwefqwefqe}.
  becomes obvious if we 
  expand the square  \eqref{qerffqwefwfwefewqfwf} as follows 
  \begin{equation}
  \xymatrix{ {t^{*}}\Sigma K\bC^{G}(-)\ar[r]^{{t^{*}}\Ass^{h}_{{\bC}}} \ar[r]&\Sigma K\bC^{G}(*) \ar[dr]_{\simeq }^{ \theta_{*},\eqref{qwefweofjwpoefeqfeqwfqwe}} \\ {s^{*}}K\bC\cX^{G}_{c,G_{can,min}}(\cO^{\infty}(-))\ar@{-}[u]^{\simeq}_{\eqref{adsvkanjovasdvassdvavads}} \ar@/_2cm/[rr]^{ {s^{*}}\Ass^{\Theta} }\ar[r] &K\bC\cX^{G}_{c,G_{can,min}}(\cO^{\infty}(*))\ar[r]_-{  \Ass^{\Theta}_{*}}^{  }  \ar[u]^{\simeq}_{\partial^{\Cone},\eqref{adsvkanjovasdvassdvavads}}& \Sigma \KK(\C,  {\bC}^{(G)}_{\std}\rtimes_{r}G)} 
\end{equation}
 The left horizontal maps in the square are induced by the natural transformation $(-)\to \underline{\const_{*}}$ (see
 \eqref{vevopjrepovervfvdfv} for $\Ass^{h}_{\bC}$), and the upper-left square commutes by the naturality    statement  in Assertion   \ref{weqfiuhqiuwhefiqwefewewfqfwefqwef}.\ref{qfeiohqoifweqewfqf}.
 The upper right triangle commutes by the definition of $\Ass_{*}^{\Theta}$, and finally the lower triangle commutes by the naturality of $\Ass^{\Theta}$.
 \end{proof}

\section{\texorpdfstring{$\boldsymbol{C^{*}}$}{Cstar}-categorical model for  the analytic assembly map}\label{wtogpwgregwegreg}

At the end of this section we finish the proof of Theorem \ref{wtoiguwegwergergregwe}.

The analytic assembly map $\Ass^{\an}_{{\bC}}$ in Definition \ref{qeriughioergewgergwer9} was obtained using a construction on the level of spectrum-valued $\KK$-theory. If we precompose this assembly map with the Paschke transformation 
from Theorem \ref{troigjwrtoijgergrweg}, then we get a functor  whose domain is also expressed through
the coarse $K$-homology functor $K\bC\cX^{G}$ and therefore in terms of $C^{*}$-categories of controlled objects.
In the present section we construct an assembly map
$\Ass^{\Lambda}$ in terms of a natural functor $\Lambda$ between $C^{*}$-categories which models this composition.
We then relate $\Ass^{\Lambda}$ with both $\Ass^{\Theta}$ and $\Ass^{\an}_{{\bC}}$.
The intermediate objects also depend on  $\bC$, but we again drop this subscript   in their notation  in order to simplify the notation.

\begin{ddd}\label{wtrkogerfrfwerf}We let $G\UBC_{\pc}$ denote the full subcategory of $G\UBC$ of $G$-uniform bornological coarse spaces  which have the bornology of relative compact subsets and whose underlying $G$-topological space belongs to $\pGTopc$ introduced in Definition \ref{wrtoigjiowrgerferfw}.
\end{ddd} 
We consider $\bC$ in $\Fun(BG,\nCcat)$ and assume that it   is  {effectively} additive and admits countable AV-sums.
Let $X$ be in $G\UBC_{\pc}$ and choose
$\chi$ in $\cR(X)$, {where the functor $\cR$ is as in}  \eqref{afvoijvodvasdvsdvavdv}. If $ (C,\rho,\mu)$ is an object in $\bCgtsmc(\cO(X)\otimes G_{can,max})$, then we can consider the homomorphism $\phi \colon C_{0}(X)\to {\End_{\bM\bC}(C)}$ defined in \eqref{ewqfpojlqkrmeflkwefqef}. The sum  
\begin{equation}\label{ewqfoihioqwhejfioewqfewfeqfew}
p_{\chi} \coloneqq \sum_{m\in G}  {\sigma}(\phi(\chi) \phi(m^{*}\chi),m)
\end{equation} has finitely many non-zero terms and defines
 a projection
   on $(C,\rho)$  considered as an object   
in the $C^{*}$-category {$\bU$}  described in the Definition \ref{witgjierojgoewrfre}, where $\sigma$ is as in \eqref{asvasvadvadsvadvdvoihjio}. 
We refer to Proposition \ref{weoigjweogergrrewgwregfewfwwrg} for the necessary verifications related with the following definition.
\begin{ddd}\label{erwoighfgojrefqwfewfqwefeqwf}
We define a   functor
$$\Lambda_{(X,\chi)}\colon \bCgtsmc(\cO(X)\otimes G_{can,max})\to  \Idem({\bU}
)$$ in $\nCcat$ as follows: 
\begin{enumerate}
\item objects: The  functor  $\Lambda_{(X,\chi)}$ sends the object
$(C,  \rho,\mu)$ in $\bCgtsmc( \cO(X)\otimes G_{can,max})$ to the object
$(C,\rho,p_{\chi})$ in  $\Idem({\bU})$, where $p_{\chi}$ is as in 
 \eqref{ewqfoihioqwhejfioewqfewfeqfew}.
 
\item  morphisms: The  functor $\Lambda_{(X,\chi)}$ sends the 
 morphism 
  $A\colon (C,\rho,\mu)\to (C',\rho',\mu')$ in $\bCgtsmc( \cO(X)\otimes G_{can,max})$  
 to the morphism \begin{equation}\label{fqwefqoijofqwefqwefqef}
\Lambda_{(X,\chi)}(A):=\sum_{m\in G}  {\sigma} (  \phi'(m^{*}\chi)
  A \phi(\chi),m) 
\end{equation} in $\Idem({\bU})$.
\end{enumerate}
 \end{ddd}
{We refer to the proof of Lemma \ref{ezrhjieorhgtre} below for the 
interpretation of the infinite sum in \eqref{fqwefqoijofqwefqwefqef}.}

In order to state the naturality of $\Lambda_{(X,\chi)}$ we introduce the category
$G\UBC^{\cR}_{\pc}$ given by the Grothendieck construction of the functor $\cR$. Its objects are pairs $(X,\chi)$ of an object $X$ in $G\UBC_{\pc}$ and $\chi$ in $\cR(X)$, and a morphism $f\colon (X,\chi)\to (X',\chi')$ in $G\UBC_{\pc}^{\cR}$ is a morphism $f \colon X\to X'$ in $G\UBC_{\pc}$ such that $f^{*}\chi'=\chi$.  We have a forgetful functor $G\UBC_{\pc}^{\cR}\to G\UBC_{\pc}$ 
which we will not write explicitly in formulas.

 \begin{prop}\label{weoigjweogergrrewgwregfewfwwrg}\mbox{}
 \begin{enumerate}
 \item\label{ewtgojkropgwerefwefgregergwerg} {For every $(X,\chi)$ in  
 $G\UBC_{\pc}^{\cR}$},
  the functor  $\Lambda_{(X,\chi)}$ is well-defined.
 \item \label{qroijgoijerogwgergergwergwergw}  The family $(\Lambda_{(X,\chi)})_{(X,\chi)\in G\UBC_{\pc}^{\cR}}$ is a natural transformation
 $$\Lambda \colon \bCgtsmc(\cO(-)\otimes G_{can,max})\to  \underline{\Idem({\bU})}$$
 of functors from $G\UBC_{\pc}^{\cR}$ to $\Sp^{\la}$.
  \label{ewtgojkropgwergwefwfwefregergwerg1}
  \item\label{wetoiguwoefmw09i09m}
 The  transformation     restricts to a  {natural}  transformation      \begin{equation}
 \label{grpogkprwefwefwsdfsdfsdffewgref}
\Lambda \colon \bCgtsmc(\cZ\subseteq \cO(-)\otimes G_{can,max})\to \underline{\Idem( {\bC}^{(G)}_{\std}\rtimes_{r}G)}
\end{equation}
of functors from $G\UBC_{\pc}^{\cR}$ to $\Sp^{\la}$.
 \end{enumerate}
 \end{prop}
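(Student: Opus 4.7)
The proof will follow the structure of Proposition \ref{weoigjweogergrrewgwregwrg}, with the adaptations needed to incorporate the cut-off $\chi \in \cR(X)$. For Assertion \ref{weoigjweogergrrewgwregfewfwwrg}.\ref{ewtgojkropgwerefwefgregergwerg}, I first observe that $(C,\rho) \in \bC^{(G)}_\std$ as in the case of $\Theta_X$, since the underlying $G$-set of $\cO(X)\otimes G_{can,max}$ is free on the $G$-factor. The element $p_\chi$ of \eqref{ewqfoihioqwhejfioewqfewfeqfew} is the image of the classical Kasparov projection $p_\chi^{\mathrm{Kasp}} = \sum_g(\chi \cdot g^*\chi, g) \in C_0(X)\rtimes G$ under the $*$-homomorphism $\phi \rtimes G$; it is therefore an orthogonal projection via the defining identity $\sum_g g^*\chi^2 = 1$ from \eqref{goqiejgoigqfqwfwefqwef}, and it is a finite sum by the proper cocompact $G$-action on $X$. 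Convergence of the sum in \eqref{fqwefqoijofqwefqwefqef} on general morphisms is established along the lines of Lemma \ref{rgoijerwoggwergregwgreg} by pulling back to the defining representation of the reduced crossed product. To verify $p_{\chi'} \Lambda_{(X,\chi)}(A) p_\chi = \Lambda_{(X,\chi)}(A)$ and the compatibility with composition and involution, one performs a direct computation in the crossed product using the equivariance $g^{-1}\cdot \phi(f) = \phi(g^*f)$ from \eqref{gwwgegdfsfvfdvsdvsfv}, the $G$-invariance of $A$ inherited from $\Cglf$, and the crossed-product multiplication convention $(B_1,g_1)(B_2,g_2) = ((g_2^{-1}\cdot B_1) B_2, g_1 g_2)$ used in the proof of Lemma \ref{w45opktrpohwrgwreg}; after a change of summation variable, the resulting double sums collapse via $\sum_m (m^*\chi)^2 = 1$. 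As a sanity check, for $A = \id$ one computes $\Lambda_{(X,\chi)}(\id) = \sum_g(\phi(\chi \cdot g^*\chi), g) = p_\chi = \id_{(C,\rho,p_\chi)}$, as required for a functor.

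For Assertion \ref{weoigjweogergrrewgwregfewfwwrg}.\ref{qroijgoijerogwgergergwergwergw}, naturality along a morphism $f \colon (X,\chi)\to(X',\chi')$ in $G\UBC^{\cR}_\pc$ exploits the defining condition $f^*\chi' = \chi$. Since $\bCgtsmc(\cO(f)\otimes \id_G)$ sends $(C,\rho,\mu)$ to $(C,\rho,\tilde f_*\mu)$ with $\tilde f := \id_{[0,\infty)} \times f \times \id_G$, and the induced functional calculus satisfies $\phi^{\tilde f_*\mu}(h) = \phi^\mu(f^*h)$ for all $h \in C_0(X')$, substituting $h = m^*\chi'$ gives $\phi^{\tilde f_*\mu}(m^*\chi') = \phi^\mu(m^*\chi)$ for every $m \in G$. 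Plugging this into \eqref{fqwefqoijofqwefqwefqef} yields $\Lambda_{(X',\chi')} \circ \bCgtsmc(\cO(f) \otimes \id_G) = \Lambda_{(X,\chi)}$ termwise. For Assertion \ref{weoigjweogergrrewgwregfewfwwrg}.\ref{wetoiguwoefmw09i09m}, if $A \in \bCgtsmc(\cZ \subseteq \cO(X)\otimes G_{can,max})$, then by the cone-without-scale version of Lemma \ref{ewiogjoerwgfdsfdgsfg} (applicable with $\cO$ in place of $\cO_\tau$, as stated in the remark following that lemma) one has $A \phi(\chi) \in \bK$. Hence each coefficient $\phi'(m^*\chi) A \phi(\chi)$ belongs to $\bK$, and $\Lambda_{(X,\chi)}(A)$ is a norm-limit of crossed-product elements with coefficients in $\bK$, so it lies in $\Idem(\bK^{(G)}_\std \rtimes_r G)$.

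The main obstacle I anticipate is the careful bookkeeping in the crossed-product calculations of Assertion \ref{weoigjweogergrrewgwregfewfwwrg}.\ref{ewtgojkropgwerefwefgregergwerg}: the interplay between the conjugation action on morphisms in $\bC^{(G)}$ and the translation action on $C_0(X)$ must be tracked through the multiplication formula and several index substitutions before the Kasparov identity can be applied. Nevertheless, the computations parallel those of the $\Theta$-case exactly, the role of the partition-of-unity collapse $\sum_g \mu_g = \id$ there being played here by $\sum_g \phi((g^*\chi)^2) = \id$; so no new conceptual difficulty arises beyond the notational care required.
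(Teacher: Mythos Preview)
Your proposal is correct and follows essentially the same approach as the paper: both arguments verify well-definedness via the defining representation for convergence together with direct crossed-product computations for the projection relation and functoriality, then use $f^{*}\chi'=\chi$ for naturality and Lemma~\ref{ewiogjoerwgfdsfdgsfg} for the ideal restriction. The only point worth flagging is that the convergence estimate here is a genuine two-step bound using the operator inequality $0\le\sum_{l\in F}\phi'(l^{-1,*}\chi)^{2}\le\id_{C'}$ rather than an identity, so the analogy with Lemma~\ref{rgoijerwoggwergregwgreg} is looser than you suggest; but you have identified the correct mechanism.
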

 \begin{proof}
 The structure of this proof is the same as for Proposition \ref{weoigjweogergrrewgwregwrg}.

 {We first observe that}   $(C,\rho,p_{\chi})$ is an object of $\Idem(\bU)$.
 \begin{lem} \label{ezrhjieorhgtre}
The {formula \eqref{fqwefqoijofqwefqwefqef}  determines  a continuous map of morphism spaces which is compatible with the composition and the involution.} 
\end{lem}
\begin{proof}
In analogy to \eqref{gwerfwerfwerfwerf} 
for every 
 $(C,\rho,\mu)$ in $\bCgtsmc( \cO(X)\otimes G_{can,max})$   we 
 consider
the isometry \begin{equation}\label{gwgwreferferfewf3}v\colon C\to \bigoplus_{g\in G}C\ , \quad v:=\sum_{g\in G} e_{g} \phi(g^{-1,*}\chi)\ .
\end{equation}
 Then similarly as \eqref{wthoigwjopgwrregw} we have \begin{equation}\label{erwfwerf25}\Lambda_{(X,\chi)}(A)=v^{\prime}Av^{*}
\end{equation}
and \begin{equation}\label{werfwerferfwrefwrwr}p_{\chi}=vv^{*}
\end{equation}  in analogy to \eqref{gwergerfwerfrewf}.  
\end{proof}

{This finishes the verification of Assertion \ref{weoigjweogergrrewgwregfewfwwrg}.\ref{ewtgojkropgwerefwefgregergwerg}.}
We continue {with} Assertion  \ref{weoigjweogergrrewgwregfewfwwrg}.\ref{qroijgoijerogwgergergwergwergw}.
Let $f \colon (X,\chi)\to (X',\chi')$ be a morphism in $G\UBC_{\pc}^{\cR}$ and note $\bCgtsmc(f)(C,\rho,\mu)=(C,\rho,f_{*}\mu)$.  We let $f_{*}(\phi) \colon C_{0}(X')\to \End_{\bC}(C)$  be the homomorphism defined with $f_{*}\mu$. Then we have the relation
$$f_{*}\phi(\theta')=\phi(f^{*}\theta')$$
for all $\theta'$ in $C_{0}(X')$. In particular
$(f_{*}\phi)(\chi')=\phi(\chi)$.
 This relation implies that
$p_{\chi}=p_{\chi'}$ and $\Lambda_{(X',\chi')}(\bCgtsmc(f)(A))=\Lambda_{(X,\chi)}(A)$ (note 
Definition \ref{wtogjoergrwegreggwege}.\ref{oiwejgiowegregwr}).
 These equalities imply the assertion.

 We finally verify Assertion  \ref{weoigjweogergrrewgwregfewfwwrg}.\ref{wetoiguwoefmw09i09m}.
 If $A \colon (C,  \rho,\mu)\to (C',\rho',\mu')$ is a morphism in $\bCgtsmc( \cZ\subseteq \cO(X)\otimes G_{can,max})$, then    $A\phi(\chi) $ is in ${\bC}$ by Lemma \ref{ewiogjoerwgfdsfdgsfg}.    This implies that $\Lambda_{(X,\chi)}(A)$ is a morphism in the ideal
$ {\Idem({\bC}^{(G)}_{\std}\rtimes_{r} G)}$.
\end{proof}

We now consider the cone sequence \eqref{wefqwefqewewfwdqewdewdqede} for $E=K\bC\cX^{G}_{G_{can,max}}$  whose boundary is the natural transformation
\begin{equation}\label{eqwfojpoqwefqefewqf}
\partial^{\Cone} \colon K\bC\cX^{G}_{G_{can,max}}(\cO^{\infty}(-))\to \Sigma K\bC\cX^{G}_{G_{can,max}}(-)
\end{equation} 
of functors from $G\UBC$ to $\Sp^{\la}$. The canonical inclusions
$\bCgtsmc(X\otimes G_{can,min})\to  \bCgtsmc(\cZ\subseteq \cO(X)\otimes G_{can,min})$
give a further transformation
\begin{equation} \label{eqwfojpoqwefqefewqf1}
\Sigma K\bC\cX^{G}_{G_{can,max}}(-) 
  \xrightarrow{\simeq}  \Sigma \Kcat( \bCgtsmc(\cZ\subseteq \cO(-)\otimes G_{can,min})) \end{equation}
which is actually an equivalence (see the argument for the left vertical equivalence in \eqref{friuhiufhqiwfewfqwefqfqewf} applied to the case $Y=G_{can,min}$). 
The composition of  the  transformations \eqref{eqwfojpoqwefqefewqf} with the equivalence \eqref{eqwfojpoqwefqefewqf1}  will also  be  called the cone boundary transformation 
$$\hat \partial^{\Cone} \colon K\bC\cX^{G}_{G_{can,max}}(\cO^{\infty}(-))\to   \Sigma \Kcat(\bCgtsmc(\cZ\subseteq \cO(-)\otimes G_{can,max}))$$ of functors from $G\UBC$ to $\Sp^{\la}$, but we add the $\hat{-}$ in order to distinguish it from \eqref{eqwfojpoqwefqefewqf}.
\begin{ddd}\label{qeoirgjoqwrfewffewfq9}
 We define the natural transformation
\begin{equation}
\Ass^{\Lambda} \coloneqq \Kcat(\Lambda)\circ \hat \partial^{\Cone}:K\bC\cX^{G}_{G_{can,max}}(\cO^{\infty}(-))\to \underline{ \Sigma \KK(\C,   {\bC}^{(G)}_{\std}\rtimes_{r}G)}
\end{equation}  of functors from $G\UBC_{\pc}^{\cR}$ to $\Sp^{\la}$.
 \end{ddd}

{If $X$ is in $G\UBC_{\pc}$ (see Definition \ref{wtrkogerfrfwerf}), then it is $G$-bounded, but not necessarily bounded.}
We let $X_{\cB_{max}}$ denote the object of $G\UBC_{\bd}$ ({see Definition \ref{werigowerferfrwef})}  obtained from $X$ by replacing the bornology of $X$ by the maximal bornology.

\begin{prop}\label{tghquifhiweewfqe}
There is a canonical equivalence of functors  \begin{equation}\label{gu98reug98wugergwergwergwwregwerg}
K\bC\cX_{G_{can,min}}(\cO^{\infty}((-)_{\cB_{max}}))\simeq K\bC\cX_{G_{can,max}}(\cO^{\infty}(-)))
\end{equation}
from $G\UBC_{{\pc}}$ to $\Sp^{\la}$.
\end{prop}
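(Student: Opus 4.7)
The plan is to prove the stronger statement that the underlying $C^*$-categories of controlled objects on the two sides are canonically identified, so that applying the $K$-theory functor $\Kcat$ yields the desired equivalence. Write $Y := \cO^\infty(X_{\cB_{max}}) \otimes G_{can,min}$ and $Y' := \cO^\infty(X) \otimes G_{can,max}$ for brevity. The first observation is that $Y$ and $Y'$ have the same underlying $G$-set $\R \times X \times G$ with the diagonal $G$-action, and the same $G$-coarse structure: neither $\cO^\infty(-)$ nor the canonical coarse structure on $G$ depends on the bornology of the input, so passing from $X$ to $X_{\cB_{max}}$ and from $G_{can,max}$ to $G_{can,min}$ does not affect the ambient coarse structure. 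The two spaces differ only in their bornologies, generated respectively by sets $[-r,r] \times X \times F$ (for finite $F \subseteq G$) on $Y$ and by sets $[-r,r] \times B \times G$ (for relatively compact $B \subseteq X$) on $Y'$.

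The key step is to show that for every $G$-invariant subset $L \subseteq \R \times X \times G$, local finiteness with respect to the bornology of $Y$ is equivalent to local finiteness with respect to the bornology of $Y'$. I would fix a relatively compact subset $K \subseteq X$ with $GK = X$ (which exists by cocompactness of the $G$-action) and show that both local-finiteness conditions are in fact equivalent to the single statement that $L \cap ([-r,r] \times K \times G)$ is finite for every $r > 0$. The two translations use the proper cocompact structure in opposite directions: any relatively compact $B \subseteq X$ is covered by finitely many $G$-translates of $K$ by properness combined with $GK = X$, which handles the $Y'$-side, while the slice $L \cap ([-r,r] \times X \times \{e\})$ embeds injectively into $L \cap ([-r,r] \times K \times G)$ after selecting for each $x$ appearing in the slice a witness $g \in G$ with $x \in gK$, which handles the $Y$-side; the general case of a finite $F \subseteq G$ reduces to $F = \{e\}$ by $G$-invariance.

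Given the agreement of local-finiteness conditions, unwinding Definition~\ref{rfquhwfiuqwhfiufewqefqwefqwefwefqwef} shows that $\bCgtsmc(Y) = \bCgtsmc(Y')$ as $G$-$C^*$-categories: the objects are triples $(C,\rho,\mu)$ with $\mu$ a $G$-invariant effective projection-valued measure whose (automatically $G$-invariant) support satisfies the local-finiteness condition in the ambient bornology, and by the key step the two resulting classes of valid triples coincide; the morphism spaces are generated by $U$-controlled morphisms, and since the coarse structures on $Y$ and $Y'$ agree, the two resulting Banach subspaces of $\Hom_{\Cglf}$ coincide. Applying $\Kcat$ then yields the asserted equivalence, and naturality in $X \in G\UBC_\pc$ is immediate because any morphism $f : X \to X'$ induces the same set-theoretic map $\id_\R \times f \times \id_G$ between the corresponding ambient spaces in both variants, which is a morphism in $G\BC$ for either choice of bornology by properness and equivariance of $f$. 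The main technical obstacle is the key step on local finiteness; although morally evident from the proper cocompact structure of $X$, the precise bookkeeping of $G$-translates of the compact $K$ meeting either a relatively compact $B$ or a finite $F$ requires careful invocation of properness of the action, and the non-compactness of $X$ (only locally compact and cocompact) prevents a naive reduction to a single bornological slice.
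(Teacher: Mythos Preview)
Your proposal is correct and rests on the same key idea as the paper: the two $G$-bornological coarse spaces in question have the same underlying $G$-coarse space and the same $G$-invariant locally finite subsets (what the paper calls a \emph{continuous equivalence}), hence literally equal categories $\bCgtsmc$, and applying $\Kcat$ gives the result. Your verification of the local-finiteness equivalence is essentially the paper's Lemma~\ref{wegoijerogregwegrgwgewergw}, rephrased via the intermediate condition ``$L\cap([-r,r]\times K\times G)$ finite'' instead of the paper's ``$L_{n,X}$ has finitely many $G$-orbits''; these are easily seen to be equivalent using properness.

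The only genuine difference is organisational: you run the continuous-equivalence argument directly at the $\cO^{\infty}$ level, whereas the paper establishes it for the pairs $X\otimes G_{can,max}\sim X_{\cB_{max}}\otimes G_{can,min}$ and $\cO(X)\otimes G_{can,max}\sim \cO(X_{\cB_{max}})\otimes G_{can,min}$, and then deduces the $\cO^{\infty}$ statement by comparing the two cone fibre sequences \eqref{friuhiufhqiwfewfqwefqfqewf}. Your route is shorter for this proposition in isolation. The paper's detour, however, is not gratuitous: the $\cO$-level identification of $C^{*}$-categories is reused verbatim in the proof of Proposition~\ref{erqoiheroigjorgiqfwefqwefqewf} (the ``round equalities'' in diagram~\eqref{rfreffewqewfewfewfqwefqwef}), so the extra work pays off downstream.
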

\begin{proof}

We  employ the notion of continuous equivalence introduced in \cite [Def. 3.21]{desc}.
  Recall the  Definition \ref{oiguoeigqregregwergregwgrg} of a locally finite subset of a $G$-bornological space.
  In the present situation we have a $G$-coarse space $Z$ with two $G$-bornologies.
  We denote the two objects in $G\BC$ by $Z_{0}$ and $Z_{1}$.
  The identity map of $Z$ is a continuous equivalence between $Z_{0}$
 and $Z_{1}$ if the following conditions on every $G$-invariant subset $L$ of $Z$ are equivalent:
 \begin{enumerate} 
 \item $L$ is locally finite in 
 $Z_{0}$.
 \item  $L$ is locally finite in    $Z_{1}$.
 \end{enumerate}
   In this case we have an obvious equality in $\nCcat$  \begin{equation}\label{vapojvoidjvadvavadsv}
\bCgtsmc(Z_{0})=  \bCgtsmc(Z_{1})\, .
\end{equation}

  \begin{lem}\label{wegoijerogregwegrgwgewergw}
  If $X$ in $G\UBC$ is $G$-bounded and such that $G$ acts properly, then the bornological coarse spaces
\[
 X\otimes G_{can,max} \quad  \textrm{and} \quad  X_{\cB_{max}}\otimes G_{can,min}
\]
 {are continuously equivalent, and}
\[
 \cO (X)\otimes G_{can,max} \quad  \textrm{and} \quad  \cO (X_{\cB_{max}})\otimes G_{can,min} 
\]
are continuously equivalent ({in both cases} by the identity map of the underlying sets).
  \end{lem}
\begin{proof}

We consider the second case. The first is similar and simpler.  Let $L$ be a $G$-invariant subset of
$[0,\infty)\times X{\times} G$.  Since $X$ is $G$-bounded we can choose a bounded subset  $B$ of $X$ such that $GB=X$.
For $n$ in $\nat$ and   subset $A$ of $X$ we
consider the intersections $L_{n,e}:=L\cap ([0,n]\times X\times \{e\})$ and $L_{n,A}:=L\cap ([0,n]\times A\times G)$.
\begin{enumerate}
\item   $L$ is locally finite in $\cO(X)\otimes G_{can,max}$ if and only if  $L_{n,A}$ is finite for every $n$ in $\nat$  and bounded subset $A$ of $X$. In particular $L_{n,B}$ is finite.
Hence $L$ is locally finite in $\cO(X)\otimes G_{can,max}$ if and only    if $L_{n,X}$ consists of finitely many $G$-orbits for every $n$ in $\nat$. 
Here we use that every $G$-orbit is locally finite in  $\cO(X)\otimes G_{can,max}$ since $G$ acts propertly on $X$.
\item  If $L$ is locally finite in $\cO(X_{\cB_{\max}})\otimes G_{can,min}$, if and only if 
$L_{n,e}$ is finite for every $n$ in $\nat$. This is the case exactly if $L_{n,X}$ consists of finitely many $G$-orbits. \qedhere
\end{enumerate}
 \end{proof}

 Let $Y$ be  any object in $G\BC$ and $X$ be in $G\UBC$.
  Then we have a  diagram in $\nCcat$
  \begin{equation}\label{}
\xymatrix{&\bCgtsmc(X\otimes Y)\ar[d] \ar[r] &\ar@{=}[d]\bCgtsmc(\cO(X)\otimes Y)\ar[r]&\bCgtsmc(\cO^{\infty}(X)\otimes Y)\\
0\ar[r]&\bCgtsmc(\cZ\subseteq \cO(X)\otimes Y)\ar[r]&\bCgtsmc(\cO(X)\otimes Y)\ar[r]&\frac{\bCgtsmc(\cO(X)\otimes Y)}{\bCgtsmc(\cZ\subseteq \cO(X)\otimes Y)}\ar[r]&0}
\end{equation} which is natural in $X$,  
where the lower sequence is exact,  and where the square commutes. 
If we apply $\Kcat$ and use  Definition \ref{qrogijeqoifefewfefewqffe}, then we get the (natural in $X$) commutative diagram
{\tiny  \begin{equation}\label{friuhiufhqiwfewfqwefqfqewf}
\xymatrix{\ar[r]&K\bC\cX^{G}_{Y} (X)\ar[d]^{\simeq}  \ar[r] &\ar@{=}[d]K\bC\cX^{G}_{Y}(\cO(X)) \ar[r]&K\bC\cX^{G}_{Y}(\cO^{\infty}(X)\otimes Y)\ar[r]\ar@{..>}[d]^{\simeq }&\\
\ar[r]&\Kcat(\bCgtsmc(\cZ\subseteq \cO(X)\otimes Y))\ar[r]&\Kcat(\bCgtsmc(\cO(X)\otimes Y))\ar[r]&\Kcat\left(\frac{\bCgtsmc(\cO(X)\otimes Y)}{\bCgtsmc(\cZ\subseteq \cO(X)\otimes Y)}\right)\ar[r]&}
\end{equation}}The lower sequence is a fibre sequence  by the exactness of $\Kcat$ (\cite[Thm.~1.32.5]{KKG} or \cite[Prop.{14.7}]{cank}), and the upper sequence is an instance of the cone sequence \eqref{wefqwefqewewfwdqewdewdqede}.
We now argue that the left vertical morphism is an equivalence (essentially the same argument as for the left vertical arrow in \eqref{qfwefeeqfeqfqwfqewwfeqwf}). First of all
for every $n$ in $\nat $ the inclusion 
$$ \bCgtsmc(Z_{n})\simeq  \bCgtsmc(Z_{n}\subseteq \cO(X)\otimes Y)
$$ 
is a  unitary equivalence by \cite[Lem.\ 6.10(2)]{coarsek}, where $Z_{n}:=[0,n]\times X\times Y$ has the structures induced from $\cO(X)\otimes Y$. The inclusion  $X\otimes Y\to Z_{n}$ given by
$(x,y)\mapsto (0,x,y)$ is a coarse equivalence.
Hence the induced map 
$$K\bC\cX^{G}( X\otimes Y)\to  K\bC\cX^{G}(\bCgtsmc(Z_{n}))\stackrel{\simeq}{\to}  \Kcat(\bCgtsmc(Z_{n}\subseteq \cO(X)\otimes Y))$$ is an equivalence for every $n$ in $\nat$. 
We now use that by definition
$$\bCgtsmc(\cZ\subseteq \cO(X)\otimes Y)\cong \colim_{n\in \nat}
\bCgtsmc(Z_{n}\subseteq \cO(X)\otimes Y)
$$ and that $\Kcat$ commutes with filtered colimits by    \cite[Thm.\ 14.4]{cank}. Hence we get an equivalence
$$K\bC\cX^{G}( X\otimes Y)\stackrel{\simeq}{\to} \Kcat(\bCgtsmc(\cZ\subseteq \cO(X)\otimes Y))$$ induced by the canonical inclusion. This is exactly the left vertical arrow in \eqref{friuhiufhqiwfewfqwefqfqewf}.

  We now assume that  $X$ is in $G\UBC_{{\pc}}$ (see Definition \ref{wtrkogerfrfwerf}) and note that $X$ is then $G$-bounded.
Using two instances of the the diagram  \eqref{friuhiufhqiwfewfqwefqfqewf}, one for $ X$ and $Y=G_{can,max}$ , and one  for $X_{\cB_{\max}}$ and $Y=G_{can,min}$,  and the equalities of $C^{*}$-categories resulting from Lemma \ref{wegoijerogregwegrgwgewergw} and  \eqref{vapojvoidjvadvavadsv} saying that the corresponding lower  fibre sequences of the two diagrams are equivalent
we get the desired equivalence \eqref{gu98reug98wugergwergwergwwregwerg}.
\end{proof}

  Let $(X,\chi)$ be in $G\UBC_{{\pc}}^{\cR}$ {(see the text before Proposition \ref{weoigjweogergrrewgwregfewfwwrg})}. Recall Definition \ref{kopeherthrggertg} of $\Ass^{\Theta}$ and
  Definition \ref{qeoirgjoqwrfewffewfq9} of $ \Ass^{\Lambda} $.
    \begin{prop}\label{erqoiheroigjorgiqfwefqwefqewf}
We have a commutative square
\begin{equation}\label{qoirfgjqoirqwfeqewfqfwefq}
\xymatrix{K\bC\cX^{G}_{G_{can,min}}(\cO^{\infty}(X_{\cB_{\max}}))\ar[rr]^-{ \Ass^{\Theta}_{X_{\cB_{max}}}}\ar@{-}[d]_{\eqref{gu98reug98wugergwergwergwwregwerg}}^{\simeq}&&  \Sigma \KK(\C,     {\bC}^{(G)}_{\std}\rtimes_{r}G) 
 \ar@{=}[d]   \\  K\bC\cX^{G}_{G_{can,max}}(\cO^{\infty}(X ))   \ar[rr]^{\Ass^{\Lambda}_{(X,\chi)}}&&  \Sigma \KK(\C,   {\bC}^{(G)}_{\std}\rtimes_{r}G) }
\end{equation}
which depends naturally on the coefficient category $\bC$ in $\Fun(BG,\nCcat_{\ndeg,\eadd,\omega\add})$.
\end{prop}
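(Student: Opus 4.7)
The plan is to first reduce the square \eqref{qoirfgjqoirqwfeqewfqfwefq} to a comparison of two $K$-theory maps on a common $C^*$-category, and then to construct an explicit Murray--von Neumann equivalence.

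First, I would use Lemma \ref{wegoijerogregwegrgwgewergw} together with \eqref{vapojvoidjvadvavadsv} to obtain canonical equalities of $C^*$-categories
\[
\bCgtsmc(X_{\cB_{max}} \otimes G_{can,min}) = \bCgtsmc(X \otimes G_{can,max}),
\]
\[
\bCgtsmc(\cZ \subseteq \cO(X_{\cB_{max}}) \otimes G_{can,min}) = \bCgtsmc(\cZ \subseteq \cO(X) \otimes G_{can,max}).
\]
Applying \eqref{friuhiufhqiwfewfqwefqfqewf} twice (with $Y=G_{can,min}$ over $X_{\cB_{max}}$, and with $Y=G_{can,max}$ over $X$), the two cone boundaries $\partial^{\Cone}$ and $\hat\partial^{\Cone}$ become identified. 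Thus, after suspending and using the left vertical equivalence of \eqref{friuhiufhqiwfewfqwefqfqewf}, the commutativity of \eqref{qoirfgjqoirqwfeqewfqfwefq} reduces to showing that the diagram
\[
\begin{tikzcd}
\Kcat(\bCgtsmc(X \otimes G_{can,max})) \ar[r, "\iota_{*}"] \ar[d, "\Kcat(\Theta_{X_{\cB_{max}}})"'] & \Kcat(\bCgtsmc(\cZ \subseteq \cO(X) \otimes G_{can,max})) \ar[d, "\Kcat(\Lambda_{(X,\chi)})"] \\
\Kcat(\Idem(\bK^{(G)}_{\std} \rtimes_{r} G)) \ar[r, equal] & \Kcat(\Idem(\bK^{(G)}_{\std} \rtimes_{r} G))
\end{tikzcd}
\]
commutes, where $\iota$ is induced by the inclusion $X \hookrightarrow \cO(X)$ at height $0$.

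The key step is then to exhibit a Murray--von Neumann equivalence between the two functors $\Theta_{X_{\cB_{max}}}$ and $\Lambda_{(X,\chi)}\circ\iota$, both from $\bCgtsmc(X \otimes G_{can,max})$ to $\Idem(\bK^{(G)}_{\std} \rtimes_{r} G)$. Concretely, for each object $(C,\rho,\mu)$ we have $\Theta_{X_{\cB_{max}}}(C,\rho,\mu) = (C,\rho,(\mu_e,e))$ and $\Lambda_{(X,\chi)}(\iota(C,\rho,\mu)) = (C,\rho,p_\chi)$. Using the homomorphism $\phi$ associated to $i_{*}\mu$, and the normalization $\sum_g g^{*}\chi^{2}=1$, I would construct an explicit partial isometry $v_{(C,\rho,\mu)}$ in $\bC^{(G)}_{\std}\rtimes_{r} G$ (essentially of the form $\sum_g (\phi(g^{*}\chi)\mu_e,g^{-1})$, after sorting out conventions) which satisfies $v^{*}v = (\mu_e,e)$ and $vv^{*} = p_\chi$; convergence is ensured by the compact support of $\chi$, the local finiteness of $\mu$, and an argument analogous to Lemma \ref{iuheiuhiuqehfiuhvuiasvsdvasdvadsv}. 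One then checks, using the $G$-equivariance of $\mu$, that the family $(v_{(C,\rho,\mu)})$ is compatible with morphisms in the sense of a relative Murray--von Neumann equivalence.

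Finally, I would invoke \cite[Prop.\ 15.10]{cank} (applied in the same way as in the proof of Lemma \ref{egeroigjregwergege}) to conclude that $\Kcat(\Theta_{X_{\cB_{max}}})\simeq \Kcat(\Lambda_{(X,\chi)}\circ\iota)$. Combined with the fact that $\iota_{*}$ is a $K$-theory equivalence (established in the proof of Proposition \ref{tghquifhiweewfqe}), this gives the desired commutativity. The main obstacle is the construction and verification of the partial isometry $v$: one must choose the sign/ordering conventions so that $v^{*}v$ and $vv^{*}$ come out exactly as claimed, handle the convergence carefully in the reduced crossed product, and verify the Murray--von Neumann naturality condition for all morphisms $A$ in $\bCgtsmc(X\otimes G_{can,max})$, not merely controlled ones, by density.
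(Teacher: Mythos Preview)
Your proposal is correct and follows essentially the same route as the paper. Both arguments reduce, via Lemma~\ref{wegoijerogregwegrgwgewergw} and the two instances of \eqref{friuhiufhqiwfewfqwefqfqewf}, to comparing $\Theta_{X_{\cB_{max}}}$ and $\Lambda_{(X,\chi)}$ on $K$-theory by means of an explicit partial isometry; your $v$ is, after commuting $\phi(g^{*}\chi)$ with $\mu_{e}$ and reindexing, precisely the adjoint of the paper's unitary $U=\sum_{m\in G}(\phi(\chi)\mu_{m},m^{-1})$, for which the paper verifies $UU^{*}=\tilde p$, $U^{*}U=p_{\chi}$ and the intertwining relation $\Theta(A)U=U'\Lambda(A)$ directly. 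The only cosmetic difference is that the paper carries out the construction on the full cone category $\bCgtsmc(\cO(X)\otimes G_{can,max})$ (Lemma~\ref{epgjpfewdfewfqwfewfqef}) and then restricts to the ideal, invoking \cite[Lem.\ 15.8]{cank}, whereas you work directly on the height-$0$ subcategory and invoke \cite[Prop.\ 15.10]{cank}; since $\iota_{*}$ is an equivalence this makes no essential difference.
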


\begin{proof}
 Recall the construction of the functor $\Theta$ in  Definition \ref{eoigjoeirgrwegreregrgewrg}
 (see also   \eqref{fqoifoifjoeiwfjqowfqwefqwefqwefq}) and of $\Lambda$ in Definition \ref{erwoighfgojrefqwfewfqwefeqwf}. We  get the following morphism of exact sequences of $C^{*}$-categories.
\begin{align}
\label{rfreffewqewfewfewfqwefqwef}\\
{\small 
\mathclap{
\xymatrix{
0\ar[r]&\bCgtsmc( \cZ\subseteq\cO (X)\otimes G_{can,max}) \ar@{=}@/_2cm/[dd]\ar[d]^{\Lambda_{(X,\chi)}}\ar[r] & \ar@{=}@/_2cm/[dd]\bCgtsmc( \cO (X)\otimes G_{can,max}) \ar[d]^{\Lambda_{(X,\chi)}}\ar[r]& \frac{\bCgtsmc(  \cO (X)\otimes G_{can,max})}{\bCgtsmc( \cZ\subseteq\cO(X)\otimes G_{can,max})} \ar@{=}@/_2cm/[dd] \ar[d] \ar[r]&0\\0\ar[r]&\Idem({\bC}^{(G)}_{\std}\rtimes_{r} G)  \ar[r]&\Idem( {\bU} )\ar[r]& \frac{\Idem( {\bU})}{\Idem({\bC}^{(G)}_{\std}\rtimes_{r} G)} \ar[r]&0\\ 
0\ar[r]&\bCgtsmc( \cZ\subseteq\cO(X_{\cB_{max}})\otimes G_{can,min}) \ar[u]_{\Theta_{X_{\cB_{max}}}}\ar[r] &\bCgtsmc( \cO(X_{\cB_{max}})\otimes G_{can,min}) \ar[u]_{\Theta_{\cO(X_{\cB_{\max}})}}\ar[r]& \frac{\bCgtsmc(  \cO (X_{\cB_{max}})\otimes G_{can,min})}{\bCgtsmc( \cZ\subseteq\cO (X_{\cB_{max}})\otimes G_{can,min})} \ar[u] \ar[r]&0
}
}
}\notag
\end{align}
The right vertical maps are induced from the universal property of quotients. The round equalities are consequences of Lemma \ref{wegoijerogregwegrgwgewergw}
and \eqref{vapojvoidjvadvavadsv}. The right equality is responsible for the left vertical equivalence in 
\eqref{qoirfgjqoirqwfeqewfqfwefq} up to identifications, see the proof of Proposition \ref{tghquifhiweewfqe}. We apply $\Kcat$ and consider the segment of the long exact sequences which involve the boundary map. We use the identification given by the 
right vertical equivalences in the two instances of \eqref{friuhiufhqiwfewfqwefqfqewf}
with $X$ and $G_{can,max}$ and $X_{\cB_{max}}$ and $Y=G_{can,min}$ in order to
express the $K$-theory of the quotient categories in terms of coarse $K$-homology.
\begin{align}\label{weiojgoijregoiergwergwergwegrwergwf}
\mathclap{
\xymatrix{K\bC\cX^{G}_{c,G_{can,max}}(\cO^{\infty}(X))\ar@{-}[dd]_{\simeq}\ar[r]^-{\hat \partial^{\Cone}}& \ar@{-}[dd]_{\simeq}\Sigma \Kast( \bCgtsmc( \cZ\subseteq\cO (X)\otimes G_{can,max}))\ar[rd]^-{\quad\Kast(\Lambda_{(X,\chi)})}&\\&& \KK(\C,   {\bC}^{(G)}_{\std}\rtimes_{r} G)  \\
K\bC\cX^{G}_{c,G_{can,min}}(\cO^{\infty}(X_{\cB_{max}}))\ar[r]^-{\hat \partial^{\Cone}} &\Sigma \Kast( \bCgtsmc( \cZ\subseteq\cO (X_{\cB_{max}})\otimes G_{can,min}))\ar[ru]_-{\quad\Kast(\Theta_{X_{\cB_{max}}})}&}
}\ .
\end{align}
The left square commutes since it is induced by an equality of exact sequences of $C^{*}$-categories. We must  provide the filler of the right triangle.

This filler will be given by a  unitary equivalence {(see \cite[Def.\ 17.9]{cank} for the definition of this notion in the non-unital case)} of functors on the level of $C^{*}$-categories which will be induced from the equivalence provided by the following lemma.
\begin{lem}\label{epgjpfewdfewfqwfewfqef}
The following  triangle is filled by a natural  unitary equivalence:
$$\xymatrix{\bCgtsmc( \cO (X)\otimes G_{can,max}) \ar[dr]^-{\quad \Lambda_{(X,\chi)}}\ar@{=}[dd]&\\
&\Idem(  {\bU} 
)\\\bCgtsmc( \cO(X_{\cB_{max}})\otimes G_{can,min}) \ar[ur]_-{\quad \Theta_{X_{\cB_{max}}}}&
}\ .$$
\end{lem}
\begin{proof}
We consider an object $(C,\rho,\mu)$ on the common domain of the functors. 
We define $$U:=uv^{*}$$  in ${\bU}$ with $u$ as in \eqref{gwerfwerfwerfwerf}
and $v$ as in \eqref{gwgwreferferfewf3}.
By \eqref{gwergerfwerfrewf} and \eqref{werfwerferfwrefwrwr} we have 
%
%
 $$U  U^{*}
   =    p\, , \quad U^{*}U=p_{\chi}\, ,$$
  where $\tilde p$ and $p_{\chi}$ are as in \eqref{qewfeqwfpokpoqwefqwefqewf} and
\eqref{ewqfoihioqwhejfioewqfewfeqfew}, respectively.
We conclude that
$U  p_{\chi} =   p U$ {and that
we therefore} have a unitary  isomorphism $ U\colon ( C, \rho,  p_{\chi }) \to  ( C,  \rho,    p)$ in $\Idem({\bU})$
  as desired.

In order to verify that $U$ implements a natural transformation we must check the
compatibility with morphisms. 
 Let $A\colon (C,\rho,\mu)\to (C',\rho',\mu')$ be a morphism in the domain of the functors. 
 We let  $U'$ be defined as above for $(C',\rho',\mu')$. 
 Then by \eqref{wthoigwjopgwrregw} and \eqref{erwfwerf25} we have
$$U' \Lambda_{(X,\chi)}(A)= \Theta_{X}(A) U\ .$$
%
%
\end{proof}

{In view of \cite[Rem.\ 17.10]{cank}, the unitary equivalence from Lemma \ref{epgjpfewdfewfqwfewfqef} implements} a unitary equivalence filling $$\xymatrix{\bCgtsmc( \cZ\subseteq \cO (X)\otimes G_{can,max}) \ar[dr]^-{\quad \Lambda_{(X,\chi)}}\ar@{=}[dd]&\\
&\Idem(  {\bC}^{(G)}_{\std}\rtimes_{r} G)\\\bCgtsmc(\cZ\subseteq  \cO(X_{\cB_{max}})\otimes G_{can,min}) \ar[ur]_-{\quad \Theta_{X_{\cB_{max}}}}&
}$$

We now use \cite[Lem.\ 17.11]{cank} which provides the desired filler of the right  triangle in \eqref{weiojgoijregoiergwergwergwegrwergwf}.
\end{proof}

\begin{rem}
In Proposition \ref{erqoiheroigjorgiqfwefqwefqewf} we could state a stronger assertion 
saying that there is an equivalence of natural transformations from
$G\UBC_{{\pc}}^{\cR}$. The constructions    on the $C^{*}$-category level done in the proof  are sufficiently natural.   But writing out the details would amount to write out large higher coherence diagrams.
Since we do not really need this naturality, we refrain from doing so.   \hB
\end{rem}

We consider  $(X,\chi)$  in $G\UBC_{\pc}^{\cR}$.  
 Recall the Paschke  morphism $p_{X}$ from  \eqref{qwoeifjoifjewfqwefwfqewfq}.
 We use  Definition \ref{qriofjqofewfefqeqff} in order to rewrite the domain of $\Ass_{(X,\chi)}^{\Lambda}$ introduced in Definition \ref{qeoirgjoqwrfewffewfq9}. 
 Recall  the Definition \ref{qeriughioergewgergwer9} of $\Ass^{\an}_{\bC}$.
 
 \begin{prop} \label{erwogijiojfoiqewfewfqwefqwef}
We have a  commutative square \begin{equation}\label{weiojgoijogergwrgwger}
\xymatrix{  K_{\bC}^{G,\cX}( X )  \ar[rr]^-{\Ass^{\Lambda}_{(X,\chi)}}\ar[d]_{p_{X}}&&  \Sigma \KK(\C,   {\bC}^{(G)}_{\std}\rtimes_{r}G)\ar@{=}[d]\\   K_{\bC}^{G,\An}(\iota^{\topp}(X))\ar[rr]^-{\Ass^{\an}_{{\bC},\iota^{\topp}(X)}}&&  \Sigma \KK(\C,   {\bC}^{(G)}_{\std}\rtimes_{r}G) } \end{equation} which   depends naturally on the coefficient category $\bC$ in $\Fun(BG,\nCcat_{\ndeg,\eadd,\omega\add})$.
\end{prop}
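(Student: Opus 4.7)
The plan is to show that the two compositions around the square agree by expanding each into a sequence of maps between $C^*$-categories and exploiting the naturality of boundary maps for exact sequences. Both ways of traversing the square end up in $\Sigma\KK(\C,\bK^{(G)}_\std\rtimes_r G)$ via some form of suspension/boundary, and the key is to realise both boundaries as arising from a \emph{single} morphism of exact sequences of $C^*$-categories.

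First I would rewrite both sides in terms of explicit $C^*$-categorical constructions. Along the top–right path, $p_{(X,\tau)}$ factors as the diagonal $\delta_{(X,\tau)}$ composed with the multiplication $\mu_{(X,\tau)}\colon C_0(X)\otimes \bQ_\tau(X)\to\bQ^{(G)}_\std$ from \eqref{oreihoijfvoisfdvervfdsvdfvsvv}, and $\Ass^{\an}_X$ is the composition of $-\rtimes G$, pull-back along the Kasparov projection $p_\chi$ via $\epsilon^*$, and the change-of-target-categories boundary from \eqref{rfreffewqefewfwefewffwfwfwefewfewfewfwfwefewfewfqfdfdfwefqwef}. Along the left–bottom path, $\Ass^{\Lambda}_{(X,\chi)}$ is $\Kcat(\Lambda_{(X,\chi)})\circ \hat\partial^{\Cone}$, where $\hat\partial^{\Cone}$ is the boundary of the sequence $0\to\bC_\tau(X)\to\bD_\tau(X)\to\bQ_\tau(X)\to 0$ from \eqref{qwefoujujfqew09ufewewfewfqfqef}, under the identification \eqref{eqwfjoiwefqwfewfqewf}.

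The main step is to construct an enhanced multiplication map at the level of exact sequences. Specifically, $\mu_{(X,\tau)}$ extends (as in the discrete case treated in \eqref{qfqwfqewfeqrrredw}, but now only modulo the ideal defining $\bK^{(G)}_\std$, by Lemmas \ref{wtioghjwergfgrefwref} and \ref{ewiogjoerwgfdsfdgsfg}) to a morphism of exact sequences
\[
\xymatrix@C=1.4em{
0\ar[r]& C_0(X)\otimes \bC_\tau(X)\ar[r]\ar[d]^{\mu^{\bC}_{(X,\tau)}}& C_0(X)\otimes \bD_\tau(X)\ar[r]\ar[d]^{\mu^{\bD}_{(X,\tau)}}& C_0(X)\otimes \bQ_\tau(X)\ar[r]\ar[d]^{\mu_{(X,\tau)}}&0\\
0\ar[r]& \bK^{(G)}_\std\ar[r]& \bC^{(G)}_\std\ar[r]& \bQ^{(G)}_\std\ar[r]&0
}
\]
in $\Fun(BG,\nCcat)$. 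Applying $-\rtimes G$ preserves exactness (by \cite[Thm.\ 1.21]{KKG}), and under pull-back along the Kasparov projection $p_\chi$ one obtains a morphism of fibre sequences in $\Sp^{\la}$ whose boundary realises $\Ass^{\an}_X\circ p_{(X,\tau)}$. By the naturality of boundary maps, this composition then factors through $\Sigma\Kcat(\bC_\tau(X))$, that is, through the cone boundary used to define $\hat\partial^{\Cone}$.

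Having reduced the problem to the ideal portion, the remaining task is to identify the composite
\[
\Kcat(\bC_\tau(X))\xrightarrow{\delta\,+\,\mu^{\bC}_{(X,\tau)}} \KKG(C_0(X),\bK^{(G)}_\std)\xrightarrow{-\rtimes G}\KK(C_0(X)\rtimes G,\bK^{(G)}_\std\rtimes_r G)\xrightarrow{\epsilon^*_\chi}\KK(\C,\bK^{(G)}_\std\rtimes_r G)
\]
with $\Kcat(\Lambda_{(X,\chi)})$. This comes down to an explicit cycle-level calculation: for a morphism $A$ in $\bC_\tau(X)$, applying the multiplication with $\chi$, then the crossed product, and finally precomposing with the Kasparov projection $p_\chi=\sum_m(\chi\cdot m^*\chi,m)$ yields $\sum_m(\phi'(m^*\chi)\,A\,\phi(\chi),m)$, which is precisely $\Lambda_{(X,\chi)}(A)$ as defined in \eqref{fqwefqoijofqwefqwefqef}. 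The identification on objects follows similarly, producing the projections $p_\chi$ that appear in Definition~\ref{erwoighfgojrefqwfewfqwefeqwf}.

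The main obstacle will be the bookkeeping when passing from maximal to reduced crossed products: the Paschke map is built with the maximal crossed product (implicit in the definition of $\mu^{Kasp}_{-,A,\max}$ and the analytic assembly), while $\Lambda$ takes values in the reduced crossed product. Matching these requires that the entire computation factors through the canonical morphism $-\rtimes G\to -\rtimes_r G$ (which is used in the very definition of $\Ass^{\an}$ via the first row of \eqref{rfreffewqefewfwefewffwfwfwefewfewfewfwfwefewfewfqfdfdfwefqwef}), and that one correctly identifies the Morita-invariant step which strips the $\Idem$ in order to land on $\KK(\C,\bK^{(G)}_\std\rtimes_r G)$ rather than on $\KK(\C,\Idem(\bK^{(G)}_\std\rtimes_r G))$. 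Once this bookkeeping is carried out and the two boundary maps are identified through the morphism of exact sequences above, the commutativity of the square follows.
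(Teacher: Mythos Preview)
Your strategy has a genuine gap at its core: the morphism of exact sequences you propose does not exist. The putative middle map $\mu^{\bD}_{(X,\tau)}\colon C_0(X)\otimes\bD_\tau(X)\to\bC^{(G)}_\std$, sending $f\otimes A$ to $\phi(f)A$, is not a functor of $C^*$-categories. Compatibility with composition would require $\phi''(f)A'-A'\phi'(f)=0$, but Lemma~\ref{wtioghjwergfgrefwref} only gives that this commutator lies in $\bK$. This is precisely why the paper constructs the lifted square \eqref{qfqwfqewfeqrrredw} \emph{only in the discrete case}, where the commutator vanishes on the nose. Your parenthetical ``only modulo the ideal'' acknowledges the issue but does not resolve it: a map that is multiplicative only modulo an ideal is not a morphism in $\nCcat$, so one cannot apply $-\rtimes G$ to it, and the naturality-of-boundaries argument collapses.

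The paper's route is genuinely different. Rather than trying to lift $\mu$ to the $\bD$-level, it uses $\Lambda_{(X,\chi)}$ itself (which already incorporates the cut-off $\chi$ and lands in the crossed product) as the map of exact sequences, see diagram \eqref{rrwer3423ggergwegegwergwegergewge}; this \emph{is} a well-defined functor on $\bD_\tau(X)$ by Proposition~\ref{weoigjweogergrrewgwregfewfwwrg}. The comparison with the analytic side is then carried out entirely at the $\bQ$-level (Lemma~\ref{qoirfjqofqwefefqewfq}): the composite $\ctc\circ\epsilon^*\circ(-\rtimes G)\circ p_{(X,\tau)}$ is unfolded into an explicit functor $\Gamma_{(X,\tau,\chi)}\colon \bQ_\tau(X)\to \frac{\Idem(\bC^{(G)}_\std\rtimes_r G)}{\Idem(\bK^{(G)}_\std\rtimes_r G)}$, whose formula on morphisms is $\sum_g([\phi'(\chi)\phi'(g^*\chi)A],g)$ and which sends objects to $(C,\rho)$ with \emph{no} projection. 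This is not the same functor as $\bar\Lambda_{(X,\chi)}\circ\bar i$, which lands in $(C,\rho,p_\chi)$; the two are identified only after a Murray--von Neumann equivalence (via the partial isometry given by the canonical inclusion), invoking \cite[Prop.~15.10]{cank}. Your cycle-level calculation conflates $\Gamma$ with $\Lambda$ and omits this MvN step, which is not a formality.
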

\begin{proof}
  We consider the following commutative diagram  of exact sequences in $\nCcat$
\begin{align}
\label{rrwer3423ggergwegegwergwegergewge}\\
{\small 
\mathclap{
\xymatrix{0\ar[r]&\bC  (X)  \ar@{=}[d]^{ \eqref{rgfqfewfqewf}} \ar[r] & \bD   (X)\ \ar@{=}[d]^{ \eqref{rgfqfewfqewf1}} \ar[r]&  \bQ (X)  \ar@{=}[d]^{\eqref{qwefoujujfqew09ufewewfewfqfqef}} \ar[r]&0\\
0\ar[r]&\bCgtsmc( \cZ\subseteq\cO (X)\otimes G_{can,max}) \ar[d]^{\Lambda_{(X,\chi)}}\ar[r] &  \bCgtsmc( \cO (X)\otimes G_{can,max}) \ar[d]^{\Lambda_{(X,\chi)}}\ar[r]& \frac{ \bCgtsmc( \cO (X)\otimes G_{can,max})}{\bCgtsmc( \cZ\subseteq\cO  (X)\otimes G_{can,max})}    \ar[d]^{\bar \Lambda_{(X,\chi)}} \ar[r]&0\\0\ar[r]&\Idem({\bC}^{(G)}_{\std}\rtimes_{r} G)  \ar[r]&\Idem({\bU})\ar[r]& \frac{\Idem({\bU})}{\Idem({\bC}^{(G)}_{\std}\rtimes_{r} G)} \ar[r]&0 }
}
}\notag \ .
\end{align}
 We use the right vertical  equivalence  of \eqref{friuhiufhqiwfewfqwefqfqewf} for $X$ and $Y=G_{can,max}$ and Definition \ref{qriofjqofewfefqeqff} in order to get the equivalence
$$K_{\bC}^{G,\cX}(X)\simeq \Kcat\left(\frac{ \bCgtsmc( \cO (X)\otimes G_{can,max})}{\bCgtsmc( \cZ\subseteq\cO  (X)\otimes G_{can,max})}\right)\simeq \Kcat(\bQ(X))\, .$$
We now expand the square \eqref{weiojgoijogergwrgwger}
as follows:
\begin{align}\label{wiuhiuwehiuehiugwegergreg}\\
\mathclap{
\xymatrix{  K_{\bC}^{G,\cX}( X )  \ar@/^1.5cm/[rrrr]^-{\Ass^{\Lambda}_{(X,\chi)}} \ar[rr]^-{\hat \partial^{\Cone}} && \Sigma \Kcat(\bCgtsmc(\cZ\subseteq \cO(X)\otimes G_{can,max}))\ar[rr]^-{\Kcat(\Lambda_{(X,\chi)}) } &&  \Sigma \KK(\C,  {\bC}^{(G)}_{\std}\rtimes_{r}G)\ar@{=}[d]\\
\Kcat(\bQ(X))\ar[u]_{\simeq} \ar[d]_{p_{X}}\ar[rr]^-{\Kcat(\bar \Lambda_{(X,\chi)})} &&\Kcat(\frac{\Idem({\bU})}{\Idem({\bC}^{(G)}_{\std}\rtimes_{r} G)})\ar@{=}[d]\ar[rr]^{\partial}&&\ar@{=}[d] \Sigma \KK(\C,  {\bC}^{(G)}_{\std}\rtimes_{r}G)\\
K_{\bC}^{G,\An}(\iota^{\topp}(X))\ar@/_1.5cm/[rrrr]^-{\Ass^{\an}_{\iota^{\topp}(X)}}\ar[rr]^-{\ctc\circ \epsilon^{*}\circ (-\rtimes G)} &&\Kcat(\frac{\Idem( {\bU})}{\Idem({\bC}^{(G)}_{\std}\rtimes_{r} G)})\ar[rr]^{\partial} &&  \Sigma \KK(\C,  {\bC}^{(G)}_{\std}\rtimes_{r}G) }
}\notag
\end{align}
where $\ctc$ is the change-of-target functor \eqref{rfreffewqefewfwefewffwfwfwefewfewfewfwfwefewfewfqfdfdfwefqwef}
and $\epsilon^{*}$ is as in \eqref{wefqwfefqfqewfqfe1}.
The  commutativity of the upper triangle reflects the definition of $\Ass_{(X,\chi)}^{\Lambda}$ in Definition \ref{qeoirgjoqwrfewffewfq9}. The filler of the middle hexagon is obtained from the naturality of boundary operators  for the  morphism  of fibre sequences obtained by applying $\Kcat$ to  \eqref{rrwer3423ggergwegegwergwegergewge}.
The lower triangle reflects the Definition \ref{qeriughioergewgergwer9}  of $\Ass^{\an}_{\iota^{\topp(X)}}$ where also the notation appearing on the lower left horizontal arrow is explained.

So in order to produce a filler of the square
\eqref{weiojgoijogergwrgwger} we must provide a filler of the lower left square in \eqref{wiuhiuwehiuehiugwegergreg}. This is the assertion of the following lemma.

\begin{lem}\label{qoirfjqofqwefefqewfq}
We have a commutative square 
$$\xymatrix{ \Kcat(\bQ(X)) \ar[d]_{p_{X}}\ar[rrr]^-{\Kcat(\bar \Lambda_{(X,\chi)} )} &&&\Kcat(\frac{\Idem( {\bU})}{\Idem({\bC}^{(G)}_{\std}\rtimes_{r} G)})\ar@{=}[d] \\
K_{\bC}^{G,\An}(\iota^{\topp}(X)) \ar[rrr]^-{\ctc\circ \epsilon^{*}\circ (-\rtimes G)
}&&&\Kcat(\frac{\Idem({\bU})}{\Idem({\bC}^{(G)}_{\std}\rtimes_{r} G)})  }$$
\end{lem}

\begin{proof}

We start with  the following  diagram:

\begin{equation}\label{erfgeoifjoiewfqwfewfq}
\xymatrix{ \KKG(C_0(X),C_{0}(X)\otimes \bQ (X))\ar[r]^-{ \mu_{X},\eqref{oreihoijfvoisfdvervfdsvdfvsvv} }  \ar[d]^{-\rtimes G}&\KKG(C_0(X), \bQ^{(G) }_{\std})  \ar[d]^{-\rtimes G}\\ \KK(C_{0}(X)\rtimes G , (C_{0}(X)\otimes  \bQ (X)) \rtimes G)\ar[r]^-{ \mu_{X} \rtimes G}\ar[d]^{\epsilon^{*} }
&\KK(C_{0}(X)\rtimes G , \bQ^{(G) }_{\std} \rtimes G)\ar[d]^{\epsilon^{*}}\\\KK(\C ,(C_{0}(X)\otimes  \bQ (X)) \rtimes G) \ar[r]^{\mu_{X }\rtimes G}
&\KK(\C , \bQ^{(G) }_{\std}  \rtimes G) 
} 
\end{equation}
where $\epsilon^{*}$ is given by pre-composition in $\KK$  with the morphism described in \eqref{qewfoihiew9ufj9qweofqewfewfq}. 
The first square commutes since $-\rtimes G$ is a functor.
The second square commutes since $\KK$ is a bifunctor.

The  next diagram extends \eqref{erfgeoifjoiewfqwfewfq} to the left:
\begin{align}\label{qrqoijfqoifewefqwefwef}\\
\mathclap{
\xymatrix{ 
\Hom_{\Fun(BG,\nCalg )}( C_0(X),C_0(X))\times   \KK( \C,  \bQ (X))   \ar[r]^-{\hatotimes }\ar[d]^{(-\rtimes G)\times \id}&\KKG(C_0(X),C_{0}(X)\otimes  \bQ (X))   \ar[d]^{-\rtimes G} \\ 
\Hom_{ \nCalg}( C_0(X)\rtimes G,C_0(X)\rtimes G) \otimes   \KK( \C,  \bQ (X))   \ar[r]^-{\hatotimes }\ar[d]^{\epsilon^{*}\times \id}&\KK(C_{0}(X)\rtimes G , (C_{0}(X)\otimes  \bQ (X)) \rtimes G) \ar[d]^{\epsilon^{*}}\\
\Hom_{ \nCalg }( \C,C_0(X)\rtimes G) \otimes   \KK( \C,  \bQ (X))      \ar[r]^-{\hatotimes }&\KK(\C ,(C_{0}(X)\otimes  \bQ (X)) \rtimes G) 
 } 
 }\notag
\end{align}
The second square commutes since $\hatotimes $ in \eqref{saCWDQKOIOJ1} is a bifunctor.
The argument for the commutativity of the first square is the same as for the third square in 
\eqref{efqefeqfqefefqewf}.
We finally specialize \eqref{qrqoijfqoifewefqwefwef} at $\id_{C_{0}(X)}$  in $\Hom_{\Fun(BG,\nCalg )}( C_0(X),C_0(X))$ and get 
\begin{equation}\label{qrqoijfqoifewefqwefwerf}
\xymatrix{ 
    \KK( \C,  \bQ (X))   \ar[rr]^-{\id_{C_{0}(X)\hatotimes }}\ar@{=}[d] &&\KKG(C_0(X),C_{0}(X)\otimes  \bQ (X))   \ar[d]^{-\rtimes G} \\ 
   \KK( \C,  \bQ (X))   \ar[rr]^-{\id_{C_{0}(X)}\rtimes G\hatotimes }\ar@{=}[d]  &&\KK(C_{0}(X)\rtimes G , (C_{0}(X)\otimes  \bQ (X)) \rtimes G) \ar[d]^{\epsilon^{*}}\\    
   \KK( \C,  \bQ (X))      \ar[rr]^-{ \epsilon\hatotimes \id_{ \bQ (X)}\rtimes G}&&\KK(\C ,(C_{0}(X)\otimes  \bQ (X)) \rtimes G)
 }
\end{equation}
Forming the horizontal composition of \eqref{qrqoijfqoifewefqwefwerf} and \eqref{erfgeoifjoiewfqwfewfq} and using Definition \ref{nlkkmlmvfdvsdva} of $p_{X}$
  yields the bold part of the commutative  diagram
  \begin{equation}
\xymatrix{\KK(\C, \bQ(X) \ar[r]^-{p_{X} }\ar@{=}[d]&\KKG(C_{0}(X), \bQ^{(G) }_{\std})\ar[d]^{\epsilon^{*}\circ (-\rtimes G)}\\  \KK(\C, \bQ(X) \ar[r]\ar@{.>}[dr]_-{\Kcat(\Gamma_{(X,\chi)})\quad}\ar[r]&\KK(\C, \bQ^{(G) }_{\std}\rtimes G)  \ar[d]^{\ctc}\\& \Kcat(\frac{\Idem( {\bU} )}{\Idem({\bC}^{(G)}_{\std}\rtimes_{r} G)}) }
\end{equation}

 Unfolding the definitions we see that  
the dotted   morphism is induced by a  functor
\begin{equation}\label{qwojoijqwvfwevqvwwc}
\Gamma_{(X,\chi)}\colon \bQ(X)\to \frac{\Idem({\bU})}{\Idem({\bC}^{(G)}_{\std}\rtimes_{r} G)}
\end{equation}
which has the following description:
\begin{enumerate}
\item objects:  The functor $\Gamma_{(X,\chi)}$ sends the object $(C,\rho,\mu)$ in $\bQ(X)$  to the object $( C,\rho ,{\id_{C}})$ in $ \frac{\Idem( {\bU})}{\Idem({\bC}^{(G)}_{\std}\rtimes_{r} G)}
$. 
\item morphisms: The functor $\Gamma_{(X,\chi)}$ sends a  morphism  $[A] \colon (C,\rho,\mu)\to (C',\rho',\mu')$ in   $\bQ(X)$  to the morphism  
\begin{equation}\label{svasdcdscascsdca}{[\sum_{g\in G} \sigma(\phi'(\chi) \phi'(g^{*}\chi) A, g)]}\colon ( C,\rho,{\id_{C}} )\to ( C',\rho,{\id_{C'}} ) \end{equation} 
in $ \frac{\Idem( {\bU}  )}{\Idem({\bC}^{(G)}_{\std}\rtimes_{r} G)}$. Here we use the formula \eqref{398z9823zf983rferwfwefwrffwr} for $p_{\chi}$ which enters the definition of $\epsilon^{*}$, and $\sigma$ is as in \eqref{asvasvadvadsvadvdvoihjio}.

\end{enumerate}
Note that the sum in \eqref{svasdcdscascsdca} has finitely many non-zero terms.
In order to show Lemma \ref{qoirfjqofqwefefqewfq} we must provide an equivalence \begin{equation}\label{adsoijoiajfoiafqasdfsfs}
\Kcat(\Gamma_{(X,\chi)})\simeq \Kcat(\bar \Lambda_{(X,\chi)})\, ,
\end{equation}
where  
\begin{equation}\label{weqfiuhiuhiu}
\bar \Lambda_{(X,\chi)} \colon  \bQ (X) \to  \frac{\Idem( {\bU})}{\Idem({\bC}^{(G)}_{\std}\rtimes_{r} G)} 
\end{equation} is  as in \eqref{rrwer3423ggergwegegwergwegergewge}.
It has the following explicit description derived from Definition \ref{erwoighfgojrefqwfewfqwefeqwf}:
\begin{enumerate}
\item objects:  The functor $ \bar \Lambda_{(X,\chi)} $  sends the object $(C,\rho,\mu)$ in $\bQ (X)$  to the object $   (C,\rho ,p_{\chi})$ in $  \frac{\Idem( {\bU})}{\Idem({\bC}^{(G) }_{\std}\rtimes_{r} G)}$. 
\item morphisms: The functor $ \bar \Lambda_{(X,\chi)} $ sends a  morphism  $[A]\colon (C,\rho,\mu)\to (C',\rho',\mu')$ in   $\bQ (X)$  to the morphism  
 \begin{equation}\label{kpoherrgrtgertg}
 [\sum_{g\in G} \sigma(\phi'(g^{*}\chi)  A\phi( \chi) , g)] \colon ( C,\rho, p_{\chi})\to ( C',\rho ',p'_{\chi})
 \end{equation}   
in   $\frac{\Idem(  {\bU})}{\Idem({\bC}^{(G) }_{\std}\rtimes_{r} G)}$, see \eqref{fqwefqoijofqwefqwefqef}

\end{enumerate}

Recall the notion of a   MvN equivalence of functors from \cite[{Def.\ 17.12}]{cank}.
We claim that 
the functors $ \bar \Lambda_{(X,\chi)} $ and $ \Gamma_{(X,\chi)}$ are    MvN  equivalent. The claim implies the equivalence \eqref{adsoijoiajfoiafqasdfsfs}  by \cite[Prop.\ {16.18} \& {17.14}]{cank}. 
 
 The MvN equivalence $v\colon \bar \Lambda_{(X,\chi)} \to  \Gamma_{(X,\chi)}$ is given by the family of partial    isometries
 $v=({[}v_{(C,\rho,\mu)}{]})_{(C,\rho,\mu)\in \bQ(X)}$, where
 $v_{(C,\rho,\mu)}\colon(C,\rho,p_{\chi})\to (C,\rho,{\id_{C}})$
 is the canonical inclusion. This inclusion is given by the morphism $p_{\chi}\colon C\to C$ which indeed belongs to $\bU$.
Note that in the summands in \eqref{svasdcdscascsdca}, we can replace $A\phi(\chi) $ by $\phi'(\chi)A$ since
$A$ is pseudo-local by Lemma \ref{wtioghjwergfgrefwref} and we take the quotient by
$\Idem(\bC^{(G)}_{\std}\rtimes_{r}G)$. Naturality of $v$ is now obvious since the formulas \eqref{svasdcdscascsdca}
and \eqref{kpoherrgrtgertg} for the action of the functors on morphisms coincide after this replacement.
  This finishes the proof of Lemma \ref{qoirfjqofqwefefqewfq}.
  \end{proof} 
  
To complete the  proof of Proposition \ref{erwogijiojfoiqewfewfqwefqwef} we observe by an inspection of the constructions that they  depend naturally on  the coefficient category $\bC$ in
   $\Fun(BG,\nCcat_{\ndeg,\eadd,\omega\add})$.
\end{proof}

By equipping a $G$-simplicial complex $X$ with the structures induced by the metric  we obtain an object $m(X)$  of $G\UBC$. We further
 use the notation introduced in the diagram \eqref{retgertg365ztheh} in order to interpret $X$ in $G\UBC_{\bd}$ or $G\Top$. In the following statement and its proof we must be very precise about this interpretation.
\begin{prop} 
{If 
 $X$ is a $G$-finite $G$-simplicial complex with finite stabilizers, then}
 we have a commutative square
\begin{equation}\label{jfjoiwefjwqefwfwefqwefwefq}
\xymatrix{\Sigma K\bC^{G}(t(X))\ar[rrr]^-{\Sigma \Ass_{{\bC},t(X)}^{h}}\ar@{-}[d]^{\simeq}&&&\Sigma K\bC^{G}(*)\ar[d]^{\simeq }\\ K^{G,\An}_{\bC}(\iota^{\topp}(m(X)))\ar[rrr]^-{\Ass_{{\bC},\iota^{\topp}(m(X))}^{\an}}&&&\Sigma \KK(\C,  {\bC}^{(G)}_{\std}\rtimes_{r}G)}
\end{equation}
which depends naturally on $\bC$ in $\Fun(BG,\nCcat_{\ndeg,\eadd,\omega\add})$.
\end{prop}
\begin{proof}
     Note that $s(X) =m(X)_{\cB_{\max}}$ in the notation introduced before Proposition \ref{tghquifhiweewfqe}.  Note further that $m(X)$ actually belongs to the subcategory 
   $G\UBC_{{\pc}}$  described in Definition \ref{wtrkogerfrfwerf}.
 We can therefore choose $\chi$ in $\cR(m(X))$. 
 {We  consider the diagram}
\[\begin{tikzcd}[column sep=large]
	\Sigma K\bC^G(t(X)) \ar[r,"{\Sigma \Ass_{{\bC},t(X)}^h}"] \ar[d,"{\ref{weqfiuhqiuwhefiqwefewewfqfwefqwef}}","\simeq"'] & \Sigma K\bC^G(*) \ar[d,"{\ref{wrtoihjwogregergwergwreg}}"',"\simeq"] \\
	K\bC\cX^G_{G_{can,min}}(\cO^\infty(s(X)) \ar[r,"\Ass^\Theta_{{s(X) }}"] \ar[d,"{\ref{tghquifhiweewfqe}}","\simeq"'] & \Sigma \KK(\C,  {\bC}^{(G)}_{\std}\rtimes_{r}G) \ar[d,equal] \\
	K\bC\cX^G_{G_{can,max}}(\cO^\infty(m(X))) \ar[r,"{\Ass^\Lambda_{(m(X),\chi)}}"] \ar[d,equal,"{\mathrm{def}}"] & \Sigma \KK(\C,  {\bC}^{(G)}_{\std}\rtimes_{r}G) \ar[d,equal] \\ 
	K^{G,\cX}_{\bC}(m(X)) \ar[r,"{\Ass^\Lambda_{(m(X),\chi)}}"] \ar[d,"{p_{m(X)}}","\simeq"'] & \Sigma \KK(\C,   {\bC}^{(G)}_{\std}\rtimes_{r}G) \ar[d,equal] \\ 
	K^{G,\An}_{\bC}(\iota^{\topp}(m(X)) \ar[r,"\Ass^\an_{{\bC},\iota^{\topp}(m(X))}"] & \Sigma \KK(\C,   {\bC}^{(G)}_{\std}\rtimes_{r}G) \\
\end{tikzcd}\]
{The} lowest left vertical map is an equivalence by an application of our main Theorem~\ref{qreoigjoergegqrgqerqfewf}.\ref{qregiojqwfewfqwfqewf}.
The statement that each of the above squares commute is proven, from top to bottom, in Proposition~\ref{weqfiuhqiuwhefiqwefewewfqfwefqwef}.\ref{fopepofkopqwkfpoqwefkqwefqwefqe}, Proposition~\ref{erqoiheroigjorgiqfwefqwefqewf}, the definitions, and Proposition~\ref{erwogijiojfoiqewfewfqwefqwef}.
 All squares  depend naturally on  the coefficient category $\bC$ in
   $\Fun(BG,\nCcat_{\ndeg,\eadd,\omega\add})$.
 This shows the proposition.
\end{proof}

  \begin{proof}[Proof of Theorem \ref{wtoiguwegwergergregwe}]
  We choose a model for $E_{\cF}G^\cw$ which is a $G$-simplicial complex. 
  Then we apply $\pi_{*}$ to the square \eqref{jfjoiwefjwqefwfwefqwefwefq} and form the colimit   of the resulting squares of {homotopy} groups for $X$ running over the $G$-finite subcomplexes of $E_{\cF}G$.
  This yields \eqref{avvoijfoivjoafvasddsvdvasdv}. 
     \end{proof}

\begin{rem} In the proof of Theorem \ref{wtoiguwegwergergregwe} 
 we must apply $\pi_{*}$ before taking the colimit over the subcomplexes. The reason is that we have only constructed the boundary of the square  \eqref{jfjoiwefjwqefwfwefqwefwefq}
 naturally in $X$. For the fillers we just have shown existence for every $X$ separately.
 \hB
\end{rem}

\section{Davis--L\"uck functors and the argument of Kranz} \label{thelast}
  
 In this section we review the argument of Kranz \cite{kranz} for the comparison of the Davis--L\"uck assembly map with the Kasparov assembly map which involves the Meyer--Nest assembly map as an intermediate step. 
{In more detail, Kranz compares the Davis--L\"uck assembly map with the Meyer--Nest assembly map, which is known to coincide with the analytical assembly map. We will review these comparisons below. In fact, Kranz' paper has two separate parts. On the one hand, he shows that the Davis--L\"uck assembly map associated to a functor}
 $$K^{G}\colon \KKGs\to \Fun(G\Orb,\Sp)$$ satisfying certain axioms (stated in Assumption \ref{tgioewrgergwerg}) {is equivalent to the Meyer--Nest assembly map. On the other hand, he provides a concrete construction of such a functor $  K^{G}$.}
We recall this construction in detail with the goal of showing that it only involves
 formal manipulations using the calculus of equivariant $\KK$-theory as developed in \cite{KKG}.
 
 

We first recall the Meyer--Nest approach to the Baum--Connes assembly map  \cite{MR2193334}. {Given the results of  \cite{KKG} and the present paper, we will  give an almost self-contained treatment, the only exception is  the usage of  \cite[Prop.\ 4.6]{MR2193334} in the proof of  Proposition \ref{weg8w9egwergweg1} below.}  We    interpret the terminology introduced in \cite{MR2193334} in the stable $\infty$-category $\KKGs$ introduced in \cite[Def.\ 1.8]{KKG} instead of the triangulated homotopy category of $\KKGs$ as considered   by Meyer--Nest.
We call a subcategory of $\KKGs$ localizing\footnote{Usually, localizing subcategories are stable, cocomplete subcategories of stable, cocomplete $\infty$-categories. Since $\KKGs$ is only known to admit countable colimits, we must use this ad-hoc definition.} if it is thick and closed under countable direct sums. In the following we use the restriction,  induction and crossed-product functors   on the level of stable $\infty$-categories as introduced in  \cite[Sec. 1.5]{KKG}. 
\begin{ddd}\label{def:CI-and-CC}\mbox{}
 \begin{enumerate} \item
 We define $\ci$ as the localizing subcategory of $\KKGs$ generated by the objects of the form $\Ind_{H,s}^G(A)$ for  all finite subgroups $H$    of $G$ and objects $A $ in $\KKHs$. {The objects of $\ci$ will be called compactly induced.}
 \item We 
 define   $\cc$ as the localizing subcategory of $\KKGs$  given by  all objects $A$ with $\Res_{H,s}^G(A) = 0$ for all finite subgroups $H$ of $G$. \end{enumerate}
\end{ddd}
We note here that $\cc$ is localising because the restriction functors commute with countable  sums  \cite[Lem.\ 4.3]{KKG}.
The proof of the following proposition is based on  a general adjoint functor theorem applicable in this situation.
\begin{prop} \label{weg8w9egwergweg1}
There exists an adjunction  \begin{equation}\label{wefqwdxcassdcsdca}
\incl:\ci\leftrightarrows  \KKGs:C
\end{equation}
    \end{prop}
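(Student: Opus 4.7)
The plan is to produce the right adjoint $C$ via an adjoint functor theorem applicable to stable $\infty$-categories that admit only countable colimits. First, I would identify a small set of generators of $\ci$. By definition $\ci$ is the smallest localizing subcategory of $\KKGs$ containing every $\Ind_{H,s}^G(A)$ with $H \le G$ finite and $A$ in $\KKHs$. Using that $\KKHs$ is itself generated under countable colimits and retracts by the image of $\kkHs$ on separable $H$-$C^*$-algebras, together with the fact that $\Ind_{H,s}^G$ preserves countable colimits (as a left adjoint, see \cite[Sec.~1.5]{KKG}), it suffices to take as a generating set $\{\Ind_{H,s}^G(\kkHs(B))\}$ where $H$ ranges over finite subgroups of $G$ and $B$ over a small set of representatives for separable $H$-$C^*$-algebras.

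Next I would verify that each such generator is countably compact in $\KKGs$ in the sense that $\map_{\KKGs}(\Ind_{H,s}^G(\kkHs(B)),-)$ commutes with countable filtered colimits. By the induction/restriction adjunction this is equivalent to countable compactness of $\kkHs(B)$ in $\KKHs$, and this property is built into the construction of separable equivariant $KK$-theory in \cite{KKG}: the functor $\kkHs$ sends every separable $H$-$C^*$-algebra to a countably compact object, and countably filtered colimits in $\KKHs$ are computed compatibly.

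With generators and their compactness in hand, the inclusion $\ci \hookrightarrow \KKGs$ is a countable-colimit-preserving functor between stable $\infty$-categories with countable colimits whose source is generated by a small set of countably compact objects. Invoking the countable version of the adjoint functor theorem (a Brown representability argument carried out inside a stable $\infty$-category admitting only countable colimits; the analogous statement on homotopy categories is \cite[Prop.~4.6]{MR2193334}) produces the desired right adjoint $C$.

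The main obstacle I anticipate is not the abstract formalism but the translation from the triangulated statement of Meyer--Nest to the $\infty$-categorical one: one must ensure that the hypotheses of the adjoint functor theorem (existence of countable coproducts, compactness of generators, local smallness) all lift from the homotopy category $\mathrm{h}\KKGs$, where they are verified in \cite{MR2193334}, to $\KKGs$ itself. Once this is done, the $\infty$-categorical adjunction follows essentially formally, since adjointness of functors between $\infty$-categories can be detected on homotopy categories in the presence of the universal property of the mapping spectra provided by \cite{KKG}.
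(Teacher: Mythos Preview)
Your approach and the paper's are essentially the same in content, though organized differently. Both rest on \cite[Prop.~4.6]{MR2193334}. The paper uses that result directly: Meyer--Nest's Dirac morphism $\tilde A \to A$ with $\tilde A \in \ci$ shows that $\KKGs(-,A)|_{\ci}$ is representable for every $A$, and then the general fact that pointwise representability of a right adjoint implies its existence as a functor (the paper cites \cite[Prop.~5.1.10]{Land-infinity}) finishes the argument. You instead recapitulate the \emph{hypotheses} of Brown representability (a generating set for $\ci$, countable compactness of the generators) before invoking the same Meyer--Nest result as a black box; steps 1--2 of your outline are therefore scaffolding that the paper bypasses by citing the conclusion directly.

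Two small points. First, the countable compactness claim for $\kkHs(B)$ in $\KKHs$ is true but is not quite ``built into the construction'' in the way you suggest; it is part of what Meyer--Nest verify in their setup, so you are not really avoiding their work by stating it separately. Second, your closing remark that ``adjointness of functors between $\infty$-categories can be detected on homotopy categories'' is imprecise as stated---this is false in general. What is true, and what the paper uses, is the sharper statement that if for every target object the putative right adjoint is representable (by an honest object, which the Dirac morphism supplies), then the right adjoint exists as a functor. This is precisely the content of the representability criterion the paper invokes, and it is the correct way to pass from the triangulated-category statement to the $\infty$-categorical one.
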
  
\begin{proof}
For any object $A$ in $\KKGs $  by \cite[Prop.\ 4.6]{MR2193334} there is an object $\tilde A$ in $\ci$ with a morphism $\tilde A\to A$ (called the Dirac morphism) inducing an equivalence of functors $\KKGs(-,\tilde A)\to \KKGs(-,A)$ from $\ci^{\op}$ to $\Sp$. 
Hence for any $A$ in $\KKGs$ the functor
$\KKGs(-,A)_{|\ci^{{\op}}}\colon\ci^{\op}\to \Sp$ is representable by an object of $\ci$. {This implies the existence of the {right adjoint $C$ to $\incl$}}  {as follows for instance from \cite[Prop.\ 5.1.10]{Land-infinity}.}
\end{proof}

{Let  $\cC$ be a stable $\infty$-category.} Recall that a semi-orthogonal decomposition of $\cC$ is a pair $(\cA,\cB)$ of full stable subcategories such that 
$\map_{\cC}(A,B)\simeq 0$ for all $A$  in $\cA$ and $B$ in $\cB$, and such that 
 for every object $C$ of $\cC$ there exists a fibre sequence
$A\to C\to B$ with $A$ in $\cA$ and $B$ in $\cB$. For the sake of  completeness of the presentation, we give the following list of equivalent conditions on a pair $(\cA,\cB)$ of stable subcategories,  and refer for more details to \cite[Sec.\ 7.2.1]{SAG}:
\begin{enumerate}
\item\label{ertherhrt1}  The pair $(\cA,\cB)$ is a semi-orthogonal decomposition of $\cC$.
\item The pair $(\cA,\cB)$ is a $t$-structure on $\cC$. 
\item\label{ertherhrt3} The inclusion $\cA \to \cC$ has a right adjoint and $\cB$ is the right orthogonal complement of $\cA$.
\item The inclusion $\cB \to \cC$ has a left adjoint and $\cA$ is the left orthogonal complement of $\cB$.
\end{enumerate}

\begin{prop}
\label{weg8w9egwergweg}
The pair $(\ci,\cc)$ is a semi-orthogonal decomposition of $\KKGs$.
\end{prop}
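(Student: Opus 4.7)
The plan is to verify condition \ref{ertherhrt3} of the equivalent conditions listed above, i.e., to show that the inclusion $\ci \to \KKGs$ admits a right adjoint and that $\cc$ is exactly its right orthogonal complement. The first half is already supplied by Proposition~\ref{weg8w9egwergweg1} in the form of the adjunction $\incl \colon \ci \leftrightarrows \KKGs : C$.

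For the second half, I would first identify the right orthogonal complement $\ci^{\perp}$ of $\ci$ in $\KKGs$. By definition, an object $A$ of $\KKGs$ belongs to $\ci^{\perp}$ if and only if $\map_{\KKGs}(I, A) \simeq 0$ for every $I$ in $\ci$. Since $\ci$ is generated as a localizing subcategory by objects of the form $\Ind_{H,s}^G(B)$ with $H \leq G$ finite and $B$ in $\KKHs$, and since $\map_{\KKGs}(-, A)$ sends countable sums to products and cofibre sequences to fibre sequences, it suffices to test vanishing on such generators. Thus $A \in \ci^{\perp}$ if and only if $\map_{\KKGs}(\Ind_{H,s}^G(B), A) \simeq 0$ for all finite $H \leq G$ and all $B$ in $\KKHs$.

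Next I would invoke the induction-restriction adjunction $\Ind_{H,s}^G \dashv \Res_{H,s}^G$ from \cite[Sec.~1.5]{KKG}, which yields a natural equivalence $\map_{\KKGs}(\Ind_{H,s}^G(B), A) \simeq \map_{\KKHs}(B, \Res_{H,s}^G(A))$. Hence $A \in \ci^{\perp}$ precisely when $\map_{\KKHs}(B, \Res_{H,s}^G(A)) \simeq 0$ for every $B$ in $\KKHs$ and every finite $H \leq G$. Taking $B = \Res_{H,s}^G(A)$ and using that the identity of $\Res_{H,s}^G(A)$ must vanish, this is equivalent to $\Res_{H,s}^G(A) \simeq 0$ for every finite $H \leq G$, which is exactly the defining condition for $A$ to belong to $\cc$. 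Therefore $\ci^{\perp} = \cc$, and combined with the adjunction from Proposition~\ref{weg8w9egwergweg1}, condition \ref{ertherhrt3} is satisfied, which yields the desired semi-orthogonal decomposition. I do not anticipate a substantial obstacle here; the only point requiring some care is that testing orthogonality on generators is legitimate, for which the closure properties of $\ci$ as a localizing subcategory (thickness and closure under countable sums) together with the exactness and sum-to-product behaviour of $\map_{\KKGs}(-,A)$ are the essential inputs.
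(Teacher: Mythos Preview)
Your proposal is correct and follows essentially the same approach as the paper: both use the induction--restriction adjunction to identify $\cc$ with the right orthogonal complement $\ci^{\perp}$, and both rely on Proposition~\ref{weg8w9egwergweg1} for the existence of the right adjoint to the inclusion. The only cosmetic difference is that the paper, after verifying condition~\ref{ertherhrt3}, explicitly spells out the passage to condition~\ref{ertherhrt1} by writing down the fibre sequence $C(A)\to A\to N(A)$ and checking that $N(A)\in\cc$, whereas you invoke the equivalence of the listed conditions directly.
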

\begin{proof}
For every subgroup $H$ of $G$ we have an adjunction 
$$\Ind_{H,s}^{G}:\KKHs \leftrightarrows  \KKGs: \Res^{G}_{H,s}$$
 which  can be obtained from 
\cite[Thm.\ 1.23.1]{KKG} by restriction to the separable subcategories.
It is an 
  immediate consequence of the existence of  these adjunctions
  that $\KKGs(A,B)\simeq 0$ for all $A$ in $ {\ci}$ and $B$ in $\cc$. We get in fact the following stronger assertion that $\cc$ consists precisely of the objects 
  $B$ of $\KKGs$ with $\KKGs(A,B)  {\simeq 0}$ for all $A$ in $\ci$, {i.e.\ that $\cc$ is the right orthogonal complement to $\ci$.} 
  
{In view of Proposition \ref{weg8w9egwergweg1}, the following is precisely a specialization of the  argument that Condition \ref{ertherhrt3} above 
 implies Condition \ref{ertherhrt1}.}
We must show  that  for any object $A$ of $\KKGs$, there is a   fibre sequence \begin{equation}\label{erwgwegergewrwferwerc}
 C(A) \lto A \lto N(A)\ 
\end{equation}
with $C(A)$ in $\ci$ and $N(A)$ in $\cc$.
By Proposition \ref{weg8w9egwergweg1} we have a fibre sequence of functors
$  C\to \id_{\KKGs}\to N$, where $N \colon \KKGs\to \KKGs$ is defined as the 
cofibre of the counit of the adjunction {in} \eqref{wefqwdxcassdcsdca}. It suffices to show that $N$ takes values in $\cc$. Let $A$ be in $\KKGs$. Then
for every $B$ in $\ci$ we have
$\KKGs(B,N(A))\simeq \cofib(\KKGs(B,C(A))\to \KKGs(B,A))$.
But $\KKGs(B,C(A))\to \KKGs(B,A)$
 is an equivalence by the construction of $C$
 so that $\KKGs(B,N(A))\simeq 0$. Since, as seen above, $\cc$ is precisely the right-orthogonal complement of $\ci$ 
   this implies  that $N(A)$  belongs to $\cc$.
 \end{proof}

Let $A$ be in $\KKGs$.
\begin{ddd}\label{igowergwergwegwerg}
The Meyer--Nest assembly map for $G$ 
is the map $$\mu_{\ast}^{\MN} \colon \KKs(\C,C(A)\rtimes_r G)\to  \KKs(\C, A\rtimes_r G)$$
induced by $C(A)\to A$ in $\KKGs$.
\end{ddd}

The following theorem is an immediate consequence of \cite[Prop.\ 5.2]{MR2193334}
which yields the comparison of the Meyer--Nest assembly map and Kasparov's assembly map. 
\begin{theorem}\label{wrigoghgegrtgeg}
 There is a commutative   square
 $$\xymatrix{  {RK^{G,\an}_{C(A)} (E_{\Fin}G^{\cw})}\ar[r]^{{\simeq}}\ar[d]_{{\simeq} }^{\mu^{\Kasp}_{C(A)}}&{RK^{G,\an}_{A} (E_{\Fin}G^{\cw})}\ar[d]^{\mu^{\Kasp}_{A}}\\ 
 \KK_{\sepa}(\C, C(A)\rtimes_{r}G) \ar[r]^-{\mu^{\MN}_{\ast}}&\KK_{\sepa}(\C, A\rtimes_{r}G)}$$
where the vertical maps are  instances of Kasparov's assembly map of {Definition \ref{wergoijowergerrrfrfrfrfeggwgw}} for the family of finite subgroups, and the horizontal maps are induced by the morphism $C(A)\to A$. 
 \end{theorem}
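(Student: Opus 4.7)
The plan is to verify the three assertions contained in the square: commutativity, the upper horizontal equivalence, and the left vertical equivalence. Commutativity is immediate from naturality: the Kasparov assembly map $\mu^{Kasp}_{\cF,-}$ of Definition~\ref{wergoijowergerrrfrfrfrfeggwgw} is, by construction, natural in the coefficient object of $\KKGs$, and both the domain functor $RK^{G,\an}_{-}(E_{\Fin}G^\cw)$ and the target $\KK_{\sepa}(\C,-\rtimes_r G)$ are functors in this coefficient. Applying this naturality to the canonical morphism $C(A)\to A$ provided by the adjunction counit in \eqref{wefqwdxcassdcsdca} produces the square.

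For the upper horizontal equivalence, I would exploit the fibre sequence $C(A)\to A\to N(A)$ from \eqref{erwgwegergewrwferwerc}, where $N(A)\in\cc$. Since for ind-$G$-proper $P$ the functor $\KKG(P,-)$ sends fibre sequences in $\KKGs$ to fibre sequences in $\Sp$ (Lemma~\ref{eroigjweogregwgefwe}), and filtered colimits of spectra are exact, the functor $RK^{G,\an}_{-}(E_{\Fin}G^\cw)$ is exact in the coefficient when evaluated on $E_{\Fin}G^\cw$ (which is filtered by $G$-finite subcomplexes with finite stabilisers). It therefore suffices to show $RK^{G,\an}_{N(A)}(E_{\Fin}G^\cw)\simeq 0$, which by unwinding the Kan extension and cofinality reduces to $K^{G,\an}_{N(A)}(W)\simeq 0$ for $W$ a $G$-finite $G$-CW complex with stabilisers in $\Fin$. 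A Mayer--Vietoris induction on the number of equivariant cells (using excision, \cite[Prop.\ 5.1.2]{KKG}) reduces this to the case $W=G/H$ with $H$ finite, where $\kkG(C_0(G/H))\simeq\Ind^G_{H,s}\kkH(\C)$ and the restriction--induction adjunction \cite[Thm.\ 1.22.1]{KKG} gives
\[
\KKG(C_0(G/H),N(A))\simeq \KKH(\C,\Res^G_{H,s}(N(A)))\simeq 0
\]
by definition of $\cc$.

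For the left vertical equivalence, the same style of argument applies, now showing that $\mu^{Kasp}_{\cF,B}$ is an equivalence for every $B\in\ci$. Since source and target of $\mu^{Kasp}_{\cF,-}$ both preserve cofibre sequences (by the exactness discussion above) and countable sums (by continuity properties of $\KK_{\sepa}$ and the filtered colimit defining $RK^{G,\an}$), the thick and localising subcategory of $\KKGs$ on which $\mu^{Kasp}_{\cF,-}$ is an equivalence contains $\ci$ as soon as it contains the generators $\Ind^G_{H,s}(B)$ for $H\in\Fin$ and $B\in\KKHs$. For such compactly induced coefficients one can use induction compatibility of both sides: on the domain via the change-of-groups equivalence $RK^{G,\an}_{\Ind^G_H B}(E_{\Fin}G^\cw)\simeq RK^{H,\an}_{B}(E_{\Fin}H^\cw)$ (since $\Res^G_H E_{\Fin}G^\cw$ is a model for $E_{\Fin}H^\cw$ when $H$ is finite), and on the target via Green imprimitivity $\Ind^G_H(B)\rtimes_r G\simeq B\rtimes_r H\otimes\cK$; the Baum--Connes conjecture then holds trivially since $E_{\Fin}H^\cw\simeq *$ for $H$ finite. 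This is exactly the content of Proposition~\ref{eqriogergreqfqwfefq} referenced in Remark~\ref{Remark:comparison-assembly}.

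The main obstacle I expect is carrying out the $\infty$-categorical bookkeeping cleanly, in particular (i) identifying $\kkG(C_0(G/H))$ with $\Ind^G_{H,s}\kkH(\C)$ at the $\KKGs$ level so that the adjunction argument above is legitimate, and (ii) establishing the two compatibilities in the compactly induced case (change of groups on $RK^{G,\an}$ and Green imprimitivity on reduced crossed products) in a form that commutes with $\mu^{Kasp}$; both require care but no new ideas beyond those already available in \cite{KKG} and \cite{MR1836047}. Given these inputs, the theorem follows by combining the three steps as in \cite[Prop.\ 5.2]{MR2193334}.
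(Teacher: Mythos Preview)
Your proposal is correct and tracks the paper's proof closely. Commutativity by naturality is exactly what the paper does. For the upper horizontal equivalence, the paper is slightly slicker: rather than running the fibre sequence and showing $RK^{G,\an}_{N(A)}(E_{\Fin}G^\cw)\simeq 0$ by cell-induction, it simply observes that $C_0(W)\in\ci$ for every $G$-finite subcomplex $W$ with finite stabilisers, so the defining property of the counit $C(A)\to A$ gives the equivalence termwise before passing to the colimit. Your argument is the unwinding of this fact (your cell-induction is precisely what shows $\kkGs(C_0(W))\in\ci$).

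For the left vertical equivalence there is a minor mismatch between what you sketch and what you cite. The argument you outline (change of groups on the domain via $\Res^G_H E_{\Fin}G^\cw\simeq E_{\Fin}H^\cw$, Green imprimitivity on the target, trivial for finite $H$) is the classical Chabert--Echterhoff route \cite{MR1836047}, and this is indeed the primary reference the paper gives. However, Proposition~\ref{eqriogergreqfqwfefq} is \emph{not} that argument: it concerns $\Ass^{\an}_{\Fin}$ and goes through the Paschke machinery and Theorem~\ref{wtoiguwegwergergregwe}. The correct internal reference for $\mu^{Kasp}_{\Fin,A}$ being an equivalence on $\ci$ is Proposition~\ref{weiojgwoerferww}.2, which the paper explicitly flags as an independent proof (for discrete $G$) of this step. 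Either route works; just be aware they are different arguments.
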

 \begin{proof} 
 {First we note that the square commutes by the naturality of the Kasparov assembly map with respect to {morphisms between} coefficients.} 
{Using Definition \ref{rigrgbjoirjvorgvfbdfgbfgbdfgbdfbg} the upper horizontal map is equivalent to the map 
$$\colim_{W\subseteq E_{\Fin}G^{\cw}}\KK^{G}_{\sepa}(C_{0}(W), C(A)) \to  \colim_{W\subseteq E_{\Fin}G^{\cw}}\KK^{G}_{\sepa}(C_{0}(W), A)\, ,$$
where the colimits run  over the $G$-finite sub-complexes of $E_{\Fin}G^{\cw}$.
 It is} an  equivalence by the definition of $C(A)\to A$, since $C_{0}(W)$ belongs to  $ \ci$ for every $W$ appearing in the colimit. 
 
 The verification of the fact that $\mu^{\Kasp}_{C(A)}$ is an equivalence is more complicated. {The reference \cite{MR2193334}
  employs the work of \cite{OO}
(isomorphism of the induction map) and
\cite[Prop. 2.3]{MR1836047} (compatibility of induction with the Kasparov assembly map).} 
{Using the results of the present paper, Theorem  \ref{wtkogwegerfwerf} gives an independent proof of this fact  {in the case of discrete groups. Note that \cite{MR2193334} considers the more general case of locally compact groups.
}}
\end{proof}

We now consider a family $(K^{H})_{H\subseteq G}$ of functors 
 $$K^{H} \colon \KKGs\to  \Fun(H\Orb,\Sp)\, , \quad A\mapsto K_{A}^{H}$$
 indexed by the subgroups $H$ of $G$.
 In order to formulate the  properties of this family required for Kranz' argument we consider the functor $$i_{H}^{G} \colon H\Orb\to G\Orb\, , \quad S\mapsto G\times_{H}S$$ and  let $i_{H,!}^{G}$ denote the left Kan extension functor along $i_{H}^{G}$. 
We assume $(K^{H})_{H\subseteq G}$ has the following properties:
\begin{ass}\label{tgioewrgergwerg}\mbox{}
 \begin{enumerate}
 \item\label{wrtkhiowgwergefrefw} $K^{G}$ preserves countable colimits.
\item  \label{rgegfgsdfg} For every  $A$ in $\KKGs$ and subgroup $H$ of $G$ we have
 an equivalence\footnote{The subscript $s$ at various functors indicates their restriction to the subcategory of separable algebras.} \begin{equation}\label{fvdsfvsavsdcasdcasdc}
K^{G}_{A}(G/H)\simeq \KKs(\C,(\Res^{G}_{H,s}(A)\rtimes_{r}H)_{s})\, .
\end{equation}  
\item \label{giowjwgoegwergwergr} For any  subgroup $H$
  of $G$ we have a commutative square
\begin{equation}\label{cdscasdc}
\xymatrix{\KKHs\ar[r]^-{K^{H}}\ar[d]^{\Ind^{G}_{H,s}}&\Fun(H\Orb,\Sp)\ar[d]^{i^{G}_{H,!}}\\\KKGs\ar[r]^-{K^{G}}&\Fun(G\Orb,\Sp) }
\end{equation}
\end{enumerate}
  \end{ass}

Note that we are mainly interested in the  member $K^{G}$ of the family $(K^{H})_{H\subseteq G}$.
The other members are only used to formulate Assumption \ref{tgioewrgergwerg}.\ref{giowjwgoegwergwergr}.
In the example   of the family $(K^{H})_{H\subseteq G}$  used below the functors
$K^{H}$ are constructed by applying 
  Definition \ref{egiueheiugheiugwegerggwfewr} to $H$ in place of $G$. In this case
the members $K^{H}$ have analoguous properties as $K^{G}$.

In view of Definition \ref{weigjorgdg} we consider $K^{G}$ as a functor from $\KKGs$ to the stable $\infty$-category of $\Sp$-valued equivariant homology theories. In particular, for $A$ in $\KKGs$ and $X$  in $G\Top$ we have a well-defined evaluation $K^{G}_{A}(X)$ in $\Sp$.

The argument of Kranz is then based on the following commutative diagram \begin{equation}\label{wergwerrefwercsdcsdc}
 \xymatrix{
	K^G_{C(A)}(E_{\Fin} G^{\cw}) \ar[rr]^-{\mu^{\MN}_{A,E_{\Fin}G^{\cw}}} \ar[d]^{\mu^{\DL}_{C(A),E_{\Fin} G^{\cw}}} && K^G_{A}(E_\Fin G^{\cw}) \ar[d]^{\mu^{\DL}_{A},E_{\Fin} G^{\cw}} \\
	K^G_{C(A)}(\ast) \ar[rr]^-{\mu^{\MN}_{A,\ast}} && K^G_{A}(\ast)
 }
\end{equation}
 Here the vertical Davis--Lück assembly maps \eqref{rtherhthetfff} are induced by the map $E_{\Fin} G^{\cw}\to \ast$. 
 Moreover, the horizontal Mayer--Nest   assembly maps   are  induced by the map $C(A)\to A$. 
 {By  Assumption \ref{tgioewrgergwerg}.\ref{rgegfgsdfg}} the map   $\mu^{\MN}_{A,\ast}$ is indeed the map from Definition \ref{igowergwergwegwerg}.

\begin{theorem}[Kranz]\label{qerighojwergerfewferf}
We have an equivalence $\mu^{\DL}_{A,E_{\Fin} G^{\cw}}\simeq \mu_{A,\ast}^{\MN}$. 
\end{theorem}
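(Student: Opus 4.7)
The plan is to prove the theorem by showing that in diagram \eqref{wergwerrefwercsdcsdc} both the top horizontal map $\mu^{MN}_{E_\Fin G^\cw}$ and the left vertical map $\mu^{DL}_{C(A)}$ are equivalences; commutativity of the square then yields $\mu^{DL}_A \simeq \mu^{MN}_*$.

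To see that $\mu^{MN}_{E_\Fin G^\cw}$ is an equivalence, I would apply the functor $K^G_{-}(E_\Fin G^\cw)$ to the fibre sequence $C(A) \to A \to N(A)$ from \eqref{erwgwegergewrwferwerc}. By exactness of $K^G$ (Assumption \ref{tgioewrgergwerg}.\ref{wrtkhiowgwergefrefw}) this produces a fibre sequence of spectra, and it suffices to show that $K^G_B(E_\Fin G^\cw) \simeq 0$ for every $B \in \cc$. By Assumption \ref{tgioewrgergwerg}.\ref{rgegfgsdfg}, for each $H \in \Fin$ one has
\[ K^G_B(G/H) \simeq \KKs(\C, (\Res^G_{H,s} B \rtimes_r H)_s) \simeq 0, \]
because $\Res^G_{H,s} B \simeq 0$ by definition of $\cc$. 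The extension of $K^G_B$ from $G\Orb$ to $G\Top$ via the coend formula \eqref{qwefpojfopwefqwefqwefqew} is homotopy invariant, excisive for $G$-CW-pushouts, and commutes with filtered colimits; since $E_\Fin G^\cw$ is a $G$-CW-complex built out of orbits $G/H$ with $H \in \Fin$, this vanishing propagates to $K^G_B(E_\Fin G^\cw) \simeq 0$.

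To see that $\mu^{DL}_{C(A)}$ is an equivalence, consider the full subcategory $\cD \subseteq \KKGs$ of those $B$ for which $\mu^{DL}_B$ is an equivalence. By naturality of $\mu^{DL}$ combined with exactness and countable-coproduct preservation of $K^G$ (Assumption \ref{tgioewrgergwerg}, parts \ref{wrtkhiowgwergefrefw} and 2), the category $\cD$ is thick and closed under countable direct sums, hence localizing. Since $C(A) \in \ci$, it suffices to show $\Ind^G_{H,s}(B) \in \cD$ for every finite subgroup $H \leq G$ and every $B \in \KKHs$. Using Assumption \ref{tgioewrgergwerg}.\ref{giowjwgoegwergwergr} together with the standard coend identity associated with the adjunction between $i^G_H$ on orbit categories and restriction on $G$-spaces, one obtains the natural equivalence
\[ K^G_{\Ind^G_{H,s} B}(X) \simeq K^H_B(\Res^G_H X) \]
for every $X \in G\Top$. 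Applied to $X = E_\Fin G^\cw$, the restriction $\Res^G_H E_\Fin G^\cw$ is an $H$-CW-model for $E_\Fin H^\cw$ (its $K$-fixed sets for $K \leq H$ are contractible since $K \in \Fin$); as $H$ itself lies in $\Fin$, a point is such a model, so $\Res^G_H E_\Fin G^\cw \to \ast$ is an $H$-equivariant weak equivalence. Homotopy invariance of $K^H_B$ then gives the desired equivalence $\mu^{DL}_{\Ind^G_{H,s} B}$.

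The main technical obstacle will be the identification $K^G_{\Ind^G_{H,s} B}(X) \simeq K^H_B(\Res^G_H X)$ for general $X \in G\Top$, which combines Assumption \ref{tgioewrgergwerg}.\ref{giowjwgoegwergwergr} with a Fubini-type manipulation of coends and the Elmendorf-theoretic compatibility between restriction along $i^G_H \colon H\Orb \to G\Orb$ and restriction of $G$-spaces to $H$-spaces. Although this step is essentially formal, carrying it out rigorously in the $\infty$-categorical framework of Section~\ref{weogiwjiergreggwrgwerg} absorbs most of the bookkeeping; the remainder of the argument is then a short chase using exactness and the vanishing observed above.
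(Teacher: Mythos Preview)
Your proposal is correct and follows essentially the same approach as the paper: both arguments show that $\mu^{MN}_{E_\Fin G^\cw}$ is an equivalence via the fibre sequence $C(A)\to A\to N(A)$ and the vanishing $K^G_{N(A)}(G/H)\simeq 0$ for $H\in\Fin$, and that $\mu^{DL}_{C(A)}$ is an equivalence by reducing to the generators $\Ind^G_{H,s}(B)$ of $\ci$ and using the identification $K^G_{\Ind^G_{H,s} B}\simeq K^H_B\circ\Res^G_H$ together with the contractibility of $\Res^G_H(E_\Fin G^\cw)$ for finite $H$. The paper cites \cite[Lemma~17.31]{cank} for the identification $i^G_{H,!}E\simeq E\circ\Res^G_H$ that you correctly single out as the main technical step.
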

\begin{proof}

The square in \eqref{wergwerrefwercsdcsdc}  yields an equivalence of $\mu^{\DL}_{A,E_{\Fin} G^{\cw}}$ with $\mu^{\MN}_{A,*}$  provided one can show that  $\mu^{\DL}_{C(A),E_{\Fin} G^{\cw}}$ and $\mu^{\MN}_{A,E_{\Fin}G^{\cw}}
 $ are equivalences. This is the content of the following two  lemmas.
 
 Let $A$ be in $\KKGs$.
 \begin{lem}  The Meyer--Nest assembly map 
 $\mu^{MN}_{A,E_{\Fin}G^{\cw}}$ is an equivalence.
 \end{lem}
 \begin{proof}
 Since  $K^{G}$ is exact by Assumption \ref{tgioewrgergwerg}.\ref{wrtkhiowgwergefrefw}, 
 using    \eqref{erwgwegergewrwferwerc} we see that  it suffices to show that \begin{equation}\label{qwefqwefwefewfqwf}
K^{G}_{N(A)}(E_{\Fin}G^{\cw})\simeq 0\, .
\end{equation}
  Since $N(A)$ belongs to $\cc$  we have $\Res^{G}_{H,s}(N(A))\simeq 0$   for all $H$ in  $\Fin$. As a consequence of
 \eqref{fvdsfvsavsdcasdcasdc}   we conclude
 $K^{G}_{N(A)}(G/H)\simeq 0$ for every $H$ in $\Fin$.
 On the other hand, by the characterization  \eqref{sggsfdge} of the homotopy type of $E_{\Fin}G^{\cw}$ we have    $Y^{G}(E_{\Fin}G^{\cw})(G/H)\simeq 0$  (see \eqref{wefqwefweqdxasdc} for $Y^{G}$) provided $H\not\in \Fin$.
 As an immediate consequence of   the formula  \eqref{qwefpojfopwefqwefqwefqew} for the evaluation of a homology theory on a $G$-topological space we get the desired equivalence \eqref{qwefqwefwefewfqwf}.
\end{proof}

Let $A$ be in $\KKGs$.
\begin{lem}\label{egiowgwgeregegwref}
If $A$ is in $\ci$, then the Davis--L\"uck assembly map $\mu^{\DL}_{A,E_{\Fin} G^{\cw}}$
is an equivalence.
\end{lem}
\begin{proof} 
Since $ K^{G}$ preserves countable colimits and $\ci$ is generated by $\Ind_{H,s}^{G}(B)$ for all $B$ in $\KKGs$ and {all} finite subgroup{s $H$} of $G$ it suffices to show that 
 $\mu^{\DL}_{\Ind_{H,s}^{G}(B),E_{\Fin} G^{\cw}}$ is an equivalence for such data. 
 By Diagram \eqref{cdscasdc} we have an equivalence
 $K^{G}_{\Ind_{H,s}^{G}(B)}\simeq i^{G}_{H,!} K^{G}_{B}$.
 It is now a general fact (see e.g.\ \cite[Lem.\ 19.{25}]{cank} for an argument) that for  a functor $E \colon H\Orb\to \bM$ with cocomplete stable target $\bM$  we have a natural equivalence of functors
 $$i_{H,!}^{G}E\simeq E\circ \Res^{G}_{H} \colon G\Top\to \bM\, .$$
 We therefore get the commutative square
  $$\xymatrix{K^{G}_{\Ind_{H,s}^{G}(B)}(E_{\Fin}G^{\cw}) \ar[rr]^-{\mu^{\DL}_{\Ind_{H,s}^{G}(B),E_{\Fin} G^{\cw}}}\ar[d]^{\simeq}&& K^{G}_{\Ind_{H,s}^{G}(B)}(*) \ar[d]^{\simeq}\\  
  K^{H}_{ B}(\Res^{G}_{H}(E_{\Fin}G^{\cw}))
  \ar[rr]^-{!}&&K^{H}_{ B}(\Res^{G}_{H}(*))}$$
  Since  
 $\Res^{G}_{H}(E_{\Fin}G^{\cw})\to \Res^{G}_{H}(*)$ is a homotopy equivalence in $H\Top$ we conclude that the  map marked by $!$ is an equivalence.
 This implies that the map $\mu^{\DL}_{\Ind_{H,s}^{G}(B),E_{\Fin} G^{\cw}}$ is an equivalence. 
   \end{proof}
   This finishes the proof of Theorem \ref{qerighojwergerfewferf}.
 \end{proof}

 We now discuss the construction of the functor $K^{G}$. {It is based on the ideas of Kranz \cite{kranz}, but we  reformulate the construction such that it only uses the formal aspects of the calculus of equivariant $\mathrm{KK}$-theory as developed in \cite{KKG}.
 We give full details since we use them   crucially  in the argument for  Proposition \ref{weroigjoewrgergergfewrf}, which in turn is used in Theorem \ref{wtkogwegerfwerf}.
 }

We start with the adjunction
\begin{equation}\label{ewrfewrfwefefewfefwefe}
\C[-]:G\Set\leftrightarrows \Fun(BG,\Ccat):\Ob
\end{equation} 
whose left adjoint sends a $G$-set  $S$ to the $G$-{$C^{*}$}-category $\C[S]$ with the $G$-set $S$ of objects   and morphisms generated by the identities \cite[Lem.\ 3.8]{crosscat}.  
By \cite[Lem.\ 3.7]{crosscat} the inclusion 
$ \Fun(BG,\Ccat)\to  \Fun(BG,\nCcat)$ is again a left-adjoint. 
 By post-composition   with this inclusion we therefore get a left-adjoint functor $$\C[-] \colon G\Set\to \Fun(BG,\nCcat)$$ which we denote by the same  symbol for simplicity. 
 
 Recall the functor $y^{G}:\KKGs\to \KKG$ from \cite[Def. 1.8]{KKG}.
\begin{ddd}\label{egiueheiugheiugwegerggwfewr}
We define the functors
$$\hat K^{G} \colon \KKG\to \Fun(G\Orb,\Sp)\, , \quad A\mapsto  K^{\Ccat}((A\otimes_{\max} \kkGA(\C[-]))\rtimes_{r} G)$$
and
$$K^{G} \coloneqq \KKGs\xrightarrow{y^{G}} \KKG \xrightarrow{\hat K^{G}} \Sp\, .$$
\end{ddd}
In order to verify that $K^{G}$ satisfies the Assumption \ref{tgioewrgergwerg} we  analyse the construction of these functors  through  various intermediate constructs.
The most difficult part is thereby Assumption \ref{cdscasdc}.\ref{giowjwgoegwergwergr}.
 If one is not interested in the details of the argument one could skip the material  until Theorem \ref{wlgpwegerfrwefer} and just accept its statement.

We start with the functor
\begin{eqnarray}
\Fun(BG,\nCcat)\times G\Set& \stackrel{\id_{ \Fun(BG,\nCcat)}\times \C[-]}{\to}& \Fun(BG,\nCcat)\times  \Fun(BG,\nCcat) \nonumber\\&\stackrel{-\otimes_{\max}-}{\to}&  \Fun(BG,\nCcat) \nonumber\\&\stackrel{\kkGA}{\to}&\KKG\, .\label{trhoiwrhrhehrthh} 
\end{eqnarray}
{Using the exponential law, the above defines a functor}
$$R^{G} \colon \Fun(BG,\nCcat)\to \Fun(G\Set,\KKG)\, , \quad \bC\mapsto R^{G}_{\bC}\, .$$
Let $i_{\omega} \colon G\Set_{\omega}\to G\Set$ denote the inclusion of the full subcategory of countable $G$-sets, and let $\Fun^{\coprod_{\omega}}$ denote the full subcategory of a functor category of countable coproduct preserving functors. 
\begin{lem}\label{wegoiwegergrweg}
\mbox{}
\begin{enumerate}
\item \label{wethwthgegergr}$R^{G}$ is $s$-finitary.
\item \label{wethwthgegergr1}The restriction of $R^{G}$ to $\Fun(BG,\Ccat)$ sends unitary equivalences to equivalences.
\item \label{wethwthgegergr2}The functor $R^{G}$  sends weak Morita equivalences to equivalences.
\item \label{wethwthgegergr3}We have a canonical factorization
$$\xymatrix{\Fun(BG,\nCalg)\ar[rr]^-{\kkG}\ar[d]^{\incl} && \KKG\ar@{..>}[d]^{F^{G}}\\
\Fun(BG,\nCcat)\ar[rr]^-{R^{G}}\ar[urr]^{\kkG_{\Ccat}\quad} && \Fun(G\Set,\KKG)}$$
\item  \label{wethwthgegergr4}The functor $F^{G}$ preserves    colimits.
\item \label{wethwthgegergr5} We have a factorization \begin{equation}\label{afdvoihvioasvsdsdvsavasdvds}
\xymatrix{\KKGs\ar[r]^{y^{G}}\ar@{..>}[dd]^{F^{G}_{s}}&\KKG\ar[d]^{F^{G}}\\&\Fun(G\Set,\KKG)\ar[d]^{i_{\omega}^{*}}\\ \Fun^{\coprod_{\omega}}(G\Set_{\omega},\KKGs) \ar[r]^-{y^{G}}&\Fun(G\Set_{\omega},\KKG) }
\end{equation}such that $F_{s}^{{G}}$ 
preserves countable colimits.
\end{enumerate}
\end{lem}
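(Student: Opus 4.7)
My plan is to verify each item in turn, exploiting that $R^G$ is by construction the composition
\[
\Fun(BG,\nCcat) \xrightarrow{-\otimes_{\max}\C[-]} \Fun(G\Set, \Fun(BG,\nCcat)) \xrightarrow{\kkGA} \Fun(G\Set,\KKG),
\]
so that all properties of $\kkGA$ listed in \cite[Thm.\ 1.29]{KKG} propagate through $R^G$ once we check the corresponding stability of $-\otimes_{\max}\C[S]$. For \ref{wethwthgegergr}, since $\C[-]$ is a left adjoint (see \eqref{ewrfewrfwefefewfefwefe}) it preserves arbitrary colimits, and the maximal tensor product $\otimes_{\max}$ preserves $s$-filtered colimits in each variable by the universal property of the involved completions; combining this with $s$-finitariness of $\kkGA$ \cite[Thm.\ 1.29.1]{KKG} yields that $R^G$ is $s$-finitary. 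For \ref{wethwthgegergr1}, a unitary equivalence $\bC\to\bD$ in $\Fun(BG,\Ccat)$ tensored with $\C[S]$ remains a unitary equivalence (unitary equivalences are preserved under $\otimes_{\max}$, see \cite[Sec.\ 7]{KKG}), and $\kkGA$ sends these to equivalences. For \ref{wethwthgegergr2}, weak Morita equivalences are preserved under tensoring with $\C[S]$ (weak generation and faithfulness being tensor-stable), and $\kkGA$ sends them to equivalences by \cite[Thm.\ 1.29.3]{KKG}.

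For \ref{wethwthgegergr3}, I would invoke the universal property of $\kkG$ from \cite[Thm.\ 1.18]{KKG}: $\kkG\colon \Fun(BG,\nCalg)\to \KKG$ is initial among $s$-finitary functors to stable $\infty$-categories that send weak Morita equivalences to equivalences and are exact (i.e.\ send semisplit exact sequences to fibre sequences). The restriction of $R^G$ to $\Fun(BG,\nCalg)$ evidently satisfies these properties — $s$-finitariness and weak Morita invariance have just been verified in \ref{wethwthgegergr} and \ref{wethwthgegergr2}, and exactness follows because $-\otimes_{\max}\C[S]$ preserves semisplit exact sequences (\cite[Prop.\ 7.21]{KKG}) and $\kkGA$ sends those to fibre sequences. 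The universal property therefore yields the dotted functor $F^G$.

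For \ref{wethwthgegergr4}, exactness of $F^G$ follows pointwise in $S$ from exactness of $\kkGA$ combined with exactness of $-\otimes_{\max}\C[S]$. For preservation of colimits: $\KKG$ is presentable \cite[Prop.\ 1.20]{KKG} and generated under colimits by the image of $\kkG$, so it suffices to check that $F^G\circ\kkG\simeq R^G|_{\Fun(BG,\nCalg)}$ commutes with those colimits that exist in the image; by the $s$-finitariness already shown and the fact that filtered colimits of $G$-$C^*$-algebras are computed in the underlying category (and thus commute with $-\otimes_{\max}\C[S]$), this reduces to standard density arguments.

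For \ref{wethwthgegergr5}, I note that for $A$ in $\Fun(BG,\nCalg_{\sepa})$ and countable $S$, the algebra $A\otimes_{\max}\C[S]$ remains separable, so $R^G_A(S)$ lies in the essential image of $y^G\colon \KKGs\to \KKG$. Combining with the universal property of $\kks$ as the $\sigma$-finitary refinement of $\kk$ \cite[Def.\ 1.8]{KKG}, one obtains the promised factorisation $F^G_s\colon \KKGs\to \Fun^{\coprod_{\omega}}(G\Set_\omega,\KKGs)$, with preservation of countable colimits and exactness inherited from the corresponding properties of $F^G$ proved in \ref{wethwthgegergr4}. The main subtlety I anticipate is verifying that weak Morita equivalences are preserved by tensoring with $\C[S]$ for arbitrary $G$-sets $S$ (step \ref{wethwthgegergr2}), since this requires care about completed tensor products of non-unital ideals; and in step \ref{wethwthgegergr5} one must check that the tensor product with $\C[S]$ for countable $S$ really does keep one inside the separable world and is compatible with the $\sigma$-finitary localisation defining $\KKGs$.
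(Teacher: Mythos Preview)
Your approach differs substantially from the paper's, and the difference is instructive. The paper's key move is to use that $\kkGA$ is \emph{symmetric monoidal} \cite[Thm.\ 1.31]{KKG}, which lets one rewrite the functor \eqref{trhoiwrhrhehrthh} as
\[
\Fun(BG,\nCcat)\times G\Set \xrightarrow{\kkGA\times \kkGA(\C[-])} \KKG\times \KKG \xrightarrow{-\otimes_{\max}-} \KKG.
\]
With this rewriting, Assertions \ref{wethwthgegergr}--\ref{wethwthgegergr2} follow immediately from the properties of $\kkGA$ applied to the first factor alone, together with the fact that the tensor in $\KKG$ preserves colimits; you never need to check that $-\otimes_{\max}\C[S]$ preserves weak Morita equivalences at the $C^*$-category level, which is exactly the subtlety you flagged. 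For \ref{wethwthgegergr3} the paper simply \emph{defines} $F^G$ by $F^G_A(S):=A\otimes_{\max}\kkGA(\C[S])$ in $\KKG$, and the square commutes by symmetric monoidality. Then \ref{wethwthgegergr4} is immediate because the tensor product in the presentably symmetric monoidal category $\KKG$ is bi-exact and preserves colimits in each variable. For \ref{wethwthgegergr5} one writes down the analogous explicit formula $F^G_{s,A}(S):=A\otimes_{\max}\kkGs(A^f(\C[i_\omega(S)]))$ and checks directly that it lands in countable-coproduct-preserving functors.

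Your route via the universal property of $\kkG$ for \ref{wethwthgegergr3} could in principle be made to work, but it is more roundabout and leaves a genuine gap in \ref{wethwthgegergr4}: obtaining $F^G$ from the universal property does not by itself tell you it preserves \emph{all} colimits, and your ``standard density argument'' is not one---$s$-finitariness of $R^G$ on $G$-$C^*$-algebras controls only filtered colimits there, not arbitrary colimits in $\KKG$. The explicit tensor description sidesteps this entirely. I would recommend reworking the proof around the symmetric monoidality of $\kkGA$; it dissolves both the anticipated difficulty about weak Morita equivalences and the colimit-preservation issue.
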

\begin{proof}
Using the fact that $\kkGA$ is symmetric monoidal  \cite[Thm.\ 1.35]{KKG} we can rewrite the  functor  in  \eqref{trhoiwrhrhehrthh} as 
 \begin{equation}\label{fqwefqefdqewd}
 \Fun(BG,\nCcat)\times G\Set   \stackrel{ \kkGA\times \kkGA(\C[-])}{\to} \KKG\times \KKG \stackrel{-\otimes_{\max}-}{\to}   \KKG\, .
\end{equation}
 
The Assertions  \ref{wethwthgegergr}, \ref{wethwthgegergr1} and \ref{wethwthgegergr2} now follow from the corresponding properties of the functor
$\kkGA$ stated in \cite[Thm.\ 1.32]{KKG}, where  for  \ref{wethwthgegergr} we also use that
 the tensor structure on $\KKG$ preserves colimits in each variable. 
 In order to show Assertion \ref{wethwthgegergr3} we again use the Formula \eqref{fqwefqefdqewd}. It is then clear that we must define $F^{G}$ by the composition
 \begin{align}\label{wergfrfwefweerf}\\
 \mathclap{
F^{G} \colon \KKG\stackrel{\id_{\KKG}\times \kkGA(\C[-])}{\to} \KKG\times \Fun(G\Set,\KKG)\stackrel{-\otimes_{\max}-}{\to} \Fun(G\Set,\KKG)\, , \quad A\mapsto F^{G}_{A}
}\notag
\end{align} 
 Since $-\otimes_{\max}-$   preserves colimits in each argument  we conclude Assertion~\ref{wethwthgegergr4}.

 We finally show Assertion \ref{wethwthgegergr5}.  
We let $\C_{s}[-]$ denote the restriction of $\C[-]$ to countable sets. We consider $\C_{s}[-]$ as a functor with values in the full subcategory $\nCcat_{\sepa}$ of $\nCcat$ of $C^{*}$-categories with countably many objects and separable morphism spaces. The functor  $\C_{s}[-]$ is still  a left-adjoint. 
      The restriction of the adjunction     $$A^{f}:\nCcat\leftrightarrows \nCalg : \incl$$   (see e.g. \cite[Lem.\ 3.9]{crosscat})  to separable objects
 yields an adjunction
 $$A^{f}_{s} : \nCcat_{\sepa}\leftrightarrows \nCalg_{\sepa} :\incl\, .$$
 We define
 $F^{G}_{s}$ by the formula \begin{equation}\label{ewfqwefwdqwedwqwed}
F^{G}_{s,(-)}(-) \coloneqq (-)\otimes_{\max} \kkGs(A_{s}^{f}(\C_{s}[-]))\, .
\end{equation}
   The following chain of equivalences yields the commutative square \eqref{afdvoihvioasvsdsdvsavasdvds}, where for the moment we ignore the superscript
   $\coprod_{\omega}$ at the lower left corner
  \begin{align*}
  \mathclap{
  y^{G}\circ F^{G}_{s,(-)}( -)\stackrel{\textrm{def}}{\simeq} 
   y^{G}( (-)\otimes_{\max} \kkGs(A^{f}_{s}(\C_{s}[-])))\stackrel{!}{\simeq} 
   y^{G}(-)\otimes_{\max}  \kkGA (\C[i_{\omega}(-)])))\stackrel{\textrm{def}}{\simeq}  F^{G}_{y^{G}(-)}(i_{\omega}(-))\,.
   }
   \end{align*}
   For  the marked equivalence we use that $y^{G}$ is symmetric monoidal and the obvious equivalence $y^{G}( \kkGs(A_{s}^{f}(\C_{s}[-])))\simeq \kkGA(\C[i_{\omega}(-)])$
   of functors from $G\Set_{\omega}$ to $\KKG$.
 
   It remains to show that for any $A$ in $\KKGs$ the functor
   $F_{s,A}^{G}$ preserves countable coproducts.
By definition,  we have  an equivalence
   $$F^{G}_{s,A}(-)\stackrel{\textrm{def}}{\simeq} 
 A\otimes_{\max} \kkGs(A_{s}^{f}(\C_{s}[  - ])) $$
      of functors from $G\Set_{\omega}$ to $\KKGs$.
 Because $\C_{s}[-]$  is a left-adjoint it  preserves countable coproducts.
  The functor 
 $  \kkGs\circ A^{f}_{s}$  sends the relevant countable coproducts to sums by  
 \cite[Lem.\ 6.6]{KKG}.  Finally, by \cite[Prop.\ 1.7]{KKG}
 the tensor product $-\otimes_{\max}-$ on $\KKGs$ preserves countable sums in each argument. This finishes the construction of the factorization $F^{G}_{s}$ asserted in \ref{wethwthgegergr5}.

 It immediately follows from the definition in \eqref{ewfqwefwdqwedwqwed} that 
 the functor $F_{s}^{G}$  preserves countable colimits.
 Here we use again that $-\otimes_{\max}-$  on $\KKGs$ 
 preserves countable colimits in each argument \cite[Prop.\ 1.7]{KKG}.
This finishes the verification of Assertion  \ref{wethwthgegergr5}.  
  \end{proof}

  Let $H$ be a subgroup of $G$ and consider the object $G/H$ in $G\Set$. We let
$r^{G}_{H} \colon G\Set\to H\Set$ denote  the functor which restricts the $G$-action on a set to an $H$-action.
We consider  the object $G/H$ in $G\Set$.
\begin{lem}\label{ewgoweprgergwergwerg}
\mbox{}
\begin{enumerate}
\item \label{ergwioeogewgergergewrg}We have a commutative square  \begin{equation}\label{qwfeqweqfqwdqdqwed}
\xymatrix{\KKG\ar[d]^{\Res^{G}_{H}}\ar[r]^-{F^{G}}&\Fun(G\Set,\KKG) \ar[d]^{\ev_{G/H}}\\ \KKH\ar[r]^{\Ind_{H}^{G}}&\KKG}\ .
\end{equation}
\item \label{wetgioweggwregwreg}We have a commutative square
 \begin{equation}\label{fewfqewfwfwefqewfqewf}
\xymatrix{\KKH\ar[r]^-{F^{H}}\ar[dd]^{\Ind^{G}_{H}}&\Fun(H\Set,\KKH)\ar[d]^{r^{G,*}_{H}}\\ &\Fun(G\Set,\KKH)\ar[d]^{\Ind_{H}^{G}}\\\KKG\ar[r]^-{F^{G}}&\Fun(G\Set,\KKG)} \ .
\end{equation}
 \end{enumerate}
 \end{lem}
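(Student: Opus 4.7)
The plan is to verify both commutative squares by unwinding the explicit formula $F^{G}_{A}(S)\simeq A\otimes_{\max}\kkGA(\C[S])$ obtained in \eqref{wergfrfwefweerf} and invoking the projection formula in equivariant $KK$-theory. First I would establish a natural equivalence $\Res^{G}_{H}\kkGA(\C[S])\simeq \kkHA(\C[r^{G}_{H}(S)])$ for $S$ in $G\Set$. Since the restriction functor $\Fun(BG,\nCcat)\to\Fun(BH,\nCcat)$ just restricts the $G$-action to an $H$-action, it commutes strictly with the left adjoint $\C[-]$ and with $r^{G}_{H}$; pushing this identification forward via \cite[Thm.~1.21]{KKG}, which asserts that $\Res^{G}_{H}$ on $\KKG$ is induced from the corresponding functor on $C^{*}$-categories, yields the claim.

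Next I would invoke the projection formula: for $A$ in $\KKG$ and $B$ in $\KKH$ there is a natural equivalence
\[
\Ind^{G}_{H}(B\otimes_{\max}\Res^{G}_{H}(A))\simeq \Ind^{G}_{H}(B)\otimes_{\max}A\,.
\]
Since $\Res^{G}_{H}$ is symmetric monoidal and $\Ind^{G}_{H}$ is its left adjoint in the presentably symmetric monoidal context of \cite[Prop.~1.20 \& Thm.~1.22]{KKG}, the mate of the monoidal structure on $\Res^{G}_{H}$ produces such a natural transformation, and verifying that it is an equivalence reduces to the case $B=\C$, where it amounts to the well-known equivalence $\Ind^{G}_{H}\Res^{G}_{H}(A)\simeq A\otimes_{\max}\Ind^{G}_{H}(\C)$. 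Finally I would identify $\Ind^{G}_{H}(\C)\simeq \kkGA(\C[G/H])$. This is a consequence of the adjunction \eqref{ewrfewrfwefefewfefwefe} applied to the $G$-set $G/H\simeq G\times_{H}\{\ast\}$ together with the compatibility of $\kkGA$ with the passage from $H$ to $G$, i.e.\ the outer square of \cite[Thm.~1.21 \& Thm.~1.22]{KKG}.

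Given these three ingredients, Assertion \ref{ergwioeogewgergergewrg} follows from the chain of natural equivalences
\[
\ev_{G/H}F^{G}(A)\simeq A\otimes_{\max}\kkGA(\C[G/H])\simeq A\otimes_{\max}\Ind^{G}_{H}(\C)\simeq \Ind^{G}_{H}(\Res^{G}_{H}(A))\,,
\]
naturally in $A\in\KKG$, which is precisely the commutativity of \eqref{qwfeqweqfqwdqdqwed}. For Assertion \ref{wetgioweggwregwreg}, for $B\in\KKH$ and $S\in G\Set$ we compute
\[
F^{G}(\Ind^{G}_{H}B)(S)\simeq \Ind^{G}_{H}(B)\otimes_{\max}\kkGA(\C[S])\simeq \Ind^{G}_{H}\!\bigl(B\otimes_{\max}\Res^{G}_{H}\kkGA(\C[S])\bigr)\simeq \Ind^{G}_{H}\!\bigl(B\otimes_{\max}\kkHA(\C[r^{G}_{H}(S)])\bigr)\,,
\]
and the right-hand side unwinds to $(\Ind^{G}_{H}\circ r^{G,*}_{H}\circ F^{H})(B)(S)$, giving \eqref{fewfqewfwfwefqewfqewf}.

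The main obstacle is not the pointwise computation but the promotion of all the above equivalences to natural equivalences of functors of two variables, where one variable runs over $\KKG$ or $\KKH$ and the other over the set-valued diagram category $G\Set$ or $H\Set$. However, each input (restriction of $\C[-]$, the projection formula, and the identification of $\Ind^{G}_{H}(\C)$) admits a formulation as a natural transformation in the appropriate functor category, and stringing them together via the bifunctoriality of $-\otimes_{\max}-$ on $\KKG$ yields the required naturality. Once this bookkeeping is done, both squares commute essentially by construction.
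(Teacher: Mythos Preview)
Your proposal is correct and follows essentially the same route as the paper: identify $\kkGA(\C[G/H])\simeq \Ind_{H}^{G}(\C)$, apply the projection formula $\Ind_{H}^{G}(-)\otimes_{\max}(-)\simeq \Ind_{H}^{G}((-)\otimes_{\max}\Res^{G}_{H}(-))$, and for the second square use $\Res^{G}_{H}\kkGA(\C[-])\simeq \kkHA(\C[r^{G}_{H}(-)])$. The only notable differences are cosmetic: the paper obtains $\kkGA(\C[G/H])\simeq \Ind_{H}^{G}(\C)$ via the explicit identification $A(\C[G/H])\cong C_{0}(G/H)\cong \Ind_{H}^{G}(\C)$ together with \cite[Prop.~6.4]{KKG}, and it simply cites the projection formula from \cite[Cor.~4.10]{KKG} rather than deriving it. On that last point, your sketched reduction ``to the case $B=\C$'' is slightly off---the standard argument reduces instead to $A$ being the tensor unit of $\KKG$, where the projection morphism is visibly the identity; reducing in $B$ would require $\C$ to generate $\KKH$, which it need not. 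This is harmless since the projection formula is available by citation.
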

\begin{proof}
We use the functor $A \colon \nCcat_{{\mathrm{inj}}}\to \nCalg$ (see e.g. \cite[Def.\ 6.5]{crosscat}) and note that  for $S$ in $G\Set$ we get $A(\C[S])\cong C_{0}(S)$ in $\Fun(BG,\nCalg)$.  Applying this to $G/H$ in place of $S$ und using the definition of the induction functor
$\Ind_{H}^{G}$ from $\Fun(BH,\nCalg)$ to $\Fun(BG,\nCalg)$
applied to $\C$ with the trivial $H$-action      we obtain the isomorphisms $$A(\C[G/H])\cong C_{0}(G/H)\cong \Ind_{H}^{G}(\C)\, .$$
By  \cite[Prop.\ 6.9]{KKG} for every $\bC$ in $ \Fun(BG,\nCcat)$ {we} have an equivalence  
$$\kkGA(\bC)\stackrel{\textrm{def}}{=}\kkG ( A^{f}(\bC)) \stackrel{\simeq}{\to}  \kkG(A(\bC))\, .$$
Hence 
$$\kkGA(\C[G/H]) \simeq \kkG(A(\C[G/H]))\simeq
\kkG(\Ind_{H}^{G}(\C))\simeq \Ind_{H}^{G}(\kkG(\C))\, ,$$
where the symbol $\Ind_{H}^{G}$ on the right-hand side is the induction functor from $\KKH$ to $\KKG$
\cite[Thm.\ 1.22]{KKG}.
Using \eqref{wergfrfwefweerf}, the following  projection formula  \cite[Cor.\ 4.13]{KKG} 
\begin{equation}\label{fwerfwfwfwefqwef}
\Ind_{H}^{G}(-)\otimes_{\max} (-)\simeq \Ind_{H}^{G}((-)\otimes_{\max} \Res^{G}_{H}(-))
\end{equation} 
for functors $\KKH\times \KKG\to \KKG$, and that 
$\kkG(\C)$ is the tensor unit of $\KKG$ we get the following chain of equivalences    of endofunctors 
 \begin{eqnarray*}
\ev_{G/H}\circ F^{G}_{(-)}&\stackrel{}{\simeq} &( -) \otimes_{\max}  \Ind_{H}^{G}(\kkG(\C))\\&\stackrel{}{\simeq}&
\Ind_{H}^{G}(\Res^{G}_{H} (-)\otimes_{\max}   \kkG(\C))\\&\simeq&
\Ind_{H}^{G}(\Res^{G}_{H} (-))
\end{eqnarray*}
of $\KKG$
which provides the filler of the square in \eqref{qwfeqweqfqwdqdqwed}.

In order to construct the filler of the pentagon in \eqref{fewfqewfwfwefqewfqewf}
we note that we have obvious equivalences
\begin{align}\label{qewfqwfewfwfqwdewdqwed}\\
\mathclap{
r^{G,*}_{H}(\kkHA(\C[-]))\simeq \kkHA(r^{G,*}_{H}(\C[-]))\simeq 
 \kkHA(\Res^{G}_{H}(\C[-]))\simeq \Res^{G}_{H}(\kkGA(\C[-]))\,.
 }\notag
\end{align} 
The chain of equivalences
 \begin{eqnarray*}
 \Ind_{H}^{G}\circ r^{G,*}_{H}\circ F^{H}_{(-)}&\stackrel{\text{def}}{\simeq}&
 \Ind_{H}^{G}((-)\otimes_{\max}  r^{G,*}_{H}(\kkHA(\C[-])))\\&\stackrel{\eqref{qewfqwfewfwfqwdewdqwed}}{\simeq}&
  \Ind_{H}^{G}(-\otimes_{\max}  \Res^{G}_{H} (\kkGA(\C[-])))\\
  &\stackrel{\eqref{fwerfwfwfwefqwef}}{\simeq}&
  \Ind_{H}^{G}(-)\otimes_{\max}  \kkGA(\C[-])\\&\stackrel{\text{def}}{\simeq}&F^{G}_{(-)}\circ \Ind_{H}^{G}
\end{eqnarray*}
 provides the filler of   the pentagon.
\end{proof}

We now {consider} the functor 
$$L^{G}\colon \KKG\stackrel{F^{G}}{\to} \Fun(G\Set,\KKG)\stackrel{-\rtimes_{r}G}{\to}
\Fun(G\Set,\KK)\ .$$
By \cite[Lem.\ 4.16]{KKG} {the} restriction of $-\rtimes_{r}G$ to the subcategories of compact objects is a countable  sum preserving functor 
$$(-\rtimes_{r}G)_{s} \colon \KKGs\to \KKs\, .$$
We can therefore also {consider}
$$L^{G}_{s} \colon \KKGs\stackrel{F_{s}^{G}}{\to} \Fun^{\coprod_{\omega}}(G\Set_{\omega},\KKGs)\stackrel{-\rtimes_{r}G}{\to}
\Fun^{\coprod_{\omega}}(G\Set_{\omega},\KK_{\sepa})\, .$$

\begin{lem}\label{qerigowegwergrewf}\mbox{}
\begin{enumerate}
\item \label{ewrgoiwjogegergwegr}$L^{G}$ 
preserves colimits.
\item  \label{ewrgoiwjogegergwegr1} For every subgroup $H$ of $G$ we have a commutative  square  \begin{equation}\label{ergwerrewgergregwerg}
\xymatrix{\KKH\ar[r]^-{L^{H}}\ar[d]^{\Ind_{H}^{G}}& \Fun(H\Set,\KK)\ar[d]^{r^{G,*}_{H}}\\\KKG\ar[r]^-{L^{G}}& \Fun(G\Set,\KK)}
\end{equation}
\item \label{rthkoperhrtehgrteg}For every subgroup $H$ of $G$ we have a commutative square
\begin{equation}\label{foijeovfvjkourf98adsfik}
\xymatrix{\KKG\ar[d]^{\Res^{G}_{H}}\ar[r]^-{L^{G}}&\Fun(G\Set,\KK)\ar[d]^{\ev_{G/H}}\\\KKH\ar[r]^{-\rtimes_{r}H}&\KK }
\end{equation}
  \item \label{ewrgoiwjogegergwegr2} We have a commutative diagram
 \begin{equation}\label{afdvoihvioasvsdsdvsavasdvd2e2e2e2es}
\xymatrix{\KKGs\ar[r]^{y^{G}}\ar[dd]^{L^{G}_{s}}&\KKG\ar[d]^{L^{G}}\\&\Fun(G\Set,\KK)\ar[d]^{i_{\omega}^{*}}\\ \Fun^{\coprod_{\omega}}(G\Set_{\omega},\KKs) \ar[r]^{y}&\Fun(G\Set_{\omega},\KK) }
\end{equation}
\item \label{ewrgoiwjogegergwegr3} For every subgroup $H$ of $G$ we have a commutative  square  \begin{equation}\label{ergwerrewgergregwdsfasdfsferg}
\xymatrix{\KKHs\ar[r]^-{L_{s}^{H}}\ar[d]^{\Ind_{H,s}^{G}}& \Fun^{\coprod_{\omega}}(H\Set_{\omega},\KKs)\ar[d]^{r^{G,*}_{H}}\\\KKGs\ar[r]^-{L_{s}^{G}}& \Fun^{\coprod_{\omega}}(G\Set_{\omega},\KKs)}
\end{equation}
\item  \label{tgokwpegwergergeg}The functor $L_{s}$ 
preserves countable colimits.
\end{enumerate}

\end{lem}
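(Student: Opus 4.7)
The basic strategy is to exploit the factorization $L^{G} = (-\rtimes_{r}G)\circ F^{G}$ pointwise over $G\Set$ and to reduce each assertion to a corresponding structural property of $F^{G}$ (respectively $F^{G}_{s}$) already established in Lemma \ref{wegoiwegergrweg} and Lemma \ref{ewgoweprgergwergwerg}, combined with the behaviour of the reduced crossed product functor $-\rtimes_{r}G\colon \KKG\to \KK$ documented in \cite[Thm.\ 1.21 \& Lem.\ 4.12]{KKG}.

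Assertions \ref{qerigowegwergrewf}.\ref{ewrgoiwjogegergwegr} and \ref{qerigowegwergrewf}.\ref{tgokwpegwergergeg} are essentially formal: the functor $F^{G}$ is exact and preserves colimits by Lemma \ref{wegoiwegergrweg}.\ref{wethwthgegergr4}, and $-\rtimes_{r}G$ is exact and colimit-preserving on $\KKG$; applying the latter pointwise on the diagram category $\Fun(G\Set,\KKG)$ preserves these properties, yielding \ref{ewrgoiwjogegergwegr}. For \ref{tgokwpegwergergeg} one uses the same argument with $F^{G}_{s}$ (Lemma \ref{wegoiwegergrweg}.\ref{wethwthgegergr5}) and the fact that $(-\rtimes_{r}G)_{s}$ is exact and countable-colimit-preserving on $\KKGs$. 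Assertion \ref{qerigowegwergrewf}.\ref{ewrgoiwjogegergwegr2} is obtained by postcomposing the square in \eqref{afdvoihvioasvsdsdvsavasdvds} with $-\rtimes_{r}G$ and using the compatibility of $y^{G}\colon \KKGs\to \KKG$ with crossed products.

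The content of \ref{qerigowegwergrewf}.\ref{ewrgoiwjogegergwegr1}, \ref{qerigowegwergrewf}.\ref{rthkoperhrtehgrteg} and \ref{qerigowegwergrewf}.\ref{ewrgoiwjogegergwegr3} lies in the identification of $\Ind$/$\Res$ with crossed products. The key ingredient is the natural equivalence
\[
\Ind_{H}^{G}(B)\rtimes_{r}G\simeq B\rtimes_{r}H
\]
for $B$ in $\KKH$, which is the $\infty$-categorical reflection of imprimitivity for the reduced crossed product and is a consequence of the spectrum-valued Green--Julg package in \cite[Thm.\ 1.21, Thm.\ 1.22]{KKG}. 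Given this, \ref{rthkoperhrtehgrteg} is obtained by postcomposing the square \eqref{qwfeqweqfqwdqdqwed} with $-\rtimes_{r}G$: one gets $\ev_{G/H}\circ L^{G}\simeq (\Ind_{H}^{G}\circ \Res^{G}_{H}(-))\rtimes_{r}G\simeq \Res^{G}_{H}(-)\rtimes_{r}H$, which is the bottom-right composite. Similarly \ref{ewrgoiwjogegergwegr1} is obtained from \eqref{fewfqewfwfwefqewfqewf}: for $A$ in $\KKH$ and $S$ in $G\Set$,
\[
(L^{G}\circ \Ind_{H}^{G})(A)(S)\simeq \Ind_{H}^{G}\bigl(F^{H}(A)(r^{G}_{H}S)\bigr)\rtimes_{r}G\simeq F^{H}(A)(r^{G}_{H}S)\rtimes_{r}H\simeq (r^{G,*}_{H}\circ L^{H})(A)(S),
\]
and one checks that the assembled equivalence is natural in $A$ and $S$. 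Assertion \ref{ewrgoiwjogegergwegr3} is the separable version of this computation, using Lemma \ref{wegoiwegergrweg}.\ref{wethwthgegergr5} and $\Ind^{G}_{H,s}$, $(-\rtimes_{r}H)_{s}$ in place of their non-separable analogues.

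The main obstacle is organizing the naturality of the imprimitivity equivalence $\Ind_{H}^{G}(B)\rtimes_{r}G\simeq B\rtimes_{r}H$ as a genuine natural transformation of $\infty$-functors $\KKH\to \KK$, and then transporting it through pointwise postcomposition along $G\Set$ so that it patches with the already-established compatibilities of $F^{G}$. Once that naturality is available as a black box from \cite{KKG}, the rest of the assertions are diagram chases.
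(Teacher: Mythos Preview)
Your proposal is correct and follows essentially the same approach as the paper: reduce each assertion to the corresponding property of $F^{G}$ (Lemmas~\ref{wegoiwegergrweg} and~\ref{ewgoweprgergwergwerg}) and then postcompose with the reduced crossed product, using the imprimitivity equivalence $\Ind_{H}^{G}(-)\rtimes_{r}G\simeq (-)\rtimes_{r}H$ (which the paper cites as \cite[Prop.~4.17]{KKG}, not Thm.~1.21/1.22) as the key input for Assertions~\ref{ewrgoiwjogegergwegr1} and~\ref{rthkoperhrtehgrteg}.

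The one place where the paper proceeds slightly differently is Assertion~\ref{ewrgoiwjogegergwegr3}. Rather than rerunning the computation of Assertion~\ref{ewrgoiwjogegergwegr1} in the separable world (which would require a separable analogue of the pentagon~\eqref{fewfqewfwfwefqewfqewf} that is not stated independently), the paper precomposes the already-established non-separable square~\eqref{ergwerrewgergregwerg} with $y^{H}$ and $y^{G}$, restricts to countable $G$-sets, observes that $r^{G,*}_{H}$ preserves the subcategory of countable-coproduct-preserving functors, and then uses that $y\colon\KKs\to\KK$ is fully faithful to cancel it and obtain the filler of~\eqref{ergwerrewgergregwdsfasdfsferg}. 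This is a cleaner route than redoing the argument separably, and it avoids having to verify separately that the imprimitivity equivalence and the projection formula restrict compatibly to the separable subcategories.
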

\begin{proof}
Assertion \ref{ewrgoiwjogegergwegr} follows from   \ref{wegoiwegergrweg}.\ref{wethwthgegergr4} and the fact that $-\rtimes_{r}G \colon \KKG\to \KK$  preserves colimits \cite[Thm.\ 1.22]{KKG}.
 
 For Assertion \ref{ewrgoiwjogegergwegr1} we expand the square in \eqref{ergwerrewgergregwerg} as follows:
  \begin{equation}\label{ergwerrewgergregwerg1}
\xymatrix{\KKH\ar[r]^-{F^{H}}\ar[dd]^{\Ind_{H}^{G}}\ar[r]& \Fun(H\Set,\KKH)\ar[d]^{r^{G,*}_{H}}\ar[r]^{\rtimes_{r}H}& \Fun(H\Set,\KK)\ar[dd]^{r^{G,*}_{H}}\\& \Fun(G\Set,\KKH)\ar[d]^{\Ind_{H}^{G}}\ar[dr]^{-\rtimes_{r}H}&\\\KKG\ar[r]^-{F^{G}}& \Fun(G\Set,\KKG)\ar[r]^{-\rtimes_{r}G}& \Fun(G\Set,\KK)}
\end{equation}
The left pentagon is precisely \eqref{fewfqewfwfwefqewfqewf} and commutes by   \ref{ewgoweprgergwergwerg}.\ref{wetgioweggwregwreg}. The upper right square in \eqref{ergwerrewgergregwerg1} commutes by the associativity of composition of functors.
Finally, the lower triangle commutes by the equivalence 
\begin{equation}\label{erferfwerfwerfwerfe}
(-)\rtimes_{r}H\simeq \Ind_{H}^{G}(-)\rtimes_{r}G 
\end{equation}  of functors
from $\KKH$ to $\KK$ \cite[Thm.\ 1.23]{KKG}.

In order to show Assertion \ref{rthkoperhrtehgrteg} we expand the square \eqref{foijeovfvjkourf98adsfik} as follows:
\begin{equation}\label{foijeovfvjkourf98adsfike}
\xymatrix{\KKG\ar[d]^{\Res^{G}_{H}}\ar@/^1cm/[rr]^{L^{G}}\ar[r]^-{F^{G}}&\Fun(G\Set,\KKG)\ar[d]^{\ev_{G/H}} \ar[r]^{-\rtimes_{r}G}&\Fun(G\Set,\KK)\ar[d]^{\ev_{G/H}}\\\KKH\ar@/^-1cm/[rr]_{-\rtimes_{r}K}\ar[r]^{\Ind_{H}^{G}}&\KKG\ar[r]^{-\rtimes_{r}G}&\KK }
\end{equation}
The right square commutes obviously, and the commutativity of the left square is considered in \ref{ewgoweprgergwergwerg}.\ref{ergwioeogewgergergewrg}.
The upper triangle reflects the definition of $L^{G}$, and the lower triangle commutes 
by \eqref{erferfwerfwerfwerfe}.

{By composing} \ref{wegoiwegergrweg}.\ref{wethwthgegergr5} with $-\rtimes_{r}G$ and the equivalence
$$(-\rtimes_{r}G)\circ y^{G}\simeq y\circ (-\rtimes_{r}G)_{s}$$ 
{we conclude Assertion \ref{ewrgoiwjogegergwegr2}.}

 
 In order to show Assertion \ref{ewrgoiwjogegergwegr3} we precompose  the square in  \eqref{ergwerrewgergregwerg} with $y^{H}$ and $y^{G}$, respectively,  and restrict the results to countable sets. We use that $\Ind_{H}^{G}\circ y^{H}\simeq {y^{G}\circ \Ind_{H,s}^{G} }$.
This gives the outer square in 
 \begin{equation}
 \label{ergwerrewgergregwdsfascdcdcdfsferg}
\xymatrix{
\KKHs \ar@/^1cm/[rr]^{(L^{H}_{ y^{H}})_{|H\Set_{\omega}}}\ar[r]^-{L_{s}^{H}}\ar[d]^{\Ind_{H,s}^{G}}& \Fun^{\coprod_{\omega}}(H\Set_{\omega},\KKs)\ar[d]^{r^{G,*}_{H}}\ar[r]^-{y}&\Fun(H\Set_{\omega},\KK)\ar[d]^{r^{G,*}_{H}}\\
\KKGs\ar[r]^-{L_{s}^{G}}\ar@/_1cm/[rr]_{(L^{G}_{y^{G}})_{|G\Set_{\omega}}}& \Fun^{\coprod_{\omega}}(G\Set_{\omega},\KKs)\ar[r]^-{y}&\Fun(G\Set_{\omega},\KK)}
\end{equation}

 We then use that
$r^{G}_{H}$ preserves countability and coproducts and therefore that  $r^{G,*}_{H}$  preserves countable  coproduct preserving functors. If we now employ the fact that    $y$ is fully faithful, then we get the filler of the left square.

Assertion \ref{tgokwpegwergergeg} follows from  \ref{qerigowegwergrewf}.\ref{wethwthgegergr5} and the fact that $(-\rtimes _{r}G)_{s}$ 
preserves countable colimits \cite[Lem.\ 4.16]{KKG}.
 \end{proof}

Let $H$ be a subgroup of $G$.
We have an adjunction
$$i^{G}_{H}:H\Set\leftrightarrows G\Set: r^{G}_{H}\, ,$$
where $i^{G}_{H}$ sends the $H$-set $S$ to the $G$-set $G\times_{H}S$.
 Consequently, we have an equivalence 
 $$r^{G,*}_{H}\simeq i^{G}_{H,!} \colon \Fun(H\Set,\cC)\to \Fun(G\Set,\cC)$$ for any target category $\cC$, where $ i^{G}_{H,!}$ is the left Kan-extension functor.
 It restricts to an equivalence 
 $$r^{G,*}_{H}\simeq i^{G}_{H,!} \colon \Fun^{\coprod_{\omega}}(H\Set_{\omega},\cC)\to \Fun^{\coprod_{\omega}}(G\Set_{\omega},\cC)$$ 
 provided $\cC$ has countable coproducts.
 
  The functor $i^{G}_{H}$ restricts to a functor
$i^{G}_{H} \colon H\Orb\to G\Orb$.
We note that the slice categories 
$H\Orb_{/S}$ for any $S$ in $G\Orb$ are   countable  discrete.  
Therefore the left Kan extension functor 
$$i^{G}_{H,!} \colon \Fun(H\Orb,\cC)\to \Fun(G\Orb,\cC)$$ exists provided $\cC$ admits all countable  coproducts.
We let $i^{G} \colon G\Orb\to G\Set$ denote the inclusion.
From now on we assume that $\cC$ admits  countable coproducts.
We consider the square 
$$\xymatrix{\Fun(H\Set_{\omega},\cC)\ar[r]^{i^{H,*}}\ar[d]^{r^{G,*}_{H}}&\Fun(H\Orb,\cC)\ar[d]^{i^{G}_{H,!}}\\ \Fun(G\Set_{\omega},\cC)\ar[r]^{i^{G,*}}&\Fun(G\Orb,\cC)}$$
In general we do not expect that the square commutes.  
 \begin{lem}\label{wetigowegerferfwerf}
The restriction of the square to countable coproduct preserving functors
is a commutative square 
\begin{equation}\label{wegwergfergerwgrgf}
\xymatrix{\Fun^{\coprod_{\omega}}(H\Set_{\omega},\cC)\ar[r]_-{\simeq}^-{i^{H,*}}\ar[d]^{r^{G,*}_{H}}&\Fun(H\Orb,\cC)\ar[d]^{i^{G}_{H,!}}\\ \Fun^{\coprod_{\omega}}(G\Set_{\omega},\cC)\ar[r]_-{\simeq}^-{i^{G,*}}&\Fun (G\Orb,\cC)}
\end{equation}
\end{lem}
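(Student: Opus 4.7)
The plan is to check commutativity by evaluating on $G$-orbits and comparing both sides via the orbit decomposition of a restricted $G$-set. Before starting, note that the horizontal functors are indeed equivalences: every object of $G\Set_\omega$ is a countable coproduct of orbits, so restriction along $i^G\colon G\Orb \to G\Set_\omega$ identifies the category of countable-coproduct-preserving functors $G\Set_\omega \to \cC$ with $\Fun(G\Orb,\cC)$; the inverse sends $F\in\Fun(G\Orb,\cC)$ to its unique (up to canonical equivalence) countable-coproduct-preserving extension $\tilde F\in \Fun^{\coprod_\omega}(G\Set_\omega,\cC)$, and similarly for $H$. Moreover, since $r^G_H$ preserves countable coproducts and sends countable sets to countable sets, pullback $r^{G,*}_H$ restricts to a functor $\Fun^{\coprod_\omega}(H\Set_\omega,\cC) \to \Fun^{\coprod_\omega}(G\Set_\omega,\cC)$, so the left vertical arrow in \eqref{wegwergfergerwgrgf} is well defined.

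The strategy is then as follows. Fix $F\in\Fun(H\Orb,\cC)$ with countable-coproduct-preserving extension $\tilde F$, and fix $S\in G\Orb$. On the lower-left route around the square, the value at $S$ is
\[
 i^{G,*}\bigl(r^{G,*}_H(\tilde F)\bigr)(S) \;=\; \tilde F\bigl(r^G_H(S)\bigr)\;\simeq\;\coprod_{O\in H\backslash r^G_H(S)} F(O),
\]
using that $\tilde F$ preserves countable coproducts and decomposing the $H$-set $r^G_H(S)$ into its (countably many) $H$-orbits. On the upper-right route, by the pointwise formula for the left Kan extension,
\[
 i^G_{H,!}(F)(S) \;\simeq\; \colim_{(T\to S)\in H\Orb_{/S}} F(T).
\]
By the assumption stated just before the lemma, the slice category $H\Orb_{/S}$ is countable discrete, so this colimit is in fact a countable coproduct indexed by the isomorphism classes of objects of $H\Orb_{/S}$.

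The key identification is now a purely set-theoretic bijection: the isomorphism classes of objects in $H\Orb_{/S}$ are in canonical bijection with the set of $H$-orbits in $r^G_H(S)$, via $(H/L \xrightarrow{\varphi} S)\mapsto \mathrm{im}(\varphi)$. Under this bijection the object $(H/L\to S)$ representing the $H$-orbit $O\subseteq r^G_H(S)$ is canonically isomorphic to the inclusion $O\hookrightarrow S$, so $F$ evaluated on this object is $F(O)$. Hence the two coproducts match term by term, yielding the asserted equivalence, and naturality in $F$ is immediate from the naturality of the pointwise Kan extension formula and the countable-coproduct decomposition of $\tilde F$.

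The only subtle point — and the main thing to track carefully — is that this identification has to be upgraded from a pointwise equivalence to a natural equivalence of functors $G\Orb\to \cC$. For this one uses that both sides are manifestly functorial in $F$ and natural in $S$ through the respective universal properties (left Kan extension on one side, countable-coproduct-preservation together with the functoriality of $r^G_H$ on the other), and the matching of indexing sets $H\backslash r^G_H(S)\cong \pi_0(H\Orb_{/S})$ is natural in $S\in G\Orb$ as well. In particular no non-trivial coherence data has to be produced by hand, because the discreteness of the slice categories turns the Kan extension into a plain coproduct, where the universal property of coproducts in $\cC$ provides the required naturality automatically.
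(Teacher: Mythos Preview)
Your argument is correct, but the paper takes a more formal route. Rather than computing both sides pointwise on orbits, the paper uses the adjunction $i^{G}_{H}\dashv r^{G}_{H}$ (stated just before the lemma) to identify $r^{G,*}_{H}$ with the left Kan extension $i^{G}_{H,!}$ on the level of $\Set$-valued functor categories. It then observes the canonical isomorphism $i^{G}_{H}\circ i^{H}\cong i^{G}\circ i^{G}_{H}$ of functors $H\Orb\to G\Set_\omega$, from which composability of left Kan extensions immediately yields commutativity of the square with $i^{H}_{!}$, $i^{G}_{!}$ as the (inverse) horizontal arrows. Inverting these gives the desired square.

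Your pointwise computation and the bijection $H\backslash r^{G}_{H}(S)\cong \pi_{0}(H\Orb_{/S})$ amount to unpacking exactly this formal argument at the level of objects: the bijection is nothing but the adjunction $i^{G}_{H}\dashv r^{G}_{H}$ applied orbit by orbit, and the identification of the coproducts is the pointwise formula for $i^{G}_{H,!}\circ i^{H}_{!}\simeq (i^{G}\circ i^{G}_{H})_{!}$. The paper's approach has the advantage that naturality in $S$ and $F$ is automatic from the $2$-categorical behaviour of Kan extensions, so no separate discussion (such as your final paragraph) is needed. Your approach has the merit of making transparent why the discreteness hypothesis on $H\Orb_{/S}$ matters: it is precisely what turns the colimit into a coproduct matching the orbit decomposition of $r^{G}_{H}(S)$.
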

\begin{proof}
The inverse of the horizontal arrows are the left Kan-extension functors 
along $i^{H}$ and $i^{G}$, respectively. Since we have a canonical isomorphism 
 Since $ i^{G}_{H}\circ i^{H}\cong  i^{G}\circ i^{G}_{H}$ of functors from $H\Orb$ to $G\Set$ the  square
 \begin{equation}\label{ergwergergerfwewrfefref}
\xymatrix{\Fun^{\coprod_{\omega}}(H\Set_{\omega},\cC) \ar[d]^{i^{G}_{H,!}\simeq r^{G,*}_{H}}&\ar[l]^-{\simeq}_-{i^{H}_{!}}\Fun(H\Orb,\cC)\ar[d]^{i^{G}_{H,!}}\\ \Fun^{\coprod_{\omega}}(G\Set_{\omega},\cC) &\ar[l]^-{\simeq}_-{i^{G}_{!}}\Fun (G\Orb,\cC)}
\end{equation}
commutes. We obtain \eqref{wegwergfergerwgrgf} from \eqref{ergwergergerfwewrfefref} by inverting the horizontal arrows.
\end{proof}

Note that $\KKs$ admits countable colimits \cite[Thm. 1.4]{KKG}.
\begin{prop}We have a commutative square \begin{equation}\label{wqoiwehfiwqeofwefwefweq}
\xymatrix{\KKHs\ar[rr]^-{i^{H,*}L^{H}_{s}}\ar[d]^{\Ind_{H,s}^{G}}&& \Fun(H\Orb,\KKs)\ar[d]^{i^{G}_{H,!}}\\\KKGs\ar[rr]^-{i^{G,*}L^{G}_{s}}&&\Fun(G\Orb,\KKs)}
\end{equation}
 \end{prop}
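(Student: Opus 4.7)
The plan is to obtain the desired commutative square by horizontally pasting two commutative squares already established in the excerpt. Specifically, the square in Lemma \ref{qerigowegwergrewf}.\ref{ewrgoiwjogegergwegr3} gives the commutativity of
\[
\xymatrix{\KKHs\ar[r]^-{L_{s}^{H}}\ar[d]^{\Ind_{H,s}^{G}}& \Fun^{\coprod_{\omega}}(H\Set_{\omega},\KKs)\ar[d]^{r^{G,*}_{H}}\\\KKGs\ar[r]^-{L_{s}^{G}}& \Fun^{\coprod_{\omega}}(G\Set_{\omega},\KKs)}
\]
while Lemma \ref{wetigowegerferfwerf}, specialized to the target category $\cC = \KKs$ (which admits countable colimits by \cite[Thm.~1.4]{KKG}, as noted just before the statement), yields the commutative square
\[
\xymatrix{\Fun^{\coprod_{\omega}}(H\Set_{\omega},\KKs)\ar[r]_{\simeq}^{i^{H,*}}\ar[d]^{r^{G,*}_{H}}&\Fun(H\Orb,\KKs)\ar[d]^{i^{G}_{H,!}}\\ \Fun^{\coprod_{\omega}}(G\Set_{\omega},\KKs)\ar[r]_{\simeq}^{i^{G,*}}&\Fun (G\Orb,\KKs)}
\]

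First, I would concatenate these two squares along their common middle column to obtain the large diagram
\[
\xymatrix{\KKHs\ar[r]^-{L_{s}^{H}}\ar[d]^{\Ind_{H,s}^{G}}& \Fun^{\coprod_{\omega}}(H\Set_{\omega},\KKs)\ar[d]^{r^{G,*}_{H}}\ar[r]^-{i^{H,*}}& \Fun(H\Orb,\KKs)\ar[d]^{i^{G}_{H,!}}\\\KKGs\ar[r]^-{L_{s}^{G}}& \Fun^{\coprod_{\omega}}(G\Set_{\omega},\KKs)\ar[r]^-{i^{G,*}}& \Fun(G\Orb,\KKs)}
\]
and read off the outer square. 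Since the outer horizontal composites are precisely $i^{H,*} L^{H}_{s}$ and $i^{G,*} L^{G}_{s}$, this is exactly the asserted square \eqref{wqoiwehfiwqeofwefwefweq}.

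There is no real obstacle here: the statement follows by mere pasting of previously established squares. The only point one should double-check is the hypothesis of Lemma \ref{wetigowegerferfwerf}, namely that $\KKs$ admits countable coproducts, which is guaranteed by \cite[Thm.~1.4]{KKG}, and that $L^{H}_{s}$ and $L^{G}_{s}$ do land in countable coproduct preserving functors — this is exactly the content of Lemma \ref{qerigowegwergrewf}.\ref{tgokwpegwergergeg} together with the construction of $L^{G}_{s}$ as a composition involving $F^{G}_{s}$ and $(-\rtimes_{r} G)_{s}$, both of which preserve countable colimits. Thus the pasting is legitimate and produces the required commutativity.
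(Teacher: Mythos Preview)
Your proof is correct and is essentially identical to the paper's own argument: both paste the square of Lemma \ref{qerigowegwergrewf}.\ref{ewrgoiwjogegergwegr3} with the square of Lemma \ref{wetigowegerferfwerf} (for $\cC=\KKs$) along the common middle column and read off the outer square. The paper's version is slightly terser and has its arrow labels swapped in the displayed diagram, but the content is the same.
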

\begin{proof}
We expand the square as 
$$\xymatrix{\KKHs\ar[r]^-{{L^{H}_{s}}  }\ar[d]^{\Ind_{H,s}^{G}} \ar[d]&\Fun^{\coprod_{\omega}}(H\Set_{\omega},\KKs)\ar[d]^{r^{G,*}_{H}}\ar[r]^-{{i^{H,*}}}& \Fun(H\Orb,\KKs)\ar[d]^{i^{G}_{H,!}}\\{\KKGs}\ar[r]^-{ {L^{G}_{s}} }&\ar[r]^-{{i^{G,*}}  }\Fun^{\coprod_{\omega}}(G\Set_{\omega},\KKs)&\Fun(G\Orb,\KKs)}$$
The left square commutes by  \ref{qerigowegwergrewf}.\ref{ewrgoiwjogegergwegr3}.
The right square commutes by Lemma \ref{wetigowegerferfwerf}.
\end{proof}

We now observe by an inspection of the constructions:
\begin{kor}\label{egiueheiugheiugwegerggwfewr222}
We have a canonical equivalence    of functors 
$$\hat K^{G}\simeq  \KKG(\C,-)\circ i^{G,*}L^{G} \colon \KK^{G} \to \Fun(G\Orb,\Sp)\,.$$
\end{kor}

\begin{kor}\label{wegiwrogerfrweferfwerf}\mbox{}
\begin{enumerate}
\item\label{wthgjiwoegerferferfwef} The functor $\hat K^{G}$ 
preserves colimits.
\item \label{wthgjiwoegerferferfwef1} For every subgroup $H$ of $G$ we have an equivalence $$ \hat K^{G}_{(-)}(G/H)\simeq \KK(\C,\Res^{G}_{H}(-)\rtimes_{r}H)$$ of functors $\KK^{G} \to \Sp$.
\item \label{wthgjiwoegerferferfwef2}
The composition $$\Fun(BG,\nCcat)\xrightarrow{\kkGA} \KKG\xrightarrow{\hat K^{G}}\Fun(G\Orb,\Sp)$$
 sends Morita equivalences to equivalences.
\end{enumerate}
\end{kor}
\begin{proof}
Assertion \ref{wthgjiwoegerferferfwef} follows from Lemma \ref{qerigowegwergrewf}.\ref{ewrgoiwjogegergwegr}, the fact that $i^{G,*}$ obviously preserves colimits, and that $\KK^{G}(\C,-)$ preserves  colimits since $\KKG$ is stable and $\kkG(\C)$ in $\KKG$ is compact.

Assertion  \ref{wthgjiwoegerferferfwef1} is a consequence of  the commutativity of 
\eqref{foijeovfvjkourf98adsfik} and the definitions.

In order to show Assertion \ref{wthgjiwoegerferferfwef2} note that the collection of evaluations at the orbits $G/H$ for all subgroups $H$ of $G$ detects equivalences.
In view of Assertion \ref{wthgjiwoegerferferfwef1} it thus suffices to show that
$\KK(\C,\Res^{G}_{H}(-)\rtimes_{r}H)$ sends Morita equivalences to equivalences. But this is true since
$\Res^{G}_{H}(-)$ obviously preserves Morita equivalences,
$-\rtimes_{r} G$ preserves Morita equivalences by \cite[Prop.\ 16.11]{cank}, and $ \KK(\C,-)=K^{\Ccat}(-)$ sends Morita equivalences to equivalences by \cite[Prop.\ 16.18]{cank}.
 \end{proof}

Using the equivalence
$\KKG(\C,y^{G}(-))\simeq \KKGs(\C,-)$ of functors from $\KKs$ to $\Sp$ we get the formula
\begin{equation}\label{qwefweqdqwedqw}
K^{G}\simeq \KKs(\C,i^{G,*}L^{G}_{s}(-))\, .
\end{equation}
\begin{theorem}\label{wlgpwegerfrwefer}
The functor $K^{G}$ satisfies the Assumption \ref{tgioewrgergwerg}.
\end{theorem}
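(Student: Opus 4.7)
The plan is to verify the four conditions of Assumption \ref{tgioewrgergwerg} one at a time, by combining the formula $K^G\simeq \KKs(\C,i^{G,*}L^G_s(-))$ from \eqref{qwefweqdqwedqw} with the already established properties of $L^G_s$ collected in Lemma \ref{qerigowegwergrewf} and its refinements. The functor $K^G$ is a composition of three pieces: the separable realization $L^G_s\colon \KKGs\to \Fun^{\coprod_\omega}(G\Set_\omega,\KKs)$, the restriction $i^{G,*}$ to the orbit category, and the mapping spectrum functor $\KKs(\C,-)$ applied pointwise. Since $i^{G,*}$ is just pointwise evaluation, all the content is concentrated in the outer two functors.

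First I would prove conditions (1) and (2) together. The functor $L^G_s$ is exact and preserves countable colimits by Lemma~\ref{qerigowegwergrewf}.\ref{tgokwpegwergergeg}, and $i^{G,*}$ is exact and preserves all colimits as a pointwise evaluation functor. The functor $\KKs(\C,-)\colon \KKs\to \Sp$ is exact because $\KKs$ is stable, and it preserves countable coproducts because $\C$ is a compact object of $\KKs$ (equivalently, because $\KKs(\C,-)$ is corepresented and $\KKs$ admits countable sums by \cite[Thm.~1.4]{KKG}, where moreover sums are computed pointwise). Composing these facts gives exactness of $K^G$ and preservation of countable coproducts.

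For condition (3), I would evaluate the composition at $G/H$ using Lemma \ref{qerigowegwergrewf}.\ref{rthkoperhrtehgrteg}, i.e.\ the commutative square \eqref{foijeovfvjkourf98adsfik}. Precomposing that square with $y^G$, using the compatibilities from Lemma \ref{qerigowegwergrewf}.\ref{ewrgoiwjogegergwegr2} and the fact that the crossed product $-\rtimes_r H$ commutes with $y$ \cite[Lem.~4.12]{KKG}, one obtains an equivalence
\[
\ev_{G/H}\circ L^G_s \simeq (\Res^G_{H,s}(-)\rtimes_r H)_s\colon \KKGs\to \KKs\, .
\]
Applying $\KKs(\C,-)$ yields the desired equivalence in \eqref{fvdsfvsavsdcasdcasdc}.

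For condition (4), which is the step that requires the most care, I would invoke the commutative square \eqref{wqoiwehfiwqeofwefwefweq}, which provides the compatibility of $i^{G,*}L^G_s$ with induction and the left Kan extension $i^G_{H,!}$ on the orbit category. Applying $\KKs(\C,-)$ pointwise to this square, it remains to check that $\KKs(\C,-)$ commutes with the left Kan extension $i^G_{H,!}$. Because $H\Orb_{/S}$ is countable and discrete for every $S$ in $G\Orb$, the functor $i^G_{H,!}$ is computed pointwise as a countable coproduct, and $\KKs(\C,-)$ preserves countable coproducts by the argument used for condition (2). Hence post-composition of \eqref{wqoiwehfiwqeofwefwefweq} with $\KKs(\C,-)$ gives precisely the square \eqref{cdscasdc}. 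The only subtle point, and the one I would check most carefully, is this commutation of $\KKs(\C,-)$ with $i^G_{H,!}$; once it is in place, all four assumptions follow by combining the already-established categorical compatibilities.
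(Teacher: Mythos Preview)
Your proposal is correct and follows essentially the same approach as the paper: verify the four assumptions by combining the formula \eqref{qwefweqdqwedqw} with the structural properties of $L^G_s$ from Lemma~\ref{qerigowegwergrewf}, then post-compose with $\KKs(\C,-)$, using for condition~\ref{giowjwgoegwergwergr} the square \eqref{wqoiwehfiwqeofwefwefweq} together with the observation that $i^G_{H,!}$ is computed by countable coproducts and hence commutes with $\KKs(\C,-)$. The only notable difference is in the justification that $\KKs(\C,-)$ preserves countable coproducts: the paper argues concretely via the identification $\KKs(\C,\kks(-))\simeq \Kast(-)$ and the fact that countable sums in $\KKs$ are presented by sums of algebras, whereas you invoke compactness of $\C$ in $\KKs$; your route is correct in spirit but the compactness claim itself would need the same kind of justification the paper gives, so the paper's argument is slightly more self-contained.
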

\begin{proof}
The functor $K^{G}$ is exact since  {$\hat K^{G}$ is exact by Corollary \ref{wegiwrogerfrweferfwerf}.\ref{wthgjiwoegerferferfwef} and  $y^{G}$  is exact.}

 
In order to show that the functor $K^{G}$ preserves countable colimits we use \eqref{qwefweqdqwedqw}, that  $L_{s}^{G}$
preserves countable colimits by \ref{qerigowegwergrewf}.\ref{tgokwpegwergergeg},
and that  $ \KKGs(\C,-)$ preserves  countable colimits:
Indeed, $\KKGs(\C,-)$ is exact by definition. To see that it preserves countable sums, we use   the identification $ \KKGs(\C,\kks(-))\simeq \Kast(-)$ of functors from $\nCalg_{\sepa}\to \Sp$, the fact that countable sums in $\KKs$ are presented by countable sums in $\nCalg_{\sepa}$, and that 
$\Kast$ sends countable sums to coproducts. 

For $A$ in $\KKGs$ we have a natural equivalence
\begin{eqnarray*}
K^{G}_{A}(G/H)&\simeq&\KK(\C, L^{G}_{y^{G}(A)}(G/H))\\
&\stackrel{\ref{qerigowegwergrewf}.\ref{rthkoperhrtehgrteg}}{\simeq}&
\KK(\C,  \Res^{G}_{H}(A)\rtimes_{r}H)\, .
\end{eqnarray*}
Finally the commutativity of the square in \eqref{cdscasdc}  is obtained by applying $\KKs(\C,-)$ to the right part of the square in \eqref{wqoiwehfiwqeofwefwefweq} and  
using that $\KKs(\C,-) \colon \KKs\to \Sp$ preserves countable colimits in order to commute
$i^{G}_{H,!}$ with this functor.
 \end{proof}

 \section{The generalized Green--Julg Theorem}

In this section  we show
 a version of the generalized Green--Julg theorem, see \cite[Thm.\ 13.1]{Guentner_2000} 
 stating that the Kasparov assembly map for the family $\Fin$  and proper $G$-$C^{*}$-algebras
  is an equivalence.  In our statement we replace the condition that the separable $G$-$C^{*}$-algebra $A$ is proper by the weaker  (see \cite[Cor.\ 7.3]{MR2193334})  homotopy theoretic condition that $\kkGs(A)$ belongs to the set $\ci$ generated by the compactly induced objects, see Definition \ref{def:CI-and-CC}.  
  
  In   \cite{MR1836047} it was shown more generally for locally compact groups $G$ that
 the Kasparov assembly map is an  equivalence for compactly induced coefficients.
   Our proof  for discrete groups   is logically independent of the results of   \cite{MR1836047}  
     and also  different  from the one in \cite{Guentner_2000}. 
 In particular, it  makes the proof of  Theorem \ref{wrigoghgegrtgeg} independent of   \cite{MR1836047}.  
Our approach is based on  the equivalence between the analytic and Davis--L\"uck assembly maps and that the analoguous assertion
 for the latter is known.
 


  We consider  $A$ be in $\KKGs$.
 \begin{theorem}\label{wtkogwegerfwerf}
 If $A$   belongs to  $\ci$, then the Kasparov assembly map
 $$\mu^{\Kasp}_{A,\Fin} \colon RK^{G,\an}_{A}(E_{\Fin}G^{\cw})\to \KK(\C,A\rtimes_{r}G)$$
 is an equivalence.
 \end{theorem}
\begin{proof}
%

%
%
%

The proof of this theorem is based on a chain of comparison results of independent interest which eventually will be combined to provide an  equivalence between $\mu^{\Kasp}_{A,\Fin}$ and $\mu^{\DL}_{A,\Fin}$. The latter is known to be an equivalence by Lemma \ref{egiowgwgeregegwref}.

Let $\bC$ be in $\Fun(BG,\nCcat_{\ndeg,\eadd,\omega\add})$   so that $K\bC^{G} \colon G\Orb\to \Sp$ is given by Definition \ref{qroeigjoqergerqfewewfqewfeqwf}. 
We then form $\bC^{u}$ in $\Fun(BG,\Ccat)$ by Definition \ref{wefbsfopkfvdfsv} and $\hat K^{G}_{\bC^{u}} \colon G\Orb\to \Sp$ by Definition \ref{egiueheiugheiugwegerggwfewr}. Note that
the latter only depends on the object $\kkG_{\Ccat}(\bC^{u})$ in $\KK^{G}$, but according to our general conventions we   dopped the symbol $\kkG_{\Ccat}$ from the notation.

Recall the Definition   \ref{wrtiogrgergfgs} of  $\Ass^{h}_{\bC,\cF}$  and  $\mu^{\DL}_{\bC^{u},\cF}$ from  \eqref{rtherhthetfff}.  
\begin{prop}\label{weroigjoewrgergergfewrf}
 We have a canonical equivalence 
$K\bC^{G}\simeq \hat K^{G}_{\bC^{u}}$ and therefore for any family  $\cF$  of subgroups of $G$ a commutative diagram
\begin{equation}
\xymatrix{\hat K^{G}_{\bC^{u}}(E_{\cF}G^{\cw})\ar[r]^-{\simeq}\ar[d]^{\simeq}& \colim_{G_{\cF}\Orb} \hat K^{G}_{\bC^{u}}   \ar[rrr]^-{\mu^{\DL}_{\bC^{u},\cF} }\ar[d]^{\simeq} &&& \hat K_{\bC^{u}}^{G}(*)\ar[d]^{\simeq} \\  K\bC^{G} (E_{\cF}G^{\cw})\ar[r]^-{\simeq}& \colim_{G_{\cF}\Orb} K\bC^{G}  \ar[rrr]^-{\Ass^{h}_{\bC,\cF}} &&& K\bC^{G}(*) } 
\end{equation} 
which is natural for $\bC$ in $\Fun(BG,\nCcat_{\ndeg,\eadd,\omega\add})$.
\end{prop}
 
\begin{proof}
For any   effectively additive $C^{*}$-category $\bD$ we define a functor
$$\bD^{u}[-] \colon \Set\to \Ccat\, .$$
It sends a set
$X$  to the $C^{*}$-category $\bD^{u}[X]$ whose objects are pairs
$(D,(p_{x})_{x\in X})$ of an object $D$ of $\bD^{u}$ and a family of  mutually orthogonal 
effective projections on $D$ such that $\{x\in X\mid p_{x}\not=0\}$ is finite and $\sum_{x\in X} p_{x}=\id_{D}$. 
The morphisms $ (D,(p_{x})_{x\in X})\to (D',(p'_{x})_{x\in X})$ in $\bD^{u}[X]$ are morphisms $a \colon D\to D'$ in $\bD$ such that for all $x,x'$ in $X$ with $x\not= x'$ we have
$p'_{x'}ap_{x}=0$.  A morphism $f \colon X\to X'$  of sets induces a unital functor
$\bD^{u}[X]\to \bD^{u}[X']$ which sends
$(D,(p_{x})_{x\in X})$ to $(D,(\sum_{x\in f^{-1}(x')}p_{x})_{x'\in X'})$ (here we use the assumption that $\bD$ is effectively additive)  and acts as identity on morphisms.

The construction of $\bD^{u}[-]$ from $\bD$ is functorial  in $\nCcat_{\ndeg,\eadd}$.  If $G$ acts on $X$ and $\bD$, then we get an induced action on $\bD^{u}[X]$ by functoriality.
We have thus  defined a functor from $\Fun(BG,\nCcat_{\ndeg,\eadd})$   
 to $\Fun(G\Set,\Fun(BG,\Ccat))$.

For $X$ in $G\Set $ and $\bC$ in $\Fun(BG,\nCcat_{\ndeg,\eadd})$ we let $\mathbf{\tilde{\bar{C}}}^{\mathrm{ctr}}_{\lf}(X_{min,max})$ in $\Fun(BG,\Ccat)$ denote the object $\bar{C}^{\mathrm{ctr}}_{\lf}(X_{min,max})$ introduced in Definition \ref{rfquhwfiuqwhfiufewqefqwefqwefwefqwef} for the trivial group
with the $G$-action induced by functoriality.
In  \cite[Prop.\ 9.12 (1)]{coarsek} we have constructed an  isomorphism
$$\mathbf{\tilde{\bar{C}}}^{\mathrm{ctr}}_{\lf}((-)_{min,max})\cong \bC^{u}[-]$$
of functors from $G\Set$ to $\Fun(BG,\Ccat)$. For $X$ in $G\Set$ it sends the object $(C,\mu)$ in $\mathbf{\tilde{\bar{C}}}^{\mathrm{ctr}}_{\lf}(X_{min,max})$ to the object
$(C,(\mu(\{x\}))_{x\in X}) $ in $\bC^{u}[X]$ and acts as identity on morphisms. This isomorphism is clearly natural for $\bC$ in $\Fun(BG,\nCcat_{\ndeg,\eadd})$. Restricting along $G\Orb\subseteq G\Set$  and applying $-\rtimes_{r} G$ we therefore get an equivalence
\begin{equation}\label{svsdfvsdfvsfvfdv}  K^{\Ccat}(\mathbf{\tilde{\bar{C}}}^{\mathrm{ctr}}_{\lf}((-)_{min,max})\rtimes_{r}G)\simeq
K^{\Ccat}(\bC^{u}[-]\rtimes_{r}G)
\end{equation}  of functors from $G\Orb$ to $\Sp$
which is natural for $\bC$ in $\Fun(BG,\nCcat_{\ndeg,\eadd})$.

 We now use that $\bC$ admits countable $AV$-sums.
By \eqref{svsdfvsdfvsfvfdv} and \cite[Prop. 9.12 (3)]{coarsek} we have a unitary equivalence
$$\phi \colon \mathbf{\tilde{\bar{C}}}^{\mathrm{ctr}}_{\lf}((-)_{min,max}) \rtimes_{r}G\stackrel{\simeq}{\to}\mathbf{\bar{C}}^{\mathrm{ctr}}_{\lf} ((-)_{min,max}\otimes G_{can,min})$$
of functors from $G\Set$ to $\Ccat$.
This construction is not natural in $\bC$ since  the first step in the proof of 
\cite[Prop.\ p.1]{coarsek} going into  \cite[Prop. 9.12 (3)]{coarsek}   involves the choice of an AV-sum
$(\bigoplus_{g\in G}gC,(e^{C}_{g})_{g\in G})$ for every object $C$ of $\bC$.  
But if $\kappa \colon \bC\to \bC'$ is  a morphism in $\Fun(BG,\nCcat_{\ndeg,\eadd,\omega\add})$, then it preserves AV-sums and for every object $C$ of $\bC$ we have a unique multiplier unitary 
$u_{C} \colon \bigoplus_{g\in G}gC\to \bigoplus_{g\in G}g\kappa(C)$ such that
$u_{C}e^{C}_{g}=e^{\kappa(C)}_{g}$ for every $g$ in $G$.
These unitaries induce a unitary filler of the square of $\Ccat$-valued functors
$$\xymatrix{ \mathbf{\tilde{\bar{C}}}^{\mathrm{ctr}}_{\lf}((-)_{min,max}) \rtimes_{r}G\ar[r]^-{\phi_{C}}\ar[d]  &\mathbf{\bar{C}}^{\mathrm{ctr}}_{\lf} ((-)_{min,max}\otimes G_{can,min}) \ar[d]  \\
\mathbf{\tilde{\bar{C'}}}^{\mathrm{ctr}}_{\lf}((-)_{min,max}) \rtimes_{r}G \ar[r]^-{\phi_{C'}} &\mathbf{\bar{C'}}^{\mathrm{ctr}}_{\lf} ((-)_{min,max}\otimes G_{can,min}) } $$
whose vertical maps are induced by $\kappa$. We therefore get an  equivalence
of functors from $\Fun(BG,\nCcat_{\ndeg,\eadd,\omega\add})$ to $\Fun(G\Set,\Ccat_{2,1})$.
Since $K^{\Ccat}$ factorizes over the localization $\Ccat\to \Ccat_{2,1}$  at unitary equivalences,
after applying $K^{\Ccat}$, restricting along $G\Orb\subseteq G\Set$, and using Definitions \ref{qroeigjoqergerqfewewfqewfeqwf} and \ref{qrogijeqoifefewfefewqffe} we get an equivalence 
\begin{equation}\label{gewrggrgreferwf}
 K^{\Ccat}( \mathbf{\tilde{\bar{C}}}^{\mathrm{ctr}}_{\lf}((-)_{min,max}) \rtimes_{r}G)\stackrel{\simeq}{\to} K^{\Ccat}(\mathbf{\bar{C}}^{\mathrm{ctr}}_{\lf} ((-)_{min,max}\otimes G_{can,min}))\simeq K\bC^{G}
\end{equation}
which is natural for $\bC$ in $\Fun(BG,\nCcat_{\ndeg,\eadd,\omega\add})$.

 We have  a natural transformation 
 \begin{equation}\label{vsdfvfdsvq3fr}
 v \colon \bC^{u}\otimes_{\max} \C[-]\to \bC^{u}[-]\,,
\end{equation} 
{see \eqref{ewrfewrfwefefewfefwefe} for $\C[-]$,}
 of functors from $G\Set$ to $\Fun(BG,\Ccat)$. Its 
 component on $X$ in ${G}\Set$ is the  functor 
 $$v_{X} \colon \bC^{u}\otimes_{\max} \C[X]\to  \bC^{u}[{X}]\, ,$$
  which 
 sends the object
 $(C,y)$ in $ \bC^{u}\otimes_{\max} \C[X]$ to the object $(C,(p^{y})_{x\in X})$
 with
 $$p_{x}^{y}:=\left\{\begin{array}{cc} \id_{C}&x=y,\\0&x\not=y, \end{array} \right.$$
 and which acts  by $a\otimes z\mapsto za$ on morphisms.
The functor $v_{X}$ is a Morita equivalence: It is fully faithful, and  every object of
$ \bC^{u}[{X}]$ is isomorphic to a finite sum of objects in the image of $v_{X}$.
Since $K^{\Ccat}$ is Morita invariant and $-\rtimes_{r}G$ preserves Morita equivalences by \cite[Prop.\ 16.11]{cank}, after restriction along $G\Orb\subseteq G\Set$ we get a natural transformation 
of functors 
\begin{equation}\label{dsfvdsfvs34rfefs} \hat K^{G}_{\bC^{u}}\simeq K^{\Ccat}( (\bC^{u}\otimes_{\max} \C[-])\rtimes_{r}G)\simeq  K^{\Ccat}( \bC^{u}[-]\rtimes_{r}G)
\end{equation} from $G\Orb$ to $\Sp$
 where we use Definition \ref{egiueheiugheiugwegerggwfewr} in order to see  the first equivalence.
 Since the transformation \eqref{vsdfvfdsvq3fr} is clearly natural for $\bC$ in $\Fun(BG,\nCcat_{\ndeg,\eadd,\omega\add})$, so is 
 \eqref{dsfvdsfvs34rfefs}.

Combining \eqref{dsfvdsfvs34rfefs}, \eqref{gewrggrgreferwf} and \eqref{svsdfvsdfvsfvfdv}
we get the equivalence asserted in the proposition.
\end{proof}

\begin{prop}\label{weiojgwoerferww}\mbox{}
If {$\cF \subseteq \Fin$},  then have a commutative square \begin{equation}\label{adfadfadfsafdsf}
\xymatrix{\Sigma RK^{G,\an}_{(\bC^{u})^{(G)}}(E_{\cF}G^{\cw})\ar[rrr]^-{\Sigma \mu^{\Kasp}_{(\bC^{u})^{(G)},\cF}}\ar@{-}[d]^{\simeq}&&&\Sigma  \KK(\C,(\bC^{u})^{(G)}\rtimes_{r} G)\\ RK^{G,\An}_{\bC}( E_{\cF}G^{\cw}) \ar[rrr]_{\Ass^{\an}_{\bC,\cF}}&&&{\Sigma}\KK(\C,{\bC}^{(G)}_{\std}\rtimes_{r} G)\ar[u]_{{\simeq}}}
\end{equation}
which is natural in $\bC$ in $\Fun(BG,\nCcat_{\ndeg,\eadd,\omega\add})$.
\end{prop}
\begin{proof}
We start with the construction of the square \eqref{adfadfadfsafdsf}.  
Its  left vertical morphism will be induced by a zig-zag and therefore does not have a preferred direction. We expand the square into the following commutative diagram:
\begin{align}\label{qewfqwefqwdqd}
\\
\mathclap{
\xymatrix{\ar[d]_{\simeq}\Sigma RK^{G,\an}_{(\bC^{u})^{G}}(E_{\cF}G^{\cw})\ar@/^1.5cm/[rrr]^{\Sigma \mu^{\Kasp}_{(\bC^{u})^{(G)},\cF}}\ar[rr]^{{\Sigma}\mu^{\Kasp}_{(\bC^{u})^{(G),\cF,\max}}}  && \Sigma \KK(\C,(\bC^{u})^{(G)}\rtimes G)\ar[r]\ar[d]^{\simeq}&\Sigma  \KK(\C,(\bC^{u})^{(G)}\rtimes_{r} G)\ar[d]_{\simeq}   \\ 
\Sigma RK^{G,\an}_{\bC^{(G)}_{\std,+}}(E_{\cF}G^{\cw}) \ar[rr]^{\Sigma \mu^{\Kasp}_{\bC^{(G)}_{\std,+},\cF,\max}} && \Sigma \KK(\C,\bC^{(G)}_{\std,+}\rtimes G)\ar[r]& \Sigma \KK(\C,\bC^{(G)}_{\std,+}\rtimes_{r} G)\\ 
\Sigma   RK^{G,\an}_{{\bC}^{(G)}_{\std}}(E_{\cF}G^{\cw}) \ar[u]^{\simeq}\ar[rr]^{\Sigma \mu^{\Kasp}_{{\bC}^{(G)}_{\std},\cF,\max}} & &\ar[u] \Sigma \KK(\C,{\bC}^{(G)}_{\std}\rtimes  G) \ar[r]&\ar[u]^{{\simeq}} \Sigma \KK(\C,{\bC}^{(G)}_{\std}\rtimes_{r} G) \ar@{=}[d] \\ 
RK^{G,\an}_{\bQ^{(G)}_{\std}}(E_{\cF}G^{\cw}) \ar@{=}[d]\ar[rr]^{\mu^{\Kasp}_{\bQ^{(G)}_{\std},\cF,\max}}\ar[u]^{\simeq} &&\KK(\C,\bQ^{(G)}_{\std}\rtimes G) \ar[r]\ar[u]^{\simeq}& {\Sigma}\KK(\C,{\bC}^{(G)}_{\std}\rtimes_{r} G) \\ 
RK^{G,\An}_{\bC}( E_{\cF}G^{\cw})\ar[rrru]^{\simeq}_{\Ass^{\an}_{\bC,\cF}} &&& }
}
\notag
\end{align}

The  two upper rows of vertical maps are induced by the zig-zag  $$(\bC^{u})^{(G)} \to \bC^{(G)}_{\std,+}\leftarrow   \bC^{(G)}_{\std}$$
(see \eqref{aefefdfafadfdffsa}), where the first map    is a weak Morita equivalence and the second is a split relative Morita equivalence. 
 We use (see below for details) that  the functors $RK^{G,\an}_{-}(E_{\cF}G^{\cw})$  and $\KK(\C,-\rtimes_{r} G)$ {send}  weak Morita equivalences and split relative Morita equivalences to  equivalences.
\begin{enumerate}
\item Recall that $RK^{G,\an}_{\bD}(E_{\cF}G^{\cw})\cong\colim_{W\subseteq E_{\cF}G^{\cw}} K^{G,\an}_{\bD}(W)$, where the   colimit runs over the filtered poset of  $G$-finite $G$-CW subcomplexes of $E_{\cF}G$. For fixed $W$ the functor $\bD\mapsto K^{G,\an}_{\bD}(W)$ sends   relative Morita equivalences 
to equivalences by Lemma \ref{eroigjweogregwgefwe}.\ref{qroigwoeigwjereferwfwerf3}.
Its sends weak Morita equivalences to equivalences by \cite[Thm.\ 1.32.3]{KKG}.
 \item Since we have the equivalence  $\KK(\C,-\rtimes G)\simeq \KK(\C,-)\circ (-\rtimes G)\circ \kkG_{\Ccat}$ of functors from $\Fun(BG,\nCcat)$ to $\Sp$,
 the functor $ \KK(\C,-\rtimes G)$ sends weak Morita  equivalences to equivalences since already $\kkG_{\Ccat}$ does so by \cite[Thm.\ 1.32.3]{KKG}. Hence the middle upper vertical arrow is an equivalence. 
One could also show that the other vertical arrow in this column is an equivalence, but since this is not needed in our argument we will not go through the details here.
 \item Since $\KK(\C,-\rtimes_{r} G)\simeq \KK(\C,-)\circ (-\rtimes _{r}G)\circ \kkG_{\Ccat}$, as in the previous point, 
the functor $\KK(\C,-\rtimes_{r} G)$ sends weak Morita equivalences to equivalences. Since
$-\rtimes_{r} G$ preserves Morita equivalences by \cite[Prop.\ 16.11]{cank} and $ \KK(\C,-)=K^{\Ccat}$ sends Morita equivalences to equivalences by \cite[Prop.\ 16.18]{cank} we see that 
$\KK(\C,-\rtimes_{r} G)$ sends Morita equivalences to equivalences.
In order to see that it also sends split relative  Morita equivalences to equivalences we apply
$-\rtimes_{r} G$ to the diagram \eqref{csdioucsadoicuoasdcdsacasdcasc}. In view of the existence of splits for $p$ and $q$,
exactness of the horizontal sequences is preserved. Because $-\rtimes_{r} G$ preserves Morita equivalences the resulting diagram shows that 
 $\phi\rtimes_{r}G \colon \bD\rtimes_{r}G\to \bE\rtimes_{r}G$ is a relative Morita equivalence.
Since $\KK(\C,-)=K^{\Ccat}$ is a Morita invariant homological functor, 
it sends relative Morita equivalences to equivalences by \cite[Prop.\ 17.4]{cank}.
\end{enumerate}

%
%

%

 The   two upper right squares
 are provided  by the    natural transformation $-\rtimes G\to -\rtimes_{r}G$.
The two lower left vertical arrows are induced by 
the  boundary map  of the fibre sequence 
 associated to the 
exact sequence  $0\to {\bC}^{(G)}_{\std}\to  {\bM\bC}^{(G)}_{\std}\to  \bQ^{(G)}_{\std}\to 0$ in $\Fun(BG,\nCcat)$, see the proof of Proposition \ref{wegojeogrregregwergwegre}. This connecting map is an equivalence
  since $ \bM\bC^{(G)}_{\std} $ is flasque.  The three  left squares commute by the 
 naturality of the Kasparov assembly map with respect to the coefficients in $\KKG$. 
 The upper triangle and the lower triangle reflect the Definitions \ref{wergoijowergerrrfrfrfrfeggwgw} and \ref{wergoijowergerreggwgw}  of
 $\mu_{(\bC^{u})^{(G)},\cF}^{\Kasp}$ and $\Ass^{\an}_{\bC,\cF}$. 
\end{proof}

Note that the statement of Theorem \ref{wtkogwegerfwerf} depends on an object $\kkG(A)$ in $\KK^{G}_{\sepa}$ for a separable $G$-$C^{*}$-algebra $A$.
In the proof we want to relate the Kasparov assembly map $\mu^{\Kasp}_{A,\Fin}$ with the Davis-Lück assembly map
$\mu^{\DL}_{A,\Fin}$ by comparing them with the analytic assembly maps $\Ass^{\an}_{\bC,\Fin}$ and
$\Ass^{h}_{\bC,\Fin}$, respectively, for a suitably choice of $G$-$C^{*}$-category $\bC$ and invoking Theorem \ref{wtoiguwegwergergregwe}. 
If $A$ is a unital separable $G$-$C^{*}$-algebra, then
we could take $\bC=\Hilb_{c}(A)$.  But not every
class in $\KK^{G}_{\sepa}$ is represented by a unital $G$-$C^{*}$-algebra.
But every class is  a fibre of a morphism between classes of unital algebras algebras.  Indeed, if a class is represented by a $G$-$C^{*}$-algebra  $A$, then it is equivalent to the fibre of
  $\kkG(A^{+})\to \kkG(\C)$. In order to apply this we must model the unitalization map by a  suitable 
essential functor between associated effectively additive $G$-$C^{*}$-categories. This is the contents of the following proposition.

Let $A$ be in $\Fun(BG,\nCalg)$ and consider the split  unitalization sequence
$$0\to A\to A^{+}\xrightarrow{p} \C\to 0$$
whose split will be denoted by 
 $e \colon \C\to A^{+}$.

\begin{prop}\label{weiorgwergrwefw}
There exists the following data:
\begin{enumerate}
\item $\bC_{+}$, $\bC_{\C}$ in $\Fun(BG,\nCcat_{\ndeg,\eadd,\omega\add})$,
\item $q \colon \bC_{+}\to \bC_{\C}$ in $\Fun(BG,\nCcat_{\ndeg,\eadd,\omega\add})$,
\item $s \colon \bC_{\C}\to \bC_{+}$ in $\Fun(BG,\nCcat_{\ndeg,\eadd,\omega\add})$,
\item   $i \colon A^{+}\to (\bC_{+}^{u})^{(G)}$   and  $j \colon \C\to (\bC_{\C}^{u})^{(G)}$ in   $\Fun(BG,\Ccat)$,
\end{enumerate}
 with the following properties:
\begin{enumerate}
\item  \label{gijerwigogwerffwef1} The squares $$\xymatrix{A^{+}\ar[rr]^{p}\ar[d]^{i} && \C \ar[d]^{j} \\ 
(\bC_{+}^{u})^{(G)}\ar[rr]^-{(q^{u})^{(G)}} && (\bC_{\C}^{u})^{(G)}} \qquad \mbox{and}\qquad
 \xymatrix{A^{+} \ar[d]^{i} && \ar[ll]_{e}\C \ar[d]^{j} \\ (\bC^{u}_{+})^{(G)}  && (\bC_{\C}^{u})^{(G)}\ar[ll]_-{(s^{u})^{(G)}}} $$
commute.
\item \label{gijerwigogwerffwef}$G$ weakly fixes the objects of $\bC^{u}_{+}$ and $\bC^{u}_{\C}$, see Definition \ref{werkgerwgreferfwref}.
 \item $i$ and $j$ are  Morita equivalences.
  \item  $q$ is a quotient   and $q\circ s=\id_{\bC_{\C}}$.
\end{enumerate}
\end{prop}
\begin{proof}
%
%
%
%

We let $\widehat{\bA^{+}}$ be the full subcategory of $\Hilb_{c}(A^{+})$ on the objects which are isomorphic to
$\hat A^{+}$, see Example \ref{wteklgwrtegwergfwrefrfwferfer}. Since the object $\hat A^{+}$ has an extension $(\hat A^{+},\kappa)$ in $((\widehat{\bA^{+}})^{u})^{(G)}$  we have unitary isomorphisms $\kappa_{g} \colon \hat A^{+}\to g\hat A^{+}$  in $\Hilb_{c}(A^{+})$ for all $g$ in $G$. It follows that
$\widehat{\bA^{+}}$ is $G$-invariant and 
 therefore  inherits a $G$-action from  $\Hilb_{c}(A^{+})$.
Furthermore, we  have $\widehat{\bA^{+}}=(\widehat{\bA^{+}})^{u}$ and 
 $G$ weakly fixes the objects of $(\widehat{\bA^{+}})^{u}$.

 We set $$\bC_{+} \coloneqq \widehat{\bA^{+}} \otimes_{\max} \Hilb_{c}(\C)$$ with the $G$-action induced from the first factor. 
 We furthermore let $\bF$ be the $G$-$C^{*}$-category with the same objects as  $\widehat{\bA^{+}}$ but morphism spaces isomorphic to $\C$ between any two objects. We have a canonical projection
 $q' \colon \widehat{\bA^{+}}\to \bF$ involving $p$ and a split $s' \colon \bF\to \widehat{\bA^{+}}$ involving the units of $A^{+}$. We set $$\bC_{\C} \coloneqq \bF\otimes_{\max} \Hilb_{c}(\C)\, .$$
 Then we have a quotient projection $q \coloneqq q'\otimes \id_{\Hilb_{c}(\C)} \colon \bC_{+}\to  \bC_{\C}$ and the split functor
 $s \coloneqq s'\otimes \id_{\Hilb_{c}(\C)} \colon \bC_{\C}\to \bC_{+}$ such that $q\circ s=\id_{\bC_{\C}}$.
 Because of this equality the condition that $q$ is a quotient simply means that it is bijective on objects.
 
 We define $j \colon \C\to (\bC_{\C}^{u})^{(G)}$ using the object $((\hat A^{+},\C), \kappa\otimes \id_{\C})$  and the canonical identification
 $\End_{(\bC^{u}_{\C})^{(G)}}(((\hat A^{+},\C), \kappa\otimes \id_{\C}))\cong \C$.
 We further define
 $i \colon A^{+}\to (\bC^{u}_{+})^{(G)}$ using the object
 $((\hat A^{+},\C), \kappa\otimes \id_{\C})$ and the canonical $G$-equivariant identification $\End_{(\bC_{+}^{u})^{(G)}}(((\hat A^{+},\C), \kappa\otimes \id_{\C}))
 \cong A^{+}$. 
 Then the two squares commute.

If we forget the $G$-action, then  $\bC_{+}$ is isomorphic to $A^{+}\otimes_{\max}\Hilb_{c}(\C)$. We can conclude that $\bC_{+}$ admits all AV-sums and is therefore effectively additive.
 A similar reasoning applies to $\bC_{\C}$.


The functor $q$ is full  and hence non-degenerate.
The split $s' \colon \bF\to \widehat{\bA^{+}}$ is unital and hence also non-degenerate.
This implies that $s$ is non-degenerate.

In order to show that $i$ is a Morita equivalence we note that  {any} object in $(\bC_{+}^{u})^{(G)}$ is unitarily isomorphic to an object
$((\hat A,H),\kappa\otimes \id_{H})$ for some finite-dimensional Hilbert space $H$. It is therefore
unitarily isomorphic to a finite sum of copies of $i(A^{+})$. {The same reasoning applies to show that $j$ is a Morita equivalence.}
%
%
%
\end{proof}

We now finish the proof of the Theorem \ref{wtkogwegerfwerf}.
\newcommand{\Intrs}{\mathrm{intrs}}
The statement of the theorem depends on an object $A$ of $\KKGs$ which is assumed to belong to $\ci$.
We can choose an object of  $\Fun(BG,\nCalg_{\sepa})$ which realizes $A$ in $\KKGs$ upon applying $\kkGs$. So from now on $A$ denotes this $G$-$C^{*}$-algebra.

We apply Proposition \ref{weiorgwergrwefw} to $A$ in order to get the asserted data.
For any functor $F$ from
$\Fun(BG,\nCcat_{\ndeg,\eadd,\omega\add})$ to an additive category
we get a decomposition $$F(\bC_{+})\simeq F^{\Intrs}\oplus F(\bC_{\C})\, ,$$ where the projection to and inclusion of the second summand are given by $F(q)$ and $F(s)$.
We call $F^{\Intrs}$ the interesting summand. A natural transformation  $f \colon F\to F'$ of functors
induces a map $f^{\Intrs} \colon F^{\Intrs}\to F^{\prime,\Intrs}$ between the interesting summands.
We call $f^{\Intrs}$ the interesting summand of  $f$.
Finally, a natural equivalence $f\simeq  f'$ between  natural transformations   
induces a natural equivalence $f^{\Intrs}\simeq f^{\prime,\Intrs}$ between the interesting summands.
We now have the following facts:
\begin{enumerate}
\item The interesting summand of $\mu^{\Kasp}_{(\bC^{u}_{+})^{(G)},\Fin}$ is equivalent to the interesting summand of
$\Ass^{\an}_{\bC_{+} ,\Fin}$  by  Proposition \ref{weiojgwoerferww}.
\item  By Theorem \ref{wtoiguwegwergergregwe} the  interesting summand of $\Ass^{\an}_{\bC_{+},\Fin}$ is  an equivalence if and only if
the interesting summand of 
$\Ass^{h}_{\bC_{+},\Fin}$ is an equivalence.
\item The interesting summand of 
$\Ass^{h}_{\bC_{+},\Fin}$  is equivalent to the interesting summand of $\mu^{\DL}_{\bC_{+}^{u},\Fin}$
 by  Proposition \ref{weroigjoewrgergergfewrf}.
\item 
The interesting summand of $\mu^{\DL}_{(\bC_{+}^{u})^{(G)},\Fin}$ is
equivalent to the interesting summand of $\mu^{\DL}_{\bC_{+}^{u},\Fin}$ by Lemma \ref{wekgowerfrefwerf}. Here we use Property \ref{gijerwigogwerffwef} of the data from Proposition \ref{weiorgwergrwefw}.
\item  We note that the Davis--Lück assembly map
$\mu^{\DL}_{-,\Fin}$ depends functorially on an object of $\KKG$. 
The pair of morphisms $p \colon A^{+}\to \C$ and $e \colon \C\to A^{+}$ provides a decomposition
$\kkG(A^{+})\simeq \kkG(A)\oplus \kkG(\C)$. The commutative squares
in  Property \ref{gijerwigogwerffwef1}
of the data from Proposition \ref{weiorgwergrwefw} provide a decomposition of the transformation  
$\mu^{\DL}_{i,\Fin}$ into a sum $(\mu^{\DL}_{i,\Fin})^{\Intrs}\oplus \mu^{\DL}_{j,\Fin}$.
Since $i$ is a   Morita equivalence and the transformation between the  Davis--Lück assembly maps depends on $\hat K^{G}_{i}$, by  Corollary \ref{wegiwrogerfrweferfwerf}.\ref{wthgjiwoegerferferfwef2} the  transformations  $ \mu^{\DL}_{i,\Fin}$ and hence $(\mu^{\DL}_{i,\Fin})^{\Intrs}$ are equivalences.
We conclude that the interesting  summand  of  $\mu^{\DL}_{(\bC_{+}^{u})^{(G)},\Fin}$ is equivalent to 
$\mu^{\DL}_{A,\Fin}$.  
\item By a completely analogous argument the interesting  summand  of  $\mu^{\Kasp}_{(\bC_{+})^{(G)},\Fin}$ is equivalent to  $\mu^{\Kasp}_{A,\Fin}$. Here  we use 
 that the domain and target $RK^{G,\an}_{-}(E_{\Fin}G^{\cw})$ and 
 $\KK(\C,-\rtimes_{r}G)$ of $\mu^{\Kasp}_{-,\Fin}$ considered as functors on $\Fun(BG,\nCcat)$
via $\kkGA$ send Morita equivalences to equivalences. 
For $\KK(\C,-\rtimes_{r}G)$ this has been observed above in the proof of  Corollary \ref{wegiwrogerfrweferfwerf}.\ref{wthgjiwoegerferferfwef2}. For the other functor we use  the formula
$$RK^{G,an}_{-}(E_{\Fin}G^{\cw})\simeq \colim_{W\subseteq E_{\Fin}G^{\cw}}\KK^{G}(C_{0}(W),-)\, ,$$ where $W$ runs over the $G$-finite subcomplexes of $ E_{\Fin}G^{\cw}$, 
and Lemma \ref{eroigjweogregwgefwe}.\ref{qroigwoeigwjereferwfwerf3} saying that
$\KK^{G}(C_{0}(W),-)$ sends Morita equivalences to equivalences for every $W$.
 \end{enumerate}
By a combination of these facts we see that $\mu^{\Kasp}_{A,\Fin}$ is an equivalence if and only if $\mu^{\DL}_{A,\Fin}$ is an equivalence. Under the assumption that $\kkGs(A)$ belongs to $\ci$ we know that 
$\mu^{\DL}_{A,\Fin}$ is an equivalence by  Lemma \ref{egiowgwgeregegwref}.
\end{proof}

\bibliographystyle{alpha}
\bibliography{forschung}

\begin{thebibliography}{BEKW20b}

\bibitem[BCH94]{baum_connes_higson}
P.~Baum, A.~Connes, and N.~Higson.
\newblock {Classifying space for proper actions and $K$-theory of group
  $C^\ast$-algebras}.
\newblock {\em Contemp. Math.}, 167:241--291, 1994.

\bibitem[BD24]{budu}
U.~Bunke and B.~Duenzinger.
\newblock ${E}$-theory is compactly assembled.
\newblock
  \href{https://arxiv.org/pdf/2402.18228.pdf}{https://arxiv.org/pdf/2402.18228.pdf},
  02 2024.

\bibitem[BE]{cank}
U.~Bunke and A.~Engel.
\newblock {Additive $C^{*}$-categories and $K$-theory}.
\newblock \href{https://arxiv.org/abs/2010.14830}{arXiv:2010.14830}.

\bibitem[BE20a]{ass}
U.~Bunke and A.~Engel.
\newblock {Coarse assembly maps}.
\newblock {\em J.\ Noncommut.\ Geom.}, 14(4):1245--1303, 2020.

\bibitem[BE20b]{buen}
U.~Bunke and A.~Engel.
\newblock {\em {Homotopy theory with bornological coarse spaces}}, volume 2269
  of {\em Lecture Notes in Mathematics}.
\newblock Springer, 2020.

\bibitem[BE23]{coarsek}
U.~Bunke and A.~Engel.
\newblock Topological equivariant coarse {K}-homology.
\newblock {\em Annals of K-Theory}, 8(2):141--220, June 2023.

\bibitem[BE25]{indexclass}
U~Bunke and A.~Engel.
\newblock The coarse index class with support.
\newblock {\em Tunis. J. Math.}, 7(2):339--378, 2025.

\bibitem[BEKW20a]{equicoarse}
U.~Bunke, A.~Engel, D.~Kasprowski, and Ch. Winges.
\newblock {Equivariant coarse homotopy theory and coarse algebraic
  $K$-homology}.
\newblock In {\em {$K$-Theory in Algebra, Analysis and Topology}}, volume 749
  of {\em Contemp.\ Math.}, pages 13--104, 2020.

\bibitem[BEKW20b]{desc}
U.~Bunke, A.~Engel, D.~Kasprowski, and Ch. Winges.
\newblock {Injectivity results for coarse homology theories}.
\newblock {\em Proc.\ London Math.\ Soc.}, 121(3):1619--1684, 2020.

\bibitem[BEL]{KKG}
U.~Bunke, A.~Engel, and M.~Land.
\newblock A stable $\infty$-category for equivariant $\mathrm{K\!K}$-theory.
\newblock \href{https://arxiv.org/pdf/2102.13372.pdf}{arxiv:2102.13372}.

\bibitem[BL24]{Bunke:2024aa}
U.~Bunke and M.~Ludewig.
\newblock Coronas and {C}allias type operators in coarse geometry.
\newblock
  \href{https://arxiv.org/pdf/2411.01646.pdf}{https://arxiv.org/pdf/2411.01646.pdf},
  11 2024.

\bibitem[BNV16]{Bunke:2013uq1}
U.~Bunke, Th. Nikolaus, and M.~V{\"o}lkl.
\newblock Differential cohomology theories as sheaves of spectra.
\newblock {\em J.\ Homotopy Related Struct.}, 11:1--66, 2016.

\bibitem[BR]{Benameur:2020aa}
M.-T. Benameur and I.~Roy.
\newblock {An equivariant PPV theorem and Paschke--Higson duality}.
\newblock \href{https://arxiv.org/abs/2001.09811}{arXiv:2001.09811}.

\bibitem[Bun24]{crosscat}
U.~Bunke.
\newblock Non-unital {{\(C^*\)}}-categories, (co)limits, crossed products and
  exactness.
\newblock {\em High. Struct.}, 8(2):163--209, 2024.

\bibitem[CE01]{MR1836047}
J.~Chabert and S.~Echterhoff.
\newblock Permanence properties of the {B}aum--{C}onnes conjecture.
\newblock {\em Doc.\ Math.}, 6:127--183, 2001.

\bibitem[DL98]{davis_lueck}
J.~F. Davis and W.~L{\"u}ck.
\newblock {Spaces over a Category and Assembly Maps in Isomorphism Conjectures
  in $K$- and $L$-Theory}.
\newblock {\em $K$-Theory}, 15:201--252, 1998.

\bibitem[GAJV19]{Aparicio:2019aa}
M.~P. Gomez-Aparicio, P.~Julg, and A.~Valette.
\newblock The {B}aum-{C}onnes conjecture: an extended survey.
\newblock In {\em Advances in noncommutative geometry}, pages 127--244.
  Springer, Cham, [2019] \copyright 2019.

\bibitem[GHN17]{Gepner:2015aa}
D.~Gepner, R.~Haugseng, and Th. Nikolaus.
\newblock Lax colimits and free fibrations in {$\infty$}-categories.
\newblock {\em Doc. Math.}, 22:1225--1266, 2017.

\bibitem[GHT00]{Guentner_2000}
E.~Guentner, N.~Higson, and J.~Trout.
\newblock Equivariant {$E$}-theory for {$C^{*}$}-algebras.
\newblock {\em Memoirs of the American Mathematical Society}, 148(703):0--0,
  2000.

\bibitem[Gla16]{Glasman:2014aa}
S.~Glasman.
\newblock {A spectrum-level Hodge filtration on topological Hochschild
  homology}.
\newblock {\em Selecta Math.}, 22:1583--1612, 2016.

\bibitem[HP04]{hamped}
I.~Hambleton and E.~K. Pedersen.
\newblock {Identifying assembly maps in $K$- and $L$-theory}.
\newblock {\em Math.\ Ann.}, 328:27--57, 2004.

\bibitem[HR95]{hr}
N.~Higson and J.~Roe.
\newblock {On the coarse Baum--Connes conjecture}.
\newblock In S.~C. Ferry, A.~Ranicki, and J.~Rosenberg, editors, {\em {Novikov
  conjectures, index theorems and rigidity, Vol.~2}}, London Mathematical
  Society Lecture Notes 227. Cambridge University Press, 1995.

\bibitem[HR00]{higson_roe}
N.~Higson and J.~Roe.
\newblock {\em {Analytic $K$-Homology}}.
\newblock Oxford University Press, 2000.

\bibitem[Joa03]{joachimcat}
M.~Joachim.
\newblock {$K$-homology of $C^{\ast}$-categories and symmetric spectra
  representing $K$-homology}.
\newblock {\em Math. Ann.}, 327:641--670, 2003.

\bibitem[Kas88]{kasparovinvent}
G.~G. Kasparov.
\newblock {Equivariant $K\!K$-theory and the Novikov conjecture}.
\newblock {\em Invent.\ Math.}, 91(1):147--201, 1988.

\bibitem[Kra21]{kranz}
J.~Kranz.
\newblock An identification of the {Baum}-{Connes} and {Davis}-{L{\"u}ck}
  assembly maps.
\newblock {\em M{\"u}nster J. Math.}, 14(2):509--536, 2021.

\bibitem[Lan21]{Land-infinity}
M.~Land.
\newblock {\em Introduction to Infinity-categories}.
\newblock Compact Textbooks in Mathematics. Birkh\"{a}user/Springer, Cham,
  [2021] \copyright 2021.

\bibitem[LNS17]{Land:2017aa}
M.~Land, Th. Nikolaus, and K.~Szumi{\l}o.
\newblock {Localization of Cofibration Categories and Groupoid $C^*$-algebras}.
\newblock {\em Algebr.\ Geom.\ Topol.}, 17:3007--3020, 2017.

\bibitem[Lur]{SAG}
J.~Lurie.
\newblock Spectral algebraic geometry.
\newblock
  \href{https://www.math.ias.edu/~lurie/papers/SAG-rootfile.pdf}{https://www.math.ias.edu/$\sim$lurie/}.

\bibitem[MN06]{MR2193334}
R.~Meyer and R.~Nest.
\newblock The {B}aum--{C}onnes conjecture via localisation of categories.
\newblock {\em Topology}, 45(2):209--259, 2006.

\bibitem[OO97]{OO}
H.~Oyono-Oyono.
\newblock {\em {La conjecture de Baum--Connes pour les groupes agissant sur des
  arbres}}.
\newblock PhD thesis, Universit\'e Lyon 1, 1997.

\bibitem[Pas81]{paschke}
W.L. Paschke.
\newblock {$K$-Theory for Commutants in the Calkin Algebra}.
\newblock {\em Pacific J. Math.}, 95(2):427--434, 1981.

\bibitem[QR10]{quro}
Y.~Quiao and J.~Roe.
\newblock On the localization algebra of {G}uoliang {Y}u.
\newblock {\em Forum Mathematicum}, 22:657--665, 2010.

\bibitem[Roe90]{roe_exotic_cohomology}
J.~Roe.
\newblock {Exotic cohomology and index theory}.
\newblock {\em Bulletin of the American Mathematical Society}, 23(2):447--453,
  1990.

\bibitem[Tho00]{thabs}
K.~Thomsen.
\newblock On absorbing representations.
\newblock {\em Proc. Amer. Math. Soc.}, 129(5):1409--1417, 2000.
\newblock Preprint Series No.: 6, 1999 Univer OF AARHUS DEPARTMENT OF
  MATHEMATICS.

\bibitem[Val83]{Val83}
A.~Valette.
\newblock {A remark on the Kasparov groups $\mathrm{Ext}^i(A, B)$}.
\newblock {\em Pacific Journal of Mathematics}, 109(1):247--255, 1983.

\bibitem[Wul22]{wulff_axioms}
Ch. Wulff.
\newblock Equivariant coarse (co-)homology theories.
\newblock {\em SIGMA, Symmetry Integrability Geom. Methods Appl.}, 18:paper
  057, 62, 2022.

\bibitem[WY20]{willett_yu_book}
R.~Willett and G.~Yu.
\newblock {\em {Higher index theory}}.
\newblock Cambridge University Press, 2020.

\end{thebibliography}

\end{document}